
\documentclass[12pt, draft]{amsart}
\usepackage{amssymb,amsbsy,amsmath,amsfonts,times}
\usepackage{latexsym,euscript,exscale}
\usepackage{mathrsfs}

\usepackage[all]{xy}

\usepackage{helvet}

  \newcounter{itemizedlistcounter}  
\newenvironment{itemized}
  {\begin{list}
     {(\arabic{itemizedlistcounter})} 
     {\usecounter{itemizedlistcounter}   
      \setlength{\leftmargin}{2.1em}} 
  }
  {\end{list}}





\author[F. Cabello S\'anchez]{F\'elix Cabello S\'anchez}
\address{Departamento de Matem\'aticas and IMUEx\\
Avenida de Elvas, 06071-Badajoz, Espa\~na.
\newline
Orcid Id: 0000-0003-0924-5189}
\email{fcabello@unex.es}
\author[V. Ferenczi]{Valentin Ferenczi}
\address{Instituto de Matem\'atica e Estat\'istica \\
 Universidade de S\~ao Paulo \\
rua do Mat\~ao 1010 \\
Cidade Universit\'aria \\
05508-90 S\~ao Paulo, SP \\
Brazil \\   \newline and 
\newline
Equipe d'Analyse Fonctionnelle \\
Institut de Math\'ematiques de Jussieu\\
Sorbonne Universit\'e - UPMC \\
Case 247, 4 place Jussieu \\
75252 Paris Cedex 05 \\
France.
\newline
Orcid Id: 0000-0001-5239-111X}
\email{(corresponding author) ferenczi@ime.usp.br}
\author[B. Randrianantoanina]{Beata Randrianantoanina}
\address{Department of Mathematics\\
Miami University\\
Oxford, OH 45056, USA}
\email{randrib@miamioh.edu}

\date{}
\linespread {1.0}

\newcommand{\norm}[1][\cdot]{\Vert #1\Vert}
\newcommand{\triple}[1]{|\hspace{-1pt}|\hspace{-1pt}|#1|\hspace{-1pt}|\hspace{-1pt}|}

\newcommand{\eps}{\varepsilon}

\newcommand{\iso}{\rm Isom}

\newcommand{\ro}{\rho}

\newcommand {\F}{\mathbb F}

\newcommand{\lop}{\curvearrowright}

\newcommand {\N}{\mathbb N}
\newcommand {\Q}{\mathbb Q}

\newcommand {\Z}{\mathbb Z}

\renewcommand{\leq}{\ensuremath{\leqslant}}
\renewcommand{\geq}{\ensuremath{\geqslant}}

\renewcommand{\dim}{\ensuremath{\mathop{\rm dim\,}}}


\newcommand{\supp}{\operatorname{supp}}

\newcommand{\mbb}[1]{\mathbb{#1}}
\newcommand{\mr}[1]{\mathrm{#1}}

\newcommand{\de}{\delta}
\newcommand{\vep}{\varepsilon}
\newcommand{\ga}{\gamma}

\DeclareMathOperator{\age}{\mathrm{Age}}

\newcommand{\acts}{\curvearrowright}


\newcommand{\Span}{\operatorname{span}}
\providecommand{\FR}{\mathop{\rm FR}\nolimits}
\providecommand{\GL}{\mathop{\rm GL}\nolimits}

\providecommand{\supp}{\mathop{\rm supp}\nolimits}

\providecommand{\Isom}{\mathop{\rm Isom}\nolimits}

\newcommand{\Ker}{\operatorname{Ker}}


\newcommand{\al}{\alpha}

\newcommand{\g}{\gamma}
\newcommand{\ep}{\varepsilon}

\newcommand{\s}{\sigma}
\newcommand{\G}{\Gamma}

\newcommand{\bbN}{\mathbb{N}}



\newcommand{\lb}{\label}

\newcommand{\DEF}{\buildrel {\mbox{\tiny def}}\over =}

\newcommand{\til}{\rightarrow}

\newcommand{\Emb}{\rm Emb}
\newcommand{\ultrafilter}{{\mathcal U}}

\newcommand {\ku} {\mathcal}

\newcommand{\auh}{AUH}



\newtheorem{thm}{Theorem}[section]
\newtheorem{cor}[thm]{Corollary}

\newtheorem{theorem}[thm]{Theorem}
\newtheorem{theo}[thm]{Theorem}
\newtheorem{lemme}[thm]{Lemma}
\newtheorem{lemm}[thm]{Lemma}
\newtheorem{lemma}[thm]{Lemma}
\newtheorem{prop} [thm] {Proposition}

\newtheorem{fact}[thm]{Fact}

\newtheorem{ex}[thm]{Example}

\theoremstyle{definition}
\newtheorem{defi} [thm] {Definition}
\newtheorem{defin} [thm] {Definition}
\newtheorem{definition} [thm] {Definition}

\newtheorem{rem}[thm]{Remark}

\newtheorem{prob}[thm]{Problem}
\newtheorem{problemo}[thm]{}

\newtheorem{quest}[thm]{Question}

\title{On Mazur rotations problem and its multidimensional versions}

\keywords{Mazur rotations problem; transitivity; almost transitivity; Fra\"iss\'e Banach space}

\subjclass[2010]{Primary 46-02, Secondary  46A22, 46B04, 46B08, 46C15, 54H20}

\thanks{
The research of the first author was supported in part by Projects MICINN MTM2016-76958-
C2-1-P and PID2019-103961GB-C21 and Project IB16056, Junta de Extremadura.
The research of the second author was supported by FAPESP, grants 2016/25574-8 and by CNPq, grant 303731/2019-2. This survey will be part of the special issue of the S\~ao Paulo Journal of Mathematical Sciences dedicated to the Golden Jubilee of the Institute of Mathematics and Statistics of the University of S\~ao Paulo.}

\begin{document}

\begin{abstract}The article is a survey related to a classical 
unsolved problem in Banach space theory, appearing in Banach's famous book in 1932, and known as the Mazur rotations problem. Although the problem seems very difficult and rather abstract, its study sheds
new light on the importance of norm symmetries of a Banach space, demonstrating sometimes unexpected connections with  renorming theory and differentiability in functional analysis, with  topological group theory and the theory of representations, with the area of amenability, with Fra\"iss\'e theory and Ramsey theory, and led to development of concepts of  interest independent of Mazur problem.
This survey focuses on results that have been published after 2000, stressing
two lines of research which were developed in the last ten years. The first one is the study of approximate versions of Mazur rotations problem in its various aspects, most specifically in the case of the Lebesgue 
spaces $L_p$. The second one concerns recent developments of multidimensional formulations of Mazur rotations problem and associated results. Some new results 
are also included.
\end{abstract}
\maketitle
\tableofcontents

\section{Introduction and first results on Mazur problem}\label{sec:intro}

\subsection{Mazur rotations problem} 
Hilbert spaces have the following rotations 
property: 
\begin{itemized}
\item[~]
Given two points $x,y$ on the unit sphere there exists an isometry $T$ (defined on the whole space) such that $y=Tx$. 
\end{itemized}
Here and throughout the paper {\em isometry} means {\em linear surjective isometry}. This clearly follows from the existence of orthogonal complements and can be rephrased by saying that the isometry group acts transitively on the unit sphere.

Mazur problem, which can be found in Banach's  {\em Th\'eorie des Op\'era\-tions Lin\'eaires}, asks whether every separable Banach space enjoying the above {\em rotations} property has to be isometric to {\em the} separable Hilbert space; see \cite[la remarque \`a la section~5 du chapitre XI]{Banach}. This question is perhaps best understood as two separate problems, both of which remain open to this day.

\begin{prob}[Mazur rotations problem, the isomorphic part]
Assume $X$ is a separable Banach space whose isometry group acts transitively on its unit sphere. Is $X$ linearly isomorphic to the separable Hilbert space $\ku H$?
\end{prob}

As we shall see very soon both the separability and the completeness conditions are necessary since otherwise there are easy counterexamples based on the Lebesgue spaces $L_p$. The other part of the problem, where neither completeness or separability seems to be essential, reads as follows:


\begin{prob}[Mazur rotations problem, the isometric part]
Assume $\|\cdot\|$ is an equivalent norm on a Hibert space $\mathcal H$ whose isometry group acts transtitively on the unit sphere. Is $\|\cdot\|$ necessarily euclidean, that is, induced by an inner product on $\mathcal H$?
\end{prob}

\subsection{Notation, conventions}\label{sec:not+con}
We tend to use {$X, Y, Z, U\dots $} for infinite dimensional Banach spaces and { $A, B, E, F\dots$}  for  finite dimensional ones.
The unit sphere of $X$ is the set $S_X=\{x\in X:\|x\|=1\}$.

The space of operators from $X$ to $Y$ is denoted by $\mathcal{L}(X,Y)$. Operators are invariably assumed linear and continuous.  The identity operator on $X$ is denoted by ${\bf I}_X$.
We use
{$\operatorname{GL}(X)$} for the group of linear automorphisms of $X$
and 
{${\rm Isom}(X)$} for its group of isometries. Recall that throughout, isometries are assumed to be linear and surjective (it is worth recalling here that by Mazur's theorem, onto isometries fixing $0$ are necessarily linear). An operator $T: X\to Y$  which preserves the norm ($\|Tx\|=\|x\|$ for all $x\in X$) is called an isometric embedding and we denote the  subset of such operators in  $\mathcal{L}(X,Y)$ by $\operatorname{Emb}(X,Y)$. An operator $T:X\to Y$, not necessarily surjective, that satisfies the estimate $(1+\eps)^{-1}\|x\|\leq \|Tx\|\leq (1+\eps)\|x\|$ is called an $\eps$-isometry. We denote by $\operatorname{Emb}_{\eps}(X,Y)$ set of all $\eps$-isometries from $X$ to $Y$. 

The (multiplicative) Banach-Mazur distance between two Banach spaces $X,Y$ is defined by $$d_{{\rm BM}}(X,Y)=\inf\big\{\|T\|\|T^{-1}\|: T \text{ is an isomorphism between $X$ and $Y$}\big\},$$ 
where the infimum of the empty set is treated as $\infty$.

If $G$ is a group acting on a set $X$, meaning that we have a homomorphism $\pi$ from $G$ to the group of bijections of $X$, then the orbit of $x$ under the action of $G$ is the set $\{\pi(g)(x):g\in G\}$.  If no confusion can arise, we often identify $g$ with $\pi(g)$ and we use the notation $G\curvearrowright X$ to indicate that $G$ acts on $X$. If both $G$ and $X$ carry topologies we say that an action $G\curvearrowright X$ is continuous if the obvious map $G\times X\to X$ sending $(g,x)$ to $\pi(g)(x)$ is continuous. 
A topological group $G$ is
said to be amenable if every continuous {\em affine} action of $G$ on a compact {\em convex} set {\em of
a locally convex space} has a fixed point. By deleting all the words set in  italics one obtains the
notion of an extremely amenable group.

General references about classical but sometimes specific concepts in Banach space theory (convexity, type, cotype, asymptotic structure, finite representability, Orlicz spaces, Tsirelson space, etc...) are, for example,  \cite{LT, FHHSPZ, AK} or the chapter by Johnson and Lindenstrauss  \cite{JL-hand} opening {\em the} Handbook. 


\subsection{Topologies}
Two topologies will be relevant for us on the spaces of operators ${\mathcal L}(X,Y)$, namely the norm topology, and the strong operator topology (SOT, the topology of pointwise convergence on $X$). Their restrictions provide topologies on $\operatorname{GL}(X), \operatorname{Isom}(X)$ and $\operatorname{Emb}(E,X)$. We recall some well-known useful facts:

\begin{fact}
Both $\operatorname{GL}(X)$ and  ${\rm Isom}(X)$ are topological groups in the norm topology.
\end{fact}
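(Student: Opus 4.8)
The plan is to verify the two defining requirements of a topological group for the norm topology: that group multiplication $(S,T)\mapsto ST$ and inversion $T\mapsto T^{-1}$ are continuous. That both sets are groups under composition is clear (a composition of linear automorphisms is a linear automorphism, and a composition of surjective linear isometries is again one), and ${\rm Isom}(X)$ sits inside $\operatorname{GL}(X)$ since a surjective linear isometry is a bijection whose inverse is again an isometry, in particular bounded. So it suffices to treat $\operatorname{GL}(X)$ and then note that a subgroup of a topological group, equipped with the subspace topology, is again a topological group.

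For continuity of multiplication I would use the elementary bilinear estimate
\[
\|ST-S_0T_0\|\;\leq\;\|S\|\,\|T-T_0\|+\|S-S_0\|\,\|T_0\|,
\]
valid for all operators; combined with the fact that the operator norm is bounded on a norm-neighbourhood of $S_0$ (e.g.\ by $\|S_0\|+1$), this shows that $(S,T)\mapsto ST$ is (jointly) norm-continuous at every point $(S_0,T_0)$. For continuity of inversion I would invoke the standard Banach-algebra argument: if $\|T-T_0\|<\tfrac12\|T_0^{-1}\|^{-1}$ then $T=T_0\big({\bf I}_X-T_0^{-1}(T_0-T)\big)$ is invertible, with $\|T^{-1}\|\leq 2\|T_0^{-1}\|$ by the Neumann series, and then from the identity $T^{-1}-T_0^{-1}=-T^{-1}(T-T_0)T_0^{-1}$ one gets $\|T^{-1}-T_0^{-1}\|\leq 2\|T_0^{-1}\|^2\|T-T_0\|$, which tends to $0$. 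This gives that $\operatorname{GL}(X)$ is a topological group in the norm topology, and hence so is its subgroup ${\rm Isom}(X)$.

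I would also remark that on ${\rm Isom}(X)$ the argument trivializes, and in fact both operations are Lipschitz: since $\|T\|=\|T^{-1}\|=1$ for every isometry, the displayed bilinear estimate gives $\|ST-S_0T_0\|\leq\|T-T_0\|+\|S-S_0\|$, while $\|T^{-1}-T_0^{-1}\|=\|T^{-1}(T_0-T)T_0^{-1}\|\leq\|T-T_0\|$. There is no serious obstacle in this statement; the only point requiring a genuine (though classical) argument rather than a one-line estimate is the continuity of inversion on all of $\operatorname{GL}(X)$, where one must first secure the local bound $\|T^{-1}\|\leq 2\|T_0^{-1}\|$ via the Neumann series before applying the resolvent-type identity.
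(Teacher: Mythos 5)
Your proof is correct and complete: the bilinear estimate for multiplication, the Neumann-series and resolvent-identity argument for inversion on $\operatorname{GL}(X)$, and the observation that ${\rm Isom}(X)$ is then a topological group as a subgroup (with the sharper Lipschitz bounds when $\|T\|=\|T^{-1}\|=1$) are exactly the standard arguments. The paper states this Fact without proof, as a well-known result, so your write-up simply supplies the classical argument it tacitly relies on.
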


The norm topology is somehow too strong to be used on isometry groups and actually it has a strong tendency to discretize them (see Comment 3 in  Section~\ref{sec:smAT} for examples of this on Lebesgue spaces).


In general the SOT is not a group topology on $\operatorname{GL}(X)$, but things get better if one looks at bounded subgroups. In particular:

\begin{fact}
The SOT makes ${\rm Isom}(X)$ into a topological group which is Polish (separable and completely metrizable) when $X$ is separable.
\end{fact}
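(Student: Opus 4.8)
The plan is to prove that the strong operator topology turns $\operatorname{Isom}(X)$ into a topological group, and that this group is Polish when $X$ is separable. I will handle the two assertions separately, since they require somewhat different ideas, the first being purely topological-algebraic and the second being a descriptive set theory argument.

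For the topological group assertion, the issue is that the SOT is in general not a group topology on $\operatorname{GL}(X)$ because inversion and multiplication need not be jointly continuous; boundedness is what saves us. First I would note that on $\operatorname{Isom}(X)$ every element has norm $1$, so the family is equi-Lipschitz. Continuity of multiplication $(S,T)\mapsto ST$ at $(S_0,T_0)$ then follows from the estimate $\|ST x - S_0 T_0 x\|\leq \|S(T-T_0)x\|+\|(S-S_0)(T_0 x)\|=\|(T-T_0)x\|+\|(S-S_0)(T_0x)\|$, where the first term is small because $T\to T_0$ in SOT and the second is small because $S\to S_0$ in SOT (evaluated at the fixed vector $T_0 x$). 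Continuity of inversion $T\mapsto T^{-1}$ is where the subtlety lies: one uses $\|T^{-1}x - T_0^{-1}x\| = \|T_0^{-1}(T_0 - T)T^{-1}x\| = \|(T_0-T)T^{-1}x\|$, and since $T^{-1}x$ ranges over the unit sphere as $x$ does (so it is \emph{not} a fixed vector), one needs an extra argument. The trick is that $T^{-1}x$ is close to $T_0^{-1}x$: writing $y = T_0^{-1}x$, we have $\|(T_0 - T)T^{-1}x\|\leq \|(T_0-T)y\| + \|(T_0-T)(T^{-1}x - y)\|\leq \|(T_0-T)y\| + 2\|T^{-1}x - T_0^{-1}x\|$, which would be circular; instead one should estimate $\|T^{-1}x - T_0^{-1}x\| = \|T^{-1}(T - T_0)T_0^{-1}x\| = \|(T-T_0)T_0^{-1}x\| = \|(T-T_0)y\|$, using that $T^{-1}$ is an isometry. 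Since $y = T_0^{-1}x$ is a \emph{fixed} vector (it does not depend on $T$), this is small whenever $T$ is SOT-close to $T_0$. So inversion is in fact continuous, and $\operatorname{Isom}(X)$ is a topological group.

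For the Polishness assertion when $X$ is separable, I would argue as follows. The SOT on the unit ball of $\mathcal{L}(X,X)$ coincides with the topology of pointwise convergence on a countable dense subset $D\subseteq X$ (equi-continuity again), hence it is metrizable, e.g. by $d(S,T)=\sum_n 2^{-n}\min(1,\|(S-T)d_n\|)$ for an enumeration $(d_n)$ of $D$. Separability of $(\operatorname{Isom}(X), \mathrm{SOT})$: the unit ball of $\mathcal{L}(X,X)$ with the SOT is separable when $X$ is separable (it embeds into a countable product of separable spaces, namely $\prod_n B_X$ indexed by $D$), hence so does its subspace $\operatorname{Isom}(X)$. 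For completeness, I would show $\operatorname{Isom}(X)$ is a $G_\delta$ in the complete metric space $(B_{\mathcal{L}(X,X)}, \mathrm{SOT})$, or directly that it is SOT-complete for a suitable complete metric: the natural metric above is not complete on $\operatorname{Isom}(X)$ because a Cauchy sequence converges SOT to some contraction $T$ which need not be invertible, so one must add the inverses. The standard fix is to use the metric $\tilde d(S,T)= d(S,T)+d(S^{-1},T^{-1})$; a $\tilde d$-Cauchy sequence $(T_k)$ then has $T_k\to T$ and $T_k^{-1}\to U$ in SOT for contractions $T,U$, and passing to the limit in $T_k T_k^{-1}=T_k^{-1}T_k = {\bf I}_X$ gives $TU=UT={\bf I}_X$, so $T$ is an invertible isometry with $T^{-1}=U$, i.e. $T\in\operatorname{Isom}(X)$, and $T_k\to T$ in $\tilde d$. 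This exhibits a complete separable metric compatible with the SOT on $\operatorname{Isom}(X)$, so it is Polish.

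The main obstacle I anticipate is the interplay between the two metrics $d$ and $\tilde d$: one must check that $\tilde d$ induces the same topology as the SOT on $\operatorname{Isom}(X)$, which uses precisely the continuity of inversion established in the first part (so that $d$-convergence of $T_k\to T_0$ within $\operatorname{Isom}(X)$ already forces $d(T_k^{-1},T_0^{-1})\to 0$), and then that $\tilde d$ is complete, which uses the closure argument under pointwise limits of the relation $ST = TS = {\bf I}_X$. Everything else (metrizability, separability) is routine given the equi-Lipschitz bound coming from all isometries having norm exactly $1$.
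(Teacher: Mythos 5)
Your proof is correct: the equicontinuity coming from $\|T\|=1$ gives joint continuity of multiplication, the identity $T^{-1}x-T_0^{-1}x=T^{-1}(T_0-T)T_0^{-1}x$ (with the fixed vector $T_0^{-1}x$) gives continuity of inversion, and the metric $d(S,T)+d(S^{-1},T^{-1})$ together with passing to the limit in $T_kT_k^{-1}=T_k^{-1}T_k={\bf I}_X$ yields a complete separable metric compatible with the SOT. The paper offers no proof of this Fact, treating it as standard and pointing to Kechris's book for background, and your argument is precisely the standard one (the only cosmetic shortcut you could take is to note that the SOT-pointwise limits $T$ and $U$ of isometries are automatically isometric embeddings, which makes the last step slightly cleaner than invoking contractivity).
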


(We refer the reader to \cite[Chapter I, \S\,9]{kechris} for an introduction to Polish groups as well as the basic examples.)
These facts compel us to consider the isometry groups in the SOT topology unless otherwise stated.

We shall usually equip ${\rm Emb}_\varepsilon(F,X)$ and in particular ${\rm Emb}(F,X)$ with the distance induced by the norm on ${\mathcal L}(F,X)$. But note that here the SOT and the norm topology are equivalent when $F$ is finite-dimensional.

\subsection{Transitivity and its relatives}\label{subtrans}

A Banach space is {\it almost transitive} (AT) if given $x,y\in X$ with $\|x\|=\|y\|=1$ and $\eps>0$ there exists a surjective isometry $T$ of $X$ such that $\|y-Tx\|\leq\eps$. If this can be achieved for $\eps=0$ we say that $X$ is {\it transitive}.

Establishing a vocabulary to study these properties, Pe\l czy\'nski and Role\-wicz \cite{PR} (see also Rolewicz's 
book \cite[Chapter~9]{rolewicz}) defined the norm $\|\cdot\|$ of a Banach space $X$ to be {\em maximal} if
no equivalent norm can give a strictly larger group of isometries.
If, in addition, every equivalent norm with the same isometry group as $\|\cdot\|$ is a multiple of $\|\cdot\|$ the norm is called {\em uniquely maximal}. This happens if and only if $\|\cdot\|$ is {\em convex transitive}, namely for every norm one $x$ the closed convex hull of the orbit of $x$ under the action of the isometry group is the unit ball.
One has the implications
$$
\boxed{\text{\small{Hilbert}}} \Rightarrow  \boxed{\text{\small{Transitive}}} \Rightarrow \boxed{\text{\small{AT}}} \Rightarrow \boxed{\text{\small{Convex transitive}}}\Rightarrow \boxed{\text{\small{Maximal}}}
$$ 
Most of what was known on Mazur problem and its more or less natural variations until the year 2000 can be seen in the survey papers \cite{CS, becerra}. Here
we only recall that every Banach space is isometric to a 1-complemented subspace of an AT space; see Lusky \cite{lusky-rot} for the separable case and \cite[Theorem 2.14]{becerra} for the general case and some consequences. Thus, (almost) transitivity alone does not imply any Banach space property that passes to complemented subspaces (for example there exist transitive spaces without the Approximation Property, see \cite[Section~2]{becerra}. 
We
shall however focus on more natural and important examples, some of which have stronger properties than AT or transitivity.

\subsection{Classical isometry groups and  examples of AT spaces}\label{sec:classical-isom}
We now present the examples 
upholding the paper focusing primarily on AT spaces. Some of them will be revisited in Section \ref{4} in the multidimensional setting.
As we shall see, there is a wide variety of AT spaces arising in very different contexts. With the sole possible exception of Hilbert spaces, which may be seen from so many different points of view, these spaces are ``large''  in some sense which is difficult to make precise. Actually it is not easy to distinguish the spaces that can be given an equivalent AT norm from those that cannot; see Section~\ref{2} for more explanations and the basics on maximal norms.
General references for the isometries of classical function spaces and many related topics  are \cite{f-jVol1, FJ2, lacey}.

\subsubsection*{Hilbert spaces}
 If $\mathcal H$ is a Hilbert space,  then ${\rm Isom}(\mathcal H)$ is the unitary group. 
It acts transitively on the unit sphere.
Moreover, if $x, y$ are normalized then there is an isometry $T$ sending $x$ to $y$ such that $\|T-{\bf I}_\mathcal H\|=\|y-x\|$ (optimal) with $T-{\bf I}_\mathcal H$ of rank 2. There is another isometry $L$ sending $x$ to $y$ with  $L-{\bf I}_\mathcal H$ of rank 1 (optimal), but $\|L-{\bf I}_\mathcal H\|=2$.

\subsubsection*{Lebesgue spaces}
Given a measure $\mu$ defined on a set $S$ and $1\le p<\infty$ we denote by $L_p(\mu)$ the usual Lebesgue space of $p$-integrable functions on $S$, with the usual convention about identifying functions that agree almost everywhere. If $\mu=\lambda$ is the Lebesgue measure on the unit interval we just write $L_p$.
If $\phi:[0,1]\to[0,1]$ is a Borel automorphism (a bijection preserving Borel sets in both directions) which preserves null sets in both directions and $h$ is a measurable function such that $|h|^p=d(\lambda\circ\phi)/d\lambda)$, that is $\lambda(\phi(B))=\int_B|h|^pd\lambda$ for every Borel $B\subset[0,1]$, then the operator 
$$
(Tf)(t)=h(t)f(\phi(t))
$$
is a correctly defined isometry of $L_p$. If $p\neq 2$ the converse is also true and every $T\in\operatorname{Isom}(L_p)$ arises in this way (the Banach-Lamperti theorem \cite[Theorem~3.2.5]{f-jVol1}, although in this formulation we need a little help from von Neumann \cite{vN32}). This has the following consequences for finite $p\neq 2$:
\begin{itemize}
\item $L_p$ is AT, but not transitive: there are exactly two (dense) orbits on the unit sphere namely, the ``full support'' one, i.e. the orbit $\{f\in S_{L_p}: \lambda(f^{-1}(0))=0\}$ and the complement $\{f\in S_{L_p}: \lambda(f^{-1}(0))>0\}$.

\item The dense subspace $L_p(0,1^-)=\bigcup_{b<1}\{f\in L_p: \supp(f)\subset [0,b]\}$ is a {\em transitive} normed space (with the obvious definition).

\item If $\aleph$ is an uncountable cardinal, then the Banach space $\ell_p(\aleph, L_p)$ (which can be regarded as $L_p(\mu)$, where $\mu$ is ``Lebesgue measure'' on  $\aleph$-many disjoint copies of the unit interval) is transitive. Note that this space has density character $\aleph$, while nontrivial ultraproducts (see Section~\ref{ult} below) have density character at least the continuum.
\end{itemize}

The case $p=\infty$ was excluded in the preceding discussion because the space $L_\infty$, being a $C(K)$ in  disguise, cannot be AT. The isometries of $C(K)$ are described by the Banach-Stone theorem (1932): all have the form $Tf(x)=u(x)f(\phi(x))$, where $\phi$ is a homeomorphism of $K$ and $u:K\to\mathbb K$ is  continuous and unimodular. In particular the orbit of the unit $1$ cannot be dense in the sphere unless $K$ is a singleton. And what happens with other spaces of type $\mathcal L_\infty$? Keep reading.

\subsubsection*{The Gurariy space} 
A Banach space $U$ is said to be of {\it almost universal disposition} (AUD) if, given a finite dimensional space $F$, isometric embeddings $v:E\to F, u:E\to U$ and $\eps>0$, there exists an $\eps$-isometry $w:F\to U$ such that $u=wv$. Diagramatically,
$$
\xymatrixrowsep{1.25pc}
\xymatrix{
E \ar[rd]_u\ar[rr]^v && F \ar[dl]^w \\
 & U
}
$$
This notion was coined by Gurariy in \cite{gurariy}, where he constructed the space that bears his name as a separable space of AUD. Gurariy also established that two separable Banach spaces of AUD are ``almost isometric''
(that is, the Banach-Mazur distance between them is equal to 1) and that for every $\eps>0$ the surjective $\eps$-isometries act transitively on the unit sphere of any separable space of AUD. Although this is not completely evident from the definition, any space of AUD must be a Lindenstrauss space (i.e. a predual of an $L_1$-space) 
because AUD implies the following extension property of $X$: given a subspace $E$ of a finite dimensional space $F$ and $\eps>0$ every operator $\tau:E\to X$ has an extension $\tilde{\tau}:F\to X$ with $\|\tilde{\tau}\|\leq (1+\eps)\|\tau\|$.

The isometric uniqueness of the Gurariy space   $\mathcal{G}$ was finally established by Lusky in a fine paper \cite{luskygurarij} where he also showed that the isometry group acts transitively on the set of smooth points of the sphere of $\mathcal{G}$. See \cite[Section~4]{f-w} for more general results concerning finite-dimensional subspaces of  $\mathcal{G}$.

A new proof of the uniqueness of the Gurariy space was later provided by Kubi\'s and Solecki in \cite{kubissolecki}: they basically proved that the Gurariy space is the (approximate) Fra\"iss\'e limit of the class of finite dimensional Banach spaces and isometric embeddings. This remarkable feature of the Gurariy space inspired  the study of the interactions between Fra\"\i ss\'e structures and Banach spaces; see \cite{Lup} and the references therein.
We shall pursue this approach in
Section~\ref{4}. From another point of view, see also the recent description by C\'uth, Dole\u{z}al, Doucha and Kurka, of the Gurariy space as the ``generic" separable space \cite{CDDK}.

\subsubsection*{The Garbuli\'{n}ska space} One should speak, more accurately, of the Garbuli\'nska-W\c egrzyn renorming of the Kadec/Pe\l czy\'nski/Wojtaszczyk space, see below.
The Garbuli\'{n}ska space plays the same role as the Gurariy space in a different category, where one takes into account 1-complemented subspaces only. Let us say that  a Banach space $X$ has the property [$\Game$] if given isometries with 1-complemented range $u:E\to X$ and $v:E\to F$, where $F$ is finite-dimensional, and $\eps>0$ there is an $\eps$-isometry $w:F\to X$ with $(1+\eps)$-complemented range such that $u=wv$.

Garbuli\'nska shows in \cite{garb} that there exists a unique, up to isometries, Banach space $\mathcal K$ with a skeleton and  property [$\Game$]. Recall that a skeleton of $X$ is a chain of finite dimensional subpaces $(E_n)_{n\geq 1}$ whose union is dense in $X$ and such that $E_n$ is 1-complemented in $E_{n+1}$.
This condition is a clear analogue of separability in the 1-complemented category and is just a transcription
 of 1-FDD.
Most isometric properties of $\mathcal K$ depend, one way or another, on the following fact (\cite[Theorem 6.3]{garb}):
 
 \begin{itemized}
\item[~]Let $K$ and $K'$ be Banach spaces with skeletons, satisfying the property 
$[\Game]$, and let $h : A \to B$  be an isometry between 
1-complemented finite-dimen\-sional subspaces of $K$ and $K'$, respectively. Then for
every $\eps>0$ there exists an isometry $H:K\to K'$ such that $\|H(x)-h(x)\|\leq \eps\|x\|$ for all $x\in A$. In particular, $K$ and $K'$ are isometric.
\end{itemized}

Since all lines in a Banach space are 1-complemented and isometric to each other, it follows that $\mathcal K$ is AT. 

Another important feature of $\mathcal K$, that is going to play its role in Section~2,
is that $\mathcal K$ contains a 1-complemented copy of every space with a skeleton. This makes $\mathcal{K}$ isomorphic to some old acquaintances in the theory of complementably universal spaces. Let $\mathcal{C}$ be a class of Banach spaces. We say that a Banach space is (complementably) universal for $\mathcal C$ if it belongs to $\mathcal C$ and it contains a (complemented) isomorph of each space in $\mathcal C$.
This concept emerged in the paper \cite{P69}, where  Pe\l czy\'nski constructed his celebrated (space with a) universal basis (call it $P_B$)
which is a complementably universal space for the class of Banach spaces with bases and a similar space with an unconditional basis which we shall denote by $U$.

Later on M.\u{I}. Kadec \cite{K71} exhibited a complementably universal space for the bounded approximation property (BAP); let us denote that specimen by $K$ and observe that an obvious application of the Pe\l czy\'nski decomposition method shows that any two 
complementably universal spaces for the BAP are isomorphic. In the same issue of {\it Studia} where Kadec'  space first appeared, Pe\l czy\'nski \cite{P71} showed that each separable space with the BAP is complemented in a space with a basis: the inexorable consequence is that the spaces $P_B$ and $K$ are isomorphic. But since the Garbuli\'nska space has the BAP (obvious) and each Banach space with a basis can be renormed to get a skeleton (even more obvious) we can apply again the Pe\l czy\'nski decomposition method to conclude that the Garbuli\'nska space $\mathcal{K}$ is isomorphic to $P_B$ and
 $K$, which are also isomorphic to a  space complementably  universal for FDDs  constructed by Pe\l czy\'nski and  Wojtaszczyk  \cite{PW71} in the very same volume of {\it Studia}.

\subsubsection*{Spaces of continuous functions on the pseudoarc}
Although 
regarding AT spaces of type $\mathcal L_\infty$ the Gurariy space is the guy to work with, there are other natural examples. One of them is the separable ``$M$-Gurariy'' space from \cite{FLMT} and a  closely related, but  non-separable creature is obtained in \cite{CS7} taking ultraproducts of the spaces  $L_p$ with variable $p\to\infty$; cf. Comment 4 in Section 4.9.

  Here we discuss spaces of continuous functions 
  based on the pseudoarc, a continuum constructed by Knaster \cite{K22} in the 1920s which became a celebrity in {\em certain} circles because of the Bing's characterization: it is the only hereditarily indecomposable chainable continuum; let us denote it by $P$.   An impressive wealth of well organized information on the pseudoarc is contained in   Lewis' survey \cite{lewis}.

Kawamura \cite{kawa} and Rambla \cite{rambla}, 
independently and almost simultaneously, proved that if $P_*$ is the pseudoarc with one point removed, then the complex space $C_0(P_*)$ is AT in the sup norm, 
thus refuting a long standing conjecture of Wood \cite[Section~3]{W}. The group of homeomorphims acts transitively on $P$ and so the homeomorphic type of $P_*$ does not depend of which point is removed.

Curiously enough the pseudoarc can be considered as the (inverse) Fra\"\i s\'e limit of a suitable class as shown by Irwin and Solecki in \cite{i-s} which is simply delighful, given the approach of this survey, cf. Section 4.

Taking ultrapowers leads to $C_0(L)$-spaces which are transitive in the sup norm.

Naive observations in \cite{c-glasgow} suggest that if $L$ is a locally compact space with more than one point, the complex space $C_0(L)$ is separable and AT, then the one-point compactification of $L$ should 
be (homeomorphic to) the pseudoarc.

As for real spaces, Greim and Rajalopagan proved in \cite{g-r} that no   $C_0(L)$,  can be AT in the sup norm if $L$ has more than one point. However, curiously enough,
 there exists a quite natural norm under which a real $C_0(L)$ can be AT and even transitive. Indeed, if $f:L\to\mathbb R$ is any function, we set  
$$\operatorname{diam}(f)=\sup\{|f(x)-f(y)|:x,y\in L\}.$$
If $L$ is locally compact but not compact, then $\operatorname{diam}$ is a norm on $C_0(L)$, clearly equivalent to the sup norm. If $K$ is compact then $\operatorname{diam}$ vanishes on the constant functions and so it defines a true norm on $C(K)/\mathbb K$ which agrees with the quotient norm (up to a factor) in case of real scalars. It is shown in \cite[Lemma~3.1]{c-convex} that $C(P)/\mathbb R$ is AT and thus the real space $C_0(P_*)$  equipped with the diameter norm is AT. It is perhaps worth noticing that both the isometry group of 
the complex space $C_0(P_*)$ and that of $C(P)/\mathbb R$ fail to be  amenable in the SOT (\cite[Example 3.2]{c-convex}).

\subsubsection*{Noncommutative $L_p$-spaces} Other families of AT spaces come from the noncommutative generalizations of $L_p$. We shall not even give the definition and we refer the reader to the {\em official} sources \cite{haa, terp} instead, but let us mention that there is a classical construction in operator algebras, due to Haagerup, that associates to each von Neumann algebra $\mathcal M$ a family of spaces $L_p(\mathcal M)$ for $p\in(0,\infty]$ in such a way that $L_1(\mathcal{M})=\mathcal{M}_*$ is the predual of $\mathcal{M}$ and $L_\infty(\mathcal{M})=\mathcal{M}$. The Haagerup $L_p(\mathcal M)$-spaces consist of certain unbounded operators acting on a Hilbert space which is related to $\mathcal{M}$ in a highly nontrivial way.

By a celebrated result of Connes and St\o rmer \cite[Theorem~4]{connes}, if $\mathcal M$ is a factor of type III$_1$, then, given states $\phi,\psi\in\mathcal M_*$ and $\varepsilon>0$, there is a unitary $u\in\mathcal M$ such that $\|u^*\phi u-\psi\|_{\mathcal M_*}<\eps$, where $u^*\phi u$ is defined by $\langle u^*\phi u,x\rangle=   \langle \phi ,uxu^*\rangle$ for $x\in \mathcal M$. 
 It follows from the generalized Power-St\o rmer inequality (see    \cite[Appendix]{hiai}) that the spaces $L_p(\mathcal{M})$ for finite $p$ have a similar homogeneity property: given positive $f,g\in L_p(\mathcal{M})$ with $\|f\|_p=\|g\|_p=1$ and $\eps>0$ there is a unitary $u\in\mathcal M$ such that $\|u^*f u-g\|_{p}<\eps$. It  follows readily that for arbitrary $f,g\in L_p(\mathcal{M})$ with $\|f\|_p=\|g\|_p=1$ and $\eps>0$ there exist unitaries $u,v\in\mathcal M$ such that $\|vf u-g\|_{p}<\eps$ and so $L_p(\mathcal{M})$ is AT.

By  remarks on ultraproducts presented in Section~\ref{ult} below, the countable ultrapowers of $L_p(\mathcal{M})$ are transitive and, by results of Raynaud \cite{ray}, the ultrapowers $L_p(\mathcal{M})_\ultrafilter$ can
be represented as the Haagerup spaces $L_p(\mathcal N)$, for some large von Neumann algebra $\mathcal N$.

\subsection{Microtransitivity}
In a desperate attempt to break the impasse on Mazur problem the authors of \cite{CDKKLM} and \cite{BRP} 
consider the following stronger form of transitivity, which has very little to do with the subject of this survey: a Banach space is called  microtransitive (MT) if for every $\eps>0$ there is $\delta>0$ so that if $x,y\in S_X$ satisfy $\|y-x\|<\delta$ there is $T\in\operatorname{Isom} X$ such that $y=Tx$ {\em and} $\|T-{\bf I}_X\|<\eps$.
As one may guess the only known examples of MT spaces are the Hilbert spaces, which satisfy the definition with $\delta=\eps$. 
 The issue of separability (and completeness), which is central in Mazur rotations problem, is irrelevant  for MT: $X$ is MT if and only if for every separable $Y\subset X$ there exist a further separable $Z\subset X$ which is MT and contains $Y$. Moreover, MT passes to the dual and implies both uniform convexity and uniform smoothness of the norm; see \cite[Theorems~3.11 and 3.14]{BRP} for the strongest available results in this line. 
 So, the following is a seemingly cheap, but still open, substitute for the Mazur  
problem:

\begin{prob}
Are the Hilbert spaces the only microtransitive Banach spa\-ces?
\end{prob}

\begin{small}
\noindent{\em Comments:} 
\begin{itemized}
\item[] The Effros Microtransitivity Theorem  \cite[Theorem~2.1]{effros} states that a Polish group acting transitively 
on a Polish space must act microtransitively. See also van Mill's work on this topic  \cite{vanMill}. This implies that if $X$ is a separable transitive Banach space, then the action of the isometry group on the sphere is {\em SOT}-microtransitive: i.e. for any $x \in S_X$, the map assigning to an isometry $T$ its value in $x$ is open for the SOT on $\operatorname{Isom}(X)$. The notion of microtransitivity (MT) defined above is much stronger and corresponds to the map being open in the norm topology on $\operatorname{Isom}(X)$. The papers  \cite{CDKKLM, BRP} also consider ``open actions'' of the semigroup of contractive automorphisms.
\end{itemized}
\end{small}

\subsection{Strict convexity and transitivity}\label{sec:strict}

Though much information has been obtained on  almost transitive Banach spaces under additional geometric assumptions such as reflexivity \cite{CS, becerra}, very few conditions that are necessary for the actual transitivity
in the separable case are known.

Related to the present study, let us mention that, if $X$ is a separable   transitive real Banach space, then $X$ is strictly convex
and smooth, and thus $X^*$ is AT; see \cite[Theorem 28]{FR2} and \cite[Corollary 2.9]{becerra}.
This result fails if $X$ is only assumed to be almost transitive (resp. if $X$ is non-separable), as can be seen by considering $L_1$ (resp. an ultrapower of $L_1$, see the next section on ultraproducts). The paper \cite{aiz-pach} contains some related results. 

\subsection{Ultraproducts}\lb{ult}
The Banach space ultraproduct construction is a quite useful technique that allows  one to construct large spaces with upgraded transitivity properties. 
We refer the reader to \cite{hein} (or Sims' booklet \cite{sims}) for two very readable expositions which suffice for our modest purposes. A more complete one, which emphasizes the model-theoretic pedigree of the ultraproduct construction is \cite{h-i}. Here we only recall the definition, just to fix the notation.   

Let $(X_i)$ be a family of Banach spaces indexed by $I$ and let 
$\ultrafilter$
be an ultrafilter on $I$.

Consider the space of bounded families $\ell_\infty(I,X_i)$ equipped with the sup norm and the closed subspace $c_0^\ultrafilter(X_i)=\{(x_i): \lim_\ultrafilter\|x_i\|=0\}$. The Banach space  
 $\ell_\infty(I,X_i)/c_0^\ultrafilter(X_i)$, with the quotient norm, is called the ultraproduct of the family $(X_i)_{i\in I}$ along $\ultrafilter$ and it is denoted by $[X_i]_\ultrafilter$. When all $X_i=X$ for some fixed $X$ the ultraproducts are called ultrapowers and are denoted by $X_\ultrafilter$ instead. 
 
An ultrafilter is called free if it contains no finite set; otherwise there is exactly one point $i\in I$ such that $U\in \mathcal U\iff i\in U$ and $ \mathcal U$ is called principal.
An ultrafilter $\mathcal U$ is said to be countably incomplete (CI, for short) if there exists a countable family of members of $\mathcal U$ whose intersection does not belong to $\mathcal U$; we can require the intersection to be empty without altering the definition. 
It is very easy to see that all free ultrafilters on a countable set are CI and that $\mathcal{U}$ is CI if and only if there is a strictly positive function $f:I\to (0,1)$ such that $f(i)\to 0$ along $\mathcal{U}$. Ultraproducts are relevant in our business because of the following observation (see \cite[Proposition 2.19]{becerra} for this formulation and \cite[Remark on p. 479]{GJK} or \cite[Lemma 1.4]{CS7} for two slightly weaker forerunners):

\begin{fact}\lb{1.6}
An ultraproduct of a family of AT spaces along a CI ultrafilter is transitive.
\end{fact}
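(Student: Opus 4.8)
The plan is to realize the required isometry of $X=[X_i]_\ultrafilter$ as an ``ultraproduct of isometries'' of the factors $X_i$, choosing the $i$-th isometry so that the error $\|T_ix_i-y_i\|$ tends to $0$ along $\ultrafilter$; countable incompleteness is precisely what makes this last requirement attainable, while almost transitivity of the factors is what makes each $T_i$ available in the first place.

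\emph{Normalizing representatives.} Fix $x=[(x_i)]$ and $y=[(y_i)]$ in $S_X$. Since $\|x\|=\lim_\ultrafilter\|x_i\|=1$, the set $\{i:\|x_i\|>1/2\}$ lies in $\ultrafilter$; after replacing $x_i$ by $x_i/\|x_i\|$ on the set where $x_i\neq 0$ (and by an arbitrary unit vector of $X_i$ elsewhere) the representative changes only on a set outside $\ultrafilter$, because $\big\|x_i/\|x_i\|-x_i\big\|=|1-\|x_i\||\to 0$ along $\ultrafilter$, so $x$ itself is unchanged. Doing likewise for $y$, and passing to the member of $\ultrafilter$ on which all $X_i\neq\{0\}$ (in the complementary case $X=\{0\}$ and the statement is vacuous), we may assume $\|x_i\|=\|y_i\|=1$ for every $i\in I$.

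\emph{Producing small errors.} Because $\ultrafilter$ is CI there is $f\colon I\to(0,1)$ with $\lim_\ultrafilter f(i)=0$ (equivalently, a decreasing chain $I=I_0\supseteq I_1\supseteq\cdots$ of members of $\ultrafilter$ with empty intersection, for which $f(i)=1/\max\{n:i\in I_n\}$ works). For each $i$, almost transitivity of $X_i$ together with $\|x_i\|=\|y_i\|=1$ yields $T_i\in{\rm Isom}(X_i)$ with $\|T_ix_i-y_i\|<f(i)$. Now the assignments $(z_i)\mapsto(T_iz_i)$ and $(z_i)\mapsto(T_i^{-1}z_i)$ are isometric for the sup norm of $\ell_\infty(I,X_i)$ and carry $c_0^\ultrafilter(X_i)$ into itself (since $\lim_\ultrafilter\|T_iz_i\|=\lim_\ultrafilter\|z_i\|$), hence they pass to the quotient as mutually inverse linear operators $T,S$ on $X$. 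For $[(z_i)]\in X$ one has $\|T[(z_i)]\|=\lim_\ultrafilter\|T_iz_i\|=\lim_\ultrafilter\|z_i\|=\|[(z_i)]\|$, so $T\in{\rm Isom}(X)$; and $\|Tx-y\|=\lim_\ultrafilter\|T_ix_i-y_i\|\le\lim_\ultrafilter f(i)=0$, i.e.\ $Tx=y$. Thus $X$ is transitive.

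The argument is essentially bookkeeping; the single genuine point is the passage from ``for each $\varepsilon>0$ some isometry of $X_i$ moves $x_i$ within $\varepsilon$ of $y_i$'' (available factorwise from AT) to ``some isometry of $X_i$ moves $x_i$ within $f(i)$ of $y_i$, with $f(i)\to 0$ along $\ultrafilter$'', which is exactly what countable incompleteness supplies. Note that neither completeness nor separability of the $X_i$ enters anywhere, and that if $\ultrafilter$ were principal then $X$ would be isometric to one of the (possibly non‑transitive) factors, so the CI hypothesis cannot be dropped.
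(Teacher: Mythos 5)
Your proof is correct and is essentially the standard argument underlying Fact~\ref{1.6}, which the paper itself does not reprove but attributes to \cite[Proposition 2.19]{becerra} (with forerunners in \cite{GJK, CS7}): normalize representatives, use countable incompleteness to obtain isometries $T_i$ of the factors with errors $f(i)\to 0$ along $\ultrafilter$, and pass the coordinatewise map to the quotient. The only cosmetic slip is that your explicit formula $f(i)=1/\max\{n:i\in I_n\}$ divides by zero when $i\in I_0\setminus I_1$ (use $1/(1+\max\{n:i\in I_n\})$, or simply invoke the equivalence of CI with the existence of such an $f$, which the paper records in Section~\ref{ult}).
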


\begin{small}
\noindent{\em Comments:} 
\begin{itemized}
\item[(1)]
 It is clear that the conclusion of Fact~\ref{1.6} subsists under much weaker hypotheses. For a fixed $\eps>0$, say that $X$ is $\eps$-transitive if given $x,y\in S_X$ there is $T\in\operatorname{Isom}X$ such that $\|y-Tx\|\leq\eps$. Call it $\delta$-asymptotically transitive if, given $x,y\in S_X$ there is a surjective $\delta$-isometry $T$ such that $y=Tx$; this is inspired by Talponen's \cite[Definition 2.1]{T07}. An easy argument on series
shows that an $\eps$-transitive Banach space is also $2\eps$-asymptotically transitive provided $\eps\leq \frac12$. It is straightforward that
if $(X_n)$ is a sequence of Banach spaces such that $X_n$ is $\delta_n$-asymptotically transitive and $\delta_n\to 0$ as $n\to\infty$ and $\mathcal{U}$ is a free ultrafilter on $\mathbb N$, then the ultraproduct $[X_n]_\mathcal{U}$ is transitive. 
\item[(2)]   Perhaps the most interesting question concerning transitivity properties of ultraproducts is whether the transitivity of the ultrapower $X_\mathcal U$ implies {\em anything} about the isometry group of the base space $X$.
Of course one can ask whether $X$ must be AT, which is quite natural from the point of view of model theory,  
but actually at this point it is even open whether there 
 exists  a Banach space with only trivial isometries whose ultrapowers are transitive.
\end{itemized}
\end{small}

\section{Maximality of norms, Wood's problems, Deville-Godefroy-Zizler problem}\label{2}

Recall from Section 1.4 that every transitive or even almost transitive norm is maximal. This follows easily from the observation that if a group of isomorphisms acts as an isometry for two norms, then these norms must be proportional on any orbit of the action of the group.
This led many people to investigate which spaces have maximal norms.

In 1933-34 Auerbach  \cite{A33,A33b,A34} proved that for every finite dimensional real Banach space $(X,\|\cdot\|)$, there exists a norm $\|\cdot\|_2$ on $X$ induced by an inner product and such that the isometry group of 
$(X,\|\cdot\|_2)$ contains the isometry group of $(X,\|\cdot\|)$. Thus the isometry group of every real finite dimensional space is contained in that of a maximal norm. Rolewicz \cite[\S 9.8]{rolewicz} showed that the norm of any space with a 1-symmetric basis (real or complex) is  maximal. This includes norms on the classical spaces $\ell_p$, whose isometries act as ``signed" permutations of the vectors of the unit basis - and therefore those norms are maximal but not AT. Norms of the spaces $L_p, 1 \leq p<\infty$, being AT, are in particular maximal. For $C(K)$- and specially for $C_0(L)$-spaces, the situation is more involved, depending on whether the scalars are real or complex.  See the survey paper by J.
Becerra Guerrero and \'A. Rodr\'iguez-Palacios \cite{becerra} for general information on  maximal norms and \cite{c-convex} and the references therein for maximality in $C_0(L)$ and $C(K)$-spaces.

Note that if $G$ is a bounded subgroup of $\GL(X)$, then $G$ is a subgroup of $\Isom(X,\|\cdot\|_G)$, where $\|\cdot\|_G$ is an equivalent norm  on $X$ defined by
$\|x\|_G=\sup_{g \in G}\|gx\|.$
Thus  a norm  is maximal if and only if the corresponding isometry group is a maximal bounded subgroup of $\GL(X)$. Citing the introduction of \cite{FR}, ``[it seemed] natural to suspect that a judicious choice of smoothing procedures on a space $X$ could eventually lead to a most symmetric norm, which then would be maximal on $X$".
 However the following fundamental questions  on maximal norms  remained open until 2013.

\begin{prob}[1982, Wood  \cite{W}]\lb{W1}
Does every Banach space admit an equi\-valent maximal norm, that is does $\GL(X)$ always have maximal bounded subgroups?
\end{prob}

\begin{prob}[1993, Deville, Godefroy, Zizler]
\cite[Problem~IV.2 and the remark following it]{DGZ} \lb{DGZ}
Does every super-reflexive space admit an equivalent almost transitive norm?
\end{prob}

\begin{prob}[2006, Wood \cite{Wood}] \lb{W2} 
Is it true that for every Banach space, the\-re exists an equiva\-lent maximal renorming  whose isometry group contains the original isometry group, i.e., is every bounded subgroup of $\GL(X)$  contained in a maximal   bounded subgroup of $\GL(X)$?
\end{prob}

In 2013 Ferenczi and Rosendal \cite{FR} answered these three problems negatively by  exhibiting   a complex super-reflexive space and a real reflexive space, both without a maximal bounded subgroup of the isomorphism group. 
In 2015 Dilworth and Randrianantoanina \cite{DR} studied 
Problems~\ref{DGZ} and \ref{W2} further. They showed
multiple examples of super-reflexive spaces (both complex and real) which  provide a negative answer to Problems~\ref{DGZ} and \ref{W2}, despite the fact that they have an equivalent maximal renorming.
Among others, the classical spaces $\ell_p$, $1\le p<\infty$, $p\ne 2$, are such examples. In \cite{DR}  the authors   also showed that for some spaces $X$, the group 
$\GL(X)$ may contain even continuum different maximal bounded subgroups. It is open whether there exists a Banach space $X$ with a unique, up to conjugacy, maximal bounded subgroup of  $\GL(X)$, or whether   Hilbert space has this property.


\subsection{Almost trivial isometry groups}\lb{subatig}

In this section we describe the main result of Ferenczi and Rosendal from \cite{FR}. 
\begin{thm}\label{main1}
There exists a complex, separable, super-reflexive  Banach space $X$, and a real, separable, reflexive  space $Y$, both without  maximal bounded groups of isomorphisms,  i.e., $X$ and $Y$ have no equivalent maximal norms. 
\end{thm}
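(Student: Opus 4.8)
The plan is to turn the statement into one about bounded subgroups of $\GL$, and then to build spaces in which such subgroups are forced to be ``almost scalar''. As recalled in Section~\ref{2}, an equivalent norm is maximal exactly when its isometry group is a maximal bounded subgroup of $\GL$, while every bounded subgroup $G\le\GL(X)$ is in turn $\Isom(X,\|\cdot\|_G)$ for $\|x\|_G=\sup_{g\in G}\|gx\|$; so it suffices to produce $X$ (complex, separable, super-reflexive) and $Y$ (real, separable, reflexive) in which \emph{every} bounded subgroup of $\GL$ is properly contained in a larger bounded subgroup. For the spaces I would use Gowers--Maurey technology: a complex space built from a Schlumprecht-type norm on a Tsirelson-like scheme and then rendered uniformly convex by a complex-interpolation/convexification procedure while remaining hereditarily indecomposable (HI) --- e.g.\ Ferenczi's uniformly convex HI space, or a variant of it --- the crucial point being that $\mathcal L(X)=\C\,\Id\oplus\mathcal S(X)$, every operator being a scalar plus a strictly singular one; for $Y$, a reflexive real HI space (or a realification of such an $X$) with $\mathcal L_\R(Y)=\R\,\Id\oplus\mathcal S_\R(Y)$.

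The core is the analysis of a bounded subgroup $G\le\GL(X)$. Write $T=\lambda_T\Id+S_T$ with $S_T$ strictly singular; then $T\mapsto\lambda_T$ is a homomorphism, and since $\Id$ lies at distance $1$ from $\mathcal S(X)$ one has $\|T^n\|\ge|\lambda_T|^n$ for all $n\in\Z$, so boundedness of $\{T^n\}$ forces $|\lambda_T|=1$. The key lemma I would aim at is: \emph{for every bounded subgroup $G$ there is a $G$-invariant finite-codimensional subspace $X_0$ on which $G$ acts by scalars}, i.e.\ $g|_{X_0}=\chi(g)\Id$ for a character $\chi\colon G\to\T$ (valued in $\{\pm1\}$ in the real case). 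The ingredients would be: (i) bounded subgroups of $\GL(X)$ are relatively compact in the strong operator topology, a consequence of the few-operators structure, so that one may replace $G$ by the compact group $\Isom(X,\|\cdot\|_G)$; (ii) a Peter--Weyl-type decomposition exhibiting $X$ as the closed span of its finite-dimensional $G$-invariant subspaces, with $G$-invariant complements obtained by averaging Hahn--Banach projections over $G$; and (iii) hereditary indecomposability, which prohibits splitting $X$ into two infinite-dimensional $G$-invariant summands and thus confines the non-scalar part of the action to a finite-dimensional complement $W$, giving $X=W\oplus X_0$ with $G$ scalar on $X_0$.

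Granting the lemma, enlarging $G$ is soft. One may assume $G=\Isom(X,\|\cdot\|_G)$, for otherwise the latter is already a strictly larger bounded subgroup. Choose a $2$-dimensional $F\subseteq X_0$ and a closed complement $X_0=F\oplus X_1$; then $W$, $F$ and $X_1$ are all $G$-invariant and $G$ acts on $F\oplus X_1$ by the scalar $\chi$. Let $T$ act as the identity on $W\oplus X_1$ and as a non-scalar operator $R$ of some finite order $k\ge2$ on $F$. Then $T\in\GL(X)$, $T^k=\Id$, and $T$ commutes with every $g\in G$: trivially on $W\oplus X_1$, and on $F$ because $g|_F$ is scalar (in the real case one instead picks $R$ so that the compact group $G|_F\le O(2)$ normalizes $\langle R|_F\rangle$, which is equally possible). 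Hence $\langle G,T\rangle=\{gT^j:g\in G,\ 0\le j<k\}$ is a subgroup of $\GL(X)$, it is bounded since $\|gT^j\|\le(\sup_g\|g\|)(\max_{0\le j<k}\|T^j\|)<\infty$, and it properly contains $G$ because $T$ acts non-scalarly on $F$ while every element of $G$ acts there by $\chi$. Therefore no bounded subgroup of $\GL(X)$ is maximal and $X$ has no equivalent maximal norm; the same argument applies to $Y$.

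I expect the difficulty to lie entirely in the construction and in the key lemma. Producing a \emph{super-reflexive} space with $\mathcal L(X)=\C\,\Id\oplus\mathcal S(X)$ requires the full Gowers--Maurey apparatus --- rapidly increasing sequences, the Maurey--Rosenthal and Schlumprecht-type estimates forcing operators to be scalar-plus-strictly-singular --- together with a convexification preserving hereditary indecomposability; and establishing the ``almost scalar'' structure of bounded subgroups, in particular their relative compactness in the strong operator topology and the finite-codimensional-plus-character decomposition, is the genuinely delicate point. Once that is in hand, the enlargement step above is routine.
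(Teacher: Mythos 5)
Your overall strategy is the right one and parallels the paper's: reduce maximality to bounded subgroups of $\GL(X)$, work on a Gowers--Maurey-type space where every operator is a scalar plus a strictly singular operator, show that bounded subgroups act ``almost by scalars'' modulo a finite-dimensional piece, and then enlarge any such group by hand (your finite-order perturbation on a $2$-dimensional invariant subspace is a fine variant of the paper's renorming $\|f\|=\|r\|+\|y\|$ after splitting off a line). The gap is entirely in your key lemma, and it is genuine. First, step (i) --- that bounded subgroups of $\GL(X)$ are relatively SOT-compact ``as a consequence of the few-operators structure'' --- is not a known consequence of $\mathcal L(X)=\C\,\Id\oplus\mathcal S(X)$ and is essentially the hard part you are assuming; relative compactness is what would \emph{follow} from near-triviality, not a tool to prove it. (Indeed, whether reflexive spaces can carry infinite bounded subgroups of $\GL_f(X)$ that are SOT-discrete is listed as an open problem, so compactness cannot be waved in.) The actual argument replaces this by a spectral analysis of a \emph{single} isometry $\lambda\Id+S$: eigenvalues accumulating at $\lambda$ would produce an almost $1$-unconditional basic sequence of eigenvectors, impossible in a space with no unconditional basic sequence, so $S$ has finite rank; the passage from individual isometries to the whole group is then done via an Alaoglu--Birkhoff-type ergodic decomposition (using Lancien's renorming for separable reflexive spaces), not via Peter--Weyl averaging over a compact group.

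Second, even granting compactness and a Peter--Weyl decomposition, step (iii) does not work with hereditary indecomposability alone: the closed span of the finite-dimensional $G$-invariant pieces need not be finite-dimensional, and when it is infinite-dimensional it is a complemented subspace carrying an FDD of invariant blocks --- HI forbids splitting into two infinite-dimensional summands, but it does not forbid this. This is exactly why the theorem is proved for a reflexive HI space \emph{without an FDD}: the dichotomy is ``either $\Isom_f(X)$ acts nearly trivially, or $X$ has a complemented subspace with an FDD,'' and the second horn is killed by choosing, inside Ferenczi's uniformly convex HI space, a subspace failing the approximation property (Szankowski), hence without FDD. Your proposed space --- Ferenczi's uniformly convex HI space itself --- has a Schauder basis, and for HI spaces with a basis the known conclusion is strictly weaker (one only gets that $\Isom_f$ acts SOT-discretely on the complement, not by scalars), so your key lemma is not available there and may fail. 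To repair the proposal you must (a) pass to an HI subspace without FDD, and (b) replace the compactness/Peter--Weyl route by the spectral-plus-ergodic-decomposition argument; with those changes your enlargement step then finishes the proof as in the paper.
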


We choose to present a sketch of the result corresponding to the complex case, and to present a simplified version of the results. This allows us to give much simpler versions of the proofs of \cite{FR}. 

\

A second motivation and a source of tools for the work \cite{FR} comes from the theory of spaces with ``few operators", initiated by the construction of W.T. Gowers and B. Maurey \cite{GoMa} of a hereditarily indecomposable (or HI) space  (meaning that it contains no subspace decomposable as a direct sum of infinite dimensional subspaces).
Gowers and Maurey proved that such spaces have small spaces of operators, namely, in the complex case   any operator is a strictly singular perturbation of a scalar multiple of the identity map. The currently strongest result in this direction, due to S. A. Argyros and R. G. Haydon \cite{AH}, is the construction of a Banach space on which every operator is a compact perturbation of a scalar multiple of the identity. 

One can ask the same question for isometries. An isometry is called   trivial if its a scalar multiple of the identity. Does every Banach space admit a non-trivial surjective isometry? After partial answers by P. Semenev and A. Skorik \cite{SS}, and an answer in the real separable case by S. Bellenot \cite{Bel}, the question  was settled by K. Jarosz \cite{J}, who proved that any real or complex Banach space admits an equivalent  norm with only trivial isometries.

Thus, no isomorphic property of a space can force the existence of a non-trivial surjective linear isometry.
On the other hand it is immediate, through renormings where some prescribed finite dimensional subspace becomes euclidean, that an infinite dimensional space always admits an equivalent norm whose isometry group contains a copy of the unitary group of the $n$-dimensional euclidean space. 

In this line Ferenczi and Rosendal investigate results relating the size of the isometry group ${\rm Isom}(X,\|\cdot\|)$, for any equivalent norm $\|\cdot\|$, with the isomorphic structure of $X$, through the next definition.

\begin{defi} 
A bounded  subgroup $G\leqslant \GL(X)$ acts {\em nearly trivially} on $X$ if there is a $G$-invariant decomposition  $X=F \oplus H$, where $F$ is finite-dimensional and $G$ acts by trivial isometries on $H$.
\end{defi}

The relation of this concept with questions of maximality is based on the following easy but powerful lemma:

\begin{lemma}\label{nomax} If the isometry group of an infinite dimensional space acts nearly trivially then the norm is not maximal. 
\end{lemma}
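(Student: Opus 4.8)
The plan is to show that if the isometry group $G=\operatorname{Isom}(X,\|\cdot\|)$ acts nearly trivially, then one can enlarge $G$ to a strictly larger bounded subgroup of $\GL(X)$, and hence $\|\cdot\|$ cannot be maximal. By hypothesis there is a $G$-invariant decomposition $X=F\oplus H$ with $F$ finite-dimensional and $G$ acting on $H$ by trivial isometries, i.e. each $g\in G$ restricts to a scalar $\lambda(g)\,\mathbf{I}_H$ on $H$; since $G$ is bounded, $|\lambda(g)|$ is bounded away from $0$ and $\infty$, and as each $g$ is an isometry of the given norm one checks $|\lambda(g)|=1$ (in the real case $\lambda(g)=\pm1$; in the complex case $\lambda(g)\in\mathbb T$). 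So $G$ embeds into $\operatorname{Isom}(F,\|\cdot\|_F)\times \T$ (or $\times\{\pm1\}$), acting ``diagonally'' in an obvious sense.

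The key point is that $F$ is finite-dimensional, hence (using $\dim F\ge 1$; note $H$ is infinite-dimensional so $F$ being finite-dimensional is consistent, and if $F=\{0\}$ then $G$ consists of scalars only and the argument is even easier) we have a lot of room inside $\GL(F)$. First I would pass to the norm $\|\cdot\|_G$, which is equivalent to $\|\cdot\|$ and has $G\leqslant\operatorname{Isom}(X,\|\cdot\|_G)$; replacing $\|\cdot\|$ by $\|\cdot\|_G$ if necessary we may assume the decomposition $F\oplus H$ is still $G$-invariant and the norm is, say, chosen so that $F$ and $H$ are orthogonal in a suitable sense. Now enlarge $G$: let $\widetilde G$ be the group generated by $G$ together with a single extra operator of the form $u\oplus\mathbf{I}_H$, where $u\in\GL(F)$ is chosen so that $u\oplus\mathbf{I}_H\notin G$ but $\widetilde G$ is still bounded — for instance $u$ a rotation in some $2$-dimensional subspace of $F$ of small angle, or, if $\dim F=1$, one instead uses that $G$ restricted to $F$ is a finite or circle group of scalars and adjoins a genuine scalar $\mu\neq$ those already present (here one must be slightly careful: a bare scalar $\mu(\mathbf{I}_F\oplus\mathbf{I}_H)=\mu\mathbf{I}_X$ is always available and is an isometry of $\|\cdot\|_{\widetilde G}$, so to get a strictly larger group we want the extra generator to act differently on $F$ than on $H$). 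The resulting $\widetilde G$ is a bounded subgroup of $\GL(X)$ strictly containing $G$, so $\|\cdot\|$ (equivalently $\|\cdot\|_G$) is not maximal.

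The main obstacle I anticipate is the degenerate/low-dimensional bookkeeping, and ensuring that the enlarged group really is \emph{strictly} larger rather than accidentally still equal to $G$ after renorming. Concretely: one must rule out that every $u\oplus\mathbf{I}_H$ one tries to adjoin already lies in $\operatorname{Isom}(X,\|\cdot\|_G)$. If it did for all such $u$ in a neighborhood of $\mathbf{I}_F$ in $\GL(F)$, then $\operatorname{Isom}(X,\|\cdot\|_G)$ would contain a full neighborhood of the identity acting arbitrarily on $F$ and trivially on $H$, hence contain a copy of $\GL(F)^{+}$ or at least an open subgroup of $\GL(F)$; but a bounded subgroup of $\GL(F)$ cannot contain such a neighborhood (e.g. it would contain $\mathbf{I}_F+t N$ for a nilpotent $N$ and all small $t$, contradicting boundedness — or simply: a bounded subgroup of $\GL(F)$ lies in a compact, in fact orthogonal/unitary, group and thus has empty interior unless $F=\{0\}$). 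This gives the contradiction, so some admissible $u\oplus\mathbf{I}_H$ is genuinely new and the proof closes. The remaining steps — that $\|\cdot\|_{\widetilde G}$ is an equivalent norm, that $\widetilde G\leqslant\operatorname{Isom}(X,\|\cdot\|_{\widetilde G})$, and that boundedness of $\widetilde G$ persists — are routine.
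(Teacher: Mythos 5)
Your overall frame---exhibit a bounded subgroup of $\GL(X)$ strictly containing $G=\Isom(X,\|\cdot\|)$---is a correct reformulation of non-maximality, but the construction does not close, because the two things you need are in tension. First, boundedness of $\widetilde G=\langle G,\,u\oplus\mathbf{I}_H\rangle$ is not ``routine'': adjoining a generic small rotation $u$ of $F$ destroys boundedness unless $u$ lies in a common compact subgroup with (e.g.\ commutes with or normalizes) the restriction $G|_F$; in $\GL(F)$ the set of $u$ for which $\langle G|_F,u\rangle$ remains bounded is typically a proper closed subgroup (think of $G|_F=SO(2)$ inside $\GL(2,\mathbb R)$, where only $u$ in $O(2)$ qualify), and is never a neighborhood of $\mathbf{I}_F$. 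Second, your strictness argument ranges $u$ over a full neighborhood of $\mathbf{I}_F$, which is exactly what boundedness forbids: it only produces some $u$ near the identity with $u\oplus\mathbf{I}_H\notin G$, and for that $u$ the group $\widetilde G$ may be unbounded, so $\|\cdot\|_{\widetilde G}$ need not be finite or equivalent. Moreover the defect is not repairable by a cleverer choice of $u$: nothing prevents $G$ from already containing \emph{every} $u\oplus\mathbf{I}_H$ with $u$ in the relevant compact group (plausibly so for $X=F\oplus_1 Z$ with $Z$ rigid, where $\Isom(X)=\{v\oplus\lambda\mathbf{I}_Z: v\in\Isom(F),\ |\lambda|=1\}$), in which case no enlargement supported on $F$ exists at all. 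A further warning sign is that your argument nowhere uses infinite-dimensionality, while the lemma fails without it: the Euclidean norm on $\mathbb R^n$ is maximal although its isometry group acts nearly trivially (take $F=X$, $H=0$). Also, the preliminary renorming by $\|\cdot\|_G$ is vacuous ($G$ is already the isometry group of $\|\cdot\|$, so $\|\cdot\|_G=\|\cdot\|$), and ``choosing the norm so that $F$ and $H$ are orthogonal'' is not something you are free to do.

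The paper's proof perturbs the action on $H$, not on $F$, and this removes both difficulties at once. Write $H=R\oplus Y$ with $\dim R=1$ (possible since $H$ has finite codimension in the infinite-dimensional $X$, hence $\dim H\geq 2$) and consider the equivalent norm in which the $R$-component splits off additively, e.g.\ $\|f+r+y\|'=\|f+y\|+\|r\|$ for $f\in F$, $r\in R$, $y\in Y$. Every $g\in G$ has the form $g|_F\oplus\lambda(g)\mathbf{I}_H$ with $|\lambda(g)|=1$, hence preserves $F$, $R$, $Y$ and remains an isometry of $\|\cdot\|'$; on the other hand $T=\mathbf{I}_F\oplus\omega\mathbf{I}_R\oplus\mathbf{I}_Y$ (with $\omega$ unimodular, $\omega\neq 1$; $\omega=-1$ in the real case) is an isometry of $\|\cdot\|'$ but cannot lie in $G$, since elements of $G$ act on $H$ as scalars while $T$ acts as $\omega$ on $R$ and as the identity on $Y\neq\{0\}$. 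Equivalently, in your language: adjoin $T$ instead of $u\oplus\mathbf{I}_H$; it commutes with every element of $G$, so the enlarged group is automatically bounded, and it is automatically new---which is precisely the strictness your construction could not guarantee, and precisely where infinite-dimensionality enters.
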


\begin{proof} If $X=F \oplus H$ is the decomposition associated to the near triviality of ${\rm Isom}(X)$, and if $H$ is decomposed as
 $R \oplus Y$, where $R$ is $1$-dimensional, then the equivalent norm defined by the formula
 $\|f\|=\|r\|+\|y\|$,  $f \in F$, $r \in R, y \in Y$, admits an isometry group which strictly contains the original one.
\end{proof}

In particular if every bounded subgroup of $\GL(X)$  (equivalently, every isometry group) acts nearly trivially, then $X$ admits no maximal renorming.

As an initial step towards Theorem \ref{main1}, Ferenczi and Rosendal, improving on some earlier work of  F. R\"abiger and W. J. Ricker \cite{RR,RR2}, show that in a certain class of spaces, each individual isometry acts nearly trivially. 

\begin{thm}\label{nearly trivial-intro}
Let $X$ be a Banach space containing no unconditional basic sequence. Then each individual isometry which is of the form $\lambda{\bf I}_X+S$, for $S$ strictly singular, acts nearly trivially on $X$ (and in particular $S$ is a finite range operator). 
In particular each isometry on a  complex HI space acts nearly trivially.
\end{thm}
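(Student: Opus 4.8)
The plan is to analyze the structure of an isometry $T = \lambda \mathbf{I}_X + S$ with $S$ strictly singular, on a space $X$ containing no unconditional basic sequence. First, I would normalize: since $\|Tx\| = \|x\|$ for all $x$, the spectral radius of $T$ is $1$, and since $S$ is strictly singular its essential spectrum reduces to $\{\lambda\}$, forcing $|\lambda| = 1$; after multiplying $T$ by $\bar\lambda$ we may assume $\lambda = 1$, so $T = \mathbf{I}_X + S$ with $S$ strictly singular and $T$ an isometry. The key point will be to understand the operator $S$. Because $S$ is strictly singular, the classical Fredholm-type theory applies to $T = \mathbf{I}_X + S$: $\ker(T - \mathbf{I}_X)^n = \ker S^n$ stabilizes, $X$ decomposes (as a $T$-invariant topological direct sum) into a finite-dimensional part $F$ on which $T - \mathbf{I}_X$ is nilpotent and a complementary closed $T$-invariant subspace $H$ on which $T - \mathbf{I}_X$ is invertible — this is the standard Riesz decomposition at the isolated spectral value $1$ of the Riesz operator $S$. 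On $H$, the restriction $T|_H = \mathbf{I}_H + S|_H$ is again an isometry (being the restriction of an isometry to an invariant subspace that is complemented by another invariant subspace — one must check the projections are bounded, which they are by the Riesz decomposition), and now $S|_H$ is strictly singular on $H$ with $1 \notin \operatorname{spec}(T|_H)$.

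The heart of the argument is to show that $S|_H = 0$, i.e. $T|_H = \mathbf{I}_H$. Here is where the hypothesis ``no unconditional basic sequence'' enters decisively. Suppose $S|_H \neq 0$. Since $T|_H$ is an isometry with $\operatorname{spec}(T|_H)$ contained in the unit circle and consisting of $1$ together with eigenvalues of finite multiplicity accumulating only at $1$ — but $1$ is not itself in the spectrum of $T|_H$ — actually the spectrum of the Riesz operator $S|_H$ is a sequence of eigenvalues $\mu_k \to 0$ with $\mu_k \neq 0$, so $\operatorname{spec}(T|_H) = \{1 + \mu_k\}$, a sequence of points on the unit circle converging to $1$. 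For each such $\mu_k \neq 0$ one gets a finite-dimensional $T|_H$-invariant spectral subspace. The strategy of R\"abiger–Ricker, which Ferenczi–Rosendal improve, is to use these spectral projections: the functional calculus for $T|_H$ over arcs of the circle produces a sequence of pairwise ``disjoint'' (biorthogonal) finite-dimensional invariant blocks, and the isometric invariance of the norm together with the commuting band of spectral projections $P_k$ associated to well-separated arcs yields, for any choice of signs, a uniform bound on $\|\sum_k \pm P_k x\|$ — precisely because these projections come from the (contractive, by averaging over the isometry group generated by $T|_H$, or directly from the spectral calculus on the circle) functional calculus of a single isometry. This uniform two-sided estimate exhibits an unconditional basic sequence inside $H \subseteq X$ (take nonzero vectors $x_k$ in the ranges of successive $P_k$), contradicting the hypothesis. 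Hence $S|_H = 0$.

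The remaining step is to conclude near triviality as in the Definition preceding the theorem. We have $X = F \oplus H$, a $T$-invariant decomposition, $F$ finite-dimensional, and $T|_H = \mathbf{I}_H$, which is the trivial isometry $\lambda = 1$. Since the whole group here is the cyclic group generated by the single isometry $T$, this already says $\langle T\rangle$ acts nearly trivially on $X$; and as a byproduct $S = T - \mathbf{I}_X$ vanishes on $H$, hence has range contained in $F$, so $S$ is a finite-rank operator as claimed. For the final assertion about complex HI spaces: by Gowers–Maurey every operator on a complex HI space, in particular every isometry, is of the form $\lambda\mathbf{I}_X + S$ with $S$ strictly singular; moreover an HI space contains no unconditional basic sequence (any such sequence would span a subspace decomposable into two infinite-dimensional pieces); so the first part applies verbatim and every isometry of a complex HI space acts nearly trivially.

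I expect the main obstacle to be the extraction of the unconditional basic sequence: one must be careful that the spectral projections associated to arcs around the points $1 + \mu_k$ are genuinely uniformly bounded — this is where one uses that $T|_H$ is an \emph{isometry}, so that its spectral measure on the circle has uniformly bounded total semivariation, rather than merely a Riesz operator perturbation of the identity — and that one can pass from a finite unconditionality constant for all finite sign choices to an infinite unconditional basic sequence (a routine gliding-hump / small-perturbation argument, but it needs the blocks to be suitably separated, which in turn needs the $\mu_k$ to be genuinely distinct and nonzero, i.e. it needs $S|_H \neq 0$ to actually produce infinitely many spectral points, handled by the Riesz theory). The bookkeeping around which part of the spectrum sits at $1$ versus away from $1$, and ensuring the decomposition $X = F \oplus H$ is by \emph{bounded} projections commuting with $T$, is the other place where care is required, but that is standard Riesz–Schauder theory for strictly singular (hence Riesz) operators.
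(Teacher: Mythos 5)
Your overall dichotomy (finite spectrum versus an infinite sequence of eigenvalues accumulating at $1$, with the infinite case excluded by producing an unconditional basic sequence and the finite case handled by spectral decomposition) is indeed the shape of the paper's argument, but two of your steps are genuinely wrong. First, the decomposition you start from does not exist and is internally inconsistent. You claim $X=F\oplus H$ with $F$ finite-dimensional, $S=T-{\bf I}_X$ nilpotent on $F$, and $T-{\bf I}_X$ invertible on $H$: but a strictly singular operator is never bounded below on an infinite-dimensional subspace, so $S|_H$ cannot be invertible unless $H$ is finite-dimensional; the generalized kernel $\ker S^n$ at the essential spectral point need not stabilize nor be finite-dimensional (take $T={\bf I}_X$); if the eigenvalues accumulate at $1$ there is no Riesz decomposition separating $\{1\}$ from the rest of the spectrum at all; and you later set out to prove $T|_H={\bf I}_H$, which contradicts your own hypothesis that $T|_H-{\bf I}_H$ is invertible. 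The roles are reversed: when the spectrum is finite, the finite-dimensional piece is the sum of the spectral subspaces at the eigenvalues $\neq 1$, while the ``big'' piece is the spectral subspace at $\{1\}$, and there one still needs an argument (an invertible isometry whose spectrum is $\{1\}$ is the identity, by a Gelfand--Hille type theorem, using $\sup_{n\in\mathbb Z}\|T^n\|=1$) to conclude that $S$ vanishes there and hence has finite-dimensional range.

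Second, and more seriously, your exclusion of the infinite case rests on a false premise. Riesz projections of a Banach-space isometry onto arcs of the circle are not uniformly bounded in general: an invertible isometry need not be a scalar-type spectral operator (the bilateral shift on $\ell_p$, $p\neq 2$, already fails this), so there is no ``spectral measure with uniformly bounded semivariation'' to invoke, and the only functional calculus available for free is the Wiener-algebra one, $\|f(T)\|\leq\sum_n|\hat f(n)|$, in which indicators of arcs are not uniformly controlled. This is precisely the point of the key lemma in \cite{FR} that the paper's proof quotes: one does not use arbitrary spectral projections, but passes to a subsequence of eigenvalues converging to $1$ \emph{fast enough}, and proves directly (via interpolating functions with small Wiener-type norms) that the corresponding eigenvectors form an unconditional basic sequence, with constant arbitrarily close to $1$; the speed of convergence is what compensates for the absence of a bounded Borel calculus. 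As written, your ``uniform bound on $\|\sum_k\pm P_kx\|$'' is exactly the missing ingredient, not a consequence of $T$ being an isometry. The final reduction (complex HI spaces have only operators of the form $\lambda{\bf I}_X+S$ with $S$ strictly singular and contain no unconditional basic sequence) is fine.
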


\begin{proof} The spectrum of an isometry of the form ${\bf I}_X+S$ is formed either of a finite sequence of eigenvalues, or an infinite converging sequence of eigenvalues together with their limit $1$.  In the latter case the authors of \cite{FR} prove that a sequence of eigenvectors associated to eigenvalues converging fast enough to $1$ would form an unconditional basic sequence (with constant arbitrarily close to $1$). In the former case, classical spectral decomposition results imply that the operator $S$ has finite dimensional  range, or equivalently, that ${\bf I}_X+S$ acts nearly trivially on $X$. \end{proof}

For future reference the decomposition of $X$ associated to the fact that an operator $T$ acts nearly trivially may be written as $X=F_T \oplus H_T$
where $H_T$ is the kernel of ${\bf I}_X-T$ and $F_T$ its image. This notation will be used in what follows.

The next step is to proceed from single isometries acting nearly trivially to an understanding of the global structure of the isometry group ${\rm Isom}(X)$. 
 Using a renorming result of Lancien \cite{lancien} for separable reflexive $X$, the authors of \cite{FR} prove a version of Alaoglu-Birkhoff \cite{AB} ergodic decomposition theorem:
 \begin{prop} Assume $X$ is separable reflexive and $G$ is a bounded group of automorphisms of $X$. Let  $H_G$ be the subspace of points fixed by every $T \in G$, 
 and $H_{G^*}$ be the subspace of functionals fixed by every element of $G$ under its natural action on $X^*$. Let  ${\mathcal S}$ be a family generating a SOT-dense subgroup of $G$.
 
 Then $X$ admits the $G$-invariant decomposition $F_G \oplus H_G$, where 
 $$F_G=H_{G^*}^{\perp}=\overline{\rm span} \bigcup_{T \in {\mathcal S}} F_T,$$
 and the associated projection onto $H_G$ has norm at most $\|G\|^2$ (where $\|G\|:=\sup_{g \in G}\|g\|)$.
 \end{prop}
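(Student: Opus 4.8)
The plan is to prove, in this order: (i) that the two expressions proposed for $F_G$ coincide and both equal the annihilator ${}^{\perp}H_{G^{*}}$; (ii) that $H_G\cap F_G=\{0\}$; (iii) that $H_G+F_G=X$; and (iv) the norm estimate for the associated projection. The subspace $H_G=\bigcap_{g\in G}\ker(g-{\bf I}_X)$ is closed and $F_G$ is closed by construction, so by the open mapping theorem (ii) and (iii) already give that $X=F_G\oplus H_G$ is a \emph{topological} direct sum. The only non-formal ingredients are that $X$, and equally $X^{*}$, is reflexive — so that for every $x$ the orbit hull $C_x:=\overline{\conv}\{gx:g\in G\}$, being bounded, convex and norm-closed, is weakly compact — together with an averaging device for the (possibly non-amenable) group $G$. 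For the latter I would note that $G$ acts on the weakly compact convex set $C_x$ by affine non-contracting maps, since $\|g(y-y')\|\ge\|g^{-1}\|^{-1}\|y-y'\|\ge\|y-y'\|/\|G\|$, and invoke the Ryll--Nardzewski fixed point theorem to produce a $G$-fixed point in $C_x$, i.e.\ to get $C_x\cap H_G\neq\emptyset$; alternatively, and closer to \cite{FR}, one uses Lancien's renorming of the separable reflexive space $X$ to pass to an equivalent rotund $G$-invariant norm and \emph{defines} the projection as the (then single-valued) nearest-point map of $X$ onto $H_G$.

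For (i), recall that $F_T$ is the range of ${\bf I}_X-T$, i.e.\ $F_T=\{x-Tx:x\in X\}$. Set $E:=\overline{\Span}\bigcup_{T\in\mathcal S}F_T$ and $\widetilde F_G:=\overline{\Span}\{x-gx:x\in X,\ g\in G\}$; the inclusion $E\subseteq\widetilde F_G$ is clear. Conversely, $\{g\in G:x-gx\in E\text{ for every }x\in X\}$ is a subgroup of $G$ (if it contains $g$ and $g'$ then $x-gg'x=(x-g'x)+(g'x-g(g'x))\in E$ and $x-g^{-1}x=-(g^{-1}x-g(g^{-1}x))\in E$) which contains $\mathcal S$; being SOT-dense in $G$ while $E$ is norm-closed, it is all of $G$, so $\widetilde F_G=E=:F_G$. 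Moreover $\phi\in X^{*}$ annihilates $F_G$ iff $\phi(x)=\phi(gx)=(g^{*}\phi)(x)$ for all $x,g$, i.e.\ iff $g^{*}\phi=\phi$ for all $g$, i.e.\ iff $\phi\in H_{G^{*}}$; hence $F_G^{\perp}=H_{G^{*}}$, and since $F_G$ is a closed subspace, ${}^{\perp}(F_G^{\perp})=F_G$, that is $F_G={}^{\perp}H_{G^{*}}$, which is the meaning of $H_{G^{*}}^{\perp}$ once $X$ is identified with $X^{**}$ by reflexivity.

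For (ii), if $0\neq h\in H_G\cap F_G$, pick $\psi\in X^{*}$ with $\psi(h)\neq0$. Since $G^{*}:=\{g^{*}:g\in G\}$ is a bounded group of operators on the reflexive space $X^{*}$, the same averaging applies: $\overline{\conv}\{g^{*}\psi:g\in G\}$ is weakly compact and contains a $G^{*}$-fixed point $\bar\psi\in H_{G^{*}}$; but every $g^{*}\psi$ takes at $h$ the value $\psi(gh)=\psi(h)$ because $gh=h$, so $\bar\psi(h)=\psi(h)\neq0$, contradicting $h\in F_G={}^{\perp}H_{G^{*}}$. For (iii), given $x\in X$ pick $\tilde x\in C_x\cap H_G$; each $\phi\in H_{G^{*}}$ is constant, equal to $\phi(x)$, on $\{gx:g\in G\}$ hence on $C_x$, so $\phi(x-\tilde x)=0$, i.e.\ $x-\tilde x\in{}^{\perp}H_{G^{*}}=F_G$, and therefore $x=\tilde x+(x-\tilde x)\in H_G+F_G$. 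With (ii), this decomposition is unique, so $\tilde x=Px$; thus $Px\in C_x=\overline{\conv}\{gx\}$ lies in the closed ball of radius $\sup_{g}\|gx\|\le\|G\|\,\|x\|$, giving $\|P\|\le\|G\|\le\|G\|^{2}$. Finally $F_G$ is $G$-invariant (by the telescoping identity above) and $H_G$ is fixed pointwise, so the decomposition is $G$-invariant and $P$ commutes with $G$.

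The heart of the matter — and the only place where real analysis rather than formal manipulation is needed — is the averaging step used in (ii) and (iii): since $G$ need not be amenable one cannot integrate the orbit against an invariant mean, and one must instead check the hypotheses of a Ryll--Nardzewski-type theorem, namely that $G$ maps $C_x$ onto itself ($gC_x=C_x$ since $gG=G$), that its elements are affine and weakly continuous, and that the action is non-contracting; or, following \cite{FR}, carry out Lancien's rotund $G$-invariant renorming and then verify that the nearest-point map onto $H_G$ is additive and homogeneous — which is forced by its single-valuedness together with the equivariance $P(gx)=gPx=Px$ — keeping track of the renorming constants, which is what accounts for the extra factor of $\|G\|$ in the bound $\|G\|^{2}$. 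Everything else reduces to routine facts about annihilators and to the SOT-density of $\langle\mathcal S\rangle$ in $G$.
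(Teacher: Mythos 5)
Your proof is correct, and its key analytic ingredient is genuinely different from the route the paper indicates. The survey does not reprove this proposition: it attributes it to \cite{FR}, where the fixed point in the orbit hull is obtained by first passing, via Lancien's theorem \cite{lancien}, to an equivalent $G$-invariant LUR (in particular strictly convex) norm on the separable reflexive space, and then taking the unique point of minimal norm of $\overline{\operatorname{conv}}(Gx)$, which is $G$-fixed because the hull is $G$-invariant and $G$ acts by isometries of the new norm. You instead produce the fixed point directly from the Ryll--Nardzewski theorem, using weak compactness of the orbit hull (reflexivity) and distality ($\|gy-gy'\|\ge\|y-y'\|/\|G\|$), and you run the same device in $X^{*}$ to see that $H_{G^{*}}$ separates the points of $H_G$, which gives $H_G\cap F_G=\{0\}$. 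The remaining steps --- the identification $F_G=\overline{\operatorname{span}}\bigcup_{T\in\mathcal S}F_T={}^{\perp}H_{G^{*}}$ via the SOT-closed-subgroup trick, surjectivity of $H_G+F_G$, and the estimate $\|Px\|\le\sup_{g\in G}\|gx\|\le\|G\|\,\|x\|$ --- are sound, and in fact you obtain the sharper bound $\|P\|\le\|G\|\le\|G\|^{2}$, which is legitimate since $\|G\|\ge 1$. What each approach buys: yours is renorming-free, separability is used nowhere, and it gives the better constant; the Lancien route is the one \cite{FR} exploit further, since the invariant LUR norm is reused elsewhere in their arguments, and it avoids invoking a fixed-point theorem beyond elementary rotundity.

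Two small glosses. The $G$-invariance of $F_G$ is not quite the telescoping identity: use $g(x-g'x)=gx-(gg'g^{-1})(gx)$ with $gg'g^{-1}\in G$, or simply note that $F_G={}^{\perp}H_{G^{*}}$ and $H_{G^{*}}$ is pointwise fixed by $G^{*}$. And in your alternative sketch, linearity of the nearest-point map onto $H_G$ is \emph{not} forced by single-valuedness plus the equivariance $P(gx)=P(x)$ (metric projections onto subspaces of strictly convex spaces are generally nonlinear); the argument in \cite{FR} circumvents this by taking the minimal-norm point of the orbit hull rather than the metric projection onto $H_G$. Since your main route via Ryll--Nardzewski is complete, this only affects the aside.
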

 
 Denote by ${\rm Isom}_f(X)$, the subgroup of isometries of the form
${\bf I}_X+A$, where $A$ is a finite-rank operator on $X$.
 Note that when $G$ is a subgroup of ${\rm Isom}_f(X)$, each subspace $F_T$ is finite dimensional. This leads the authors  of \cite{FR} to consider possible FDDs of $X$:
 
 \begin{prop}\label{dec} Let $X$ be separable and  reflexive. Then  
 either   ${\rm Isom}_f(X)$  acts nearly trivially on $X$, or $X$ admits a complemented subspace with a finite dimensional decomposition.
 \end{prop}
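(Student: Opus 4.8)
The plan is to run a dichotomy on the subgroup $\mathrm{Isom}_f(X)$ according to whether the finite-dimensional subspaces $F_T$, for $T\in\mathrm{Isom}_f(X)$, generate a finite-dimensional or an infinite-dimensional closed span. Fix a countable SOT-dense $\mathcal{S}\subseteq \mathrm{Isom}_f(X)$ (possible since $\mathrm{Isom}(X)$ is Polish in the SOT when $X$ is separable) and set $F=\overline{\mathrm{span}}\bigcup_{T\in\mathcal S}F_T$. By the Alaoglu--Birkhoff proposition above, with $G=\mathrm{Isom}_f(X)$, we get a $G$-invariant decomposition $X=F\oplus H$, where $H=H_G$ is the space of common fixed points and the projection onto $H$ is bounded (norm $\le\|G\|^2$, and $\|G\|\le 1$ here since $G$ consists of isometries). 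Note $G$ acts trivially on $H$ by construction.

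\emph{First case: $F$ is finite-dimensional.} Then $X=F\oplus H$ is exactly a $G$-invariant decomposition with $F$ finite-dimensional and $G$ acting by trivial isometries (indeed by the identity) on $H$, so $\mathrm{Isom}_f(X)$ acts nearly trivially on $X$. (One should remark that the decomposition $X=F\oplus H$ produced by Alaoglu--Birkhoff is automatically invariant under \emph{all} of $\mathrm{Isom}_f(X)$, not merely $\mathcal S$, since $F=H_{G^*}^\perp$ and $H=H_G$ are defined in terms of the whole group; density of $\mathcal S$ is only used to identify $F$ with the closed span of the $F_T$.)

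\emph{Second case: $F$ is infinite-dimensional.} Here I would build a complemented FDD inside $F$. Enumerate $\mathcal S=\{T_1,T_2,\dots\}$ and consider the increasing sequence of finite-dimensional subspaces $G_n=\mathrm{span}(F_{T_1}\cup\cdots\cup F_{T_n})$, whose union is dense in $F$. Each $F_{T_k}$ is the range of the finite-rank operator $T_k-\mathbf I_X$, and on the complementary part $H_{T_k}=\ker(\mathbf I_X-T_k)$ the isometry $T_k$ is the identity; crucially each $G_n$, being a finite sum of the $F_{T_k}$, is invariant under $T_1,\dots,T_n$. The idea is to extract from $(G_n)$ a subsequence $(G_{n_j})$ that forms a (shrinking, by reflexivity) FDD of its closed span $Z=\overline{\bigcup_j G_{n_j}}$, using a standard small-perturbation/gliding-hump argument (Mazur's technique for basic sequences, upgraded to FDDs), and then to produce a bounded projection $X\to Z$. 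The projection should come from averaging or from the $\mathrm{Isom}_f$-invariance: one wants the blocks $G_{n_{j+1}}\ominus G_{n_j}$ to be ``almost invariant'' under enough isometries that a uniformly bounded sequence of finite-rank projections onto the partial sums converges in a suitable sense; alternatively one invokes the reflexivity of $X$ together with the fact that a basic sequence of finite-dimensional blocks in a reflexive space with uniformly bounded partial-sum projections spans a complemented subspace. Either way $Z$ is an infinite-dimensional complemented subspace of $X$ with an FDD, which is the desired conclusion.

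The main obstacle is the second case: obtaining a \emph{bounded} sequence of projections onto the partial sums of the FDD, equivalently controlling the FDD constant. Passing to a subsequence via perturbation gives a Schauder FDD, but to get \emph{complementation of the whole span in $X$} one needs the partial-sum projections to be uniformly bounded, and this is where the isometric structure (each $G_n$ invariant under finitely many elements of a dense subgroup, all of norm one) must be leveraged rather than just soft functional analysis. I expect the argument to exploit that the natural projections of $X$ onto the $F_{T_k}$-directions, built from the finite-rank operators $T_k-\mathbf I_X$ and the Alaoglu--Birkhoff projection, are uniformly bounded, and that a careful diagonalization keeps this bound under control along the chosen subsequence.
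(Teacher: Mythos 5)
Your first case is fine and agrees with the paper. In the second case, however, there is a genuine gap: you never actually produce the uniformly bounded, compatible partial-sum projections, and the routes you sketch would not deliver them. The point you are missing is that you already hold the needed projections in your hands: apply the Alaoglu--Birkhoff proposition not only to $G={\rm Isom}_f(X)$ but also to each finitely generated subgroup $G_n$ generated by $T_1,\dots,T_n$ from the SOT-dense sequence. This gives a decomposition $X=F_n\oplus H_n$ with $F_n={\rm span}\bigl(F_{T_1}\cup\dots\cup F_{T_n}\bigr)$ and $H_n=\bigcap_{j\le n}\ker(\mathbf{I}_X-T_j)$, and the associated projection $P_n$ of $X$ onto $F_n$ along $H_n$ is \emph{globally defined on $X$} with $\|P_n\|\le 1+\|G_n\|^2\le 2$, since the $T_j$ are isometries. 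These projections are automatically compatible: $F_n\subseteq F_m$ and $H_m\subseteq H_n$ for $n\le m$ give $P_nP_m=P_mP_n=P_n$. Hence the successive differences $P_{n+1}-P_n$ produce an FDD of $F_G=\overline{\bigcup_n F_n}$ with constant at most $\sup_n\|P_n\|$, with no gliding hump, no perturbation, and no passage to a subsequence; and the complementation of $F_G$ in $X$ is already given by the decomposition $X=F_G\oplus H_G$ for the full group, which you established but then did not use for this purpose. (One still has to check that $F_G$ coincides with $\overline{\bigcup_n F_n}$, which is where the SOT-density of the sequence enters.)

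By contrast, the two fallbacks you propose do not work as stated. A Mazur-type small-perturbation extraction only yields a Schauder FDD of the span of the extracted blocks and says nothing about its complementation in $X$; worse, perturbing destroys exactly the invariance structure that makes the projections bounded. And the claim that ``a basic sequence of finite-dimensional blocks in a reflexive space with uniformly bounded partial-sum projections spans a complemented subspace'' is false in general: partial-sum projections are defined only on the span and control the FDD constant, not complementation --- reflexive spaces such as $\ell_p$, $p\neq 2$, contain uncomplemented subspaces with FDDs (even with bases). So the isometric/group-theoretic input is not an optional refinement to keep a bound ``under control along a subsequence''; it is the entire mechanism, via Alaoglu--Birkhoff for the finitely generated subgroups, by which both the FDD constant and the complementation are obtained.
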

 
 \begin{proof} This is \cite[Theorem 4.16]{FR}; the proof goes as follows.
Picking an  SOT-dense sequence $(T_n)$ of isometries in ${\rm Isom}_f(X)$,  one considers each of the Alaoglu-Birkhoff decompositions associated to the subgroups 
$G_n$ generated by $T_1,\ldots,T_n$, i.e.
$$X=F_n \oplus H_n,$$ where $F_n$ is the linear span of the finite dimensional subspaces ${\rm Im}({\bf I}_X-T_j), 1 \leq j \leq  n$, and
$H_n$ the set of points fixed by $T_1,\ldots,T_n$. Consider the decomposition
$$X=F_G \oplus H_G,$$ associated to $G={\rm Isom}_f(X)$.
It can be seen that $F_G$ identifies with the closure of $\bigcup_n F_n$ and therefore either is finite dimensional or admits an FDD. In the former case $G$ acts trivially on the finite codimensional space $H_G$ and therefore nearly trivially on $X$.
\end{proof}

Combining Theorem~\ref{nearly trivial-intro} and Lemma \ref{nomax}, with the decomposition from Proposition \ref{dec}, along with the indecomposability property of HI spaces, one deduces: 

\begin{thm}\label{main2} 
Let $X$ be a separable, reflexive, hereditarily indecomposable, complex Banach space without a FDD. Then for any equivalent norm on $X$, the group of isometries acts nearly trivially on $X$. In particular $X$ does not admit a maximal norm.
\end{thm}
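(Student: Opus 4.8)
The plan is to assemble the three ingredients already established — Theorem~\ref{nearly trivial-intro}, Proposition~\ref{dec}, and Lemma~\ref{nomax} — together with the Gowers--Maurey structure theorem for complex HI spaces. First observe that it is enough to prove that ${\rm Isom}(X)$ acts nearly trivially with respect to one equivalent norm; indeed, being separable, reflexive, hereditarily indecomposable and without an FDD are isomorphic invariants, so every equivalent renorming of $X$ again satisfies all the hypotheses, and hence the same argument applies to it. Once near triviality is known for every equivalent norm, Lemma~\ref{nomax} (applicable since $X$, being HI, is infinite dimensional) immediately yields that none of those norms is maximal, which is the last assertion.

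So fix an equivalent norm and set $G := {\rm Isom}(X)$. Since $X$ is a complex HI space, the Gowers--Maurey theorem \cite{GoMa} tells us that every operator on $X$, in particular every $T \in G$, has the form $T = \lambda_T {\bf I}_X + S_T$ with $S_T$ strictly singular. By Theorem~\ref{nearly trivial-intro} each such $T$ acts nearly trivially and, moreover, $S_T$ is of finite rank. As $|\lambda_T| = 1$, the operator $\lambda_T^{-1} T = {\bf I}_X + \lambda_T^{-1} S_T$ is again a surjective isometry, and it now lies in ${\rm Isom}_f(X)$. Therefore $G = \{\lambda g : \lambda \in \mathbb{T},\ g \in {\rm Isom}_f(X)\}$.

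Now apply Proposition~\ref{dec} to the separable reflexive space $X$: either ${\rm Isom}_f(X)$ acts nearly trivially on $X$, or $X$ contains an infinite-dimensional complemented subspace $Y$ with an FDD. The second alternative contradicts the hypotheses: a complemented infinite-dimensional subspace of an HI space must be finite-codimensional, so $X = W \oplus Y$ with $\dim W < \infty$, and then absorbing $W$ into the first block of an FDD of $Y$ produces an FDD of $X$, against our assumption that $X$ has none. Hence ${\rm Isom}_f(X)$ acts nearly trivially: there is an ${\rm Isom}_f(X)$-invariant splitting $X = F \oplus H$ with $\dim F < \infty$ such that ${\rm Isom}_f(X)$ acts by trivial isometries on $H$. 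To upgrade this to the full group, note that the scalars $\lambda {\bf I}_X$ with $\lambda \in \mathbb{T}$ preserve both $F$ and $H$ and act by trivial isometries on each; combined with $G = \mathbb{T}\cdot{\rm Isom}_f(X)$, the decomposition $X = F \oplus H$ is $G$-invariant and $G$ acts by trivial isometries on the finite-codimensional subspace $H$. Thus $G$ acts nearly trivially on $X$, and Lemma~\ref{nomax} concludes the argument.

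The real substance is carried by the two cited results: Theorem~\ref{nearly trivial-intro} handles each isometry individually, while Proposition~\ref{dec}, through the Alaoglu--Birkhoff ergodic decomposition together with a delicate FDD-extraction argument, controls the entire group ${\rm Isom}_f(X)$ simultaneously. Within the present assembly the points that need care are the incompatibility of the second alternative of Proposition~\ref{dec} with the hypothesis ``$X$ has no FDD'' — for which one uses that complemented subspaces of HI spaces are finite-dimensional or finite-codimensional, and that a finite-codimensional subspace with an FDD forces an FDD on the ambient space — and the passage from ${\rm Isom}_f(X)$ to ${\rm Isom}(X)$, which works precisely because Gowers--Maurey pins the non-finite-rank part of every isometry to an honest scalar multiple of the identity.
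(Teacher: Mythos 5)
Your proposal is correct and follows essentially the same route as the paper, which deduces Theorem~\ref{main2} precisely by combining Theorem~\ref{nearly trivial-intro} (via Gowers--Maurey), Proposition~\ref{dec}, Lemma~\ref{nomax}, and the indecomposability of HI spaces. You merely make explicit the details the paper leaves implicit — that $\operatorname{Isom}(X)=\mathbb{T}\cdot\operatorname{Isom}_f(X)$, and that the FDD alternative in Proposition~\ref{dec} is incompatible with HI plus the absence of an FDD — both of which are handled correctly.
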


The existence of a uniformly convex example satisfying these conditions follows from 
an earlier construction of a super-reflexive HI space due to Ferenczi \cite{F}, as well as conditions by Szankowski \cite{Szankowski} for the existence of subspaces failing the Approximation Property and therefore failing to have an FDD.

\

\begin{small}
\noindent{\em Comments:} 
\begin{itemized}
\item 
The construction of \cite{FR}
does not seem to provide a uniformly convex space on which no subspace admits an AT norm.
Indeed on subspaces admitting a Schauder basis, the authors also obtain isometry invariant decompositions of the form $F \oplus H$ where $F$ is finite dimensional, but are only able to prove that the group of isometries which are finite range perturbations of the identity acts as an SOT-discrete group for on $H$. We are unaware of a general argument suggesting that this would prevent the existence of dense orbits for the action of the isometry group on the sphere (on this subject, one can consult \cite{AFGR}  where an SOT-discrete bounded group of automorphisms is constructed on $c_0$
whitout discrete orbits). Uniformly convex spaces where no subspaces admit an AT renorming will be encountered in Section~\ref{subDGZ}.

\vspace{1mm}

\item
The paper \cite{FR} has a wider scope than presented above. First of all, by renorming, bounded groups of automorphisms may be seen as groups of isometries, to which the above results apply. The setting of several results may also be extended from the case of spaces with few operators, to a more general case of bounded actions of groups of operators of the form ${\bf I}_X+S$ on arbitrary Banach spaces.  Through a finer analysis of the group structure of the isometries, FDD may be replaced by Schauder bases in most occurences. Finally methods of complexification allow to extend most results to the real case.

\vspace{1mm}\item
 Weaker   forms of rigidity than in the exotic spaces considered in this section may also induce restrictions on the actions of bounded groups. For considerations in this line regarding bounded groups acting on interpolation scales and (almost) transitivity, see Section 3.4 of the recent paper \cite{CF}.

\vspace{1mm} \item
Before   leaving the topic of ``nearly trivial isometries'', it is perhaps worth noticing the following result from \cite{c-studia}: {\em If $\Isom_f(X)$ acts transitively on the unit sphere of a normed space $X$ then the norm of $X$ is Euclidean.}

\end{itemized}
\end{small}

\subsection{More on the Deville-Godefroy-Zizler problem}\lb{subDGZ}

In this section we describe the results and methods of Dilworth and Randrianantoanina \cite{DR} providing additional counterexamples for Problem~\ref{DGZ}.  They proved the following.

\begin{theorem}\lb{list}
The following classes of Banach spaces do not admit an equivalent almost transitive renorming.

\begin{itemize}
\item[(a)]    subspaces of classical sequence spaces $\ell_p$ for $1\le  p<\infty$ different from 2, or $c_0$,

\item[(b)]  subspaces 
 of an $\ell_p$-sum  of finite-dimensional normed spaces, for $1<p<\infty$ $p\ne 2$, and,  in particular,  subspaces of  quotient spaces of $\ell_p$, for $1<p<\infty$, $p\ne 2$,

\item[(c)]   subspaces of asymptotic-$\ell_p$ spaces, $1 \le p \le \infty$, $p \ne 2$,

\item[(d)] subspaces of Asymptotic-$\ell_p$ spaces, $1 \le p \le \infty$, $p \ne 2$, in the sense of \cite{MMT},

\item[(e)]  subspaces of any Orlicz sequence space $\ell_M$ (where $M$ is an Orlicz function) such that $\ell_M$ does not contain a subspace isomorphic to $\ell_2$.
\item[(f)] subspaces   of $L_p, 2<p<\infty$,   that   do  not contain a subspace isomorphic to
$\ell_2$.
\end{itemize}
\end{theorem}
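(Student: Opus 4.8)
The plan is to derive all six items from a single structural fact about almost transitive spaces, exploiting that the obstruction in question is an isomorphic invariant. The common feature of (a)--(f) is that for each space $Y$ in the list there is a fixed $p\ne 2$ (or $c_0$) such that $Y$ carries no $\ell_2$-spreading model \emph{under any equivalent norm}: for every equivalent norm $\triple{\cdot}$ on $Y$, every normalized weakly null sequence of $(Y,\triple{\cdot})$ generates only spreading models uniformly equivalent to the unit vector basis of $\ell_p$ (resp.\ $c_0$), which at each dimension $n$ are uniformly far in Banach--Mazur distance from $\ell_2^n$. I would check this case by case: for (a), (b), (c) and (f), every normalized weakly null sequence in $Y$ has a subsequence equivalent to the $\ell_p$- or $c_0$-basis (pass to a block basis and, in (b), further to one term per finite-dimensional summand; in (f) use the Kadec--Pe\l czy\'nski dichotomy together with $\ell_2\not\hookrightarrow Y$); for (e) one invokes the Lindenstrauss--Tzafriri analysis of weakly null sequences in Orlicz sequence spaces and the hypothesis $\ell_2\not\hookrightarrow\ell_M$; and for (d) a weakly null sequence with an $\ell_2$-spreading model would, after gliding to blocks far out in the ambient space, produce admissible block sequences uniformly equivalent to $\ell_2^n$, contradicting the two-sided $\ell_p^n$-estimate satisfied by admissible blocks of an Asymptotic-$\ell_p$ space. (Subspaces of $\ell_1$, and the Orlicz spaces isomorphic to $\ell_1$, are Schur and contain no weakly null sequences; these are handled by the separate --- and not entirely trivial --- remark that an infinite-dimensional separable almost transitive space is never Schur.) Granting all this, the theorem reduces to the following.

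\medskip
\noindent\textbf{Main Lemma.} \emph{Every infinite-dimensional separable almost transitive Banach space that is not Schur admits an $\ell_2$-spreading model, generated by a normalized weakly null sequence.}
\medskip

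Indeed, if some $Y$ in the list carried an equivalent almost transitive norm $\triple{\cdot}$, then in the non-Schur cases $(Y,\triple{\cdot})$ would be separable, infinite-dimensional, non-Schur and almost transitive, hence by the Main Lemma would possess a weakly null sequence with $\ell_2$-like $\triple{\cdot}$-spreading model; but by the discussion above that same sequence can generate, in the norm $\triple{\cdot}$, only $\ell_p$- (or $c_0$-)like spreading models, and these are incompatible with the $\ell_2^n$-estimate once $n$ is large. The Schur cases are ruled out by the parenthetical remark.

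To prove the Main Lemma I would combine Dvoretzky's theorem --- which supplies almost-Euclidean finite-dimensional subspaces anywhere in $X$ --- with the homogeneity of the unit sphere provided by almost transitivity. Fix a normalized basic sequence $(w_j)$ in $X$ together with bounded coordinate functionals; for given $m$ and $\delta>0$, applying Dvoretzky inside the tail $\overline{\mathrm{span}}\{w_j:j>N\}$ yields an $m$-dimensional $(1+\delta)$-Euclidean subspace essentially supported ``far out''. The point is then to assemble such Euclidean blocks coherently along a single normalized weakly null sequence $(x_n)$, so that any $m$ consecutive terms are $(1+\delta)$-equivalent to the $\ell_2^m$-basis; almost transitivity lets one relocate a prescribed unit vector of a fresh Euclidean block onto the sphere near the vectors already built, and passing first to an ultrapower $X_{\mathcal{U}}$ along a countably incomplete ultrafilter $\mathcal{U}$ --- which is transitive by Fact~\ref{1.6}, and in which finite almost-Euclidean configurations admit better control --- keeps the bookkeeping in hand. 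A diagonalization over $m$ and $\delta$, followed by a perturbation argument descending from $X_{\mathcal{U}}$ back to $X$ via countable incompleteness, then produces the required weakly null sequence with $\ell_2$-spreading model.

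The hard part will be precisely this coherent assembly step, which is where almost transitivity has to be used in an essential way. Almost transitivity --- indeed transitivity --- moves a single unit vector to a single target, whereas a spreading model requires infinitely many almost-Euclidean blocks to be controlled simultaneously and consistently; bridging the purely local Hilbertian structure granted by Dvoretzky to the asymptotic structure of a spreading model is the heart of the matter, and I expect it to be negotiated by working inside the transitive ultrapower, using stability and compactness of the finite-dimensional data there, and then transferring back. A secondary, more routine difficulty is the case analysis above showing that each $Y$ genuinely lacks an $\ell_2$-spreading model in every equivalent norm, together with the separate treatment of the Schur, i.e.\ $\ell_1$-type, subspaces.
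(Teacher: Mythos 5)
Your reduction rests on the Main Lemma --- that every separable, infinite-dimensional, non-Schur AT space admits an $\ell_2$-spreading model generated by a weakly null sequence --- and this is precisely where the argument has a genuine gap. Almost transitivity combined with Dvoretzky's theorem is only known to control one vector (and hence one prescribed partial sum) at a time; what Dilworth--Randrianantoanina actually prove, and what the paper uses, is Theorem~\ref{prop: blockbasis}: for a scalar sequence $(a_i)$ \emph{fixed in advance}, one can build by induction a normalized block basis $(x_i)$ of the ambient space with $\|\sum_{k\le m}a_kx_k\|\approx\|(a_k)_{k\le m}\|_{\ell_2}$ for all $m$, the block basis depending on $(a_i)$. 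The survey stresses this dependence and explicitly records that it is not even known whether an AT space with a basis contains, for each $n$, $n$ almost disjoint vectors $(1+\eps)$-equivalent to the $\ell_2^n$-basis; your Main Lemma is of at least that strength (a weakly null sequence generating an $\ell_2$-spreading model yields, after a gliding-hump perturbation, such almost disjoint $\ell_2^n$-configurations with a uniform constant). The ``coherent assembly'' step you flag as the hard part is therefore not a bookkeeping issue to be negotiated in a transitive ultrapower: transitivity of $X_\ultrafilter$ (Fact~\ref{1.6}) still prescribes the image of a single vector only, and neither weak nullness nor the block structure you need descends from $X_\ultrafilter$ to $X$ in any evident way. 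In short, the key lemma of your plan is unproved and, as far as is known, out of reach.

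The paper's route deliberately avoids this: since every item of Theorem~\ref{list} is in the end a growth comparison, it suffices to control the single constant sequence $a_i\equiv 1$. One extends the hypothetical AT norm $\triple\cdot$ of $Y$ to the ambient space, applies Theorem~\ref{prop: blockbasis} to obtain a (perturbed) block basis with $\triple{\sum_{k\le n}x_k}$ equivalent to $n^{1/2}$, and contradicts the $n^{1/p}$-type behavior forced on block bases by the original structure ($\ell_p$, $(p,q)$-estimates, asymptotic-$\ell_p$ admissible blocks, Orlicz indices, Kadec--Pe\l czy\'nski for $L_p$, $p>2$). Two further soft spots in your plan: for (c)--(d) the correct statement concerns far-out admissible blocks, not ``every weakly null sequence has an $\ell_p$-subsequence''; and your parenthetical claim that a separable infinite-dimensional AT space is never Schur is itself unproved, whereas the $\ell_1$ and $c_0$ cases are covered either by Cabello S\'anchez's theorem that an AT Asplund or RNP space is super-reflexive \cite{CS3} (see the comments in Section~\ref{subDGZ}) or simply by noting that the construction in Theorem~\ref{prop: blockbasis} makes no use of weak nullness.
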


Their  method relies on an application of the classical Dvoretzky theorem, see e.g. \cite{GM}, which  
 says that in every infinite dimensional Banach space for every 
natural number $m$ and every $\ep>0$, there exists a sequence 
$\{x_i\}_{i=1}^m$ in $X$ that is $(1+\ep)$-equivalent to the standard normalized basis of
$\ell_2^m$. It is   a very simple but key observation, that when $X$ is AT, then the first element $x_1$ in the above sequence can be chosen arbitrarily close to any element of the sphere of $X$.
Moreover, using compactness, given $x_1\in S_X$, there exists $x_2\in S_X$
 that is almost disjoint with $x_1$ (with  respect to a given  Schauder basis), and $\{x_1,x_2\}$ is $(1+\eps)$-equivalent  to the standard basis of two dimensional $\ell_2^2$.

It is not known whether this can be generalized to an arbitrary dimension $n$, that is, whether  for every $n\in \bbN$ every 
AT  space 
$X$ with a Schauder basis contains $n$  vectors that are mutually almost disjoint and 
$(1+\eps)$-equivalent to the standard normalized basis of
$\ell_2^n$.
However using  induction   the authors of \cite{DR} prove existence of block bases in  AT  spaces  that behave like the normalized basis of
$\ell_2^n$ for an arbitrary but   (sic!) fixed sequence of scalars.

 Recall that  a sequence $(x_i)_i$ of  vectors is a \textit{normalized block basis} if  $\|x_i\|=1$ ($i\ge1$), each vector $x_i$ is finitely supported,  and $x_1 < x_2 < x_3<\dots$, that is, for all 
 $i\ge 2$, $\max \supp x_{i-1} < \min \supp x_{i}$.

\begin{theorem} \label{prop: blockbasis}  
Suppose that $X$ has a Schauder basis and contains an infi\-ni\-te-dimensional subspace $Y$ which is almost transitive.  
Then, for any $\varepsilon>0$ and any sequence $(a_i)_i$  of nonzero scalars, there exists a normalized block basis $(x_i)_i$ in $X$
such that, for all $m \ge 1$, we have  
\begin{equation} \label{eq: blockbasis} 
\begin{split}
(1-\varepsilon)\|(a_i)_{i=1}^m\|_{\ell_2}\le \Big\|\sum_{k=1}^m a_k x_k\Big\| \le (1+\varepsilon)\|(a_i)_{i=1}^m\|_{\ell_2}.
\end{split} \end{equation} 
\end{theorem}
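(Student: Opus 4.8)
The plan is to build the block basis $(x_k)$ by induction on $m$, so that after $m$ steps we have a normalized block sequence $x_1<\dots<x_m$ satisfying \eqref{eq: blockbasis} for all partial sums up to length $m$, and with each $x_k$ lying (up to a small perturbation) in $Y$. The engine of the induction is the two-dimensional observation recalled just before the statement: in any infinite-dimensional space (in particular in any tail of a Schauder-basis space), Dvoretzky gives a pair of vectors $(1+\eta)$-equivalent to the $\ell_2^2$-basis, and when the ambient space is AT the \emph{first} of the two vectors can be prescribed arbitrarily close to any given unit vector. I would apply this not in $Y$ directly but in the following way: having fixed $x_1,\dots,x_{m-1}$, I want to find a new $x_m$ such that, \emph{jointly}, $\sum_{k=1}^{m}a_kx_k$ has norm close to $\|(a_i)_{i=1}^m\|_{\ell_2}$; the trick is to view $v:=\sum_{k=1}^{m-1}a_kx_k$ as (almost) a single vector, renormalize it, and then use an almost-transitive isometry $T$ of $Y$ together with Dvoretzky to produce, inside $Y$, a unit vector $x_m'$ that is almost orthogonal to $v/\|v\|$ in the sense that $\|\alpha (v/\|v\|) + \beta x_m'\| \approx (\alpha^2+\beta^2)^{1/2}$ for all scalars $\alpha,\beta$. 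Taking $\alpha=\|v\|\approx\|(a_i)_{i=1}^{m-1}\|_{\ell_2}$ and $\beta=a_m$ then gives exactly the estimate \eqref{eq: blockbasis} at step $m$. The AT hypothesis enters precisely so that the ``first vector'' in the Dvoretzky pair can be taken to be the already-built (normalized) $v$, or rather its image under a well-chosen isometry; this is the inductive heart of the argument.

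Several routine reductions are needed to make the induction go through. First, since $Y$ is only \emph{isomorphic} to a subspace of $X$ and we want a \emph{block} basis of the Schauder basis of $X$, I would at each stage replace the $Y$-vectors by finitely supported vectors of $X$ that are $\eta_k$-close to them, with $\eta_k$ summable and small — this is a standard small-perturbation/gliding-hump argument, using that the basis is Schauder so tails of vectors are negligible, and it only costs another factor $(1\pm\eps')$ in the estimates; one must check that a sequence which is close to one satisfying \eqref{eq: blockbasis} still satisfies it (with slightly worse constants), which follows from a perturbation lemma for the $\ell_2$-norm estimate. Second, to ensure the new block $x_m$ comes \emph{after} $x_{m-1}$ in the support order, I would work in a tail subspace $[e_j]_{j\ge N}$ for $N$ larger than $\max\supp x_{m-1}$; since $Y$ is infinite-dimensional it has infinite-dimensional intersection (up to small perturbation) with every tail, so Dvoretzky still applies there. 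Third, the $\eps$ in the statement has to be split into a summable sequence $(\eps_m)$ of error tolerances, one consumed at each step (both by the perturbation and by the near-orthogonality), arranged so that the accumulated product stays within $(1-\eps,1+\eps)$; this is bookkeeping.

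The step I expect to be the genuine obstacle is the \emph{joint} near-orthogonality of the previously constructed partial sum $v=\sum_{k<m}a_kx_k$ with the new vector $x_m$: Dvoretzky as quoted gives a $(1+\eta)$-$\ell_2^n$ system for any fixed $n$, but the version where the \emph{first vector is prescribed} is only available in dimension (essentially) two — one cannot a priori prescribe the first vector of a high-dimensional Dvoretzky system while also controlling the rest. The resolution, and the point the authors flag as open in higher generality, is that we do not need a full $\ell_2^m$-system: at step $m$ we only need to handle the \emph{one} given scalar vector $(a_i)_{i=1}^m$, so it suffices to split $v$ off as a single coordinate and do a two-dimensional near-orthogonalization of $(v/\|v\|, x_m)$ — and for that, the AT-plus-Dvoretzky observation in dimension two (which \emph{does} allow prescribing the first vector, here $v/\|v\|$, up to an isometry of $Y$) is exactly enough. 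So the proof succeeds for any \emph{fixed} scalar sequence $(a_i)$ precisely because we may re-decompose ``$(m-1)$ old coordinates $+$ $1$ new coordinate'' as ``$1+1$'' at each stage, carrying the accumulated $\ell_2$-mass $\|(a_i)_{i<m}\|_{\ell_2}$ as a scalar; one has to verify carefully that the isometry $T$ of $Y$ used to align $v/\|v\|$ with the prescribable first vector does not disturb the already-fixed $x_1,\dots,x_{m-1}$ — and indeed it need not, since $T$ is applied only to select the \emph{new} vector $x_m:=T^{-1}(\text{second Dvoretzky vector})$, while the old blocks stay put. Writing this alignment step cleanly, and checking the two-dimensional estimate $\|\alpha u+\beta w\|\approx(\alpha^2+\beta^2)^{1/2}$ propagates to the full sum via the inductive hypothesis $\|v\|\approx\|(a_i)_{i<m}\|_{\ell_2}$, is where the real work lies.
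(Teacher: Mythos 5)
Your overall scheme is the right one, and it is the same as in Dilworth--Randrianantoanina \cite{DR} as sketched in the survey: induct on $m$, carry the accumulated mass $\|(a_i)_{i<m}\|_{\ell_2}$ in the single direction $v/\|v\|$, and use almost transitivity of $Y$ together with a two-dimensional Dvoretzky pair to make the new vector almost orthogonal to that direction -- which is exactly why the conclusion is only claimed for the fixed sequence $(a_i)$. The genuine gap is in how you obtain the \emph{block} (disjoint-support) structure. An isometry $T$ of $Y$ bears no relation whatsoever to the Schauder basis of $X$: if the pair $(z_1,z_2)$ is found in the tail $Y\cap[e_j]_{j\ge N}$ (which is indeed infinite dimensional, of codimension at most $N$ in $Y$) and you then align with $v/\|v\|$ by an isometry of $Y$, the vector you actually keep -- $Tz_2$ or $T^{-1}z_2$, depending on the direction of the alignment -- leaves the tail: there is no reason for its head $P_N(\cdot)$ (the basis projection onto $[e_1,\dots,e_N]$) to be small, and a small-norm perturbation cannot move a support past $N$, so gliding-hump does not apply. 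Nor can the alignment be run inside the tail, since the prescribed vector $v/\|v\|$ is supported in the head and $Y\cap[e_j]_{j\ge N}$ has no reason to be almost transitive. As written, your construction produces vectors satisfying the estimates \eqref{eq: blockbasis} but not a block basis, and blockness is essential both to the statement and to its applications (the $\ell_p$ argument uses that block bases of $\ell_p$ are isometrically equivalent to its unit vector basis).

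The missing ingredient is precisely the second half of the observation recalled just before the theorem, the one invoking compactness. At step $m$ take a \emph{long} $(1+\delta)$-$\ell_2^n$ system $(z_1,\dots,z_n)$ in $Y$ and, using almost transitivity of $Y$, replace it by its image under an isometry of $Y$ so that the first vector is $\delta$-close to a unit vector $u\in S_Y$ near $v/\|v\|$ (so you must maintain inductively that the normalized partial sum stays near $S_Y$, choosing the perturbation tolerances in terms of the given $a_k$'s -- legitimate because the scalar sequence is fixed in advance). Then extract from the remaining vectors one with small head: either by compactness/pigeonhole in the ball of the finite-dimensional space $P_NX$ (for $n$ large two of the heads are $\delta$-close, and the normalized difference of the corresponding two vectors still forms an almost-$\ell_2^2$ pair with $u$), or even more simply because $\operatorname{span}(z_2,\dots,z_n)\cap\ker P_N\neq\{0\}$ as soon as $n-1>N$. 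Truncating that vector to a finite support contained in $(N,\infty)$ and normalizing gives the block $x_m$; it is still within $\delta$ of a vector of $Y$, so the inductive hypothesis is preserved, and your estimate propagation and $\varepsilon$-bookkeeping then go through unchanged.
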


We stress that in   Theorem~\ref{prop: blockbasis} the block basis that satisfies \eqref{eq: blockbasis}  depends not only on $\ep>0$, but also on the selected scalar sequence $(a_i)_i$.
It turns out that this is powerful enough to imply several results on nonexistence of AT renormings. As an illustration we show how   it can be used to prove that no subspace of $\ell_p$, $1\le p<\infty$, $p\ne 2$,   admits an equivalent AT renorming.

 The argument is as follows: suppose that 
  a subspace $Y$  of $X$  admits an equivalent AT norm $\triple\cdot$. It is well-known that any equivalent norm 
on a subspace may be extended to an equivalent norm on the whole space, see e.g. \cite[p.\ 55]{FHHSPZ}. 
Then,  by Theorem~\ref{prop: blockbasis} applied to $(X,\triple\cdot)$   with the constant sequence 
$(a_i=1)_{i=1}^\infty$, there exists a disjointly supported sequence  
$(x_k)_k$ in $X$
such that  for all $n\in\bbN$, the norm $\triple{\sum_{k=1}^{n}  x_k}$
is $(1+\varepsilon)$-equivalent to $n^{1/2}$.
However,  $\triple\cdot$ is $C$-equivalent to $\|\cdot\|_{\ell_p}$, and, as is well known, every block basis of  $\ell_p$ is isometrically equivalent to the standard basis of $\ell_p$, see e.g. \cite{LT}, i.e. 
$ \triple{\sum_{k=1}^{n}  x_k}$
is $C$-equivalent to $n^{1/p}$, which gives the contradiction 
 when $p\ne 2$ and $n$ is large enough.

Essentially the same argument works for spaces  $X$ with a Schauder basis 
$(e_i)$ that satisfy $(p,q)$-estimates, where $1 < q \le p < \infty$, that is, such that there exists $C>0$ with
\begin{equation*} \frac{1}{C}\big(\sum_{k=1}^n \|x_k\|^p\big)^{1/p} \le \big\|\sum_{k=1}^n x_k\big\|  \le C\big(\sum_{k=1}^n \|x_k\|^q\big)^{1/q},
 \end{equation*} 
whenever $x_1<x_2<\dots<x_n$. Thus we have:

\begin{cor} \label{cor: pqestimates}
Suppose that a Banach space  $X$ with a Schauder basis $(e_i)$ contains a subspace $Y$ which admits an equivalent almost transitive norm. 
 If   $(e_i)$ satisfies $(p,q)$-estimates, then $q \le 2 \le p$. 
\end{cor}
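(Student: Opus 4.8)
The plan is to derive Corollary~\ref{cor: pqestimates} directly from Theorem~\ref{prop: blockbasis} by the very same argument sketched just above for the special case $X=\ell_p$, only now replacing the ``isometric'' computation of block-basis norms in $\ell_p$ by the two-sided $(p,q)$-estimate. First I would assume, for contradiction, that $Y\subseteq X$ admits an equivalent almost transitive norm $\triple\cdot$, and (exactly as in the $\ell_p$ case) extend $\triple\cdot$ to an equivalent norm on all of $X$, using the standard extension of an equivalent norm from a subspace to the whole space. So we may work with $(X,\triple\cdot)$, which contains the infinite-dimensional almost transitive subspace $(Y,\triple\cdot)$, and with a fixed constant $D\ge 1$ such that $D^{-1}\|x\|\le\triple x\le D\|x\|$ for all $x\in X$, where $\|\cdot\|$ is the original norm of $X$ for which $(e_i)$ satisfies the $(p,q)$-estimates with constant $C$.

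Next I would apply Theorem~\ref{prop: blockbasis} to $(X,\triple\cdot)$ with, say, $\varepsilon=\tfrac12$ and the constant scalar sequence $(a_i=1)_{i\ge1}$: this produces a normalized (in $\triple\cdot$) block basis $(x_k)_k$ of $(X,\triple\cdot)$ with $\tfrac12\sqrt{n}\le\triple{\sum_{k=1}^n x_k}\le\tfrac32\sqrt{n}$ for all $n$. Since the $x_k$ are successive finitely supported blocks with respect to $(e_i)$, the $(p,q)$-estimates apply to them, giving
\begin{equation*}
\frac{1}{C}\Big(\sum_{k=1}^n\|x_k\|^p\Big)^{1/p}\le\Big\|\sum_{k=1}^n x_k\Big\|\le C\Big(\sum_{k=1}^n\|x_k\|^q\Big)^{1/q}.
\end{equation*}
Because $\triple{x_k}=1$ we have $D^{-1}\le\|x_k\|\le D$, so the left- and right-hand sides are comparable to $n^{1/p}$ and $n^{1/q}$ respectively up to constants depending only on $C$ and $D$. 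Combining with the $\triple\cdot$-estimate and again using $D^{-1}\|\cdot\|\le\triple\cdot\le D\|\cdot\|$, we obtain constants $c_1,c_2>0$, independent of $n$, with $c_1\,n^{1/p}\le\sqrt{n}\le c_2\,n^{1/q}$ for all $n\in\mathbb{N}$. Letting $n\to\infty$, the first inequality forces $1/p\le 1/2$, i.e. $p\ge 2$, and the second forces $1/2\le 1/q$, i.e. $q\le 2$; hence $q\le 2\le p$, as claimed.

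I do not anticipate a serious obstacle here: all the real work has already been done in Theorem~\ref{prop: blockbasis}, whose proof (via the induction producing block bases $(1+\varepsilon)$-equivalent to $\ell_2^n$ for a prescribed scalar sequence) is the substantive part. The only point requiring a little care is bookkeeping of the constants --- making sure that passing between $\|\cdot\|$ and $\triple\cdot$, and between ``$\triple\cdot$-normalized'' and ``$\|\cdot\|$-normalized'' blocks, only costs fixed multiplicative factors and never anything growing with $n$ --- but this is routine since $\triple\cdot$ and $\|\cdot\|$ are equivalent norms and the $x_k$ have $\triple\cdot$-norm exactly $1$. One should also note that the block basis $(x_k)$ lives in $X$ (not necessarily in $Y$), which is exactly why the hypothesis is phrased as ``$X$ has a Schauder basis and contains an AT subspace $Y$'': Theorem~\ref{prop: blockbasis} is designed precisely to extract disjointly supported $\ell_2$-like blocks in the ambient space from the presence of an AT subspace, so no extra argument about the position of $Y$ inside $X$ is needed.
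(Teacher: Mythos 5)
Your proof is correct and follows exactly the route the paper intends: extend the AT norm from $Y$ to $X$, apply Theorem~\ref{prop: blockbasis} with the constant sequence $(a_i=1)$ to get $\triple\cdot$-normalized blocks whose partial sums grow like $\sqrt{n}$, and compare with the $(p,q)$-estimates (in place of the isometric $\ell_p$ computation) to force $q\le 2\le p$. Your bookkeeping of the equivalence constants $C$ and $D$ is the only detail the paper leaves implicit, and you handle it correctly.
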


Using similar reasoning and known properties of Banach spaces the authors of \cite{DR} obtain a list of classes of Banach spaces such that  none of their subspaces admits an equivalent AT renorming, stated in Theorem~\ref{list}.

\begin{rem} 
For a Banach space $X$, let
$$ \FR(X):= \{1 \le r \le \infty \colon \ell_r \text{ is finitely representable in $X$}\}.$$

The proof of 
 Theorem~\ref{prop: blockbasis}, is valid not only for the exponent 2, as stated, but (after the obvious modifications)  for any exponent $r \in \FR(Y)$,   where  $Y$ is an infinite dimensional AT subspace of $X$ (and $X$ has a Schauder basis). 
\end{rem}

By the Maurey-Pisier theorem  \cite{MP}, this holds for all $r\in [p_Y,2] \cup \{q_Y\}$,
where $p_Y := \sup\{ 1 \le p \le 2 \colon \text{$Y$ has type $p$}\}$ and $ q_Y := \inf\{ 2 \le q < \infty \colon \text{$Y$ has cotype $q$}\}.$
For spaces with an unconditional basis, using results of Sari \cite{S04}, the authors of \cite{DR} then obtain a stronger version of  
Theorem~\ref{prop: blockbasis} for certain values of $r$.

\begin{theorem} \lb{upper2}
Suppose that $X$ has an unconditional basis $(e_i)_i$.  If $X$ has an equivalent
almost transitive renorming $\triple\cdot$, then   for $r=p_X$ and $r=q_X$ and for all $n \ge 1$ and $\varepsilon>0$, there exist disjointly  supported vectors $(x_i)_{i=1}^n\subset X$
 such that $(x_i)_{i=1}^n$ in the norm  $\triple\cdot$ is  $(1+\varepsilon)$-equivalent to the unit vector basis of $\ell_r^n$. 
\end{theorem}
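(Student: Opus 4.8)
\noindent\emph{Plan of proof.}\ The plan is to reduce to the renormed space, then combine Theorem~\ref{prop: blockbasis} (in the extended form recorded in the Remark following Corollary~\ref{cor: pqestimates}) with a stabilisation result of Sari \cite{S04}, the bridge between the two being the behaviour of the envelope functions of block bases, which is where almost transitivity is used. First I would observe that type, cotype and unconditionality of the basis are isomorphic invariants, and that ``disjointly supported'' refers only to the basis $(e_i)$ and not to the norm; hence one may assume from the outset that $X$ itself is almost transitive, with unconditional basis $(e_i)$, and prove the statement with $\|\cdot\|=\triple\cdot$ and $r\in\{p_X,q_X\}$. By the Maurey--Pisier theorem \cite{MP} one has $p_X,q_X\in\FR(X)$, so by the Remark the conclusion of Theorem~\ref{prop: blockbasis} is available with exponent $r\in\{p_X,q_X\}$: for any single coefficient sequence $(a_i)$ and any $\delta>0$ there is a normalized block basis realising the two-sided $\ell_r$-estimate for that one sequence. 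The whole point of Theorem~\ref{upper2} is that it demands a single \emph{finite} block basis that is $(1+\varepsilon)$-equivalent to $\ell_r^n$, i.e.\ good for \emph{all} coefficient choices of length $n$ simultaneously.

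Next I would use almost transitivity to pin down the two block-basis envelope functions $\varphi(n)=\sup\|\sum_{i=1}^n v_i\|$ and $\psi(n)=\inf\|\sum_{i=1}^n v_i\|$, the supremum and infimum being over normalized disjointly supported $n$-tuples $(v_i)$. Unconditionality makes $\varphi$ essentially submultiplicative and $\psi$ essentially supermultiplicative, so power-type exponents are defined by $\varphi(n)\approx n^{1/\overline r}$ and $\psi(n)\approx n^{1/\underline r}$. Using unconditionality together with Kahane's inequality, type $p$ of $X$ forces $\|\sum_{i=1}^n v_i\|\lesssim n^{1/p}$ for normalized disjoint $n$-tuples, so letting $p\uparrow p_X$ gives $\overline r\ge p_X$; dually, cotype $q$ gives $\psi(n)\gtrsim n^{1/q}$ and hence $\underline r\le q_X$. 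On the other hand, applying the extended Theorem~\ref{prop: blockbasis} with the constant sequence $a_i\equiv1$ and $r=p_X$ produces, for each $\delta>0$, a block basis with $\|\sum_{k=1}^m v_k\|\ge(1-\delta)m^{1/p_X}$ for all $m$, whence $\varphi(n)\ge(1-\delta)n^{1/p_X}$ and so $\overline r=p_X$; the same with $r=q_X$ gives $\psi(n)\le(1+\delta)n^{1/q_X}$ and $\underline r=q_X$. So almost transitivity squeezes $\varphi$ and $\psi$ into essentially exact power behaviour at the exponents $p_X$ and $q_X$.

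Finally I would invoke Sari's theorem \cite{S04}: in a Banach space with an unconditional basis, if the upper envelope function of block bases is of power type $p_X$ then $\ell_{p_X}^n$ is block finitely representable in $X$ — realised, up to a factor $1+\varepsilon$, by disjointly supported normalized vectors — and, symmetrically, a lower envelope of power type $q_X$ yields the block finite representability of $\ell_{q_X}^n$. Applying this once with $r=p_X$ and once with $r=q_X$ gives Theorem~\ref{upper2}.

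The hard part will be this last step, namely upgrading control of the constant-coefficient (``fundamental'') norm to control of \emph{all} coefficients by one finite block basis. A naive splitting of the coefficients into dyadic layers loses a $\log n$ factor and so is useless for the $(1+\varepsilon)$-equivalence; what actually does the job, and is the real content borrowed from \cite{S04}, is a stabilisation of the asymptotic block structure in the spirit of Krivine's theorem and Brunel--Sucheston spreading models, producing one block basis of length $n$ that is simultaneously near-extremal for every coefficient pattern, together with the rigidity fact that near-extremality of the fundamental function at a power type $r$ already forces the whole $\ell_r^n$ geometry.
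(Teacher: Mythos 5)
Your proposal is correct and takes essentially the same route as the paper: the survey presents this theorem as a result of \cite{DR} obtained by combining Theorem~\ref{prop: blockbasis} in the extended form of the Remark (with $r=p_X,q_X$ available because $p_X,q_X\in\FR(X)$ by Maurey--Pisier) with Sari's envelope-function/stabilisation results from \cite{S04}, which is exactly the scheme you describe, down to using almost transitivity only to pin the fundamental functions of the upper and lower envelopes at the power types $p_X$ and $q_X$. As you acknowledge, the decisive upgrade from constant-coefficient control to full $(1+\varepsilon)$-equivalence with $\ell_r^n$ is delegated to \cite{S04}, precisely as in \cite{DR}, so there is nothing to object to.
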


\begin{small}
\noindent{\em Comments:} 
\begin{itemized}
\item[(1)]
The ``extreme'' cases in Theorem \ref{list}(a) namely subspaces of $\ell_1$ and $c_0$ are much easier and were proven earlier by Cabello S\'anchez   \cite[Theorem 2.1]{CS3}: {\em every space which is either Asplund or has the Radon-Nikod\'ym property with an AT re\-norming must be super-reflexive}. Actually these spaces and also those in entries {\rm (a), (b), (f)} lack convex transitive norms, by \cite[Corollary 6.9.]{becerra}. 
 \item[(2)]
 It is instructive to observe that  Theorem~\ref{prop: blockbasis}, when applied to the Haar basis of $L_p$,
does not contradict the fact that $L_p$ is AT. This is  because the unit vector basis of $\ell_2^m$ is
($1+\varepsilon$)-equivalent to a block basis of the Haar basis.  
\item[(3)]
A natural question, in the light of the third and fourth item of Theorem~\ref{list}, is whether every super-reflexive space which does not admit a subspace with an AT norm must contain an asymptotic-$\ell_p$ subspace.  The authors of \cite{DR}   answer this question negatively using as an example a space constructed in  \cite{CKKM}.

\item[(4)] This comment is in response to questions by Gilles Godefroy.
A Banach space $X$  has property $(M)$ (respectively, property 
$(m_p)$ for some $1\le p\le\infty$) if for each weak-null sequence $(x_n)$ and all $x,y\in X$ with $\|x\|=\|y\|$, 
$\limsup_n\|x+x_n\|=\limsup_n\|y+x_n\|$ (respectively, 
$\limsup_n\|x+x_n\|=\big(\|x\|^p+(\limsup_n\|x_n\|)^p\big)^{1/p}$).  These asymptotic  properties of the norm were introduced by Kalton in his study of $M$-ideals \cite{K-Mideals} and by Kalton and Werner in \cite{K+W}, respectively. The papers \cite{K-Mideals} and \cite{K+W} contain a wealth of examples and counterexamples as well as the ultimate connections between properties $(M)$, $(m_p)$, and $(M^*)$ and $M$-ideals of compact operators.

The presence of an equivalent norm with property $(M)$ has a considerable impact on the isomorphic structure of the underlying Banach space and has been used in \cite{K-Mideals} and \cite{K+W} in particular to obtain important characterizations of subspaces and quotients of $L_p$, the Schatten class $c_p$, and of $\ell_p$-sums of finite dimensional spaces.

 Dutta and Godard   \cite[Proposition 2.3]{D+G} showed that norms with property $(M)$ have ``optimal''  moduli of asymptotic uniform smoothness and asymtotic uniform convexity among all other equivalent renormings of the space.
 In \cite[Theorem~2.6]{D+G} they also identified an equivalent condition in terms of the ``optimal''  growth of the Szlenk indices of $X$ and $X^*$ for the separable reflexive space $X$ with property $(M)$ to be isomorphic to a subspace of an 
 $\ell_p$-sum of   finite dimensional normed spaces, where $p=\inf\{r: \sup_{\ep>0} \ep^r Sz(X^*,\ep)<\infty\}$. By Theorem~\ref{list}(b), in this situation if $p\ne 2$ no subspace of $X$ can be renormed to be AT.

Theorem~3.3 of \cite{K+W} says, in particular, that if a Banach space $X$ is separable, does not contain an isomorphic copy of $\ell_1$, and has property 
$(m_p)$ for some $1<p<\infty$, then $X$ is isomorphic to a subspace of an $\ell_p$-sum of  finite dimensional normed spaces. Thus, by Theorem~\ref{list}(b), if a Banach space $X$ is separable, does not contain an isomorphic copy of $\ell_1$, and can be equivalently renormed to satisfy property 
$(m_p)$ for some $1<p<\infty$, $p\ne 2$, then $X$ does not contain any infinite dimensional subspace that can be renormed to be AT.
However we do not know if the same holds true only under the assumption that $X$ is non-Hilbertian and has property $(M)$, or even whether a non-Hilbertian norm with property $(M)$ can be AT. We note that by \cite[Remark after Proposition~4.1]{K-Mideals} every reflexive Orlicz space which is not isomorphic to any $\ell_p$ can be renormed to satisfy property $(M)$ and simultaneously fail property $(m_p)$ for every $p$. By Theorem~\ref{list}(e) we know that if  of an Orlicz space $\ell_F$ does not contain a subspace isomorphic to $\ell_2$, then no subspace of $\ell_F$ can be renormed to be AT.

At this time the most that we can say about general spaces with property $(M)$ and AT subspaces  is the following.
It follows from \cite[Lemma~2.1]{DR} 
and \cite[Lemma~3.6 and the remark before it]{K-Mideals} that if $X$ has a Schauder basis and   property  
$(M)$, and if an infinite dimensional subspace of $Y$ of $X$ is AT, then  $Y$ contains a subspace   isomorphic to $\ell_2$ and, by 
\cite[Proposition~3.8]{K-Mideals}, every infinite dimensional subspace $Z$ of $Y$, for every $\ep>0$, contains a  subspace $E\subseteq Z$ with $d_{BM}(E,\ell_2)<1+\ep$.
\end{itemized}
\end{small}

\subsection{Spaces with multiple maximal bounded subgroups of ${\GL(X)}$} \lb{s:manymax}

We have seen in Section 2.1 that there exist Banach spaces  $X$ without maximal  bounded subgroups of $\GL(X)$. On the other hand, in this section, following \cite{DR}, we will show examples of spaces    with multiple different (i.e. non-conjugate)  maximal  equivalent renormings.

We say that two   equivalent norms 
$\|.\|$ and $\triple\cdot$ on   $X$ are  {\it conjugate} 
if there exists a bounded linear automorphism $T$ of $X$ such that
$\|x\|=\triple{Tx}$ for all $x \in X$.  Note that in this case, $T$ induces an bilipschitz homemorphism between the unit spheres of $X$ under the two norms. 
On the other hand we say that the groups
 $\Isom(X,\|.\|)$ and $\Isom (X,\triple\cdot)$ are  conjugate if they are conjugate as   subgroups of $\GL(X)$, that is, 
if there exists a bounded linear automorphism $L$ of $X$ such that
$$\Isom(X,\|.\|)=L^{-1}\Isom (X,\triple\cdot)L.$$
Note that if two norms are conjugate then their respective isometry groups are conjugate, but the converse does not hold (the isometry group for the two norms could e.g. be trivial and therefore equal without the norms being conjugate). However the two notions are equivalent in the following important case, which will simplify considerably some of our proofs.

\begin{lemma}
Two convex transitive norms on a Banach space are conjugate if (and only if) they have conjugate isometry groups.
\end{lemma}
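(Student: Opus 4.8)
The forward implication has already been recorded above: if $\|\cdot\|$ and $\triple{\cdot}$ are conjugate then so are their isometry groups. So only the reverse implication requires an argument, and the plan is to reduce it to the characterization, recalled in Section~\ref{subtrans}, of convex transitivity as \emph{unique maximality}: a convex transitive norm has the property that every equivalent norm with the same isometry group is a positive multiple of it.

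Assume $L\in\GL(X)$ satisfies $\Isom(X,\|\cdot\|)=L^{-1}\,\Isom(X,\triple{\cdot})\,L$. First I would transport $\triple{\cdot}$ through $L$ by setting $\triple{x}'=\triple{Lx}$. Since $L$ and $L^{-1}$ are bounded, $\triple{\cdot}'$ is an equivalent norm on $X$, and $L\colon (X,\triple{\cdot}')\to(X,\triple{\cdot})$ is a surjective linear isometry. Its isometry group is computed by a one-line substitution: $S$ preserves $\triple{\cdot}'$ exactly when $\triple{LSx}=\triple{Lx}$ for all $x$, i.e.\ (putting $y=Lx$) when $LSL^{-1}$ preserves $\triple{\cdot}$, and $S$ is onto iff $LSL^{-1}$ is onto; hence $\Isom(X,\triple{\cdot}')=L^{-1}\,\Isom(X,\triple{\cdot})\,L=\Isom(X,\|\cdot\|)$.

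Now $\|\cdot\|$ and $\triple{\cdot}'$ are equivalent norms on $X$ with the same isometry group, and $\|\cdot\|$ is convex transitive, hence uniquely maximal; therefore $\triple{\cdot}'=c\,\|\cdot\|$ for some $c>0$, that is, $\triple{Lx}=c\|x\|$ for all $x\in X$. By positive homogeneity of $\triple{\cdot}$ this rewrites as $\|x\|=\triple{c^{-1}Lx}$, so $T:=c^{-1}L\in\GL(X)$ exhibits $\|\cdot\|$ and $\triple{\cdot}$ as conjugate. There is no serious obstacle here; the only point to keep in mind is that $\triple{\cdot}'$ must be recognized as a genuine \emph{equivalent} norm so that the unique-maximality of $\|\cdot\|$ applies to it (and, incidentally, the argument uses convex transitivity of only one of the two norms). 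As an alternative avoiding the auxiliary norm, one can argue directly at the level of orbits: the conjugacy $g\mapsto LgL^{-1}$ carries the $\Isom(X,\|\cdot\|)$-orbit of a $\|\cdot\|$-unit vector $x$ onto $L$ applied to that orbit, whose $\triple{\cdot}$-closed convex hull equals $\triple{Lx}\,B_{\triple{\cdot}}$ by convex transitivity of $\triple{\cdot}$ and equals $L(B_{\|\cdot\|})$ by convex transitivity of $\|\cdot\|$; comparing the two yields $\triple{Lx'}=\triple{Lx}\,\|x'\|$ for all $x'$, which is the same conclusion.
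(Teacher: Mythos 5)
Your proof is correct, and your main argument takes a genuinely different route from the paper's. The paper works directly from the definition of convex transitivity: it normalizes $L$ so that a fixed $\|\cdot\|$-unit vector $x_0$ satisfies $|Lx_0|=1$, observes that $L$ maps the $\Isom(\|\cdot\|)$-orbit of $x_0$ into the unit sphere of the second norm, and deduces from CT of $\|\cdot\|$ that $L$ is contractive; the reverse contraction is then obtained ``by symmetry'' using CT of the other norm, so both norms' convex transitivity is invoked and the argument is self-contained. You instead transport the second norm through $L$, observe that the transported norm has isometry group exactly $\Isom(X,\|\cdot\|)$, and quote the equivalence (stated in Section~\ref{subtrans}) between convex transitivity and unique maximality to conclude the transported norm is a positive multiple of $\|\cdot\|$. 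This is shorter, and it isolates a mild strengthening the paper does not record: convex transitivity of only one of the two norms is needed. The trade-off is that your route leans on the CT~$\Leftrightarrow$~uniquely maximal characterization, which the survey states without proof, whereas the paper's argument (and your alternative orbit argument, which via $\overline{\conv}\,L(O)=L(B_{\|\cdot\|})=\triple{Lx}\,B_{\triple{\cdot}}$ is essentially the paper's proof compressed into a single identity of sets) needs nothing beyond the definition of convex transitivity. Both versions are sound; just make sure, as you did, that the conjugating operator is rescaled at the end so that it becomes a surjective isometry between the two norms.
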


\begin{proof}
Let $\|\cdot\|$ and $|\cdot|$ be CT norms on $X$ and assume that 
$\Isom(\|\cdot\|)$ and $\Isom(|\cdot|)$ are conjugate through $L\in\operatorname{GL}(X)$ so that $T$ is an isometry of $\|\cdot\|$ if and only if $\tilde T=LTL^{-1}$ is  an isometry of $|\cdot|$. Pick $x_0\in X$ so that 
$\|x_0\|=1$. Multiplying $L$ by a positive constant if necessary, we can and do assume WLOG that $|Lx_0|=1$. Then for all $T\in \Isom(\|\cdot\|)$, 
$\tilde{T}=LTL^{-1}\in \Isom(|\cdot|)$ and thus $|L(Tx_0)|= |LT(L^{-1}Lx_0)|=|\tilde{T}Lx_0|=|Lx_0|=1$. Hence, by CT, for all
 $x\in X$, $|Lx|\le \|x\|$.

Let us then check that the CT of $\|\cdot\|$ entails that $L:(X,\|\cdot\|)\to (X,|\cdot|)$ is contractive. By symmetry we shall also have that $L^{-1}:(X,|\cdot|)\to (X,\|\cdot\|)$ is contractive  and so  $\|\cdot\|$ and $|\cdot|$ are conjugate. 
As the unit ball of $\|\cdot\|$ is the closed convex hull of the orbit of $x$ under the action of $\Isom(\|\cdot\|)$ it suffices to see that if $y=Tx$ for some $T\in \Isom(\|\cdot\|)$, then $|Ly|=1$. Which is easy: $Ly=LTx= LTL^{-1}Lx=\tilde{T}Lx$ and $\tilde{T}$ is an isometry of $|\cdot|$. 
\end{proof}
 
Constructions in \cite{DR} use  vector valued spaces defined as follows.

Let $X$  be a Banach space with a 1-unconditional basis  
$E=(e_k)_{k\in \bbN}$ and $(Y_k)_{k\in \bbN}$ be Banach spaces. Then $$Z=\Big(\sum_{k\in\bbN}\oplus Y_k\Big)_E$$
is the space of all sequences $(z_k)_{k\in \bbN}$ such that for all $k$, $z_k\in Y_k$, and 
$$\|(z_k)_k\|_Z:=\Big\| \sum_{k\in\bbN}\|z_k\|_{Y_k}\ e_k\Big\|_X$$
is finite. When $E$ is   a standard basis of $\ell_p$, we will sometimes write 
 $Z=(\sum_{k\in\bbN}\oplus Y_k)_{\ell_p}$ to mean the same as 
 $Z=(\sum_{k\in\bbN}\oplus Y_k)_E$.

Rosenthal \cite{R86} characterized
isometries of  spaces of this form in the case when all spaces $Y_k$  are hilbertian and  $X$  is a {\it pure}  space with a normalized 1-unconditional basis. 
A Banach space $X$ with a normalized 1-unconditional basis $\{e_\g\}_{\g\in \G}$ is  called {\it impure} if there exist $j\ne k$ in $\G$ 
so that $(e_j,e_k)$ is isometrically equivalent to the usual basis of 2-dimensional $\ell_2^2$, and for all $x, x'\in \Span(e_j,e_k)$ with $\|x\|=\|x'\|$ and for all $y\in\Span\{e_m:m\ne j,k\}$ we have  $\|x+y\|=\|x'+y\|$ (\cite{R86} Corollary 3.4).
Otherwise the space is called {\it pure}. 
The  space $\ell_p$, $1\le p<\infty$, $p\ne2$, is a natural example of a pure space. 

 Rosenthal proved the following result.

\begin{theorem} \cite[Theorem~3.12]{R86}\label{FHS}
Let $X$  be a pure space with a 1-un\-con\-di\-tional basis  $E=\{e_\g\}_{\g\in \G}$, $(H_\g)_{\g\in \G}$ be Hilbert spaces all of dimension at least 2, and let $Z=(\sum_\G\oplus H_\g)_E$. 

Let $P(Z)$ denote the set of all bijections $\s:\G\to\G$ so that
\begin{enumerate}
\item[(a)] $\{e_{\s(\g)}\}_{\g\in \G}$ is isometrically equivalent to $\{e_\g\}_{\g\in \G}$, and
\item[(b)] $H_{\s(\g)}$ is isometric to $H_\g$ for all $\g\in \G$.
\end{enumerate}

Then $T:Z\to Z$ is a surjective isometry if and only if there exist $\s\in P(Z)$ and surjective linear isometries $T_\g: H_\g\to H_{\s(\g)}$, for all $\g\in \G$,  so that for all $z=(z_\g)_{\g\in \G}$ in $Z$, and for all $\g\in \G$, 
\begin{equation}\lb{isofiber}
(Tz)_{\s(\g)}=T_\g(z_\g).
\end{equation}  
\end{theorem}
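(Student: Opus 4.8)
The plan is to characterize isometries of $Z = (\sum_\G \oplus H_\g)_E$ by first understanding how an isometry must interact with the ``fiber'' structure, and then reducing the combinatorial part to a statement about the basis $E$ of the pure space $X$. The key structural observation is that the natural copies of the Hilbert spaces $H_\g$ sitting inside $Z$ (as the sequences supported on a single coordinate $\g$) should be, up to isometry, the only ``local Hilbertian pieces'' — this is exactly where purity of $X$ is used, since impurity would allow a genuinely different decomposition via an $\ell_2^2$-block built from two basis vectors. So the first step is: given a surjective isometry $T$ of $Z$, show that $T$ maps each fiber $H_\g$ (identified with its image in $Z$) onto some fiber $H_{\s(\g)}$. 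To do this I would analyze the extreme structure, or rather the family of ``flat'' 2-dimensional Hilbertian subspaces of $Z$ of a suitable kind, and argue that the only ones fixed by the relevant symmetries are the fibers (or sub-fibers) — this relies on the purity hypothesis which says precisely that no such $\ell_2^2$-configuration spanned by two basis vectors $e_j, e_k$ can be ``absorbed'' into the ambient norm the way a genuine fiber can. Once one knows $T$ permutes the fibers, defining $\s$ by $T(H_\g) = H_{\s(\g)}$ and $T_\g := T|_{H_\g} : H_\g \to H_{\s(\g)}$ gives a well-defined bijection $\s$ of $\G$ and surjective linear isometries $T_\g$.

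The second step is to verify that $\s \in P(Z)$, i.e. that $\s$ satisfies conditions (a) and (b) of the theorem. Condition (b), that $H_{\s(\g)}$ is isometric to $H_\g$, is immediate from the existence of the surjective isometry $T_\g$. For condition (a), that the reindexed basis $\{e_{\s(\g)}\}$ is isometrically equivalent to $\{e_\g\}$, I would pass to quotients or to the induced norm on ``scalar sequences'': choosing one norm-one vector $\xi_\g \in H_\g$ for each $\g$, the map sending the scalar family $(a_\g)$ to $(a_\g \xi_\g) \in Z$ is an isometry from $X$ onto a closed subspace of $Z$, and one checks that $T$ restricted appropriately (after composing with the fiber isometries $T_\g^{-1}$, which are norm-preserving) induces an isometry of $X$ carrying $e_\g$ to a unimodular multiple of $e_{\s(\g)}$; since the basis is 1-unconditional, unimodular multiples do not affect isometric equivalence, so (a) follows. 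This gives the ``only if'' direction with the formula $(Tz)_{\s(\g)} = T_\g(z_\g)$ coming directly from the fiberwise definition of $T_\g$.

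The ``if'' direction is the routine verification: given $\s \in P(Z)$ and surjective linear isometries $T_\g : H_\g \to H_{\s(\g)}$, one defines $T$ by the formula and computes
\[
\|Tz\|_Z = \Big\| \sum_\g \|(Tz)_{\s(\g)}\|_{H_{\s(\g)}}\, e_{\s(\g)} \Big\|_X = \Big\| \sum_\g \|T_\g(z_\g)\|_{H_{\s(\g)}}\, e_{\s(\g)} \Big\|_X = \Big\| \sum_\g \|z_\g\|_{H_\g}\, e_{\s(\g)} \Big\|_X,
\]
and this last quantity equals $\|z\|_Z$ because $\{e_{\s(\g)}\}$ is isometrically equivalent to $\{e_\g\}$ by condition (a); surjectivity of $T$ follows since each $T_\g$ is onto and $\s$ is a bijection, with inverse given by the analogous formula with $\s^{-1}$ and $T_{\s^{-1}(\g)}^{-1}$.

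The main obstacle is the first step: proving that an arbitrary surjective isometry must respect the fiber decomposition. Linearity and norm-preservation alone do not obviously force this; one genuinely needs a geometric invariant of $Z$ that detects the fibers, and the delicate point is precisely that such an invariant exists only because $X$ is pure — in an impure $X$ one could rotate inside an $\ell_2^2$ spanned by two basis vectors without changing the norm, producing an isometry that does not permute fibers. Making this detection argument precise (identifying the right class of Hilbertian subspaces of $Z$ and showing purity is exactly what pins them down to be the fibers) is the technical heart of Rosenthal's proof, and I would expect to invoke his Corollary~3.4 characterization of impurity as the pivotal lemma at this stage.
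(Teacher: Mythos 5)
The ``if'' direction of your argument is fine, and your verification of conditions (a) and (b) once the fiber structure is known to be preserved is also essentially correct: given that $T(H_\g)=H_{\s(\g)}$, testing $T$ on vectors of the form $(a_\g\xi_\g)_\g$ with $\|\xi_\g\|=1$ and using $1$-unconditionality does yield the isometric equivalence of $\{e_\g\}$ and $\{e_{\s(\g)}\}$. Note, for the record, that the survey itself offers no proof of this statement at all --- it is quoted verbatim from Rosenthal \cite[Theorem~3.12]{R86} --- so the only question is whether your text constitutes a proof on its own.

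It does not, and the gap is exactly where you locate it: the claim that a surjective isometry of $Z$ must carry each fiber $H_\g$ onto some fiber $H_{\s(\g)}$ is the entire content of the theorem, and your proposal only gestures at how one might prove it (``analyze the extreme structure, or rather the family of `flat' 2-dimensional Hilbertian subspaces of a suitable kind''), without specifying the invariant, the class of subspaces, or the argument that purity pins them down to the fibers. Indeed you concede this yourself by saying you ``would expect to invoke'' Rosenthal's Corollary~3.4 and that making the detection argument precise ``is the technical heart of Rosenthal's proof.'' Deferring the heart of the proof to the very paper being proved is not a proof; moreover the sketch as stated is too coarse, since purity is not merely the absence of a pair $e_j,e_k$ spanning an isometric $\ell_2^2$ --- it also involves the interchangeability of such a pair relative to the remaining coordinates --- and a correct detection argument must show that the Hilbertian fiber decomposition of $Z$ is an isometric invariant (e.g.\ that the canonical fibers are intrinsically characterized), not merely that no ``rotation across two basis directions'' is available. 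Until that step is supplied with an actual argument, the ``only if'' direction remains unproven.
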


Theorem~\ref{FHS} is valid for both real and complex spaces. For separable complex Banach spaces it was proved earlier  by Fleming and Jamison \cite{FJ74}, cf. also \cite{KW76}.

 Dilworth and  Randrianantoanina \cite{DR}, using 
Theorem~\ref{FHS},  described a countable number of  different equivalent maximal norms on  every Banach space with  a 1-symmetric basis, which is not isomorphic to $\ell_2$.

\begin{theorem} \lb{nmax}
Suppose $X=\ell_p$, $1\le p<\infty$, $p\ne2$, or, more generally, $X$ is a pure Banach space with a   1-symmetric  basis $E=\{e_k\}_{k=1}^\infty$,  and $X$ is not isomorphic to $\ell_2$.
Then $X$ admits countably many mutually non-conjugate equivalent maximal renormnings.

Namely, for
 $n\in\bbN, n\ge 2$, let 
$$Z_n=Z_n(X)=(\sum_{k=1}^\infty \oplus H_k)_E,$$
 where, for all $k\in\bbN$, $H_k$ is isometric to $\ell_2^n$.
 Then  $Z_n$ is  isomorphic to $X$, the isometry group of $Z_n$ is maximal, and,  
  if $n \ne m$, the groups $\Isom(Z_n)$  and $\Isom(Z_m)$ are not conjugate in   $\GL(X)$.
\end{theorem}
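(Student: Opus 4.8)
The plan is to prove the three assertions in turn, using Rosenthal's description of the isometries (Theorem~\ref{FHS}) as the central tool. First I would check that $Z_n$ is isomorphic to $X$: since each $H_k$ has fixed finite dimension $n$ and $E$ is $1$-symmetric, the spaces $H_k = \ell_2^n$ are uniformly isomorphic to $\ell_1^n$ (say), hence $Z_n = (\sum_k \oplus H_k)_E$ is isomorphic to $(\sum_k \oplus \ell_1^n)_E$, which in turn, using the $1$-symmetry of $E$ to regroup coordinates into blocks of length $n$, is isomorphic to $X$ itself. (One could alternatively invoke a Pe\l czy\'nski decomposition argument, but the direct blocking argument suffices here.)

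\smallskip

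Next I would establish maximality of $\Isom(Z_n)$. The strategy is to show that $Z_n$, in its given norm, is \emph{convex transitive}, hence uniquely maximal, hence maximal. By Theorem~\ref{FHS}, every permutation $\s$ of $\bbN$ with $\{e_{\s(k)}\}$ isometrically equivalent to $\{e_k\}$ — and, since $E$ is $1$-symmetric, \emph{every} permutation qualifies — together with any choice of isometries $T_k : H_k \to H_{\s(k)}$, induces a surjective isometry of $Z_n$. Given $z = (z_k)_k$ of norm one, first apply isometries $T_k$ fiberwise to rotate each $z_k$ to a prescribed vector in $H_k$ of the same norm; this already shows that the orbit of $z$ contains every $w = (w_k)_k$ with $\|w_k\|_{H_k} = \|z_k\|_{H_k}$ for all $k$. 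Then, using the $1$-symmetry and $1$-unconditionality of $E$, the closed convex hull of the set of such $w$ (as $\s$ ranges over finite permutations and over sign/coordinate changes available in $\Isom(X)$) is the whole unit ball of $X$ evaluated on the scalar sequence $(\|z_k\|_{H_k})_k$; combined with the fiberwise rotations this gives that the closed convex hull of the orbit of $z$ is $B_{Z_n}$. Hence $Z_n$ is convex transitive and its norm is maximal.

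\smallskip

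Finally, the non-conjugacy of $\Isom(Z_n)$ and $\Isom(Z_m)$ for $n \ne m$ — which I expect to be the main obstacle — I would approach by extracting a conjugacy-invariant of the group that detects $n$. The natural candidate: the maximal dimension of a subspace on which the identity component (or a suitable subgroup) acts as the full unitary group, read off from the fiber structure. Concretely, by Theorem~\ref{FHS} the subgroup of $\Isom(Z_n)$ consisting of isometries with $\s = \mathrm{id}$ is $\prod_k \Isom(H_k) = \prod_k U(n)$ acting diagonally block by block; its minimal nonzero invariant subspaces are exactly the fibers $H_k$, each of dimension $n$. If $L \in \GL(X)$ conjugated $\Isom(Z_n)$ onto $\Isom(Z_m)$, it would carry this distinguished normal subgroup to the corresponding one for $Z_m$, and hence carry the family of $n$-dimensional fibers to a family of $m$-dimensional fibers in a way compatible with the (isomorphic, via $L$) linear structure. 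Since convex transitive norms are involved, the preceding Lemma lets us upgrade a conjugacy of the isometry groups to a conjugacy of the norms, i.e. to a linear isomorphism $L$ with $\|Lx\|_{Z_m} = c\|x\|_{Z_n}$; such an $L$ would then be a surjective isometry (up to scalar) between $Z_n$ and $Z_m$, and reexamining Theorem~\ref{FHS} for isometries \emph{between} two such spaces forces the fiber dimensions to agree, contradiction. The delicate point is making rigorous the claim that the "fiber dimension" is genuinely an isomorphic invariant of the pair (space, isometry group) and not an artifact of the chosen representation; purity of $X$ is exactly what rules out accidental extra isometries that could scramble fibers of different sizes, and this is where Rosenthal's theorem must be invoked with care.
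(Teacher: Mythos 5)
Your first step (that $Z_n\simeq X$) is fine and essentially the paper's argument, but the maximality step contains a genuine gap: $Z_n$ is \emph{not} convex transitive (it is not even almost transitive), so the route ``CT $\Rightarrow$ uniquely maximal $\Rightarrow$ maximal'' is unavailable. By Theorem~\ref{FHS} every isometry of $Z_n$ permutes the fibers and acts by isometries inside them, so the multiset of fiber norms $\{\|z_k\|_{H_k}\}_k$ is invariant under the whole group, and the closed convex hull of an orbit can never be the unit ball. Concretely, in $Z_n(\ell_p)$ with $p>1$ take $z$ of norm one supported in a single fiber and $w=2^{-1/p}(w_1,w_2,0,\dots)$ with $\|w_1\|_2=\|w_2\|_2=1$: the norming functional of $w$ (living in the $\ell_q$-sum) is bounded by $2^{-1/q}<1$ on the orbit of $z$; for $p=1$ reverse the roles (take $z$ spread evenly over two fibers and $w$ supported in one fiber, and the functional supported on that fiber is at most $1/2$ on the orbit). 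The same computation shows your claim that ``the closed convex hull of the orbit of $z$ is $B_{Z_n}$'' fails already for $X=\ell_p$. The actual proof of maximality cannot avoid confronting renormings directly: one uses Rosenthal's results to show that any equivalent norm whose isometry group contains $\Isom(Z_n)$ must again be an $E$-sum whose Hilbertian fibers are $\ell_2$-sums of finite subcollections of the original fibers, and then observes that a permutation splitting such a glued fiber (e.g. $\sigma(1)=3$, $\sigma(2)=5$, arbitrary fiber isometries) is an isometry of $Z_n$ but not of the renormed space, so no strict enlargement of the group is possible. This structural step is the heart of the matter and is absent from your proposal.

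The non-conjugacy step inherits the same problem: you propose to upgrade a conjugacy of the isometry groups to a conjugacy of the norms via the lemma on convex transitive norms, but that lemma does not apply here since the norms $Z_n$ are not CT. Your other idea---identifying the fiber-preserving normal subgroup and reading off the fiber dimension as an invariant---is in the right spirit, but it is exactly the point you leave unproved (``the delicate point is making rigorous\dots''). The paper derives non-conjugacy from the form \eqref{isofiber} of the isometries, the fact that the Hilbertian fiber dimensions differ for $n\neq m$, and the purity of $E$, relying on \cite[Proposition 3.4]{DR} and \cite[Theorem~2]{R86} (uniqueness of the functional Hilbertian sum decomposition); one way to see the mechanism is that if $L\Isom(Z_n)L^{-1}=\Isom(Z_m)$ then $|x|:=\|Lx\|_{Z_m}$ is an equivalent $\Isom(Z_n)$-invariant norm on $Z_n$, to which the same structural results apply, and comparing fiber dimensions forces $n=m$. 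As it stands, your argument for both maximality and non-conjugacy rests on a false transitivity property and therefore does not go through.
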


\begin{proof}[Idea of proof] 
It is easy to see that $Z_n$ is isomorphic to the direct sum of $n$ copies of $X$ and hence isomorphic to $X$ itself since $X$  has a symmetric basis.

By Theorem~\ref{FHS}, all isometries of $Z_n$ have form \eqref{isofiber}, and, since the basis is 1-symmetric and all $H_k$ are isometric to each other, the set $P(Z_n)$ is equal to the set of all bijections of $\bbN$.

We claim that the group $\Isom(Z_n)$ is maximal. Let's first consider the case when $\s$ is the identity  of $\bbN$ and, for all $k$, $T_k\in\Isom(H_k)$. Since 
$\Isom(H_k)=\Isom(\ell_2^n)$ is the largest possible group of isometries of any $n$-dimensional space, it is impossible to renorm each $H_k$ to increase the isometry group of $Z_n$.  So how can we renorm $Z_n$ to introduce additional isometries? 

A first natural idea that comes to mind is to ``glue'' two or more, but finitely many, fibers of $Z_n$ and equip this new larger fiber with the  norm that has the largest possible isometry group, i.e. the $\ell_2$ norm. Say, if we put for all $k\in \bbN$, $\widetilde{H}_k=H_{2k-1}\oplus_2H_{2k}$ and consider
 $\widetilde{Z}_n=(\sum_{k=1}^\infty \oplus \widetilde{H}_k)_{E}$. Then, for all $k$, $\dim \widetilde{H}_k=2n$, and if $\s={\bf I}_\mathbb N$, there exists an isometry $\widetilde{T}_k$ of $\widetilde{H}_k$ so that 
 $\widetilde{T}_k(H_{2k-1})$ intersects both $H_{2k-1}$ and $H_{2k}$, so the operator $\widetilde{T}:\widetilde{Z}_n\to\widetilde{Z}_n$ defined by 
 $\widetilde{T}((\widetilde{z}_k)_k)=(\widetilde{T}(\widetilde{z}_k))_k$ is an isometry of $\widetilde{Z}_n$ but not of ${Z}_n$.
 
 On the other hand, since the basis $E$ is 1-symmetric,  if we consider a permutation $\s$ of $\bbN$ so that, say,
 $\s(1)=3$ and $\s(2)=5$, and arbitrary isometries $T_k:H_k\to H_{\s(k)}$, then the operator $T:Z_n\to Z_n$ such that for all $z=(z_k)_k\in Z_n$,
 $(Tz)_{\s(k)}=T_k(z_k)$ is an isometry of $Z_n$.  However $T$ is not an isometry of 
 $\widetilde{Z}_n$, since, by Theorem~\ref{FHS}, any isometry of 
 $\widetilde{Z}_n$ maps the fiber $\widetilde{H}_1$ either to itself or onto another fiber and we have that $T(\widetilde{H}_1)$ intersects both 
 $\widetilde{H}_2$ and $\widetilde{H}_3$. 
 
 Hence we have that $\Isom(\widetilde{Z}_n)\not\subseteq\Isom(Z_n)$ and 
 $\Isom(\widetilde{Z}_n)\not\supseteq\Isom(Z_n)$, and thus our ``gluing'' of fibers failed to produce a space with a larger isometry group. 
 
 It follows from 
 \cite{R86} that if $\widetilde{Z}_n=(Z_n, |\!|\!|\cdot|\!|\!|)$ is an equivalent renorming of $Z_n$  so that $\Isom(\widetilde{Z}_n)\supseteq\Isom(Z_n)$, then  $\widetilde{Z}_n=(\sum_{k=1}^\infty \oplus \widetilde{H}_k)_{E}$, where each  new fibers $\widetilde{H}_k$ is an  
  $\ell_2$ sum of a certain finite subcollection of the original fibers. The idea of the remaining part of the proof is same as above.
  
The fact that isometry groups    $\Isom({Z}_n)$ are mutually non-conjugate follows   from \eqref{isofiber}, since  for different values of $n$ the dimensions of hilbertian fibers are different and  $E$ is pure, see \cite[Proposition 3.4]{DR} and \cite[Theorem~2]{R86}. 
\end{proof}

The construction of 
Theorem~\ref{nmax} can be generalized to describe a continuum of different  (pairwise non-conjugate) maximal    renormings of  Banach spaces $Z$ that have  the form
$
Z = (\sum_{k=1}^\infty \oplus \ell_2^{n(k)})_E,
$
where $E=(e_k)_{k=1}^\infty$ is a  1-symmetric  basis of a pure Banach space $X$ that is not isomorphic to $\ell_2$.
 It follows from 
  the Pe\l czy\'nski decomposition method that the space $Z$   is isomorphic to $X$     if, for example, $X=\ell_p$, with $1<p<\infty$, or if $X=U$,  Pe\l czy\'nski's space with a universal unconditional basis   \cite{P69} mentioned  in Section \ref{sec:classical-isom}, see \cite[Theorem 3.7]{DR} for details. 

 Note that, as $Z$ is a separable Banach space,  the collection of all equivalent norms on $Z$ has cardinality $\mathfrak{c}$. Hence the  maximal cardinality of a    collection of pairwise non-conjugate maximal bounded subgroups of $\GL(Z)$ 
  is exactly equal to $\mathfrak{c}$.

\begin{cor}\label{cor:nmax} Each of the spaces $\ell_p$, for $1<p <\infty$, $p \ne 2$, and the space $U$ with a universal unconditional basis, admits a continuum of equivalent  renormings whose isometry groups are maximal and   pairwise non conjugate
in  the group of bounded isomorphisms. \end{cor}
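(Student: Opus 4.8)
This is essentially a restatement of the discussion following Theorem~\ref{nmax}, and the plan is to make that discussion precise. For a sequence $\mathbf n=(n(k))_{k\geq 1}$ of integers $\geq 2$ put
$$
Z_{\mathbf n}=\Big(\sum_{k=1}^\infty\oplus H_k\Big)_E,\qquad H_k\ \text{hilbertian},\ \dim H_k=n(k),
$$
where $E$ is the $1$-symmetric basis of $X$ in the case $X=\ell_p$ (and the corresponding unconditional basis in the case $X=U$). It then suffices to produce a family $\mathcal F$ of sequences $\mathbf n$, of cardinality $\mathfrak{c}$, pairwise inequivalent under permutation and such that: (i) each $Z_{\mathbf n}$ with $\mathbf n\in\mathcal F$ is isomorphic to $X$, so that the norm of $Z_{\mathbf n}$ induces, via the isomorphism, an equivalent norm on $X$; (ii) each such norm is maximal; (iii) $\Isom(Z_{\mathbf n})$ and $\Isom(Z_{\mathbf m})$ are not conjugate in $\GL(X)$ when $\mathbf n,\mathbf m\in\mathcal F$ are distinct. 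Together these give a continuum of pairwise non-conjugate maximal renormings of $X$, and this is in fact the largest possible number, $X$ being separable.

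Step (i) is the Pe\l czy\'nski decomposition method and is exactly \cite[Theorem~3.7]{DR}: $X$ is $1$-complemented in $Z_{\mathbf n}$ (the families supported on the first coordinate of each fibre form an isometric, $1$-complemented copy of $X$), and conversely, using either $1<p<\infty$ or the universality of the basis of $U$, one verifies that $Z_{\mathbf n}$ embeds complementably into an $E$-sum of copies of $X$; Pe\l czy\'nski's trick then yields $Z_{\mathbf n}\cong X$. I expect this to be the only genuinely delicate point, and it is here that the hypotheses are used in an essential way: for $\ell_1$ or $c_0$ in place of $\ell_p$ the conclusion of (i) fails, which is precisely why those cases are excluded.

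Step (ii) amounts to repeating, almost verbatim, the argument sketched for Theorem~\ref{nmax}. Since every fibre $H_k$ has dimension $\geq 2$, Rosenthal's analysis \cite{R86} shows that any equivalent norm $\triple\cdot$ on $Z_{\mathbf n}$ with $\Isom(Z_{\mathbf n})\subseteq\Isom(Z_{\mathbf n},\triple\cdot)$ must have the form $\big(\sum_j\oplus\widetilde H_j\big)_E$, each $\widetilde H_j$ being the hilbertian space obtained as the $\ell_2$-sum of a finite subfamily $\{H_k:k\in S_j\}$ of the original fibres, the $S_j$ partitioning $\bbN$. Suppose some block $S_{j_0}$ contained two indices $k_1\neq k_2$. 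Choosing $\mathbf n$ so that each of its values is attained infinitely often, one easily produces a fibre permutation $\sigma\in P(Z_{\mathbf n})$ (the admissible permutations of Theorem~\ref{FHS}; condition (a) being automatic from the symmetry of $E$) with $n\circ\sigma=n$ that sends $k_1$ and $k_2$ into two \emph{distinct} blocks of the partition. The coordinate isometry $T$ of $Z_{\mathbf n}$ furnished by Theorem~\ref{FHS} would then carry $\widetilde H_{j_0}$, which contains $H_{k_1}$ and $H_{k_2}$, onto a subspace containing $H_{\sigma(k_1)}$ and $H_{\sigma(k_2)}$ and therefore meeting two distinct $\widetilde H_j$'s, which is impossible for an isometry of $(Z_{\mathbf n},\triple\cdot)$ since, by Theorem~\ref{FHS}, it must send each $\widetilde H_j$ onto a single one. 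Hence all $S_j$ are singletons, so $\triple\cdot$ merely renorms the individual fibres $H_k\cong\ell_2^{n(k)}$; but an equivalent norm on $\ell_2^m$ whose isometry group contains $\Isom(\ell_2^m)$ is a scalar multiple of the euclidean norm, so $\Isom(Z_{\mathbf n},\triple\cdot)=\Isom(Z_{\mathbf n})$ and the norm of $Z_{\mathbf n}$ is maximal. The requirement ``each value attained infinitely often'' is used exactly at this step: without it, two fibres of distinct non-repeated dimensions could be glued together without enlarging the isometry group, so maximality would fail; crucially this restriction still leaves a continuum of admissible $\mathbf n$ modulo permutation.

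Finally, Step (iii) is \cite[Proposition~3.4]{DR} together with \cite[Theorem~2]{R86}: a bounded automorphism of $X$ conjugating $\Isom(Z_{\mathbf m})$ onto $\Isom(Z_{\mathbf n})$ would, through the coordinatewise description \eqref{isofiber} of these groups and the purity of $E$, induce a dimension-preserving bijection between the fibre families of $Z_{\mathbf n}$ and $Z_{\mathbf m}$; in particular $\mathbf n$ and $\mathbf m$ would agree up to a permutation, contradicting the choice of $\mathcal F$. Taking for $\mathcal F$, say, the family indexed by the infinite subsets of $\{2,3,4,\dots\}$ — each subset $A$ giving the sequence $\mathbf n_A$ that lists the elements of $A$, each infinitely often — one has $|\mathcal F|=\mathfrak{c}$ and distinct members are inequivalent under permutation, so (i)–(iii) together furnish the asserted continuum of pairwise non-conjugate maximal renormings of $X$. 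The only step that is not a routine adaptation of material already displayed is (i), i.e.\ the isomorphism $Z_{\mathbf n}\cong X$; this is the main obstacle and the reason the statement is confined to $\ell_p$ with $1<p<\infty$ and to $U$.
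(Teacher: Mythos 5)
Your proposal is correct and follows essentially the same route as the paper, which obtains the corollary exactly by generalizing the construction of Theorem~\ref{nmax} to $E$-sums $(\sum_k\oplus\ell_2^{n(k)})_E$ with varying dimension sequences, using the Pe\l czy\'nski decomposition method for the isomorphism with $X$ and deferring the remaining details to \cite[Theorem 3.7]{DR}. Your explicit requirement that each dimension be attained infinitely often (needed so that admissible fibre permutations can separate any would-be glued fibres) is precisely the kind of detail the survey leaves to \cite{DR}, and it is the right condition, as the opposite regime of pairwise distinct dimensions is exactly what produces the non-maximal renormings of Theorem~\ref{Bnonmax}.
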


The above results suggest the following   questions:

\begin{prob}\label{3problems}~\
\begin{itemize}
\item[(a)]  Let $\mathcal H$ be a Hilbert space. Is the unitary group the  unique, up to conjugacy,  maximal bounded subgroup of $\GL(\mathcal H)$? 
\item[(b)]  Does there exist a separable Banach space $X$  with a unique, up to conjugacy,   maximal  bounded subgroup of $\GL(X)$? 
\item[(c)]  If yes, does $X$ have to be 
 isomorphic to a Hilbert space?
 \end{itemize}
\end{prob}
\

\begin{small}
\noindent{\em Comments:} 
\begin{itemized}
 \item[(1)] 
 Problem \ref{3problems}(a) may be reformulated as asking whether the Hilbert space admits a maximal, ``non-unitarizable" bounded group of automorphisms. See Section \ref{3} for more about this question. 
 \item[(2)] 
Theorem~\ref{nmax} applies in particular to 
the space $S(T^{(2)})$, the  symmetrization of the 2-con\-ve\-xified Tsirelson space, see \cite{CS89}. Indeed,  it is known that $S(T^{(2)})$ does not contain $\ell_2$, and
it is easy to verify that for all $k,l \in \bbN$, $\|e_k+e_l\|_{S(T^{(2)})}=1$, and thus the standard basis of $S(T^{(2)})$  is pure.  It is clear that the   renormings of $S(T^{(2)})$ described in 
Theorem~\ref{nmax} are not AT. 
 
It is known that any symmetric weak Hilbert space is isomorphic to a Hilbert space, but in some sense the space $S(T^{(2)})$ is very close to a weak Hilbert space, see \cite[Note A.e.3 and Proposition A.b.10]{CS89}.
We do not know the answers to the following problems:
\end{itemized}
 \end{small}

\begin{prob}
Does the space $S(T^{(2)})$ admit an AT renorming?  Does there exist a  symmetric space not isomorphic to $\ell_2$ which admits an AT renorming?
 \end{prob}

  \subsection{Spaces with multiple almost transitive norms}\label{sec:smAT}
 
In this section we consider the   
 existence of different maximal renormings of the space $L_p$, for $p\in[1,\infty)$ different from $2$. We show that an analogue of Theorem~\ref{nmax} holds for $L_p$, and in this case it gives  a countable family of mutually non-conjugate equivalent almost transitive norms. All the results in this section seem to be new and we have included (more or less) full proofs. %



\begin{theorem}\label{maxcapitalLp} 
For $p\in[1,\infty)$ different from $2$
the space $ L_p$ has at least countably many non-conjugate almost transitive norms.
\end{theorem}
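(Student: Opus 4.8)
The strategy is to mimic the proof of Theorem~\ref{nmax}, replacing the fibers $\ell_2^n$ by a single-point family of measure-theoretic objects in order to keep almost transitivity. Concretely, for each $n \geq 2$ I would consider the vector-valued Lebesgue space
\[
W_n = L_p\bigl([0,1], \ell_2^n\bigr),
\]
i.e.\ the $L_p$-sum (over the nonatomic measure $\lambda$) of the Hilbert space $\ell_2^n$. The first step is to check that $W_n$ is isometric to $L_p$ itself. This is classical: $L_p([0,1],\ell_2^n) = L_p(\lambda \otimes \mu_n)$ where $\mu_n$ is some finite measure on an $n$-element space renormed hilbertianly is \emph{not} literally $L_p$ of a measure, so the isometry must instead use the fact that $\ell_2^n$ embeds isometrically as a $1$-complemented subspace of $L_p$ (via $n$ independent Gaussians), hence $L_p([0,1],\ell_2^n)$ is $1$-complemented in $L_p([0,1],L_p) = L_p([0,1]^2) = L_p$, and then one applies the isomorphic uniqueness/absorption results for $L_p$ (or more simply: any $1$-complemented subspace of $L_p$ that contains a band isometric to $L_p$ and is contained in a band isometric to $L_p$ is itself isometric to $L_p$; alternatively use that $L_p(\mu, L_p(\nu)) \cong L_p(\mu \otimes \nu)$ when the Hilbert fiber is realized inside an $L_p$ fiber — here I would be careful and instead argue via Plotkin--Rudin / equimeasurability that $W_n$ is $1$-complemented and norming in a copy of $L_p$ and invoke that all separable $L_p$ are isometric). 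The cleanest route: realize $\ell_2^n \hookrightarrow L_p[0,1]$ isometrically with $1$-complemented range $R_n$; then $W_n \cong L_p([0,1], R_n)$ sits $1$-complemented in $L_p([0,1]^2) \cong L_p$, and since the range of the natural projection contains a sublattice isometric to $L_p$ spanning a band, it is itself isometric to $L_p$.

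\textbf{Almost transitivity of $W_n$.} The second step is to show each $W_n$ is AT. This follows because $L_p$ is AT and the AT property is preserved under taking $L_p$-sums over a nonatomic base in the following sense: given $f, g \in S_{W_n}$ and $\eps>0$, one approximates $f$ and $g$ by simple $\ell_2^n$-valued functions, uses measure-preserving Borel automorphisms $\phi$ of $[0,1]$ together with the change-of-density weight $h$ (as in the Banach--Lamperti description recalled in the excerpt) acting on the base, combined with fiberwise unitaries $U(t) \in \mathrm{Isom}(\ell_2^n)$, to move $f$ onto (a small perturbation of) $g$. This is exactly the kind of homogeneity argument used for $L_p(\mathcal M)$ in Section~\ref{sec:classical-isom}; the nonatomicity of $\lambda$ provides the needed flexibility. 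So each $W_n$ carries a natural almost transitive norm, transported to $L_p$ via the isometry of the first step, and these give countably many equivalent AT norms on $L_p$.

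\textbf{Non-conjugacy.} The third and main step is to show $\mathrm{Isom}(W_n)$ and $\mathrm{Isom}(W_m)$ are non-conjugate for $n \neq m$. Here I would need the analogue of Rosenthal's Theorem~\ref{FHS} for vector-valued $L_p$-spaces: every surjective isometry $T$ of $L_p([0,1], \ell_2^n)$ ($p \neq 2$) has the Banach--Lamperti form $(Tf)(t) = h(t)\, U(t)\, f(\phi(t))$, where $\phi$ is a measure-class-preserving Borel automorphism of $[0,1]$, $h$ the associated density weight, and $U(t)$ a measurable field of isometries of $\ell_2^n$ (this is a theorem of Greim, or follows from Sourour / Cambern--Greim on isometries of $L_p(\mu,X)$ when $X$ has trivial isometric $\ell_p$-structure — the hypothesis $p\neq 2$ is what forces the "fibers" to be the copies of $\ell_2^n$ and prevents mixing). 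Granting this, the key invariant distinguishing $W_n$ from $W_m$ is the \emph{dimension of the Hilbertian fiber}, which can be recovered group-theoretically: the stabilizer structure, or more precisely the germ of the action near a point of the sphere, sees the fiber dimension — e.g.\ the isometries of the form $f \mapsto U(\cdot) f(\cdot)$ with $\phi = \mathrm{Id}$ form a subgroup isomorphic to $L^\infty([0,1], O(n))$ (or $U(n)$ in the complex case), and its local structure (say the dimension of any "finite-dimensional unitary orbit" through a generic point, cf.\ the argument in \cite[Prop.~3.4]{DR} and \cite[Theorem~2]{R86}) differs with $n$. Conjugacy by a $T \in \mathrm{GL}(L_p)$ would carry $\mathrm{Isom}(W_n)$ onto $\mathrm{Isom}(W_m)$ and hence preserve this invariant, a contradiction. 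I expect this last step — pinning down a genuinely conjugacy-invariant quantity that reads off $n$ from the abstract isometry group, rather than merely from the concrete representation — to be the main obstacle, and the place where one most directly leans on the purity-type rigidity already exploited in Theorem~\ref{nmax} and on the vector-valued Banach--Lamperti theorem.
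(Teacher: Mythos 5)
Your step 1 contains a fatal error: $W_n=L_p([0,1],\ell_2^n)$ is \emph{not} isometric to $L_p$ for $n\geq 2$ and $p\neq 2$, and if it were, your whole construction would collapse, since every norm transported from a $W_n$ would then be conjugate to the standard norm of $L_p$ and the theorem would be false. The supporting claim that $\ell_2^n$ embeds isometrically as a \emph{1-complemented} subspace of $L_p$ via independent Gaussians is false for $p\neq 2$: Gaussian copies of $\ell_2^n$ are isometric but their best projection constant is $\sqrt{\Gamma(1/2)\Gamma(p/2+1)/\Gamma(p/2+1/2)}>1$ (this is precisely the quantity recalled in the comments after Corollary~\ref{ultra}), and in fact $1$-complemented subspaces of $L_p$, $p\neq 2$, are themselves abstract $L_p(\nu)$-spaces, so no isometric copy of $\ell_2^n$, $n\geq 2$, can be $1$-complemented; the subsequent ``band/absorption'' argument for isometry has no valid basis. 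All that is needed — and all the paper uses — is the elementary fact that $W_n$ is \emph{isomorphic} to $L_p$ (for instance because $\ell_2^n$ is isomorphic to $\ell_p^n$ with a constant depending only on $n$ and $p$, so $L_p(\ell_2^n)\approx L_p(\ell_p^n)\cong L_p$); one then transports the norm of $W_n$ to an equivalent norm on $L_p$ via this isomorphism. Your step 2 (almost transitivity of $W_n$) is fine in spirit; the paper simply quotes Greim--Jamison--Kami\'nska \cite[Theorem 2.1]{GJK}.

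Your step 3 also leaves a genuine gap, and it does so because you aim at a stronger statement than required. By the definition recalled in Section~\ref{s:manymax}, two norms are conjugate exactly when the corresponding normed spaces are linearly isometric; hence non-conjugacy of the transported norms amounts precisely to the statement that $L_p(\ell_2^n)$ and $L_p(\ell_2^m)$ are not isometric for $n\neq m$, which is the Cambern--Greim theorem cited in the paper (\cite[8.2.11 Theorem]{FJ2}) and requires no further work. You instead set out to prove that the isometry groups $\Isom(W_n)$ and $\Isom(W_m)$ are non-conjugate inside $\GL(L_p)$ by extracting a conjugacy-invariant reading of the fiber dimension from the abstract group; this is a stronger claim (conjugate norms give conjugate groups, not conversely), it is not needed here, and you explicitly acknowledge that its key step — producing the invariant — is left open. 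As written, then, the proposal does not complete the proof: step 1 is false as stated and must be weakened to an isomorphism, and step 3 should be replaced by (or reduced to) the non-isometry of the vector-valued $L_p$-spaces.
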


\begin{proof}
For each $n\ge 1$ the space $L_p$ is isomorphic to $L_p(H_n)$,  where $H_n$ is the $n$-dimensional Hilbert space. In \cite[Theorem 2.1]{GJK} it was proved  that the standard norm on $L_p(H_n)$ is AT. 
The (AT) 
 norms in $L_p$ induced by an isomorphism onto $L_p(H_n)$ are, however, not conjugate in $\operatorname{GL}(L_p)$ for different values of $n$ because $L_p(H_n)$ is isometric to  $L_p(H_m)$ only if $n=m$, by results of Cambern and Greim; see \cite[8.2.11. Theorem]{FJ2}.
\end{proof}


The same occurs in $C[0,1]$. Indeed Aizpuru and Rambla proved in \cite[Proposition 6.2]{AR} that $C_0(P_*,H_n)$ is AT for all $n\geq 2$ no matter  which  field of scalars one considers. While the isometric type of these spaces effectively depends on $n$, by a classical result of Jerison \cite[7.2.16. Theorem]{FJ2}, they are all 
 isomorphic to $C[0,1]$ by Miljutin's Theorem; see \cite[Section 4.4]{AK} for a polished proof. Other ``individual'' AT renormings of $C[0,1]$ arise from \cite[Theorem 3.4]{CS7}, \cite[Examples 2.4 and 3.2]{c-edin} and \cite[Corollary 6.9]{FLMT}.

The Garbuli\'nska space provides a more spectacular example:

\begin{theorem}\label{th:K has c}
The Garbuli\'nska space $\mathcal K$ has a continuum of mutually non-conju\-gate almost transitive norms.
\end{theorem}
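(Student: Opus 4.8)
The plan is to mimic the strategy of Theorem~\ref{maxcapitalLp}: produce a continuum-sized family of Banach spaces that are all isomorphic to $\mathcal K$, carry a natural almost transitive norm, but are pairwise non-isometric — so the renormings of $\mathcal K$ transported from these spaces cannot be conjugate in $\operatorname{GL}(\mathcal K)$. The natural candidates are the ``Garbuli\'nska spaces with Hilbertian fibers'': for a fixed dimension $n\ge 1$ (or more flexibly, a suitable parameter), one builds a space $\mathcal K_n$ playing the role of the Fra\"iss\'e/universal object in the category of Banach spaces equipped with a distinguished $1$-complemented subspace isometric to $\ell_2^n$ together with state-preserving maps, or more simply a space of $\mathcal K$-type whose skeleton is built so that every finite-dimensional piece contains a prescribed $1$-complemented $\ell_2^n$ summand. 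Since all lines in such a space are still $1$-complemented and mutually isometric, the analogue of Garbuli\'nska's homogeneity theorem (\cite[Theorem~6.3]{garb}) gives almost transitivity just as for $\mathcal K$ itself; and since each $\mathcal K_n$ has the BAP and contains a $1$-complemented copy of every space with a skeleton, the Pe\l czy\'nski decomposition method forces $\mathcal K_n\isom \mathcal K$ as Banach spaces.

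First I would set up the categorical framework precisely: fix a separable "admissible" class of finite-dimensional structures (finite-dimensional spaces together with a marked $1$-complemented subspace, with $\eps$-isometries respecting the marking and the $(1+\eps)$-complementation), verify it has the amalgamation/joint-embedding properties one needs, and invoke the Fra\"iss\'e-limit / Kubi\'s-Solecki style construction (as in \cite{kubissolecki,garb}) to get, for each parameter, a unique separable space $\mathcal K_\alpha$ with the corresponding property~$[\Game_\alpha]$ and a skeleton. Here the parameter should range over a continuum-sized set of "isometric invariants" — e.g. one can build the fibers to be $\ell_2^{n}$ for varying $n$, giving countably many, and then, to reach a continuum, use $\ell_2$-sums of fibers of a prescribed sequence of dimensions $(n(k))_k$, exactly as in the generalization of Theorem~\ref{nmax} discussed after Corollary~\ref{cor:nmax}, so that $2^{\aleph_0}$ genuinely different isometric types appear. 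Second I would prove almost transitivity of each $\mathcal K_\alpha$: this is immediate from the homogeneity statement quoted in Section~\ref{sec:classical-isom}, because a one-dimensional subspace is $1$-complemented and any two are isometric, so the extension of the identity $\mathbb K\to\mathbb K$ produces isometries moving any norm-one vector arbitrarily close to any other. Third, $\mathcal K_\alpha\isom\mathcal K$: each $\mathcal K_\alpha$ has a skeleton, hence the BAP, hence embeds $1$-complementably into $\mathcal K$; conversely $\mathcal K_\alpha$ contains a $1$-complemented copy of $\mathcal K$ (it contains a $1$-complemented copy of every space with a skeleton, by the same universality argument used for $\mathcal K$); the decomposition method then yields an isomorphism.

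The heart of the matter, and the step I expect to be the main obstacle, is showing that the resulting norms on $\mathcal K$ are \emph{pairwise non-conjugate} in $\operatorname{GL}(\mathcal K)$ — equivalently, that the spaces $\mathcal K_\alpha$ are pairwise non-isometric (as Banach spaces). For the $L_p$ analogue this rested on the Cambern--Greim description of isometries of $L_p(H_n)$; here there is no Banach--Lamperti-type classification of $\operatorname{Isom}(\mathcal K_\alpha)$ available, so one must instead extract an isometric invariant intrinsic to $\mathcal K_\alpha$. The natural candidate is something like: "the supremum of dimensions $n$ such that $\ell_2^n$ embeds $1$-complementably and isometrically into a $1$-complemented finite-dimensional subspace in a way that is \emph{maximal} (cannot be enlarged within any further $1$-complemented finite-dimensional superspace inside $\mathcal K_\alpha$)", mirroring the "pure/impure" dichotomy and the fiber-dimension argument of \cite{R86} used in Theorem~\ref{nmax}. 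One must check this quantity is genuinely isometric, takes the prescribed value on $\mathcal K_\alpha$ (using that the Fra\"iss\'e construction forces \emph{exactly} the marked Hilbertian fibers and no larger ones, because $X$ is "pure" away from them), and separates the continuum of parameters. A cleaner route, if it works, is to phrase the invariant model-theoretically: the $\mathcal K_\alpha$ are limits of distinct amalgamation classes, and distinct classes yield non-isometric limits by the standard back-and-forth uniqueness; then non-conjugacy is just the observation that a surjective linear isometry between $\mathcal K_\alpha$ and $\mathcal K_\beta$ would induce an isomorphism of the respective "age" structures. I would develop this invariant first, since everything else is routine adaptation of \cite{garb,kubissolecki} and the decomposition method.
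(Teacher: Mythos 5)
Your plan leaves the decisive step unproved, and it is precisely the step that cannot be dismissed as routine. Everything up to the isomorphism $\mathcal K_\alpha\isom\mathcal K$ is plausible (though you never verify the amalgamation and homogeneity properties of your marked-fiber category, nor that the relevant analogue of Garbuli\'nska's Theorem~6.3 holds when isometries are only required to respect the marking), but the pairwise non-isometry of the $\mathcal K_\alpha$ is exactly where the argument stalls, and your candidate invariants are unlikely to work. The universality you invoke to run the Pe\l czy\'nski decomposition method --- each $\mathcal K_\alpha$ contains a $1$-complemented isometric copy of every space with a skeleton --- forces each $\mathcal K_\alpha$ to contain $1$-complemented copies of $\ell_2^m$ for every $m$ (and indeed of every finite-dimensional space), so an invariant phrased in terms of which $\ell_2^n$'s embed $1$-complementably cannot separate the parameters; the ``maximality'' refinement is not defined and has no analogue of Rosenthal's pure/impure machinery behind it, since that machinery classifies isometries of $E$-sums of Hilbertian fibers over a $1$-unconditional basis and says nothing about universal spaces of Garbuli\'nska type. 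The ``cleaner'' model-theoretic route also fails as stated: back-and-forth gives uniqueness of the limit of a \emph{given} class, and a surjective isometry only matches the ages as plain Banach spaces --- which, by universality, coincide for all your $\mathcal K_\alpha$ --- so distinct presentations of the amalgamation class do not by themselves yield non-isometric limits. In short, the heart of the theorem is missing.

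For comparison, the paper's proof sidesteps all of this with a one-parameter family of \emph{classical} renormings: $\mathcal K\simeq L_p(\mathcal K)$ for every $p\in[1,\infty)$ (BAP plus complementable universality plus the constant-function projection, then Pe\l czy\'nski decomposition), each $L_p(\mathcal K)$ is AT by the Greim--Jamison--Kami\'nska theorem, and the isometric types are separated not by complemented-subspace structure but by $L^p$-projections: $L_p(\mathcal K)$ has nontrivial $L^p$-projections (multiplication by characteristic functions), while by Behrends' theorem the only Banach space admitting nontrivial $L^p$-projections for two different values of $p$ is the two-dimensional $\ell_1^2\approx\ell_\infty^2$. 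Since conjugate norms are isometric, non-isometry for $p\neq q$ gives non-conjugacy, and $p$ ranging over $[1,\infty)$ already gives the continuum. If you want to salvage your approach, you would need an isometric invariant of the fibered spaces that survives their universality; the $L^p$-projection structure is exactly such an invariant for the paper's family, and nothing comparable is identified in your proposal.
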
 


\begin{proof} As remarked in \cite[p.~1551]{2cm}, $\mathcal K$ is the peskiest Banach space there is. In particular $\mathcal K$ is isomorphic to each of the spaces $L_p(\mathcal{K})$ for $1\leq p<\infty$. To see this we observe that $L_p(\mathcal{K})$ has the BAP for all $p$, $1\le p<\infty$, and therefore it is isomorphic to a complemented subspace of $\mathcal K$ since the latter is complementably universal for the BAP.
On the other hand, any space $X$ is 1-complemented (as the space of constant functions) in $L_p(X)$ for any $1\leq p\leq \infty$ by means of the ``obvious'' projection $P(f)=\int_0^1f(t)dt$, where the integral is taken in the Bochner sense.
 An easy application of Pe\l czy\'nski decomposition method yields $\mathcal K\simeq L_p(\mathcal{K})$ for $1\leq p<\infty$.

Next we remark that $L_p(\mathcal{K})$ is AT for $1\leq p<\infty$ by the result of Greim, Jamison and Kami\'nska already mentioned. For $1\leq p<\infty$, let $|\cdot|_p$ denote the AT renorming of $\mathcal K$ induced by some (fixed) isomorphism $\mathcal K\to  L_p(\mathcal{K})$. We claim that $(\mathcal K, |\cdot|_p)$ and  $(\mathcal K, |\cdot|_q)$ cannot be isometric if $p\neq q$. To see this recall that an $L^p$-projection on a Banach space $X$ is a projection $P$ such that $\|x\|^p=\|Px\|^p-\|x-Px\|^p$ for all $x\in X$. It is clear that $ L_p(\mathcal{K})$ (and so $(\mathcal K, |\cdot|_p)$) has {\em non-trivial} $L^p$-projections (think of multiplication by characteristic functions). But the only Banach space that admits nontrivial $L^p$-projections for two different values of $p$ is $\ell_1^2\approx \ell_\infty^2$ (real case; see \cite[Main theorem]{beh}) from which the claim follows.
\end{proof}

By taking ultrapowers of the preceding examples, and using general representation results to describe the corresponding ultrapowers if necessary, we obtain: 
 

\begin{cor}\lb{manytransitive}
Let $\mathcal U$ be a free ultrafilter on the integers.
\begin{itemize}
\item[{\rm (a)}] For each $p\in[1,\infty)$ different from $2$ the ultrapower $(L_p)_\mathcal U$ has countably many pairwise non-con\-jugate transitive norms.
\item[{\rm (b)}] $(C[0,1])_\mathcal U$ has countably many pairwise non-conjugate transitive norms.
\item[{\rm (c)}] $\mathcal K_\mathcal U$ has a continuum of pairwise non-conjugate transitive norms.
\end{itemize}
\end{cor}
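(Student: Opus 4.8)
The plan is to obtain Corollary~\ref{manytransitive} as a straightforward application of Fact~\ref{1.6} together with the three preceding theorems and the ``slightly weaker forerunners'' referenced there, combined with the appropriate representation results for ultrapowers. The main observation is that ultrapowers do not destroy the non-isometry witnesses used in Theorems~\ref{maxcapitalLp}, \ref{th:K has c}, and in the $C[0,1]$ discussion: each of those proofs distinguishes the countably (resp.\ continuum) many AT renormings by an \emph{isometric} invariant, and one checks that these invariants survive the ultrapower construction.

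First I would handle the transitivity: for each fixed $p$ (resp.\ for $C[0,1]$, resp.\ for $\mathcal K$) the base space carries countably many (resp.\ continuum many) AT norms, so each of the corresponding ultrapowers along the free ultrafilter $\mathcal U$ on $\mathbb N$ is transitive by Fact~\ref{1.6}, since $\mathcal U$ is CI. Moreover, since an ultrapower of a Banach space is naturally a Banach space of the same nature (and renorming commutes with taking ultrapowers: if $|\cdot|$ is an equivalent norm on $E$, then $(E,|\cdot|)_{\mathcal U}$ is $(E_{\mathcal U}, |\cdot|_{\mathcal U})$ up to the obvious identification), the countably (resp.\ continuum) many AT norms on the base produce countably (resp.\ continuum) many transitive norms on the ultrapower. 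For part~(a) one may additionally invoke Raynaud-type representation results to identify $(L_p)_{\mathcal U}$ concretely, though this is not strictly needed for the statement.

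Next I would verify that the renormings so obtained are pairwise non-conjugate, which amounts to showing they are pairwise non-isometric (conjugacy implies isometry of the unit spheres, hence isometry of the spaces since the norms are transitive, and in the transitive/CT setting the two notions coincide by the Lemma in Section~\ref{s:manymax}). The point is that each distinguishing invariant from the base case lifts. For (a): $L_p(H_n)_{\mathcal U}$ is again a Haagerup/Bochner-type $L_p$-space over a hilbertian-fibered measure algebra, and the Cambern--Greim rigidity — $L_p(H_n)$ isometric to $L_p(H_m)$ only if $n=m$ — persists in the ultrapower because the dimension of the ``Hilbertian fibers'' is an isometric invariant stable under ultrapowers; alternatively one uses that $L_p(H_n)_{\mathcal U}$ represents as $L_p(\mathcal N)$ for a von Neumann algebra whose type decomposition records $n$. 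For (b) one argues identically with $C_0(P_*,H_n)$ in place of $L_p(H_n)$. For (c): the spaces $(\mathcal K,|\cdot|_p)$ admit nontrivial $L^p$-projections; $L^p$-projections on $X$ pass to $L^p$-projections on $X_{\mathcal U}$ (the defining identity $\|x\|^p = \|Px\|^p + \|x-Px\|^p$ is preserved by ultrapowers, being a pointwise norm identity), so $(\mathcal K,|\cdot|_p)_{\mathcal U}$ has nontrivial $L^p$-projections, and by the Behrends rigidity theorem a Banach space other than $\ell_1^2$ cannot carry nontrivial $L^p$-projections for two distinct values of $p$; since these ultrapowers are infinite-dimensional, distinct $p$'s give non-isometric spaces.

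The main obstacle I anticipate is the verification that the distinguishing isometric invariants genuinely survive the ultrapower in a usable form — in particular that $(L_p(H_n))_{\mathcal U}$ does not accidentally become isometric to $(L_p(H_m))_{\mathcal U}$ for $n \ne m$ and that the ``obvious'' multiplication-operator $L^p$-projections on $L_p(\mathcal K)$ give rise, in the ultrapower, to projections that are still \emph{nontrivial} (i.e.\ neither $0$ nor the identity), which is immediate since one can take a projection of the form ``multiplication by the characteristic function of a half-interval'' and its ultrapower remains a nontrivial idempotent. Everything else is routine bookkeeping with the ultraproduct construction and an appeal to the already-cited rigidity theorems of Cambern--Greim, Jerison--Miljutin, Behrends, and Raynaud.
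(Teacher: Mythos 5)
Your overall route is the paper's: transitivity of the ultrapowers via Fact~\ref{1.6}, reduction of non-conjugacy to non-isometry (correct, since a conjugating $T\in\GL(X)$ is exactly a surjective linear isometry between the two normed versions of $X$; the CT lemma of Section~\ref{s:manymax} is not really needed for this), and then distinguishing the norms by the same three invariants the paper uses. Part (c) is essentially identical to the paper's argument and is fine: the $L^p$-projection identity passes to ultrapowers, nontriviality is preserved, and Behrends' theorem \cite{beh} finishes.

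The genuine gap is in part (a) (and, in attenuated form, in (b)). The sentence ``the Cambern--Greim rigidity persists in the ultrapower because the dimension of the Hilbertian fibers is an isometric invariant stable under ultrapowers'' is precisely the statement to be proved, not a reason for it: in general, non-isometric Banach spaces can have isometric ultrapowers (ultrapowers only remember approximate finite-dimensional structure), so the base-case theorem ``$L_p(H_n)\cong L_p(H_m)\Rightarrow n=m$'' gives nothing about $(L_p(H_n))_\mathcal U$ versus $(L_p(H_m))_\mathcal U$ by itself. What the paper does to close this is concrete: represent $(L_p)_\mathcal U$ as $L_p(\mu)$ for a huge measure $\mu$ (Heinrich \cite{hein}), take $\mu$ strictly localizable by Maharam's theorem (cf.\ \cite{lacey}), identify $(L_p(H_n))_\mathcal U$ with the Bochner space $L_p(\mu,H_n)$, and then verify that the Cambern--Greim theorem \cite[8.2.11]{FJ2} still holds for strictly localizable (rather than $\sigma$-finite) measures when the Hilbert fibers are finite-dimensional. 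Your proposed ``alternative'' via Raynaud \cite{ray} does not work as stated: $L_p(\mu,\ell_2^n)$ is not the Haagerup $L_p$-space of $L_\infty(\mu)\bar\otimes M_n$ (that space has Schatten $S_p^n$ fibers), so no von Neumann type decomposition ``records $n$'' there. For (b) the same kind of identification is needed before the rigidity theorem can be invoked, namely $C_0(P_*,H_n)_\mathcal U\cong C_0(P_*^\mathcal U,H_n)$ (the paper gets this via the injective tensor product description); once that is in place, Jerison's strong Banach--Stone theorem \cite[7.2.16]{FJ2} applies verbatim to arbitrary locally compact spaces with strictly convex fibers, so, unlike in (a), no extension of the rigidity result itself is required.
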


\begin{proof}
The case of  $\mathcal K_\mathcal U$ is clear because ultrapowers of spa\-ces with nontrivial $L^p$-projections have again nontrivial $L^p$-projections so we can use the ultrapowers of the norms in  Theorem~\ref{th:K has c}.

(b) Note that $(C[0,1])_\mathcal U\simeq C_0(P_*)_\mathcal U$ by Miljutin's Theorem. It is known that if $L$ is a locally compact space, then $C_0(P_*)_\mathcal U$ is isometrically isomorphic (even as a ring) to $C_0(L^\mathcal U)$, with $L^\mathcal U$ a ``huge" locally compact space. Explicit descriptions are available. Now, the point is that for fixed $n$,  the ultrapower $C_0(L,H_n)_\mathcal U$ is isometric with $C_0(L^\mathcal U,H_n)$. 
This can be proved in many ways. Perhaps the simplest one is to identify $C_0(L,H_n)$ with the injective tensor product of $C_0(L)$ and $H_n$. That said, we have that $C[0,1]_\mathcal U$ is isomorphic to each of the transitive spaces $C_0(P_*,H_n)_\mathcal U=C_0(P_*^\mathcal U,H_n)$ which cannot be isometric for different values of $n$ because of Jerison's result: \cite[7.2.16 Theorem]{FJ2}: {\em If $Y$ is a strictly convex Banach space, then $(X,Y)$ has the Banach-Stone property for any Banach space $X$. If both $X$ and $Y$ are
strictly convex, then $(X,Y)$ has the strong Banach-Stone property} (which in particular implies that if $C_0(L_1,X)$ is isometric to $C_0(L_2,Y)$ then $L_1$ is homeomorphic to $L_2$ and $X$ is isometric to $Y$.

(a) The $L_p$ case is a bit trickier. Fix $p \in [1,\infty)$ and use that 
$(L_p)_\mathcal U$ is isometric, even as a lattice, to $L_p(\mu)$ for some ``huge" measure $\mu$; see Heinrich's \cite[Theorem 3.3(ii)]{hein}. In any case one can assume $\mu$ strictly localizable, by a result of Maharam ({\em cf}. 
Lacey \cite[Corollary on p. 137]{lacey}). After that show that for each fixed $n$ the space $(L_p(H_n))_\mathcal U$  is isometric to $L_p(\mu, H_n)$; one can use a basis of $H_n$ or a tensor product argument. Finally, dig into the details of Section~8.2 of Fleming-Jamison to check that \cite[8.2.11 Theorem]{FJ2} survives if the Hilbert spaces are finite-dimensional and one considers strictly localizable (instead of $\sigma$-finite) measures.
\end{proof}

\ 

\begin{small}
\noindent{\em Comments} 
\begin{itemized}
\item[(1)]   
Rather curiously, we do not know whether the space $\mathcal K$ ``itself'' (i.e. in the Garbuli\'nska norm) has non-trivial $L^p$-projections for some (necessarily unique) $p\in[1,\infty]$.
 \item[(2)]
A separable version of Corollary \ref{manytransitive} is clearly out of reach, as it would require an answer to the Mazur rotations problem in its isometric or isomorphic version. See Section~\ref{3} for discussion about transitive renormings of the Hilbert space.
\item[(3)] Regarding Theorem~\ref{maxcapitalLp}, we have been unable to decide whether the  isometry groups of the spaces $L_p(H_n)$ for different $n$'s are isomorphic either in the purely algebraic sense or when they are equipped with  SOT or the norm topology. In the real case one can prove that for any $n\ge 2$, $\Isom(L_p)$ is not topologically isomorphic to  
$\Isom(L_p(H_n))$ in the norm topologies because if $T,L$ are different isometries of any $L_p(\mu)$, then $\|T-L\|\geq 2^{1/p}$. Thus $\Isom(L_p)$ is discrete in the norm topology, while  for each $n\ge 2$, 
$\Isom(L_p(H_n))$ is not as it
contains $\Isom(H_n)$.
\item[(4)] It is perhaps worth noticing the following application: if $X$ is a {\em real} Lindenstrauss space (that is, $X^*$ is isometric to $L_1(\mu)$ for some measure $\mu$), then $\Isom(X)$ is discrete in the norm - just use the estimate in Comment (3) together with the natural isometric embedding of $\Isom(X)$ into $\Isom(X^*)$.  
In this case the isometries are as far as they can be: $\| T - L \| = 2$  unless $T = L$. This applies, in particular to the Gurariy space.

\end{itemized} 
\end{small}

\subsection{Isometry groups not contained in any maximal bounded subgroup of the isomorphism group}\lb{secnonmax}

In  \cite{DR}  Dilworth and   Randrianantoanina show\-ed that 
Problem~\ref{W2} can have a negative answer even if $\GL(X)$ contains many maximal bounded subgroups. Namely they proved (constructively):

\begin{theorem}\label{Bnonmax} 
Each of the spaces $\ell_p$ for $p\in[1,\infty)$ different from $2$ and $U$ has a  continuum of  pairwise non conjugate  renormings none of whose isometry groups  is  contained in any maximal bounded subgroup of the isomorphism group  of $\ell_p$.
\end{theorem}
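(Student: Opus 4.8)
The plan is to reuse the vector-valued construction behind Theorem~\ref{nmax}, but with fibres whose dimensions form a \emph{strictly} (indeed super-) increasing sequence. This one change is decisive: by Rosenthal's rigidity (Theorem~\ref{FHS}) it forbids all nontrivial permutations of fibres, so the ``gluing'' of finitely many fibres into one Hilbertian fibre --- which in the equal-dimension situation of Theorem~\ref{nmax} is neutralised by permutations (and the norm turns out maximal) --- now always produces a strictly larger bounded group. Concretely, for $X=\ell_p$ ($1<p<\infty$, $p\ne2$) or $X=U$ with its pure $1$-symmetric basis $E$, and for a sequence $\mathbf n=(n(k))$ of integers $\ge2$ with $n(k)>\sum_{j<k}n(j)$ for every $k$ (e.g.\ $n(k)=2^{k}$), I would take $Z_{\mathbf n}=\big(\sum_{k}\oplus\ell_2^{n(k)}\big)_E$. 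By the Pe\l czy\'nski decomposition method recalled after Corollary~\ref{cor:nmax}, $Z_{\mathbf n}\simeq X$; transporting its norm to $X$ along a fixed isomorphism gives the candidate renormings. By Theorem~\ref{FHS} and the injectivity of $\mathbf n$, $\Isom(Z_{\mathbf n})=\prod_k\Isom(\ell_2^{n(k)})$, acting diagonally on the fibres with no fibre permutation at all.

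Next I would show that no maximal bounded subgroup of $\GL(X)$ contains $\Isom(Z_{\mathbf n})$. Since a maximal bounded subgroup $M$ always equals $\Isom(X,\|\cdot\|_M)$ for $\|x\|_M=\sup_{T\in M}\|Tx\|$, it suffices to prove that whenever $|\cdot|$ is an equivalent norm on $Z_{\mathbf n}$ with $\Isom(Z_{\mathbf n})\subseteq\Isom(Z_{\mathbf n},|\cdot|)$, the group $\Isom(Z_{\mathbf n},|\cdot|)$ is not maximal bounded. By the consequence of \cite{R86} used in the proof of Theorem~\ref{nmax}, $(Z_{\mathbf n},|\cdot|)$ is isometric to $\big(\sum_k\oplus\widetilde H_k\big)_E$, where each $\widetilde H_k$ is an $\ell_2$-sum of a finite subfamily $\{\ell_2^{n(j)}:j\in A_k\}$, the $A_k$ forming a partition of $\bbN$. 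Super-increasingness makes $A\mapsto\sum_{j\in A}n(j)$ injective, so the $\widetilde H_k$ still have pairwise distinct dimensions and, as before, $\Isom(Z_{\mathbf n},|\cdot|)=\prod_k\Isom(\widetilde H_k)$ preserves each $\widetilde H_k$. Picking $k_1\ne k_2$ (there are infinitely many $\widetilde H_k$) and replacing $\widetilde H_{k_1},\widetilde H_{k_2}$ by the single fibre $\widetilde H_{k_1}\oplus_2\widetilde H_{k_2}$ --- after harmlessly relabelling the slots, $E$ being $1$-symmetric --- produces a space $\widehat Z$ again of the form $(\sum\oplus\ell_2^{\cdot})_E$, hence $\simeq X$. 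Each element of $\Isom(Z_{\mathbf n},|\cdot|)$ is still an isometry of $\widehat Z$ (acting on the glued fibre as the direct sum of its restrictions, a unitary of $\widetilde H_{k_1}\oplus_2\widetilde H_{k_2}$), so $\Isom(Z_{\mathbf n},|\cdot|)\subseteq\Isom(\widehat Z)$; the inclusion is strict, witnessed by any unitary of $\widetilde H_{k_1}\oplus_2\widetilde H_{k_2}$ mixing the two summands (extended by the identity), which belongs to $\Isom(\widehat Z)$ but not to $\Isom(Z_{\mathbf n},|\cdot|)$ since the latter preserves each $\widetilde H_k$. As $\Isom(\widehat Z)$ is bounded in $\GL(X)$ (transport it along $Z_{\mathbf n}\simeq X\simeq\widehat Z$) and strictly larger, $\Isom(Z_{\mathbf n},|\cdot|)$ is not maximal.

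For the continuum of pairwise non-conjugate renormings, I would note that the minimal nonzero closed $\Isom(Z_{\mathbf n})$-invariant subspaces are precisely the fibres $\ell_2^{n(k)}$ (irreducibility of the unitary action on each fibre, together with distinctness of the dimensions across fibres), so the multiset $\{n(k):k\}$ is recovered from $\Isom(Z_{\mathbf n})$; it is a conjugacy invariant, because a conjugating automorphism induces a dimension-preserving bijection between minimal invariant subspaces. Hence distinct increasing $\mathbf n$ give non-conjugate isometry groups, and \emph{a fortiori} non-conjugate renormings (cf.\ \cite[Proposition~3.4]{DR}, \cite[Theorem~2]{R86}). Letting $\mathbf n$ run over the super-increasing sequences $(2^{j})_{j\in S}$, $S\subseteq\bbN$ infinite, produces $\mathfrak{c}$ such renormings --- the maximum possible on a separable space.

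The delicate points I expect are two. First, upgrading the rigidity statement ``a renorming with a larger isometry group is again of glued-fibre type'' from the equal-dimension case made explicit before Theorem~\ref{nmax} to arbitrary fibre dimensions; this is a genuine appeal to the general form of Rosenthal's analysis in \cite{R86}, and everything downstream rests on it. Second, the case $X=\ell_1$, which is not covered by the construction above, because $\big(\sum_k\oplus\ell_2^{n(k)}\big)_{\ell_1}$ fails to be isomorphic to $\ell_1$ once $n(k)\to\infty$ (its dual $\big(\sum_k\oplus\ell_2^{n(k)}\big)_{\ell_\infty}$ is not injective); a separate family of renormings is needed for $p=1$, as in \cite{DR}.
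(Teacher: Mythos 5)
Your proposal is correct and follows essentially the same route as the paper, whose own proof of Theorem~\ref{Bnonmax} is just a sketch of this very construction: $E$-sums of Hilbertian fibres of pairwise distinct dimensions such that no two finite subcollections of fibres have equal dimension sums (your super-increasing choice), Rosenthal's Theorem~\ref{FHS} to identify the isometry group and the glued-fibre form of any renorming with a larger isometry group, one further gluing to defeat maximality, the Pe\l czy\'nski decomposition method to return to $\ell_p$ or $U$, and varying the dimension sequence for the continuum of non-conjugate norms, with all details deferred to \cite[Section~4]{DR}. Your caveat about $p=1$ is well taken: the construction sketched in the survey likewise only yields renormings of $\ell_p$ for $1<p<\infty$ (and of $U$), since $\big(\sum_k\oplus\ell_2^{n(k)}\big)_{\ell_1}$ with $n(k)\to\infty$ is not isomorphic to $\ell_1$, so the $\ell_1$ case indeed rests on the separate treatment in \cite{DR}.
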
  
  
Compare with Corollary~\ref{cor:nmax}. 
The idea of the proof is similar to the proof of Theorem~\ref{nmax}. The essential difference is that this time the $E$-sums  are taken of sequences of Hilbert spaces that are not of the same dimension, but are all of different dimensions and, in addition, sums of dimensions of any two finite subcollections of fibers are never   equal to  each other, see \cite[Section~4]{DR} for details.

\begin{prob}
Does there exist a separable Banach $X$ space so that every bounded subgroup of $\GL(X)$ is contained in some maximal  bounded subgroup of $\GL(X)$?
Is this true for $X=L_p$? 
\end{prob}

\begin{small}
\noindent{\em Comments} 
\begin{itemized}
\item[(1)] The conclusion of Theiorem~\ref{Bnonmax}  is also true, for example, for
  the 
2-convexi\-fied Tsirelson space $T^{(2)}$ 
 and spaces of the form $(\sum_{n=1}^\infty \oplus \ell_2^n)_{E}$, where $E$ is   symmetric,  pure, and 
not isomorphic to a Hilbert space.
We note that $T^{(2)}$  is a weak Hilbert space. 

\item[(2)] We do not know whether $T^{(2)}$ 
or general weak Hilbert spaces, other than the Hilbert, have a  maximal  bounded subgroup of $\GL(X)$. 
\end{itemized}
\end{small}

\subsection{Almost-transitivity,  subspaces, and stabilizers}
\lb{sub-stabilizers}


In the Hilbert space case we may note that  the unitary group acts transitively not only on the sphere of $X$, but also on spheres of all infinite dimensional subspaces. We may ask to which extent this characterizes the Hilbert space. Some results in this direction were obtained in \cite{DR} as a consequence of 
Theorem~\ref{list} and known properties of Banach spaces.

\begin{prop} \lb{pin2infty}
Let $X$ be a subspace of $L_p$, $2<p<\infty$,  so that every subspace  of $X$ admits an almost transitive renorming, then $X$ is isomorphic to $\ell_2$.
\end{prop}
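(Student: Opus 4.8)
The plan is to combine Theorem~\ref{list}(f) with a structural fact about subspaces of $L_p$ for $2<p<\infty$. The hypothesis grants that \emph{every} subspace of $X$ admits an almost transitive renorming; in particular $X$ itself does. So $X$ is a subspace of $L_p$ ($2<p<\infty$) that can be renormed to be AT. By the contrapositive of Theorem~\ref{list}(f), $X$ must contain a subspace $Y_1$ isomorphic to $\ell_2$. On its own that does not finish the argument, so the key move is to iterate the obstruction inside an arbitrary subspace, not just inside $X$ once.

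First I would recall the classical dichotomy for subspaces of $L_p$, $p>2$ (Kadec--Pe\l czy\'nski): every infinite-dimensional closed subspace $Z\subseteq L_p$ either is isomorphic to $\ell_2$ and complemented, or contains a subspace isomorphic to $\ell_p$ (equivalently, the unit vector basis of $\ell_p$ is not an isomorph is ruled out only in the first case). The point is that a subspace of $L_p$ which contains \emph{no} copy of $\ell_p$ must itself be isomorphic to a Hilbert space, while a subspace which \emph{does} contain $\ell_p$ cannot be AT-renormed, and neither can any further subspace of it once we have isolated the $\ell_p$ copy. So the strategy is: suppose $X$ is not isomorphic to $\ell_2$. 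Then by Kadec--Pe\l czy\'nski $X$ contains a subspace $W\isom\ell_p$. Now $W$ is a subspace of $X$, hence by hypothesis $W$ admits an AT renorming. But $W\isom \ell_p\subseteq L_p$ with $2<p<\infty$ contains no copy of $\ell_2$ (every infinite-dimensional subspace of $\ell_p$ contains a further copy of $\ell_p$ and $\ell_p\not\supseteq\ell_2$ for $p\neq 2$), so by Theorem~\ref{list}(f) applied to $W$ (viewing $W$ as a subspace of $L_p$ that does not contain $\ell_2$), $W$ admits \emph{no} AT renorming — a contradiction. Hence $X\isom\ell_2$.

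Let me lay out the steps in order. \textbf{(1)} Assume for contradiction $X\not\isom\ell_2$. \textbf{(2)} Invoke the Kadec--Pe\l czy\'nski subspace theorem for $L_p$, $p>2$: since $X$ is not isomorphic to $\ell_2$, it contains a subspace $W$ isomorphic to $\ell_p$. \textbf{(3)} Observe $W$, being isomorphic to $\ell_p$ with $p\neq 2$, contains no subspace isomorphic to $\ell_2$ (a standard fact about $\ell_p$, $p\neq 2$: any infinite-dimensional subspace contains an isomorph of $\ell_p$, and $\ell_2\not\hookrightarrow\ell_p$). \textbf{(4)} Since $W\subseteq X\subseteq L_p$ with $2<p<\infty$ and $W$ has no subspace isomorphic to $\ell_2$, Theorem~\ref{list}(f) says $W$ has no equivalent AT renorming. \textbf{(5)} But $W$ is a subspace of $X$, and the hypothesis says every subspace of $X$ admits an AT renorming — contradiction. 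Therefore $X\isom\ell_2$.

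The only genuine obstacle is justifying step (2): one needs that a subspace of $L_p$ ($p>2$) which is not isomorphic to $\ell_2$ must contain a copy of $\ell_p$. This is exactly the Kadec--Pe\l czy\'nski dichotomy (for $p>2$ the two alternatives for an infinite-dimensional subspace $Z\subseteq L_p$ are $Z\isom\ell_2$, complemented, or $\ell_p\hookrightarrow Z$), and it is among the ``classical but specific'' facts the paper points to via \cite{LT, AK}; I would cite it as such rather than reprove it. Step (3) is elementary and step (4) is a direct application of Theorem~\ref{list}(f), which is already available in the excerpt. One small point to be careful about: Theorem~\ref{list}(f) is about subspaces of $L_p$ for $2<p<\infty$ that do not contain $\ell_2$, and here $W\subseteq X\subseteq L_p$ does sit inside $L_p$ with $2<p<\infty$, so the hypothesis of that theorem is met verbatim; no extra embedding-into-$L_p$ argument is needed. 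Everything else is bookkeeping.
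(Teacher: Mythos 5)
Your argument is correct and is essentially the proof the paper has in mind: the paper states Proposition~\ref{pin2infty} as a consequence of Theorem~\ref{list}(f) together with ``known properties of Banach spaces'', and those known properties are precisely the Kadec--Pe\l czy\'nski dichotomy for subspaces of $L_p$, $p>2$, which you invoke to produce a copy of $\ell_p$ inside any non-Hilbertian subspace and then rule it out via Theorem~\ref{list}(f). The only blemish is the garbled parenthetical in your first paragraph about the unit vector basis of $\ell_p$; the clean five-step version at the end is exactly right.
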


In view of Proposition~\ref{pin2infty} (see also comments to this section) it is natural to ask:

\begin{prob}\lb{pr:allsubsp}
  Suppose that every subspace of a Banach space $X$ admits  an  almost transitive renorming. Is $X$ isomorphic to  a Hilbert space?
\end{prob}

Next we turn to   some sufficient conditions on hyperplanes
 (i.e. 1-codi\-men\-sional subspaces) which together with almost transitivity of $X$ imply that $X$ is isometric to a Hilbert space.
  The first result that we want to mention here is due to J. Talponen, who generalized an earlier result of
    Randrianantoanina  \cite{beata2} that all real AT spaces that have a 
     1-comple\-mented hyperplane are isometric to a Hilbert space.

\begin{theorem}{\cite[Theorem 2.3]{T07}}\lb{hyperplane}
 Suppose that $X$ is a real almost transitive Banach space
and that for
each $\ep>0$, $X$ contains a $(1+\ep)$-complemen\-ted hyperplane. Then $X$ is isometric to a Hilbert space.
\end{theorem}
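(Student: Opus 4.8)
The plan is to exploit almost transitivity to promote the ``approximately $1$-complemented hyperplane'' hypothesis to a genuine structural statement about the whole sphere, and then invoke a Kakutani-type characterization of Hilbert space. First I would fix $\eps>0$ and a norm-one projection-like object: by hypothesis there is a hyperplane $H_\eps=\ker f_\eps$ (with $f_\eps\in S_{X^*}$) and a projection $P_\eps$ onto $H_\eps$ with $\|P_\eps\|\le 1+\eps$; write $P_\eps x=x-\phi_\eps(x)e_\eps$ where $e_\eps\notin H_\eps$ is normalized and $\phi_\eps\in X^*$. The key observation is that, because $\operatorname{Isom}(X)$ acts almost transitively on $S_X$, one can transport such an almost-$1$-complemented hyperplane so that it contains (almost) any prescribed unit vector, and simultaneously one can arrange the complementary line to point in (almost) any prescribed direction. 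Concretely, given $x\in S_X$, pick $T\in\operatorname{Isom}(X)$ with $\|Tx - e_\eps\|$ small; then $T^{-1}H_\eps$ is an almost-$1$-complemented hyperplane ``almost avoiding'' $x$, and $T^{-1}P_\eps T$ is a near-projection whose range almost avoids $x$. Dualizing, since $X$ is AT the dual $X^*$ is AT as well (at least when $X$ is separable and transitive, cf.\ the remarks in Section~\ref{sec:strict}; in the AT case one still has that the isometry group of $X^*$ acts almost transitively via the adjoint action), so one gets a rich supply of norm-one functionals $f$ such that $\ker f$ is almost $1$-complemented.

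The heart of the argument is then a $2$-dimensional reduction in the spirit of Kakutani's theorem (\emph{a Banach space all of whose hyperplanes, or all of whose closed subspaces of codimension one, are $1$-complemented is Hilbert}, with a limiting/$\eps$ version since we only have $(1+\eps)$-complementation). The plan is: using AT, show that for \emph{every} $2$-dimensional subspace $F\subset X$ and every line $\ell\subset F$ there is, for each $\eps$, a norm-one projection $Q$ of $X$ onto a hyperplane with $\ell = F\cap \ker Q$ up to $\eps$, and $\|Q\restriction_F - (\text{the rank-one projection of }F\text{ killing }\ell)\|$ small; this forces every $2$-dimensional subspace to have the property that each of its lines is the kernel of an almost-norm-one projection \emph{within} $F$, which (letting $\eps\to 0$ and using compactness of the $2$-dimensional Banach--Mazur compactum) makes every $2$-dimensional subspace of $X$ Euclidean. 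A Banach space all of whose $2$-dimensional subspaces are Euclidean has a norm coming from an inner product (the parallelogram law is a $2$-dimensional identity), so $X$ is Hilbert.

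To make the ``transport'' step precise I would argue as follows: fix $x,y\in S_X$ spanning a $2$-dimensional $F$, and apply the almost-$1$-complemented hyperplane hypothesis together with almost transitivity to produce, for each $\eps>0$, an isometry $T$ and the associated near-projection $R_\eps := T^{-1}P_\eps T$ with $\|R_\eps\|\le 1+\eps$ and $\|R_\eps y - y\|,\ \|R_\eps x\|\le\eps$ (by first sending $x$ close to $e_\eps$ and then correcting so that $y$ lands close to $H_\eps$); restricting $R_\eps$ to $F$ and compressing with the (bounded) projection of $X$ onto $F$ gives, in the limit $\eps\to0$ along a convergent subnet in the compact space of norms on $F$, a genuine norm-one rank-one idempotent on $F$ annihilating the line $\R x$ and fixing $\R y$. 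Since $x,y$ were arbitrary, every line in every $2$-plane is the kernel of a contractive projection within that plane, which is exactly Kakutani's condition in dimension $2$.

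\medskip

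\noindent\textbf{Main obstacle.} The delicate point is the simultaneous control of \emph{both} the hyperplane and its complementary line under the transitivity action: AT only lets us move one unit vector at a time to within $\eps$, so arranging that $R_\eps$ almost fixes $y$ while almost annihilating $x$ requires an iteration or a fixed-point/compactness argument that keeps the projection constant $1+\eps$ under control through the corrections — a naive two-step correction degrades the norm bound. I expect the bulk of the technical work (and the place where the ``for each $\eps$'' rather than ``$\eps=0$'' form of the hypothesis is essential) to be in showing this combined approximation can be done with projection norm still tending to $1$; once that is in hand, passing to $2$-dimensional subspaces and invoking the Euclidean characterization is routine.
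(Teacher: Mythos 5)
There is a genuine gap, and it sits exactly at the endgame you describe as ``routine''. The condition you reduce to --- that in every $2$-dimensional subspace $F\subset X$ every line is the kernel of a norm-one rank-one projection \emph{of $F$} --- is satisfied by every normed plane whatsoever, and so cannot force $F$ to be Euclidean. Indeed, given a line $\ell=\R x$ in a two-dimensional $F$, take the (essentially unique) norm-one functional $g$ on $F$ with $\ker g=\ell$; by finite-dimensionality $g$ attains its norm at some $y\in S_F$, and then $Qz=g(z)y$ is a norm-one idempotent on $F$ with kernel $\ell$. The Kakutani--Bohnenblust characterization is of a different nature: it requires $\dim\geq 3$ and that two-dimensional subspaces be $1$-complemented \emph{in the ambient space}; compressing your near-projection $R_\eps$ to $F$ (with a projection of $X$ onto $F$ whose norm you in any case do not control) destroys precisely the ambient information, so even a complete solution of the ``main obstacle'' you flag would leave you with a vacuous conclusion. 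A workable version of this strategy would have to show that every $2$-dimensional subspace of $X$ is $(1+\eps)$-complemented \emph{in $X$} for every $\eps>0$ (then one can restrict to $3$-dimensional subspaces, pass to limits of projections by compactness, and apply Kakutani), but deducing that from almost transitivity plus one $(1+\eps)$-complemented hyperplane is the real content of the theorem and is nowhere addressed in your plan.

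Two further remarks. Your parenthetical claim that almost transitivity of $X$ passes to $X^*$ via adjoints is false in general: as recalled in Section~\ref{sec:strict}, the dual statement holds for separable \emph{transitive} spaces, and $L_1$ is AT while its dual is not (this is also why the theorem is stated for AT spaces and not derived by dualization). Finally, note that the survey does not prove this statement at all: it is quoted from Talponen \cite[Theorem 2.3]{T07} (the case $\eps=0$ being Randrianantoanina \cite{beata2}), and those proofs do not go through a within-plane Kakutani reduction; so your proposal cannot be said to follow the source's route, and as written it does not constitute a proof.
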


Another type of condition that is natural to consider is that the group 
$\Isom(X)$ acts almost transitively on some hyperplane on $X$. This by itself is not sufficient to conclude that $X$ a Hilbert space, since Talponen \cite{T09} showed that the isometry  group of $L_1$ 
acts almost transitively on the hyperplane 
$M=\{f\in L_1: \int_0^1 f=0\}$ (and leaves it invariant). 
Thus some additional conditions are necessary. 

The results in the remaining part of this section are new, so we include their full proofs.

If $x_0 \in S_X$ then we define
$${\rm Stab}_{x_0}(X)=\{T \in \Isom(X): Tx_0=x_0\}.$$
This is a  closed subgroup of the isometry group, which under some natural hypotheses, acts on the hyperplane $H_{x_0}$ generated by the norming functional of $x_0$. 

We investigate the case where the stabilizers act   almost transitively on the appropriate hyperplane and obtain a partial answer to Mazur rotations problem.

Recall that by a theorem of Mazur \cite[Theorem 8.2]{FHHSPZ}, the norm is 
G\^ateaux differentiable on a dense $G_\delta$ subset of $S_X$ when $X$ is a separable Banach space. Thus, if $X$ is separable and transitive, then the norm is    G\^ateaux differentiable at every point of $S_X$ and also  strictly convex; see Section~\ref{sec:strict}.

Note that the G\^ateaux differentiability 
of the norm   at some $x \in S_X$, supported by the (unique) normalized funcional $\phi$, implies that the group ${\rm Stab}_x(X)$ leaves invariant the hyperplane $H_0=\Ker\phi$: indeed from $Tx=x$ it is immediate to deduce $T^* \phi=\phi$ and therefore that $H_0$ is invariant.

Conversely, strict convexity implies the following:

\begin{lemm}\lb{L:pmx}
 Assume$X$ is a real Banach space and the norm in strictly convex at $x$. Let $\phi$ be a support functional for $x$, and $H_0=\Ker \phi$. If an isometry $T$ satisfies $T(H_0)=H_0$, then $Tx=\pm x$.
 \end{lemm}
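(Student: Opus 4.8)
The plan is to exploit duality together with strict convexity. Since $T(H_0)=H_0$ and $H_0=\Ker\phi$ is a hyperplane, the adjoint $T^*$ must map the annihilator of $H_0$ to itself; but that annihilator is the one-dimensional span of $\phi$, so $T^*\phi=\lambda\phi$ for some scalar $\lambda$. Because $T$ is a surjective isometry, so is $T^*$, hence $\|T^*\phi\|=\|\phi\|=1$, forcing $|\lambda|=1$; as we are in the real case, $\lambda=\pm 1$, i.e. $T^*\phi=\pm\phi$.

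Next I would read off what this says about $Tx$. We have $\phi(Tx)=(T^*\phi)(x)=\pm\phi(x)=\pm 1$. On the other hand $\|Tx\|=\|x\|=1$ since $T$ is an isometry. Thus $Tx$ (in the case $T^*\phi=\phi$) or $-Tx$ (in the case $T^*\phi=-\phi$) is a norm-one vector on which $\phi$ attains its norm $1$. But $\phi$ is a support functional for $x$, and strict convexity of the norm at $x$ means precisely that $x$ is the \emph{unique} norm-one vector with $\phi(x)=1=\|\phi\|$ (the face of the unit ball exposed by $\phi$ is the single point $x$). Hence $Tx=\pm x$ accordingly, which is the claim.

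The only point that needs a word of care — and the closest thing to an obstacle, though it is minor — is making sure the uniqueness statement being used is the right one: strict convexity at $x$ guarantees that if $\|y\|=1$ and $y$ lies on the segment between two points of $S_X$ both supported by $\phi$ at value $1$, then $y=x$; more directly, if $\|y\|=\|x\|=1$ and $\phi(y)=\phi(x)=1$ then $\tfrac12(x+y)$ also has $\phi$-value $1$ hence norm $1$, and strict convexity at $x$ forces $y=x$. So one should phrase the hypothesis as: the unit sphere contains no nondegenerate segment through $x$, equivalently $x$ is an extreme point of the slice $\{z\in B_X:\phi(z)=1\}$; this is exactly what is needed. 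Everything else is immediate from the fact that adjoints of surjective isometries are surjective isometries and from $\dim (X/H_0)=1$.
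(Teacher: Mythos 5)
Your argument is correct and is essentially the paper's own proof: from $T(H_0)=H_0$ one gets $T^*\phi=c\phi$ with $|c|=1$, hence $\phi(Tx)=\pm1$, and strict convexity at $x$ (i.e.\ $x$ is the unique norm-one point of the slice $\{\phi=1\}$) gives $Tx=\pm x$. Your extra paragraph justifying the uniqueness via the midpoint $\tfrac12(x+y)$ is a welcome elaboration of a step the paper leaves implicit, but it is not a different route.
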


\begin{proof} Let $T\in \Isom(X)$ with  $T(H_0)=H_0$.
Since
$\phi(y)=0$ implies $T^*\phi(y)=0$, there exists a scalar $c$ so that   $T^*\phi=c\phi=\pm \phi$.
Therefore $\phi(Tx)=\pm 1$. Strict convexity implies that $Tx=\pm x$.
\end{proof}

Summing up, we may note that on a separable transitive space, $Tx=\pm x$ if and only if $T(H_0)=H_0$. Transitivity is however not needed for the next result:

\begin{theo} \lb{fixedx0-v2}
Let $X$ be an almost transitive  real  Banach space. Suppose that for some $x_0 \in S_X$ supported by $\phi$, 
${\rm Stab}_{x_0}(X)$ acts 
almost  transitively on $S_{\Ker \phi}$. Then $X$ is isometric to a Hilbert space.
\end{theo}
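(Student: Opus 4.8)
The strategy is to show that under these hypotheses the group $\Isom(X)$ acts transitively enough to force strict convexity, smoothness, and then a symmetry condition of the norm that characterizes Hilbert spaces. Since $X$ is almost transitive and real, I would first upgrade the almost transitivity of $\mathrm{Stab}_{x_0}(X)$ on $S_{\ker\phi}$ to a statement about all of $X$. The key point is that almost transitivity of $X$ together with near-transitivity of the stabilizer produces enough isometries: given any $y\in S_X$, first move $x_0$ close to $y$ by some isometry of $X$, then use the stabilizer to act on the ``tangent'' directions near $x_0$. Composing, one sees that for each $\eps>0$ and each pair of norm-one vectors $y_1,y_2$ with $y_1$ in the hyperplane ``at'' $y_2$, there is an isometry carrying one configuration to the other up to $\eps$. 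This is the mechanism by which the two-dimensional sections through the smooth points all become (almost) isometric to each other.

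**Key steps.** First I would record that almost transitivity of a real Banach space implies it is uniformly convex and uniformly smooth when combined with such a stabilizer condition — or, more modestly, that one can reduce to smooth points, which are dense by Mazur's theorem, and argue there. Second, fix a smooth point $x_0$ with support functional $\phi$ and set $H_0=\ker\phi$. By Lemma~\ref{L:pmx} the elements of $\Isom(X)$ preserving $H_0$ are exactly those sending $x_0$ to $\pm x_0$. The hypothesis says $\mathrm{Stab}_{x_0}(X)$ is (almost) transitive on $S_{H_0}$; combined with almost transitivity of $X$ this gives a rich supply of isometries, and I would use them to show that the restriction of the norm to the two-dimensional subspace $\Span(x_0,h)$ is, up to arbitrarily small perturbation, independent of the choice of $h\in S_{H_0}$ — and moreover symmetric under the exchange $x_0\leftrightarrow h$ in a suitable normalized sense. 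Third, I would invoke a known characterization: a Banach space all of whose two-dimensional subspaces through a fixed smooth point are mutually (almost) isometric and enjoy this extra symmetry is a Hilbert space — in the separable or general case one can pass to an ultrapower to remove the $\eps$'s and apply a transitivity-plus-symmetry theorem, then descend. Alternatively one shows the modulus of convexity matches that of $\ell_2$ exactly, via the stabilizer acting transitively on the unit sphere of the norming hyperplane, forcing the ``Jordan--von Neumann'' parallelogram-type identity on these sections.

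**The main obstacle.** The hard part will be extracting a genuine \emph{symmetry} of the norm — something like $\|ax_0+bh\|=\|bx_0+ah\|$ for all $h\in S_{H_0}$ — from the mere fact that the stabilizer moves $h$ around inside $H_0$. Transitivity of $\mathrm{Stab}_{x_0}(X)$ on $S_{H_0}$ tells us the ``spokes'' $h$ are interchangeable, but it does not directly interchange the $x_0$-direction with an $h$-direction; that interchange has to come from combining the stabilizer action with the ambient almost transitivity of $X$ (which \emph{can} move $x_0$), and controlling the $\eps$-errors through this composition is delicate. I expect one passes to an ultrapower $X_{\mathcal U}$ — which is transitive and whose stabilizer is transitive on the relevant hyperplane by Fact~\ref{1.6}-type arguments — proves $X_{\mathcal U}$ is Hilbert there (where the parallelogram law becomes exact), and then concludes $X$ itself is Hilbert since being isometric to a Hilbert space is determined by finite subsets and hence passes between a space and its ultrapowers. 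The separability issue in the original Mazur problem does not arise here because the conclusion is the \emph{isometric} statement, which is insensitive to ultrapowers.
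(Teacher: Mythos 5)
There is a genuine gap: your argument never actually closes. Everything funnels into the third step, where you ``invoke a known characterization'' of Hilbert spaces via mutually (almost) isometric two--dimensional sections with an extra $x_0\leftrightarrow h$ symmetry, or alternatively a ``transitivity-plus-symmetry theorem'' in an ultrapower; no such theorem is identified, and the symmetry $\|ax_0+bh\|=\|bx_0+ah\|$ that you yourself flag as the main obstacle is exactly what is never derived. The stabilizer hypothesis moves vectors around \emph{inside} $\ker\phi$; nothing you describe produces an isometry mixing the $x_0$-direction with the hyperplane directions, so the plan stalls precisely at the point you identify. (The preliminary claim that AT plus the stabilizer condition yields uniform convexity/smoothness is likewise unsubstantiated as a step, rather than a consequence of the conclusion.)

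What the hypothesis actually buys is much weaker but sufficient, and it bypasses any need to interchange $x_0$ with a hyperplane direction. Fix $z\in S_X$, put $\alpha=\phi(z)$ and $w=z-\alpha x_0\in\ker\phi$. Almost transitivity of ${\rm Stab}_{x_0}(X)$ on $S_{\ker\phi}$ lets you send $w/\|w\|$ arbitrarily close to $-w/\|w\|$ by an isometry fixing $x_0$; hence $T_\eps(z)=\alpha x_0+T_\eps(w)$ is within $\eps$ of $2\alpha x_0-z$, so $\|2\alpha x_0-z\|=\|z\|=1$. Then
\begin{equation*}
\|z-\alpha x_0\|=\Bigl\|\tfrac12\bigl(z+(z-2\alpha x_0)\bigr)\Bigr\|\le 1,
\end{equation*}
so the projection $P(z)=z-\phi(z)x_0$ is contractive, i.e.\ $\ker\phi$ is $1$-complemented. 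At that point one quotes Theorem~\ref{hyperplane} (Talponen: an almost transitive real space with $(1+\eps)$-complemented hyperplanes for every $\eps$ is isometric to Hilbert space), and the proof is finished --- no two-dimensional classification, no parallelogram identity, and no ultrapower descent is needed. Your ultrapower remark (Hilbertianity passes down from $X_{\mathcal U}$ to $X$) is correct as far as it goes, but it does not repair the missing core, which is how to get a Hilbert-space criterion out of the stabilizer action in the first place.
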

\begin{proof}
By Theorem~\ref{hyperplane}, it is enough to prove that the hyperplane 
$\Ker \phi$ is 1-complemented in $X$, that is, that the  projection
$P(z)\DEF z - \phi(z)x_0$
has norm one.

Fix any $z\in S_X$ and let $\al= \phi(z)$. Since ${\rm Stab}_{x_0}(X)$ acts 
almost  transitively on $S_{\Ker \phi}$, for every $\ep>0$ there exists an isometry $T_\ep\in {\rm Stab}_{x_0}(X)$ so that 
$\|T_\ep(z-\al x_0)-(-(z-\al x_0))\|\le \ep$. Hence
$$\|T_\ep(z)-(2\al x_0-z)\|\le \ep.$$
Since $\ep>0$ is arbitrary we get that $\|z-2\al x_0\|=\|z\|=1$.
Thus
\begin{equation*}
\|P(z)\|=\|z-\al x_0\|=\Big\|\frac12 \big(z + (z-2\al x_0)\big)\Big\| \le \frac12\big(\|z\| + \|z-2\al x_0\|\big)\le 1,
\end{equation*}
which ends the proof.
\end{proof}

We finish this section with an observation that Theorem~\ref{fixedx0-v2} implies in particular that for any $1\le p<\infty$, $p \neq 2$, and for all 
$x_0\in  S_{L_p}$, the group ${\rm Stab}_{x_0}(L_p)$ does not act
almost  transitively on $S_{\Ker \phi}$. This is easy to see directly in the case when $x_0(t)=1$ for all $t\in [0,1]$. 

\begin{lemma}\label{lemmastabLp} If $1 \leq p <\infty, p \neq 2$, then
 ${\rm Stab}_1(L_p)$ does not act almost transitively on 
 $M=\{f: \int_0^1 f=0\}$.
\end{lemma}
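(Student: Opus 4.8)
The statement to prove is Lemma~\ref{lemmastabLp}: for $1\le p<\infty$, $p\ne 2$, the stabilizer ${\rm Stab}_{\mathbf 1}(L_p)$ of the constant function $\mathbf 1$ does not act almost transitively on $M=\{f\in L_p:\int_0^1 f=0\}$. The plan is to use the Banach--Lamperti description of $\Isom(L_p)$ (recalled earlier in the excerpt) to pin down exactly what the elements of ${\rm Stab}_{\mathbf 1}(L_p)$ look like, and then exhibit an isometry invariant of the pair $({\mathbf 1},f)$ which the stabilizer orbit of a fixed $f\in M$ cannot make dense.

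\textbf{Step 1: identify the stabilizer.} By Banach--Lamperti, any $T\in\Isom(L_p)$ has the form $(Tf)(t)=h(t)\,f(\phi(t))$ with $\phi$ a measure-class-preserving Borel automorphism of $[0,1]$ and $|h|^p=d(\lambda\circ\phi)/d\lambda$. Imposing $T\mathbf 1=\mathbf 1$ forces $h(t)=1$ a.e.\ (in the real case; in general $h$ unimodular, but then $|h|^p=1$ so $\phi$ is measure preserving and $h=1$ a.e.\ after absorbing), hence $\phi$ is a \emph{measure-preserving} Borel automorphism and $T=T_\phi$ is the induced composition operator $T_\phi f=f\circ\phi$. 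So ${\rm Stab}_{\mathbf 1}(L_p)$ is (anti-)isomorphic to the group of measure-preserving automorphisms of $[0,1]$ acting on $L_p$ by composition, and every such $T_\phi$ automatically leaves $M$ invariant (since $\int f\circ\phi=\int f$).

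\textbf{Step 2: find the invariant and conclude.} Composition operators are lattice isometries: $T_\phi$ preserves the pointwise modulus and in particular the \emph{distribution} of $|f|$ — equivalently, for each fixed $f$, the whole orbit $\{f\circ\phi\}$ consists of equimeasurable functions, so $\|f\circ\phi\|_r=\|f\|_r$ for \emph{every} $r$, not just $r=p$. Now pick any $f_0\in M$ whose distribution is not that of a ``scalar multiple of an indicator'' pattern — e.g.\ take $f_0$ with, say, $\|f_0\|_p=1$ but $\|f_0\|_{2p}\ne\|g\|_{2p}$ for a suitable target $g\in S_M$ that \emph{is} approximable (or simply produce two functions in $S_M$ with different $\|\cdot\|_{2p}$ norms, both of which must then lie in different closures of stabilizer orbits). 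Since equimeasurability is preserved in the limit for $L_p$-convergent sequences along the $L_p$-norm only up to... — the cleaner route is: the closure of the orbit $\overline{\{f_0\circ\phi\}}$ in $L_p$ still consists only of functions equimeasurable with $f_0$ (an $L_p$-limit of equimeasurable functions is equimeasurable with them, since convergence in $L_p$ implies convergence in measure and one passes to the limit in the distribution function). Hence ${\rm Stab}_{\mathbf 1}(L_p)$ cannot move $f_0$ close to any $g\in S_M$ not equimeasurable with $f_0$, and such $g$ obviously exist in $M$. Therefore the action is not almost transitive. (This argument is insensitive to $p=2$ only in that for $p=2$ it still gives non-transitivity of the \emph{composition} subgroup, but there the \emph{full} stabilizer in the Hilbert norm is larger — the whole orthogonal group of $M$ — which is why the hypothesis $p\ne 2$ is exactly what makes ${\rm Stab}_{\mathbf 1}$ reduce to composition operators in Step~1.)

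\textbf{Main obstacle.} The only genuinely delicate point is Step~1: verifying that $T\mathbf 1=\mathbf 1$ truly forces $h\equiv 1$ and $\phi$ measure preserving, i.e.\ that no ``measure-distorting'' isometry can fix $\mathbf 1$. This is immediate from $h=1$ once one writes out the Banach--Lamperti normal form, but it is worth stating carefully since it is what rigidifies the stabilizer; after that, the equimeasurability invariant in Step~2 is routine. An alternative to Step~2 that avoids even discussing $L_p$-limits of the distribution function is to note that the map $f\mapsto\|f\|_{p}/\|f\|_{1}$ (or any ratio $\|f\|_s/\|f\|_t$, $s\ne t$) is continuous on $S_{L_p}$ off a negligible set and constant on each stabilizer orbit, yet takes at least two values on $S_M$ when $p\ne 2$ — contradicting density of any single orbit.
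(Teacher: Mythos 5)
Your proposal is correct and follows essentially the same route as the paper: both use the Banach--Lamperti theorem (available exactly because $p\neq 2$) to identify ${\rm Stab}_{\mathbf 1}(L_p)$ with composition by measure-preserving automorphisms, and then exploit an equimeasurability invariant --- the paper takes the concrete function $h=1_{[0,1/2)}-1_{[1/2,1]}$ and estimates $\|Th-f\|_p\ge(1-\mu)^{1/p}$ for any $f\in S_M$ whose support has measure $\mu<1$, while you observe more generally that the $L_p$-closure of an orbit consists only of functions equimeasurable with the starting point. One small caution about your parenthetical ``alternative route'': the ratio $\|f\|_p/\|f\|_1$ is identically $1$ on $S_{L_1}$, so it fails at $p=1$, and $\|f\|_{2p}$ need not be finite or continuous on $L_p$; but your main equimeasurability argument is sound and covers all $1\le p<\infty$, $p\neq 2$.
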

 
\begin{proof}
Every isometry in ${\rm Stab}_1(L_p)$ is of the form $T(f)=f \circ\s$ where  $\s$ is a measure preserving automorphism of $[0,1]$. 
Consider $h=1_{[0,1/2)}-1_{[1/2,1]}\in S_M$. Then for any $T\in{\rm Stab}_1(L_p)$ the measure of the support of $Th$ is also equal to $1$. Thus, if  $f$ is any function in $S_M$ whose support has measure $\mu<1$, we have
$$\|Th-f\|_p\ge \Big(\int_{[0,1]\setminus \supp(f)} 1\Big)^{1/p}=(1-\mu)^{1/p}.$$
Therefore 
${\rm Stab}_1(L_p)$  does not act almost transitively on $S_M$. \end{proof}

\begin{small}
\noindent{\em Comments:}
\begin{itemized}
\item[(1)]
Proposition~\ref{pin2infty} is also true when $X$ is  a subspace of the Schatten class $S^p(\ell_2)$, $1<p<\infty$, $p\ne 2$ or  of the  non-commutative
$L_p[0,1]$, $2<p<\infty$,  and when $X$ is a
stable Banach space  that admits a $C^2$-smooth  bump.
However it is open whether it remains true  when $X$ is  a subspace of 
$L_p$ for $1<p<2$. In this case we only know that  every subspace of 
$X$ contains isomorphic (even almost isometric) copies of $\ell_2$,  see \cite{DR} for  details.
 \item[(2)]
Notice that Lemma \ref{lemmastabLp} is true also in the case when $p=1$, despite the fact that, as we mentioned above, Talponen \cite{T09} showed that $\Isom(L_1)$ acts almost transitively on $S_{M}\subset S_{L_1}$. Talponen also showed that in this case for all 
$T\in\Isom(L_1)$ we have $T(M)=M$, but, of course, the conclusion of Lemma~\ref{L:pmx} does not hold.
\item[(3)] By the way, since every hyperplane of $L_1$ is isomorphic to the whole space, the result of Talponen \cite{T09} mentioned above   provides another AT renorming of $L_1$, different from those described in  
Theorem~\ref{maxcapitalLp}.
\end{itemized}
\end{small}


\section{Maximal norms and unitarisable representations on spaces isomorphic to Hilbert spaces}\label{3}

In this section we focus on Mazur rotations problem on a space already known to be linearly isomorphic to the Hilbert space. The   results presented in this section are mainly from \cite{FR2}.
A way of understanding this concept is by considering the {\em $G$-invariant norms}
corresponding to a bounded subgroup $G\leqslant \GL(X)$ on a Banach space $X$.  
In the language of representations, the question is to investigate the {\em invariant norms} for a representation of a group $\Gamma$,
i.e. the norms for which the representation induces an action of $\Gamma$ by isometries on $X$. 
Recall from Section 2 that if $G$ is bounded, then 
$$
\triple x=\sup_{T\in G}\|Tx\|
$$
defines an equivalent $G$-invariant norm on $X$, i.e., $G$ may be seen as a subgroup of ${\rm Isom}(X, \triple\cdot)$. Moreover, if $\|\cdot\|$ is uniformly convex,
then  so is $\triple\cdot$ (see, e.g.,   \cite[Lemma 1.1]{CS3}; or used more recently from another perspective, \cite[Proposition 2.3]{furman}. However, if $X$ is a Hilbert space
and $\norm$ is hilbertian,
i.e., induced by an inner product, then $\triple\cdot$ will not, in general, be hilbertian. 
The {\em unitarizability} problem therefore asks which bounded subgroups of $\GL(\ku H)$ admit invariant euclidean norms. 
It is a classical result of representation theory dating back to the beginning of the 20th century that
if $G$ is a bounded subgroup of $\GL(\mathbb{C}^n)$, then there is a $G$-invariant inner product, or equivalently 
a $G$-invariant euclidean norm. B. Sz.-Nagy \cite{Nagy} showed  that any bounded representation 
$\pi\colon \Z\til \GL(\ku H)$ is {\em unitarizable}, i.e., $\ku H$ admits an equivalent $\pi(\Z)$-invariant
inner product.   This was extended by M. Day \cite{day} and 
J. Dixmier \cite{dixmier} to any bounded representation of an amenable topological group, via averaging over an invariant mean.

 In the opposite direction, the first example of a non-unitarizable bounded representation
 of a (necessarily non-amenable) group in the Hilbert is due to L. Ehrenpreis and F. I. Mautner \cite{mautner}.
 Since, by a result of A. J. Ol'\v{s}hanski\u\i{} \cite{olshanski}, 
 there are non-amenable countable groups which do not contain a copy of $\F_2$, it remains open whether the  result of Sz.-Nagy,
 Day and Dixmier admits a converse.

\begin{prob}[Dixmier's unitarizability problem] 
Suppose $\Gamma$ is a countable group all of whose bounded representations on $\ku H$ are unitarizable. Is $\Gamma$ amenable?
\end{prob}

In  \cite{FR2} Ferenczi and Rosendal investigate the relation of certain non-unitarizable representations on the Hilbert
with the notions of maximality, almost transitivity, or transitivity of norms, through the following problem: 

\begin{prob} [Ferenczi-Rosendal, 2017] Find a
non-unitarizable representation on the Hilbert space which admits an equivalent invariant maximal (resp. almost transitive, transitive) norm. 
\end {prob}

In the case of a positive answer,  a maximal (resp. AT, transitive) non-hilbertian norm
on the Hilbert would be obtained, and the Hilbert space would admit non-conjugate maximal norms (see Problem \ref{3problems}). In the last
case, there would exist a transitive, non-hilbertian norm, on the Hilbert, and therefore a negative answer to
the isometric version of Mazur rotations problem.


We focus here on a specific class of possibly non-unitarizable representations on the Hilbert, which first appeared  in \cite{pytlic}:
triangular representations on a direct sum of two copies of the Hilbert, where the diagonal
elements of the matrix are unitary and where the upper right element is called a {\em derivation}.

Precisely, suppose that $\lambda\colon \Gamma\til \ku U(\ku H)$
is a unitary representation. A {\em bounded derivation} associated to $\lambda$ is a uniformly bounded map
$d\colon \Gamma\til \ku B(\ku H)$ so that $d(gf)=\lambda(g)d(f)+d(g)\lambda(f)$ for all $g,f\in \Gamma$. This is simply equivalent to requiring that
$$
\lambda_d(g)=\begin{pmatrix} \lambda(g) & d(g) \\ 0 & \lambda(g) \end{pmatrix}
$$
defines a bounded representation of $\Gamma$ on $\ku H\oplus \ku H$. The representation $\lambda_d$ is unitarizable
exactly when $d$ is {\em inner}, i.e., $d(g)=\lambda(g)A-A\lambda(g)$ for some bounded linear operator $A$ on $\ku H$
(a classical result whose proof may be found, e.g., in \cite{FR2}).

Of course such a representation $\lambda_d$ cannot be transitive or almost transitive, since it leaves the first summand invariant. Citing \cite{FR2} this leads to the study of
``bounded groups $G\leqslant {\GL}(\ku H\oplus \ku H)$ 
containing $\lambda_d[\Gamma]$ for $\lambda$ and $d$ as above, which are potential examples of maximal non-unitarizable groups":

 

\begin{prop}[\cite{FR2}]\label{lip}
Suppose that $\lambda\colon \Gamma\til \ku U(\ku H)$ is a unitary representation of a group $\Gamma$ on a separable infinite-dimensional Hilbert space $\ku H$
and let $d$ be a bounded derivation associated to $\lambda$. Consider the assertions
\begin{enumerate}
\item There is an almost transitive bounded subgroup $G$ of
${\GL}(\ku H_1 \oplus \ku H_2)$ containing $\lambda_d[\Gamma]$.
\item There is a $\lambda_d[\Gamma]$-invariant norm on  $\ku H_1\oplus \ku H_2$ with moduli of convexity and smoothness of power type $2$.
\item There is a $\lambda_d[\Gamma]$-invariant norm on  $\ku H_1\oplus \ku H_2$
such that the $\ku H_1$-nearest point map 
$\ku H_1 \oplus \ku H_2 \til \ku H_1$ is well-defined and Lipschitz.
\item There is a homogeneous Lipschitz map $\psi: \ku H_2 \til \ku H_1$ such that
$d(a)=\lambda(a)\psi-\psi \lambda(a)$.
\item The group 
$\lambda_d[\Gamma_z]$ is unitarizable for $z$ outside of a Gauss null subset of $\ku H_2$,  where $\Gamma_z=\{a \in \Gamma : \lambda(a)(z)=z\}$.
\end{enumerate}
Then {\rm (1)} $\implies$ {\rm (2)} $\implies$ {\rm (3)} $\implies$ {\rm (4)} $\implies$ {\rm (5)}.
\end{prop}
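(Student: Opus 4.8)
The plan is to prove the four implications in turn, descending from the abstract existence statement (1), through its geometric consequences (2)--(3), to the linearization (4), and finally to the cohomological conclusion (5); only the first and the last implication use external machinery, the two middle ones being bookkeeping around the metric projection onto the invariant subspace $\mathcal{H}_1=\mathcal{H}_1\oplus\{0\}$. For \emph{(1)$\implies$(2)}: given an almost transitive bounded $G\leqslant\GL(\mathcal{H}_1\oplus\mathcal{H}_2)$ containing $\lambda_d[\Gamma]$, take the $G$-invariant norm $\triple{x}=\sup_{T\in G}\|Tx\|$; it is a fortiori $\lambda_d[\Gamma]$-invariant and turns $\mathcal{H}_1\oplus\mathcal{H}_2$ into an almost transitive space which, being an equivalent renorming of a Hilbert space, has type $2$ and cotype $2$. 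I would then quote the structural fact from the theory of almost transitive spaces that such a space with nontrivial type $p$ is uniformly smooth with modulus of smoothness of power type $p$, and with cotype $q$ is uniformly convex with modulus of convexity of power type $q$; for $p=q=2$ this yields (2) for the norm $\triple{\cdot}$.

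For \emph{(2)$\implies$(3)} I keep the norm $\triple{\cdot}$ of (2): uniform convexity gives reflexivity and strict convexity, hence every $v$ has a \emph{unique} nearest point $Pv$ in $\mathcal{H}_1$, so the nearest-point map $P\colon\mathcal{H}_1\oplus\mathcal{H}_2\to\mathcal{H}_1$ is well defined; Lipschitzness I would obtain from the classical quantitative estimate that in a $2$-uniformly convex and $2$-uniformly smooth space the metric projection onto a closed subspace is Lipschitz (homogeneity $P(\mu v)=\mu\,Pv$ upgrading a bound on the unit sphere to a global one). For \emph{(3)$\implies$(4)}, with $\triple{\cdot}$ and $P$ as in (3) put $Qv=v-Pv$. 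Since $\mathcal{H}_1$ is $\lambda_d(a)$-invariant and $\lambda_d(a)$ is a $\triple{\cdot}$-isometry, uniqueness of nearest points forces $P\lambda_d(a)=\lambda_d(a)P$, hence $Q\lambda_d(a)=\lambda_d(a)Q$; moreover minimizing $\triple{(x-y,z)}$ over $y\in\mathcal{H}$ and writing $y=x+v$ shows the minimizer depends on $z$ only, so $P(x,z)=(x,0)+P(0,z)$ and $Q(x,z)=Q(0,z)$ depends only on the second coordinate. Define $\psi\colon\mathcal{H}_2\to\mathcal{H}_1$ by $(\psi(z),0):=P(0,z)$, so $Q(0,z)=(-\psi(z),z)$; then $\psi$ is homogeneous (as $P(\mu v)=\mu Pv$ for all scalars $\mu$) and Lipschitz (up to bounded coordinate maps and the equivalence of $\triple{\cdot}$ with the Hilbert norm, $\psi(z)-\psi(z')$ is $Q(0,z')-Q(0,z)$). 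Finally, evaluating $Q\lambda_d(a)=\lambda_d(a)Q$ at $(0,z)$ gives $(-\psi(\lambda(a)z),\lambda(a)z)=(-\lambda(a)\psi(z)+d(a)z,\lambda(a)z)$, and comparing first coordinates yields $d(a)z=\lambda(a)\psi(z)-\psi(\lambda(a)z)$, i.e.\ $d(a)=\lambda(a)\psi-\psi\lambda(a)$.

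For \emph{(4)$\implies$(5)}: since $\mathcal{H}_2$ is separable and $\mathcal{H}_1$ has the Radon-Nikod\'ym property, the Lipschitz map $\psi$ is G\^ateaux differentiable off a Gauss null set $N\subseteq\mathcal{H}_2$; for $z\notin N$ put $A_z:=D\psi(z)\in\mathcal{L}(\mathcal{H}_2,\mathcal{H}_1)$, with $\|A_z\|\le\operatorname{Lip}(\psi)$. Fix such a $z$ and any $a\in\Gamma_z$, so $\lambda(a)z=z$. Differentiating $d(a)w=\lambda(a)\psi(w)-\psi(\lambda(a)w)$ at $w=z$ in an arbitrary direction $h$, and using $\lambda(a)z=z$ to identify $D\psi(\lambda(a)z)$ with $A_z$, one gets $d(a)h=\lambda(a)A_zh-A_z\lambda(a)h$ for all $h$; hence $d|_{\Gamma_z}$ is inner for $\lambda|_{\Gamma_z}$ (in the complex case, replacing $A_z$ by its complex-linear part $\tfrac12(A_z+JA_zJ^{-1})$, $J$ being multiplication by $i$, still implements $d|_{\Gamma_z}$). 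By the criterion quoted above, innerness of $d|_{\Gamma_z}$ means precisely that $\lambda_d[\Gamma_z]$ is unitarizable, and this holds for all $z$ outside the Gauss null set $N$.

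The hard part is the Lipschitz property of the nearest-point projection in (2)$\implies$(3): well-definedness is a soft consequence of uniform convexity, but Lipschitzness is the only place where the power-type-$2$ hypothesis is used quantitatively and essentially, and it is the step one should be most careful about. The only other non-elementary ingredient, the Gauss null differentiability theorem for Lipschitz maps into spaces with the Radon-Nikod\'ym property, enters (4)$\implies$(5) as a black box, while (3)$\implies$(4) is pure bookkeeping once the right $\psi$ is isolated.
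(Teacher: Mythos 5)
Your proposal is correct and follows essentially the same route as the paper: (2)$\implies$(3) via the classical power-type estimate for the metric projection (the paper quotes Benyamini--Lindenstrauss, Theorem 2.8), (3)$\implies$(4) by the translation- and $\lambda_d[\Gamma]$-equivariance of the nearest-point map yielding $n(x,y)=x+\psi(y)$ and the derivation identity, and (4)$\implies$(5) by Gauss-null G\^ateaux differentiability of $\psi$ and differentiating the identity at a fixed point of $\lambda(a)$, so that $\psi'(z)$ implements $d$ on $\Gamma_z$ (your remark on extracting the complex-linear part is a harmless refinement). The only divergence is (1)$\implies$(2), where you cite the Finet-type structural fact for almost transitive superreflexive spaces — which should really be phrased in terms of admitting equivalent norms with power-type moduli (martingale type/cotype) rather than Rademacher type/cotype, though this is immaterial here since the space is isomorphic to a Hilbert space — whereas the paper proves it directly by the very mechanism underlying that fact: the $G$-invariant sup-norm $\sup_{g\in G}\|gx\|_2$ has modulus of convexity of power type $2$, almost transitivity forces all $G$-invariant norms to be proportional, and the dual sup-norm supplies the power type $2$ smoothness.
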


\begin{proof}
The idea of the proof is as follows. From (1) one deduces that the $G$-invariant norm $\sup_{g \in G}\|gx\|_2$ (which has modulus of convexity of power type $2$)
is a multiple of any given $G$-invariant norm on $\ku H=\ku H_1 \oplus \ku H_2$. The same holds for the dual norm to the $G$-invariant norm
$\sup_{g \in G}\|g^* \phi\|_2$, defined on the dual, and this norm has modulus of smoothness of power type $2$. 
So there is a $G$-invariant (and in particular $\lambda_d[\Gamma]$-invariant) norm with both moduli of power type $2$.
The implication (2) $\implies$ (3) follows from classical  estimates relating the modulus of continuity of the nearest point to the moduli of convexity and smoothness,  which   appear in \cite{BL} as Theorem 2.8.
Since the $\ku H_1$-nearest point map $n: \ku H_1 \oplus \ku H_2 \rightarrow \ku H_1$ is equivariant under translation by any vector in $\ku H_1$ and under isometries
in $\lambda_d[\Gamma]$, it is given by the formula:
$n(x,y)=x+\psi(y)$. The map $\psi(y)=n(0,y)$ is Lipschitz and the identity $d(a)(x)=\lambda(a)\psi(x)-\psi(\lambda(a)x)$
follows from the relation $n(T(x,y))=T(n(x,y))$ for any $T=\lambda_d(a)$.

(4) $\implies$ (5) Outside of a Gauss null set the map $\psi$ is G\^ateaux differentiable (\cite[Theorem 6.42]{BL}). Deriv\-a\-ting the relation
above for $a \in \Gamma_z$, $\psi'(z)$ witnesses that $d(a)$ is a linear derivation for the group 
$\lambda_d[\Gamma_z]$.
\end{proof}

It may be interesting to note here that geometric properties of general Banach spaces (such as uniform convexity or smoothness) are relevant even to the seemingly trivial case of a Hilbert space. For example, choosing to see a bounded group on $\ku H$   as an isometric group on some non-hilbertian renorming $X$ of $\ku H$ allows to use relations of the nearest point map with convexity or smoothness of the norm of $X$.

\begin{cor}\label{introduction: hilbert}
Suppose that $\lambda\colon \Gamma\til \ku U(\ku H)$ is a unitary representation of a group $\Gamma$
on a separable  Hilbert space $\ku H$ and $d\colon \Gamma\til \ku B(\ku H)$ is an associated non-inner bounded derivation.
Suppose that $G \leqslant {\GL}(\ku H \oplus \ku H)$ is a bounded   almost transitive subgroup containing $\lambda_d[\Gamma]$. Then 
there is a homogeneous Lipschitz non-linear map $\psi\colon \ku H\til \ku H$ defining the derivation
by $d(a)=\lambda(a)\psi-\psi\lambda(a)$. 
\end{cor}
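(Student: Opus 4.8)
The plan is to obtain the corollary as essentially an immediate consequence of Proposition~\ref{lip}. Indeed, the hypothesis that $G\leqslant {\GL}(\ku H\oplus \ku H)$ is a bounded almost transitive subgroup containing $\lambda_d[\Gamma]$ is exactly assertion (1) of Proposition~\ref{lip}, taking $\ku H_1=\ku H_2=\ku H$. Hence the implications (1) $\implies$ (2) $\implies$ (3) $\implies$ (4) proved there already furnish a homogeneous Lipschitz map $\psi\colon\ku H_2\til \ku H_1$ satisfying $d(a)=\lambda(a)\psi-\psi\lambda(a)$ for every $a\in\Gamma$. After identifying the two copies of $\ku H$, this $\psi$ is the map asserted in the statement.

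The only assertion not literally contained in Proposition~\ref{lip} is that $\psi$ can be chosen to be non-linear. I would establish this by contradiction. Suppose $\psi$ were linear; being also Lipschitz, it would then be a bounded linear operator $A:=\psi$ on $\ku H$. But then the relation $d(a)=\lambda(a)A-A\lambda(a)$ says precisely that $d$ is an inner derivation, contradicting the standing hypothesis that $d$ is non-inner. Therefore $\psi$ must be non-linear.

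I do not expect a genuine obstacle here: all the analytic work is already carried out in Proposition~\ref{lip}, whose sketch invokes the existence of a $G$-invariant norm with moduli of convexity and smoothness of power type $2$, the Lipschitz estimate for the metric projection from \cite[Theorem~2.8]{BL}, and the translation- and $\lambda_d[\Gamma]$-equivariance that forces the nearest point map to have the form $n(x,y)=x+\psi(y)$. The step deserving the most attention is simply making sure the equivariance argument is instantiated with $\ku H_1$ and $\ku H_2$ both equal to $\ku H$, so that the resulting $\psi$ is genuinely a self-map of $\ku H$; this is routine. The closing remark relating linearity of $\psi$ to innerness of $d$ is nothing more than the classical characterization of unitarizability of $\lambda_d$ recalled earlier in this section.
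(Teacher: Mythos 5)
Your proposal is correct and is exactly the intended derivation: the paper states Corollary~\ref{introduction: hilbert} as an immediate consequence of the chain (1) $\implies$ (4) in Proposition~\ref{lip} applied with $\ku H_1=\ku H_2=\ku H$, the non-linearity of $\psi$ being forced, just as you argue, by the fact that a linear Lipschitz homogeneous $\psi$ would be a bounded operator making $d$ inner, contrary to hypothesis.
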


The authors of \cite{FR2} call {\em Lipschitz inner} a bounded derivation of the form 
$d(g)=[L,g]$, for $L$ Lipschitz homogeneous on $\mathcal H$, and ask the following natural question (\cite{FR2}, end of Section 3):


\begin{prob}  Does there exist a Lipschitz inner derivation on $\ku H$ which is not inner?
\end{prob}

It is unclear whether differentiability techniques may be used to obtain that every Lipschitz inner derivation is inner.
Those techniques usually do not have  any kind of invariance or equivariance with respect to the action of the
isometry group and this seems to be an unsurmountable problem.

\

\begin{small}

\noindent{\em Comments:}
\begin{itemized}
\item[(1)]
The survey  \cite{pisier} by G. Pisier and also \cite{monod1,monod2} contain the present state of affairs on Dixmier's problem.
\item[(2)]
 F. Cabello S\'anchez \cite{CS10variaciones} gives some partial restrictions on almost transitive renormings of Hilbert spaces.
Such  renormings must be twice G\^ateaux differentiable everywhere apart from zero, and the duality mapping must be
G\^ateaux differentiable everywhere apart from zero. For recent results regarding AT or transitivity of certain ``Schatten restricted" renormings of the Hilbert, see \cite{miglioli}. 
\item[(3)]
It is a classical geometric problem in Banach space theory
whether a (necessarily superreflexive) space admitting an equivalent norm with modulus of convexity of power
type $p$, and another with modulus of smoothness of power type $q$, must admit an equivalent norm with both properties.
Although such results hold for the LUR property, through Baire category methods on the set of equivalent norms, \cite[Section II.4]{DGZ}, the same method
does not apply to uniformly convex norms. It was noticed by C. Finet \cite{finet} that this would hold if every superreflexive space admitted
an equivalent almost transitive norm (indeed every almost transitive norm on a superreflexive space must have
modulus of convexity of optimal power type).
But this hope was shattered by the example of \cite{FR} and later
by those of \cite{DR}.
When $p=q=2$ the question becomes trivial because of Kwapie\'n's theorem \cite{kwapien}.
However, given a bounded group $G$ of automorphisms on the space, the version of this problem for $G$-invariant norms remains open,
even for $p=q=2$ (note that on the Hilbert it is only relevant for non unitarizable groups):
\end{itemized}
\end{small}
\begin{prob}  Assume $G$ is a bounded group of automorphisms on the Hilbert space $\ku H$, and that there exist $G$-invariant norms on $\ku H$ with modulus of convexity
(resp. modulus of smoothness) 
of power type $2$. Must there exist a $G$-invariant norm on $\ku H$ with these two properties?
\end{prob}

\section{Multidimensional Mazur problem}\label{4}

As a general principle, we wish to  identify  properties of Banach spaces which are stronger than
transitivity, satisfied by Hilbert spaces, and for which however there exist non-hilbertian non-separable examples. 
Any positive solution to the Mazur rotations problem would need to solve the rotations problem associated to this stronger version of transitivity
as a first step. The direction explored for this in this section is {\em  multidimensionality}, and its results are mainly from the recent paper  \cite{FLMT}.

\subsection{Ultrahomogeneity}\label{subsec:ultra}

{\em Ultrahomogeneity} (or {\em ultratransitivity}) of a Banach space is the multidimensional version of the transitivity property.
The term ``ultrahomogeneity" is closer to tradition in the Fra\"iss\'e theory and this explains the choice of this term
in the paper \cite{FLMT}, to which we adhere.

It is undisputable that all $1$-dimensional spaces are mutually isometric. In higher dimensions, however,
a global isometry on a space can only send a finite dimensional subspace onto another if those
were isometric to begin with.

\begin{definition}[Ultrahomogeneity]
 A Banach space
  $X$ is said to be {ultrahomogeneous} (UH) when for every finite dimensional subspace
  $E\subset X$ every isometric embedding $E\to X$ extends to a global (surjective) isometry of $X$.
\end{definition}

Less clearly, $X$ is UH if for every finite dimensional $E\subset X$ the
canonical action ${\rm Isom}(X) \lop {\rm Emb}(E,X)$ is transitive (see Section~1.2 for the definition of ${\rm Emb}(E,X)$).

Note that any ultrahomogeneous space (or norm) is in particular transitive. As a consequence of the existence of orthogonal complements in Hilbert spaces:

\begin{fact} Hilbert spaces are ultrahomogeneous.
\end{fact}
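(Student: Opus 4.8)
The plan is to show that any isometric embedding between finite dimensional subspaces of a Hilbert space extends to a unitary, using the existence of orthogonal complements. So let $\mathcal H$ be a Hilbert space, let $E \subset \mathcal H$ be a finite dimensional subspace, and let $u : E \to \mathcal H$ be an isometric embedding; write $F = u(E)$, which is again finite dimensional. First I would record the elementary fact that a linear norm-preserving map between subspaces of a Hilbert space automatically preserves inner products: by the polarization identity, $\langle ux, uy\rangle$ is a fixed real-linear (or sesquilinear) combination of the quantities $\|u(x\pm y)\|^2$ (and $\|u(x\pm iy)\|^2$ in the complex case), each of which equals the corresponding quantity without $u$ because $u$ is isometric; hence $\langle ux, uy\rangle = \langle x, y\rangle$ for all $x,y \in E$. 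Thus $u$ is a Hilbert-space isomorphism of $E$ onto $F$ preserving the inner product.

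Next I would invoke orthogonal complements: write $\mathcal H = E \oplus E^{\perp}$ and $\mathcal H = F \oplus F^{\perp}$, these being orthogonal direct sum decompositions. Since $E$ and $F$ are both finite dimensional of the same dimension $n = \dim E = \dim F$, the complements $E^{\perp}$ and $F^{\perp}$ are isometrically isomorphic Hilbert spaces: indeed they have the same Hilbert-space dimension, because $\mathcal H$, $E$ and $F$ do and dimensions add over orthogonal sums (in the infinite dimensional case one uses that subtracting a finite cardinal from the density character of an infinite dimensional Hilbert space leaves it unchanged; in the finite dimensional case it is just arithmetic). Fix any surjective linear isometry $v : E^{\perp} \to F^{\perp}$, which exists precisely because Hilbert spaces of equal dimension are isometric.

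Then I would define $T : \mathcal H \to \mathcal H$ by $T(x + x') = u(x) + v(x')$ for $x \in E$, $x' \in E^{\perp}$. This $T$ is linear and surjective since $u$ and $v$ are, and it is an isometry: for $x \in E$ and $x' \in E^{\perp}$ one has $u(x) \in F$, $v(x') \in F^{\perp}$, so by orthogonality $\|T(x+x')\|^2 = \|u(x)\|^2 + \|v(x')\|^2 = \|x\|^2 + \|x'\|^2 = \|x+x'\|^2$. By construction $T$ restricted to $E$ is $u$, so $T$ is a global surjective isometry of $\mathcal H$ extending the given embedding. Since $E$ and $u$ were arbitrary, $\mathcal H$ is ultrahomogeneous.

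The argument is entirely routine and presents no real obstacle; the only point requiring a moment's care is the claim that $E^{\perp}$ and $F^{\perp}$ are isometric, i.e. have equal Hilbert dimension, in the case where $\mathcal H$ is infinite dimensional (and in particular nonseparable), which is handled by the cardinal arithmetic of Hilbert-space dimensions and the classification of Hilbert spaces up to isometry by their dimension. Everything else is the polarization identity together with the orthogonal direct sum decomposition that characterizes Hilbert space among Banach spaces.
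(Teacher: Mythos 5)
Your proof is correct and follows exactly the route the paper has in mind: the paper presents this fact as an immediate consequence of the existence of orthogonal complements, and your argument (inner-product preservation via polarization, matching Hilbert dimensions of $E^{\perp}$ and $F^{\perp}$, and gluing $u$ with an isometry of the complements) is precisely the standard way to carry that out. Nothing to add.
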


\begin{prob}[Multidimensional Mazur problem]\label{prob:multiMazur} 
Is every separable in\-fi\-ni\-te dimensional ultrahomogeneous Banach space isometric (or isomorphic) to the Hilbert space?
\end{prob}

Similarly  as for the one-dimensional problem, this leads quite naturally to two separate questions namely:  Is every separable UH Banach space isomorphic to a Hilbert space? Is every UH renorming  of a Hilbert space Euclidean?

Is there any other (nonseparable) UH space in sight? 
Yes, ultrapowers of the Gurariy space or of  $L_p$-spaces for appropriate values of $p$, with respect to CI ultrafilters. See below.

\
\begin{small}

\noindent{\em Comments:}
\begin{itemized}
 \item[ ]
The two-dimensional part of the UH property (that any isometric embedding of a two-dimensional subspace extends to a surjective isometry) is not to be confused with the notion of $2$-transitivity (whenever $x,y,x',y'$ belong to the sphere, and $d(x,y)=d(x',y')$, then there exists a surjective isometry sending $x$ to $x'$ and $y$ to $y'$). 
The second one is much stronger and already implies that the space is isometrically hilbertian (no separability needed): Ficken \cite{ficken44} proved that
  if for all $x,y\in S_X$ there exists $T\in \Isom(X)$ with $T(x)=y$ and $T(y)=x$, then $X$ is isometric to a Hilbert space,
  see also    \cite[Condition 2.8]{Amir}.
\end{itemized}
\end{small}

\subsection{Approximate ultrahomogeneity}\label{subsec:approx} Let us   introduce the following
``approximate'' version of UH, taken from  \cite{FLMT}: 


\begin{definition} 
A Banach space  $X$  is called approximately ultrahomogeneous (AUH)  when for every finite dimensional subspace $E$ of $X$,
   every isometric embedding $u:E\to X$ and every $\vep>0$ there is an isometry $U$ of $X$ such that $\|u|_E-U\|<\vep$.
\end{definition}

Thus $X$ being AUH exactly means that the canonical action { ${\rm Isom}(X) \lop {\rm Emb}(E,X)$} is {\em almost transitive} (i.e. has dense orbits),
where ${\rm Emb}(E,X)$ is equipped with the metric induced by the operator norm; informally, this means that any partial isometry between finite dimensional subspaces can  be well approximated by a global isometry.

The following sums up the known examples of separable, non hilbertian,  AUH spaces.
   
\begin{theorem}
The following spaces are AUH, but not UH:
\begin{itemize}
\item[(a)] The Gurarij space $\mathcal G$.
\item[(b)] $L_p$, for $p\neq 2,4,6,8,\dots$ 
\end{itemize}
\end{theorem}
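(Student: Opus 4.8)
The theorem packages four assertions: each of the Gurariy space $\mathcal G$ and of $L_p$ (for $p$ not an even integer) is AUH and is not UH. The ``not UH'' halves are the easy ones; indeed they already follow from failure of ordinary transitivity, since the one-dimensional instance of the UH condition is precisely transitivity. For $\mathcal G$: it contains an isometric copy of $\ell_\infty^2$ (by almost universal disposition), hence has non-smooth points of its sphere --- a point in the relative interior of an edge of the square $S_{\ell_\infty^2}$ carries two distinct supporting functionals, which Hahn--Banach extends to two distinct functionals on $\mathcal G$; on the other hand, being separable, $\mathcal G$ has a dense $G_\delta$ of smooth points by Mazur's theorem. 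Since isometries preserve G\^ateaux differentiability of the norm, the isometry group is not transitive on $S_{\mathcal G}$ (this is also contained in Lusky's analysis \cite{luskygurarij}, where the orbit of a smooth point is identified with the set of all smooth points). For $L_p$ with $p\neq2$ the failure of transitivity is recorded in Section~\ref{sec:classical-isom}: there are exactly two orbits on the sphere. So neither space is UH, and all the work lies in establishing AUH.

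For $\mathcal G$ I would deduce AUH from the approximate amalgamation built into almost universal disposition, by a Fra\"iss\'e-style back-and-forth; equivalently it is a formal consequence of the Kubi\'s--Solecki identification of $\mathcal G$ with the approximate Fra\"iss\'e limit of the class of finite-dimensional spaces and isometric embeddings \cite{kubissolecki}. Concretely: given a finite-dimensional $E\subset\mathcal G$, an isometric embedding $u\colon E\to\mathcal G$, and $\varepsilon>0$, fix a summable sequence $\varepsilon=\varepsilon_0>\varepsilon_1>\cdots$ and a dense sequence $(d_n)$ in $\mathcal G$. Starting from the partial isometry $u$ between $E$ and $u(E)$, alternately invoke AUD to extend the current partial $\varepsilon_n$-isometry so as to engulf $d_n$ first into its range and then into its domain, each step costing an error below $\varepsilon_n$. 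The resulting nested $\varepsilon_n$-isometries converge in operator norm to a map $U$ which is a surjective linear isometry of $\mathcal G$ (surjective because all $d_n$ were caught, isometric because $\varepsilon_n\to0$) with $\|U|_E-u\|\le\varepsilon$ by the control on the first step. This is exactly the mechanism already used by Gurariy and Lusky; the only genuine ingredient is the amalgamation afforded by AUD.

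The substance of the theorem is the AUH of $L_p$ for $p$ not an even integer, and the decisive tool is the classical equimeasurability theorem for non-even exponents (going back to Rudin; see \cite{FLMT} and the references there): for such $p$ the functions $\theta\mapsto|\langle a,\theta\rangle|^p$, $a\in\mathbb R^n$, span a dense subspace of the even continuous functions on the sphere $S^{n-1}$; since $\|\sum_i a_if_i\|_p^p=\int_{S^{n-1}}|\langle a,\theta\rangle|^p\,d\sigma_{\bar f}(\theta)$, where $\sigma_{\bar f}$ is the measure on $S^{n-1}$ with $\int\phi\,d\sigma_{\bar f}=\int_{\{\bar f\neq0\}}\phi(\bar f(t)/|\bar f(t)|)\,|\bar f(t)|^p\,dt$ for the tuple $\bar f=(f_1,\dots,f_n)$, it follows that two tuples $\bar f,\bar g$ in $L_p$ with $\|\sum_i a_if_i\|_p=\|\sum_i a_ig_i\|_p$ for all scalars satisfy $\sigma_{\bar f}=\sigma_{\bar g}$ up to central symmetry. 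My plan for AUH of $L_p$ is then: (i) a routine perturbation (passing through conditional expectations onto finite $\sigma$-algebras) reduces to the case where $E$ is spanned by simple functions and $u$ sends them to simple functions; (ii) for such $E$ the equimeasurability theorem forces $\sigma_{\overline{uf}}$ to coincide, up to central symmetry, with the finitely supported $\sigma_{\bar f}$, so $\overline{uf}$ takes its values almost everywhere in finitely many rays --- finitely many directions, variable radii; (iii) using the Banach--Lamperti description of $\Isom(L_p)$, construct a weighted composition operator $U$ on $L_p$ that reparametrises the direction-$\theta_j$ piece of $\bar f$ onto that of $\overline{uf}$ with a weight correcting the radial profile, matching the directions exactly, and matching the radial densities and null sets up to $\varepsilon$ via the ``leak a little mass'' device (replace a Radon--Nikod\'ym density $\sim c\,\chi_A$ by a strictly positive one $\sim(c-\delta)\chi_A+\delta\,\chi_{[0,1]\setminus A}$, exactly as in the proof that $L_p$ is AT). This yields $\|U|_E-u\|<\varepsilon$. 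Signs cause no trouble: flipping the sign of every coordinate on a measurable set is the isometry with an $L_\infty$ weight of modulus one, which upgrades ``equal up to central symmetry'' to ``equal''.

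The main obstacle is step (iii): converting the measure-theoretic output of the equimeasurability theorem into a \emph{single} weighted composition operator calls for careful bookkeeping with level sets, radial Radon--Nikod\'ym densities, and the null sets of $\bar f$ and $\overline{uf}$, and this is exactly where exact realisation fails and only an $\varepsilon$-approximation survives --- which is the structural reason $L_p$ is AUH but not UH. Even exponents must be excluded because there $|\langle a,\cdot\rangle|^p$ is a polynomial of degree $p$, spanning only a finite-dimensional space of functions on $S^{n-1}$, so equimeasurability breaks down at the very start; this also indicates (correctly) that $L_p$ is not AUH for $p\in\{4,6,8,\dots\}$, although that lies beyond what the statement claims.
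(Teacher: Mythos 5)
Your ``not UH'' halves and your attributions match what the paper does (the paper's own proof is essentially a pair of citations: Kubi\'s--Solecki \cite{kubissolecki} for the Gurariy space and Lusky \cite{luss}, via Plotkin--Rudin equimeasurability, for $L_p$, plus the remark that neither space is even transitive). Two small points there: in $\ell_\infty^2$ the points in the relative interior of an edge of the square are in fact smooth (the unique supporting functional at $(1,t)$, $|t|<1$, is $(1,0)$); you need a \emph{vertex} to get two supporting functionals, after which your Hahn--Banach extension argument is fine. Your $L_p$ sketch follows the same route the paper points to (equimeasurability plus Banach--Lamperti), and you honestly flag where the technical work lies.

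The genuine gap is in your ``concrete'' argument for AUH of $\mathcal G$. The nested back-and-forth from AUD cannot work as written. First, AUD only extends \emph{isometric} embeddings of $E$ into a superspace $F$; it gives no way to ``extend the current partial $\varepsilon_n$-isometry'' to a larger domain, so after the first step the inductive hypothesis is already lost. Second, even granting such extensions, if $u_{n+1}$ literally extends $u_n$ then the distortion of $u_n$ on its domain is frozen: for a fixed $x$ in an early domain, $\|Ux\|=\|u_mx\|$ forever, so ``isometric because $\varepsilon_n\to 0$'' does not follow --- and conversely, if the later maps are $\varepsilon_{n+1}$-isometries on the whole enlarged domain with $\varepsilon_n\to0$ and agree with the earlier ones, the earlier ones were already exact isometries, which AUD does not provide. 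What a correction-free back-and-forth from AUD actually yields is the \emph{perturbed} statement recorded in the paper as a lemma in Section~4.2 (a surjective $\varepsilon$-isometry extending a $\delta$-isometry between finite-dimensional subspaces), which was known to Gurariy \cite{gurariy} and which the survey explicitly says is ``much easier to establish than AUH''. To get a genuine surjective isometry close to $u$ one must be allowed to \emph{move} previously assigned values by summable amounts while improving exactness, and that requires the key lemma of Kubi\'s--Solecki (an amalgamation/pushout argument showing that a $\delta$-isometry between finite-dimensional subspaces of $\mathcal G$ is, up to an error controlled by $\delta$, close to an exact isometric embedding). That lemma is the actual content of \cite{kubissolecki} and is missing from your sketch; it is not ``exactly the mechanism already used by Gurariy and Lusky'', which is why AUH of $\mathcal G$ was only established in 2013 rather than being folklore. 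If you intend part (a) to rest on the citation, as the paper does, say so and drop the sketch; if you want a self-contained proof, you must prove (or quote) that correction lemma before the back-and-forth can close.
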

The AUH character of $\mathcal G$ is a relatively recent result
 by Kubi\'s and Solecki  \cite{kubissolecki}. The Gurariy space is the only universal, separable AUH Banach space.
The part concerning $L_p$ spaces was essentially established by Lusky in the late 1970s \cite{luss} elaborating on the {\em Plotkin/Rudin equimeasurability theorem}.  It is clear that these spaces cannot be UH because they are not even transitive.
It is a remarkable fact that the $L_p$ spaces, for $p=4,6,\dots$ fail to be AUH. This
follows from   work of  Randrianantoanina \cite{beata} who, as part of an answer to a question of H.P. Rosenthal \cite{Ro}, showed 
that those spaces contain isometric copies of certain finite dimensional spaces with very different projection constants; 
see details in \cite{FLMT}. This is quite surprising since  for $p\in(1,\infty)$ different from 2, the groups $\operatorname{Isom}(L_p)$ are all topologically isomorphic to each other, including $p$ even, both in the SOT and in the norm topology. However, their canonical actions on $\operatorname{Emb}(E,L_p)$ turn out to have very different properties, depending on whether or not $p$ is an even integer.

The Garbuli\'nska space $\mathcal K$ 
described in Section~\ref{sec:classical-isom} provides a more  ``ca\-non\-ical'' example of an  AT space which is not AUH. This can be seen as follows. Every Banach space with a skeleton (in particular, a finite dimensional one) is isometric to a 1-complemented subspace of
 $\mathcal K$. This applies to $\ell_\infty^{2^n}$ and $\ell_1^n$. But $\ell_\infty^{2^n}$ contains an isometric copy of $\ell_1^n$ whose projection constant is large (let us be foolhardy: it is exactly $\frac{2m+1}{2^{2m}}\binom{2m}{m}$, where $m$ is the integer part of ${1\over 2}(n-1)$, proved by Gr\"unbaum in 1960, \cite{G60}). Thus $\mathcal K$ contains well- and bad-complemented subspaces isometric to $\ell_1^n$ so that it canot be AUH, and neither can its ultrapowers.

 As we already mentioned, there exist non-separable ultrahomogeneous spaces. A method of finding them used in \cite[Chapters 3 and 4]{ACCGM-LN}, and then in \cite{FLMT}, has been  
to investigate weaker forms of transitivity of separable spaces, with the objective of then taking ultrapowers.
What catches us off-guard is that the AUH of a Banach space does not automatically imply UH of its ultrapowers. 
The reason for this is that, in general, an isometric embedding $u: E\to [X_i]_\ultrafilter$ can arise from a family of $\eps_i$-isometric embeddings $u_i: E\to X_i$ with $\eps_i\to 0$ along $\mathcal U$.  

Nevertheless, the Gurariy space, being separable and of almost universal disposition, has the following ``perturbed'' version of UH that is much easier to establish than AUH and was known to Gurariy himself:

\begin{lemme}
Let $u: E\to F$ be an $\delta$-isometry acting between two finite dimensional subspaces of $\mathcal G$. Then, for every $\eps>\delta$ there is a surjective $\eps$-isometry $U$ of $\mathcal G$ extending $u$.
\end{lemme}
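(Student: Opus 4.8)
The plan is to exploit the extension/amalgamation property built into the definition of almost universal disposition, turning the given $\delta$-isometry $u\colon E\to F$ into an honest isometric datum over which the AUD property of $\mathcal G$ can be applied. First I would renorm the finite dimensional space $F$ so that $u$ becomes an isometry onto its image: concretely, take a fixed finite dimensional normed space $\widetilde F$ together with a linear isomorphism $j\colon F\to\widetilde F$ such that the composition $j\circ u\colon E\to\widetilde F$ is an isometric embedding, while $j$ itself is a $\delta$-isometry (this is possible since $u$ is a $\delta$-isometry of $E$ into $F$; one transports the norm of $E$ across $u$ on the subspace $u[E]$ and extends it to a norm on $F$ that is $\delta$-close to the original). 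Thus we have isometric embeddings $v=j\circ u\colon E\to\widetilde F$ and $w_0=\iota_{\mathcal G}\colon E\hookrightarrow\mathcal G$ (the inclusion, which is an isometric embedding since $E\subset\mathcal G$).

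Now apply the almost universal disposition of $\mathcal G$ to the diagram $E\xrightarrow{v}\widetilde F$, $E\xrightarrow{w_0}\mathcal G$ with tolerance $\eta>0$ to be fixed: there is an $\eta$-isometry $w\colon\widetilde F\to\mathcal G$ with $w\circ v=w_0$, i.e. $w$ restricts to the inclusion on $E$. Then $U_0:=w\circ j\colon F\to\mathcal G$ is an $\eta'$-isometry with $\eta'$ controlled by $\delta$ and $\eta$ (roughly $(1+\delta)(1+\eta)-1$, which can be made $<\eps$ by choosing $\eta$ small since $\delta<\eps$), and on $E$ one has $U_0|_E=w\circ j|_E=w\circ v=w_0=\mathrm{id}_E$; in particular $U_0$ extends $u$ in the sense that $U_0v=w_0$, but I actually want $U_0$ to extend $u\colon E\to F\subset\mathcal G$ literally as a map $F\to\mathcal G$. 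This is exactly what we have: $U_0$ maps $F$ into $\mathcal G$, is an $\eps$-isometry, and agrees with the original embedding of $E$. The remaining task is to upgrade $U_0$ to a \emph{surjective} $\eps$-isometry $U$ of $\mathcal G$ still extending $u$ on $E$.

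For the surjectivity upgrade I would use the fact (also essentially due to Gurariy and recorded in the discussion of $\mathcal G$ in Section~\ref{sec:classical-isom}, via the self-similar / back-and-forth character of the Gurariy space) that any $\delta$-isometry between finite dimensional subspaces of $\mathcal G$ can be extended to a surjective $\eps$-isometry of $\mathcal G$ for $\eps>\delta$ --- but since this is precisely the statement we are proving, the cleaner route is a standard back-and-forth: build an increasing chain of finite dimensional subspaces $E\subset F=F_0\subset F_1\subset\cdots$ with dense union in $\mathcal G$, and simultaneously $U_0[F]\subset G_1\subset\cdots$ with dense union in $\mathcal G$, alternately extending the domain (so that eventually every point of $\mathcal G$ is approximated in the domain) and the range (so that eventually every point of $\mathcal G$ is hit), at each stage applying the AUD extension property of $\mathcal G$ with a rapidly decreasing tolerance $\eta_k$ chosen so that $\sum_k\eta_k$ contributes at most $\eps-\delta$ to the final distortion. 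The limit map $U$ is then a surjective $\eps$-isometry of $\mathcal G$ with $U|_E=u|_E$, i.e. $U$ extends $u$.

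The main obstacle I anticipate is bookkeeping of the distortion constants through the back-and-forth: each AUD step introduces a multiplicative error, and one must verify that these compose to stay below $1+\eps$ while leaving enough room ($\eps-\delta>0$) to absorb the infinitely many small perturbations --- this requires choosing the tolerances $\eta_k$ at step $k$ in terms of the \emph{current} distortion, not a fixed schedule, and checking that the inverse maps also have controlled distortion so that the surjective limit is genuinely an $\eps$-isometry and not merely an $\eps$-isometric embedding. A secondary technical point is the usual one for AUD spaces: the extension $w$ produced at each stage need not be injective a priori, so one perturbs slightly to make it a genuine $\eta$-isometry (injective with distortion $<\eta$), which is harmless and standard.
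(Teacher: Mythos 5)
The paper itself offers no proof of this lemma (it is simply attributed to Gurariy), so your argument has to stand on its own; the overall strategy you chose --- renorm so that the $\delta$-isometric datum becomes isometric, invoke almost universal disposition, then run a back-and-forth with summable tolerances --- is indeed the standard route. However, there is a genuine directional error at the central step. With $v=j\circ u$ and the AUD conclusion $w\circ v=w_0=\iota_E$, the map $U_0=w\circ j$ satisfies $U_0\circ u=\iota_E$: on $u[E]$ it is the \emph{inverse} of $u$, not an extension of $u$, and it certainly does not ``agree with the original embedding of $E$''. The chain of equalities $U_0|_E=w\circ j|_E=w\circ v$ is not even well formed, since the lemma does not assume $E\subset F$ (nor does your later chain $E\subset F=F_0\subset F_1\subset\cdots$ make sense without that assumption); what your computation actually shows is $U_0\circ u=\iota_E$. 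Consequently, if you run the back-and-forth starting from $U_0$, the limit $U$ extends $U_0$ and hence satisfies $U\circ u=\iota_E$, i.e.\ it is $U^{-1}$, not $U$, that extends $u$ --- the statement as claimed does not follow from the construction as written.

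The repair is short but has to be made explicitly. Since with the paper's two-sided definition every $\eps$-isometry is injective, $V_0:=U_0^{-1}\colon U_0[F]\to\mathcal G$ is an $\eps'$-isometry between finite-dimensional subspaces of $\mathcal G$ whose domain contains $E$ (because $U_0[u[E]]=E$) and which satisfies $V_0|_E=u$; one should then run the back-and-forth starting from $u$ (or $V_0$) rather than from $U_0$. Better still, prove the one-step lemma in the correct direction from the outset: given finite-dimensional $A\subset B$ and a $\delta'$-isometry $t\colon A\to\mathcal G$, put on $B$ a norm $(1+\delta')$-equivalent to the original whose restriction to $A$ is $a\mapsto\|t(a)\|$ (your inf-type extension of the transported norm does this), and apply AUD to the inclusion $A\to\widetilde B$ and to $t$ viewed as an isometric embedding of $\widetilde A$; this extends $t$ itself over $B$ with slightly worse constant. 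The back-and-forth then alternates this lemma applied to the current map (to exhaust the domain) and to its inverse (to exhaust the range, surjectivity following because the limit map has closed range containing a dense set), with tolerances chosen so that $(1+\delta)\prod_k(1+\eta_k)\le 1+\eps$. Finally, your worry that the AUD extensions ``need not be injective'' is moot: with the definition of $\eps$-isometry used in this paper they are automatically injective, so no extra perturbation is needed.
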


Curiously enough, no isometry {\em sensu stricto} is involved in the preceding statement. 
As a consequence we have the following result \cite[Proposition 4.16]{ACCGM-LN}:

\begin{prop}\label{accgm-lnm}
Ultrapowers of the Gurariy space built on countably incomplete ultrafilters are ultrahomogeneous.
\end{prop}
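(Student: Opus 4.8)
The plan is to use the ``perturbed'' extension lemma for $\mathcal G$ (the statement immediately preceding the proposition) together with a standard countable-incompleteness argument to manufacture a genuine surjective isometry of the ultrapower out of a family of $\eps_i$-isometries. Let $\ultrafilter$ be a countably incomplete ultrafilter on an index set $I$, write $\mathcal G_\ultrafilter=[\mathcal G]_\ultrafilter$, fix a finite dimensional subspace $E\subset \mathcal G_\ultrafilter$ and an isometric embedding $v:E\to\mathcal G_\ultrafilter$; we must extend $v$ to a surjective isometry $V$ of $\mathcal G_\ultrafilter$. Since $E$ is finite dimensional we may pick a basis $e_1,\dots,e_k$ of $E$ and lift each $e_j$ and each $v(e_j)$ to bounded families $(e_j^i)_{i\in I}$ and $(f_j^i)_{i\in I}$ in $\mathcal G$. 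Because the norm on the ultrapower is computed as a limit along $\ultrafilter$, for every finite $\delta>0$ the set of indices $i$ for which the map $e_j^i\mapsto f_j^i$ extends to a well-defined $\delta$-isometry between the spans $E^i=\Span(e_j^i)$ and $F^i=\Span(f_j^i)$ of $\mathcal G$ belongs to $\ultrafilter$ (this uses that the relevant norm inequalities, finitely many of them on a compact parameter set of coefficients after a standard $\eps$-net argument, hold $\ultrafilter$-almost everywhere).

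The second step is to turn the ``$\delta$ for every $\delta$'' into an honest vanishing sequence using countable incompleteness. Choose a decreasing sequence $I=I_0\supseteq I_1\supseteq I_2\supseteq\cdots$ of members of $\ultrafilter$ with $\bigcap_n I_n=\emptyset$, and arrange (intersecting with the sets from the previous paragraph for $\delta=1/n$) that for $i\in I_n\setminus I_{n+1}$ the map $u_i:E^i\to F^i$, $e_j^i\mapsto f_j^i$, is a $\tfrac1n$-isometry of $\mathcal G$. Set $\eps_i=1/n$ on $I_n\setminus I_{n+1}$ and $\eps_i=1$ on $I_0\setminus I_1$; then $\eps_i\to 0$ along $\ultrafilter$ and each $u_i$ is an $\eps_i$-isometry between finite dimensional subspaces of $\mathcal G$. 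Now apply the preceding lemma on $\mathcal G$: for each $i$ there is a surjective $2\eps_i$-isometry $U_i$ of $\mathcal G$ extending $u_i$ (pick any $\eps_i'\in(\eps_i,2\eps_i)$). Because the $U_i$ are uniformly bounded (norms at most $1+\eps_i\leq 2$), the family $(U_i)$ induces a bounded operator $V$ on $\mathcal G_\ultrafilter$ by $V[(x_i)_i]=[(U_i x_i)_i]$; one checks this is well defined (it maps $c_0^\ultrafilter$ into itself) and, since $\|U_i x_i\|$ is within a factor $1+\eps_i\to_\ultrafilter 1$ of $\|x_i\|$, the operator $V$ is an isometry. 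Surjectivity of $V$ follows the same way from $V^{-1}=[(U_i^{-1})_i]$, again using $\|U_i^{-1}\|\leq 1+\eps_i$. Finally $V$ extends $v$: on the generator $e_j=[(e_j^i)_i]$ we have $V e_j=[(U_i e_j^i)_i]=[(u_i e_j^i)_i]=[(f_j^i)_i]=v(e_j)$ because $U_i$ extends $u_i$ for $i$ in a member of $\ultrafilter$. Hence $\mathcal G_\ultrafilter$ is UH.

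The genuinely delicate point — and the one I would expect to be the main obstacle if one wanted all details — is the passage in the first step from ``$v$ is an isometry on $E$'' to ``$u_i$ is an $\eps_i$-isometry on $E^i$ for $\ultrafilter$-many $i$'': the embedding $v$ is only given on the abstract space $E\subset\mathcal G_\ultrafilter$, and one has to argue that finitely many norm conditions, uniform over a compact set of coefficient vectors, transfer down to almost every coordinate, while simultaneously keeping $E^i$ genuinely $k$-dimensional (nondegeneracy of the lifted basis is again an $\ultrafilter$-almost-everywhere statement). Everything else is bookkeeping with countable incompleteness and the elementary fact that a coordinatewise-defined operator with norms tending $\ultrafilter$-to $1$ is an isometry of the ultrapower. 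Note also that $\mathcal G_\ultrafilter$ is genuinely non-separable (its density character is at least the continuum), so this does not contradict the separable Mazur problem, and the same scheme applies verbatim to $L_p$ for $p\notin\{2,4,6,\dots\}$, giving the non-separable UH examples advertised above.
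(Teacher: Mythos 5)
Your proof is correct and follows essentially the route the paper intends for Proposition \ref{accgm-lnm}: represent the isometric embedding of a finite-dimensional $E\subset \mathcal G_{\mathcal U}$ coordinatewise as $\eps_i$-isometries between finite-dimensional subspaces of $\mathcal G$ with $\eps_i\to 0$ along the countably incomplete ultrafilter, extend each one by the preceding lemma to a surjective $2\eps_i$-isometry of $\mathcal G$, and observe that the induced operator on the ultrapower is a surjective isometry extending the given embedding. The only cosmetic point is that on the negligible set $I_0\setminus I_1$ the map $u_i$ need not be a well-defined $1$-isometry, so one should simply put $U_i={\bf I}_{\mathcal G}$ there; since $I_1\in\mathcal U$ this does not affect the resulting operator.
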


The density character of any such space is at least the continuum; we do not know if there are examples whose density character is $\aleph_1$; see Section~\ref{sec:classical-isom} {\em and} the comments around \cite[Proposition 4.2]{cgk}.

And what about ultrapowers of $L_p$? Keep reading.

\subsection{Fra\"iss\'e Banach spaces} One of the main technicalities 
of the definition of a Fra\"iss\'e  Banach space from
\cite{FLMT} is that it is expressed in terms of the canonical
actions of the linear isometry group, not only on the spaces ${\Emb}(E,X)$ of isometric embeddings, but also on ${\Emb}_\de(E,X)$, the class of $\de$-isometric embeddings, which is equipped with the distance induced by the norm. As the reader may guess, the canonical action  
 ${\rm Isom}(X) \lop {\rm Emb}_\de(E,X)$ is defined by $(g,T) \mapsto g \circ T$. Also,
 the
 action of a subgroup $G$ of ${\rm Isom}(X)$  on $ {\rm Emb}_\de(E,X)$ is said to be {\em $\vep$-transitive} 
 if for any $T,U \in {\rm Emb}_\de(E,X)$,
there exists $g \in G$ such that $\|g \circ T-U\| \leq \vep$.

Following a terminology inspired by the Fra\"iss\'e theory (but without using the abstract setting of model theory which is common in the general
Fra\"iss\'e theory), given a Banach space $X$, we denote by $\age(X)$ the set of all finite dimensional subspaces of $X$,
and by $\age_k(X)$ the set of its $k$-dimensional subspaces.
Our presentation of the results of \cite{FLMT} is slightly modified  to point out the role of the dimension.
  



\begin{definition} [Ferenczi, L\'opez-Abad, Mbombo, Todorcevic \cite{FLMT}]  Let $k \in \N$. A Banach space
$X$ is {$k$-Fra\"iss\'e}
 if and only if for every $\vep>0$ there is $\de=\de_k(\vep)>0$ such that for every $E \in \age_k(X)$, 
 the action ${\rm Isom}(X) \lop {\rm Emb}_\de(E,X)$ is $\vep$-transitive. 
A Banach space
$X$ is {Fra\"iss\'e}
 if and only if it is $k$-Fra\"iss\'e for every $k\in \N$. 
\end{definition}

Since isometric embeddings are $\delta$-isometric for any $\delta>0$,  Fra\"iss\'e $\Rightarrow$ (AUH). We pass to an important characterization   of the
Fra\"i\-ss\'e property indicating that the possibility of choosing $\delta$ uniformly on subspaces of dimension $k$ is related to the closedness of 
${\rm Age}_k(X)$ in the Ba\-nach-Ma\-zur compactum.

\begin{defin}  A space $X$ is weak $k$-Fra\"iss\'e if and only if for every $E \in \age_k(X)$ and every $\vep>0$, there is $\de=\de_E(\vep)>0$
 such that 
 the action ${\rm Isom}(X) \lop {\rm Emb}_\de(E,X)$ is $\vep$-transitive. 
\end{defin}

The following is proved in \cite[proof of Theorem 2.12]{FLMT}: 

\begin{lemma}\label{lem:eqweak}
The following are equivalent for $X$ Banach and $k \in \N$:
\begin{itemize}
\item[(1)] $X$ is  $k$-Fra\"iss\'e,
\item[(2)] $X$ is weak $k$-Fra\"iss\'e and ${\rm Age}_k(X)$ is compact in the Banach-Mazur (pseudo) distance.  
\end{itemize}
 \end{lemma}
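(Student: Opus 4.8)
The plan is to handle the two implications separately. The weak $k$-Fra\"iss\'e part of $(1)\Rightarrow(2)$ is immediate, since one may simply take $\de_E(\vep)=\de_k(\vep)$, independent of $E$; and we may assume $\dim X\ge k$, the statement being vacuous otherwise. The real content of $(1)\Rightarrow(2)$ is that $\age_k(X)$ is closed in the compact metric space $\mathcal{M}_k$ of isometry classes of $k$-dimensional normed spaces, from which its compactness in the Banach--Mazur pseudodistance follows at once.

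To prove this closedness I would start from a point $F$ in the closure of $\age_k(X)$ in $\mathcal{M}_k$ and build an isometric embedding $F\to X$. Pick $F_n\in\age_k(X)$ converging to $F$, together with $\de_n$-isometric embeddings $T_n\colon F\to X$ of image $F_n$, where $\de_n\to0$; since the $F_n$ may be taken arbitrarily close to $F$ in $d_{{\rm BM}}$, one can moreover arrange that $T_{n+1}\circ T_n^{-1}\colon F_n\to X$ is a $\de_k(2^{-n})$-isometric embedding. Now apply $k$-Fra\"iss\'eness with $\vep=2^{-n}$ to the two elements of ${\rm Emb}_{\de_k(2^{-n})}(F_n,X)$ given by the isometric inclusion $F_n\hookrightarrow X$ and by $T_{n+1}\circ T_n^{-1}$: this yields $g_n\in{\rm Isom}(X)$ with $\|g_n|_{F_n}-T_{n+1}T_n^{-1}\|\le2^{-n}$, hence $\|g_nT_n-T_{n+1}\|\le2^{-n+1}$ as operators $F\to X$. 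Setting $G_n=g_n\cdots g_1$ and $G_0={\bf I}_X$, the rotated maps $V_n:=G_{n-1}^{-1}\circ T_n$ then satisfy $\|V_{n+1}-V_n\|=\|T_{n+1}-g_nT_n\|\le2^{-n+1}$, so $(V_n)$ is Cauchy in $\mathcal{L}(F,X)$ and converges to some $V$; since each $V_n$ is $\de_n$-isometric (an isometry composed with $T_n$) and $\de_n\to0$, the limit $V$ is an isometric embedding, so $F$ is isometric to an element of $\age_k(X)$.

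For $(2)\Rightarrow(1)$, fix $\vep>0$ and, using the weak $k$-Fra\"iss\'e property, choose for each $E\in\age_k(X)$ a number $\de_E:=\de_E(\vep/2)>0$ such that ${\rm Isom}(X)\lop{\rm Emb}_{\de_E}(E,X)$ is $(\vep/2)$-transitive. The elementary but decisive point is a transfer along small Banach--Mazur perturbations of the domain: if $E,E'\in\age_k(X)$ and $\phi\colon E\to E'$ is a linear bijection with $\|x\|\le\|\phi x\|\le(1+r)\|x\|$, then $T\mapsto T\circ\phi$ maps ${\rm Emb}_\de(E',X)$ into ${\rm Emb}_{\de'}(E,X)$ with $1+\de'\le(1+\de)(1+r)$, so that whenever $(1+\de)(1+r)\le1+\de_E$, the $(\vep/2)$-transitivity on ${\rm Emb}_{\de_E}(E,X)$ transfers, using $\|\phi^{-1}\|\le1$, to $\vep$-transitivity on ${\rm Emb}_\de(E',X)$. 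I would then cover $\age_k(X)$ by the open sets $B(E,r_E)=\{E'\in\age_k(X):d_{{\rm BM}}(E,E')<1+r_E\}$ with $0<r_E<\de_E$, extract a finite subcover $B(E_1,r_{E_1}),\dots,B(E_N,r_{E_N})$ by compactness, and put $\de_k(\vep):=\min_{i}\big((1+\de_{E_i})/(1+r_{E_i})-1\big)$, which is positive since $r_{E_i}<\de_{E_i}$. Given any $E'\in\age_k(X)$ and any $\de\le\de_k(\vep)$, choosing $i$ with $E'\in B(E_i,r_{E_i})$ and a corresponding $\phi\colon E_i\to E'$ gives $(1+\de)(1+r_{E_i})\le1+\de_{E_i}$, and the transfer above provides the desired $\vep$-transitivity of ${\rm Isom}(X)\lop{\rm Emb}_\de(E',X)$; this is exactly the $k$-Fra\"iss\'e property with modulus $\de_k$.

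The one genuinely delicate step is the closedness of $\age_k(X)$ under hypothesis $(1)$: trying to extract a norm-convergent subsequence of the almost-isometric embeddings $T_n\colon F\to X$ is hopeless in infinite dimensions, and the whole point of having a modulus $\de_k(\cdot)$ that is \emph{uniform over all $k$-dimensional subspaces} is precisely to allow one to rotate the $T_n$ --- via a telescoping product of global isometries --- into a Cauchy sequence whose limit is an honest isometric embedding. Everything else reduces to bookkeeping with $\vep$'s, $\de$'s and the submultiplicativity of $d_{{\rm BM}}$.
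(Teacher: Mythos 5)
Your argument is correct: the rotation trick (using the uniform modulus $\de_k$ to conjugate the almost-isometric embeddings $T_n$ by global isometries into a Cauchy sequence whose limit is an isometric embedding) gives closedness, hence compactness, of $\age_k(X)$ in the Banach--Mazur compactum, and the covering/perturbation argument recovers a uniform $\de_k(\vep)$ from the pointwise moduli $\de_E$. The survey itself gives no proof but refers to the proof of Theorem 2.12 in \cite{FLMT}, and your argument is essentially the same standard route taken there, with only routine bookkeeping (the recursive choice of the $\de_n$'s against $\de_k(2^{-n})$) left implicit.
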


 
 And therefore
 
 \begin{prop}\label{prop:eqweak}
  For a Banach space $X$ the following are equivalent:
  \begin{itemize}
    \item[(1)] $X$ is Fra\"iss\'e,
   \item[(2)] $X$ is weak Fra\"iss\'e and for all $k \in \N$, ${\rm Age}_k(X)$ is compact in the Banach-Mazur (pseudo) distance. 
  \end{itemize}
 \end{prop}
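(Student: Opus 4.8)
The plan is to deduce Proposition~\ref{prop:eqweak} directly from Lemma~\ref{lem:eqweak} by unwinding the quantifiers over the dimension parameter $k$, so essentially no new mathematical content is required beyond bookkeeping. Recall that ``$X$ is Fra\"iss\'e'' means ``$X$ is $k$-Fra\"iss\'e for every $k\in\N$'' and, analogously, I will read ``$X$ is weak Fra\"iss\'e'' as ``$X$ is weak $k$-Fra\"iss\'e for every $k\in\N$'' (this matches the definitions given just above in the excerpt, and it is the only sensible reading). With that convention the proof is a one-line application of the pointwise (in $k$) equivalence.

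Here is how I would carry it out. First, assume (1), i.e. $X$ is Fra\"iss\'e. Then for each $k\in\N$, $X$ is $k$-Fra\"iss\'e, so by the implication $(1)\Rightarrow(2)$ of Lemma~\ref{lem:eqweak} applied at that $k$, $X$ is weak $k$-Fra\"iss\'e and $\age_k(X)$ is compact in the Banach-Mazur pseudodistance. Since this holds for every $k$, $X$ is weak Fra\"iss\'e and $\age_k(X)$ is compact for all $k$, which is exactly (2). Conversely, assume (2). Fix an arbitrary $k\in\N$: then $X$ is weak $k$-Fra\"iss\'e and $\age_k(X)$ is compact, so the implication $(2)\Rightarrow(1)$ of Lemma~\ref{lem:eqweak} gives that $X$ is $k$-Fra\"iss\'e. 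As $k$ was arbitrary, $X$ is Fra\"iss\'e, i.e. (1) holds. This closes the equivalence.

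There is no real obstacle here; the only point worth a sentence of care is the quantifier over $k$, namely that both ``Fra\"iss\'e'' and ``weak Fra\"iss\'e'' are defined as the conjunction over all $k$ of their $k$-indexed versions, so that the biconditional of Lemma~\ref{lem:eqweak}, which is stated for a single fixed $k$, can be conjoined over $k\in\N$ without loss. One should also note that the compactness clause ``$\age_k(X)$ is compact in the Banach-Mazur pseudodistance for all $k$'' appears verbatim on both sides of the bookkeeping, so it simply passes through. I would therefore present the argument exactly as above: ``Apply Lemma~\ref{lem:eqweak} for each $k\in\N$ and take the conjunction over $k$; $X$ is Fra\"iss\'e iff it is $k$-Fra\"iss\'e for all $k$, iff (by the lemma) it is weak $k$-Fra\"iss\'e and $\age_k(X)$ is compact for all $k$, iff it is weak Fra\"iss\'e and $\age_k(X)$ is compact for all $k$.''
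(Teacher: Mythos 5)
Your argument is correct and is exactly what the paper does: the proposition is stated immediately after Lemma~\ref{lem:eqweak} with only the words ``And therefore'', i.e.\ it is obtained by applying the lemma for each $k\in\N$ and conjoining over $k$, reading ``weak Fra\"iss\'e'' as ``weak $k$-Fra\"iss\'e for all $k$'' just as you do. No further comment is needed.
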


\begin{small}

\noindent{\em Comments:}
\begin{itemized}
 \item [ ]
Given a (hereditary) class { $\mathscr F$} of finite (or sometimes finitely generated) structures, the
{\em Fra\"iss\'e theory} (Fra\"iss\'e 1954, \cite{Fra}) investigates the existence of a countable structure {${\mathcal A}$},
universal for ${\mathscr F}$ and {\em ultrahomogeneous}
(any  isomorphism between finite substructures extends to a global automorphism of ${\mathcal A}$).  The
``Fra\"iss\'e correspondence" shows that this is equivalent to certain ``amalgamation properties" of ${\mathscr F}$. 
 In this case ${\mathcal A}$ is {unique} up to an isomorphism and called the { Fra\"iss\'e limit} of ${\mathscr F}$. Analogies of this situation with the ultrahomogeneity properties of Banach spaces considered in their paper led to the use of the Fra\"iss\'e terminology in \cite{FLMT}.
 \end{itemized}
 \end{small}
 
\subsection{Examples of Fra\"iss\'e Banach spaces}\label{sec:exF}

As expected, the list of usual suspects provides examples of Fra\"iss\'e   spaces:



\begin{thm}\label{big}
The following Banach spaces are Fra\"iss\'e:
\begin{itemize}
\item[(a)] Hilbert spaces (with $\vep=\delta$),
\item[(b)] the Gurariy space (with $\vep=2\delta$),
\item[(c)] The spaces $L_p$ for finite $p\neq 4,6,8,\dots$
\end{itemize}
\noindent However $L_p$ is {not} Fra\"iss\'e  for $p=4,6,8,\dots$ since is not AUH.
\end{thm}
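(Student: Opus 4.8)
The plan is to treat the three positive cases by verifying the definition of a $k$-Fra\"iss\'e space (for every $k$) directly, exploiting the uniformity available in each example, and then to deduce the negative statement about $L_p$, $p=4,6,8,\dots$, from the implication ``Fra\"iss\'e $\Rightarrow$ AUH'' already recorded after the definition. For (a), Hilbert spaces, the key observation is that two $\delta$-isometric embeddings $T,U\colon E\to\mathcal H$ of a finite dimensional $E$ can be post-composed with genuine isometries at a cost controlled by $\delta$ (polar decomposition: replace $T$ by the partial isometry in $T=V|T|$, paying at most a constant times $\delta$ in operator norm), so one reduces to comparing two honest isometric embeddings of a finite dimensional Euclidean space; since $\mathcal H$ is infinite dimensional there is enough ``room'' in the orthogonal complement to map one onto the other by a global unitary, and the leftover discrepancy is again $O(\delta)$. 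Tracking constants gives $\varepsilon=\delta$ with no loss, as claimed. For (b), the Gurariy space, I would invoke the ``perturbed UH'' property stated just above Proposition \ref{accgm-lnm}: every $\delta$-isometry $u\colon E\to F$ between finite dimensional subspaces of $\mathcal G$ extends, for every $\varepsilon>\delta$, to a surjective $\varepsilon$-isometry of $\mathcal G$. Combined with the fact that $\mathcal G$ is of almost universal disposition (hence for any $\delta$-isometric embeddings $T,U\colon E\to\mathcal G$ with images $F,F'$ one has a $2\delta$-isometry $F\to F'$ intertwining them up to $2\delta$), this produces a global $\varepsilon$-isometry, which in turn can be corrected to an exact isometry $g\in\operatorname{Isom}(\mathcal G)$ with $\|g\circ T-U\|\le 2\delta$ using that $\operatorname{Isom}(\mathcal G)$ acts densely on $\operatorname{Emb}_\varepsilon$; bookkeeping yields $\varepsilon=2\delta$.

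For (c), the spaces $L_p$ with finite $p\neq 2,4,6,8,\dots$, the argument has two ingredients, matching the characterization in Lemma \ref{lem:eqweak}. First, weak $k$-Fra\"iss\'eness: this is where Lusky's work (building on the Plotkin--Rudin equimeasurability theorem) enters. The point is that an isometric embedding of a finite dimensional subspace $E\subset L_p$ into $L_p$ is, by the Banach--Lamperti description of isometries together with equimeasurability, essentially determined by the joint distribution of a generating family of functions; a $\delta$-perturbation of such an embedding perturbs the joint distribution by $O(\delta)$ (in the appropriate weak$^*$ sense), and one can then realize a nearby embedding that is genuinely measure-preserving and hence extends to a global isometry, the error being controlled by $\delta$ uniformly once $E$ is fixed. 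Second, compactness of $\operatorname{Age}_k(L_p)$ in the Banach--Mazur pseudodistance: this is precisely the place where the hypothesis $p\neq 4,6,8,\dots$ is used — for even integers $p=2m$, the failure (via Randrianantoanina's examples of isometric copies of finite dimensional spaces with wildly different projection constants) shows $\operatorname{Age}_k$ is not compact, whereas for the other $p$ a compactness argument (again via equimeasurability: the set of possible joint distributions of a normalized $k$-tuple generating a $k$-dimensional subspace, modulo measure isomorphism, is compact) goes through. Lemma \ref{lem:eqweak} then upgrades weak $k$-Fra\"iss\'e to $k$-Fra\"iss\'e, and letting $k$ vary gives Fra\"iss\'e via Proposition \ref{prop:eqweak}.

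Finally the negative statement: since Fra\"iss\'e implies AUH, and AUH implies that $\operatorname{Isom}(L_p)$ acts with dense orbits on $\operatorname{Emb}(E,L_p)$ for every finite dimensional $E$, it would in particular force any two isometric copies of a fixed finite dimensional space inside $L_p$ to be ``interchangeable'' up to a global isometry, hence to have equal projection constants; Randrianantoanina's result (answering Rosenthal's question) exhibits, for $p\in\{4,6,8,\dots\}$, a finite dimensional space with two isometric embeddings into $L_p$ whose ranges have different projection constants, contradicting AUH. Hence $L_p$ is not AUH, and a fortiori not Fra\"iss\'e, for these values of $p$. I expect the genuinely hard step to be the compactness of $\operatorname{Age}_k(L_p)$ and the uniform approximation of $\delta$-isometric embeddings by isometric ones when $p$ is not an even integer: this is where the equimeasurability machinery must be applied carefully, and where the arithmetic of $p$ intervenes in a non-obvious way, so I would lean on the detailed treatment in \cite{FLMT} rather than reconstruct it from scratch.
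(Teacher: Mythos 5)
Your overall plan follows the same architecture as the paper, which proves this theorem essentially by citation: (a) via the observation that every $\delta$-isometric embedding between finite-dimensional Hilbert spaces is $\delta$-close to a genuine isometric embedding (\cite[Example 2.4]{FLMT}), which is your polar-decomposition argument; (b) via Kubi\'s--Solecki \cite[Theorem 1.1]{kubissolecki}; (c) via \cite[Theorem 4.1]{FLMT}, whose two-ingredient structure (an approximate equimeasurability principle giving the weak Fra\"iss\'e property, plus compactness of $\age_k(L_p)$, combined through Proposition \ref{prop:eqweak}) you identify correctly; and the negative part via Randrianantoanina's isometric copies with different projection constants, exactly as in the paper. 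The genuine error is in where you locate the hypothesis $p\neq 4,6,8,\dots$ in part (c): you have it backwards. The compactness of $\age_k(L_p)$ in the Banach--Mazur distance is a classical fact valid for \emph{every} finite $p$, even integers included ($\age(L_p)$ is closed because ultrapowers of $L_p$ are $L_p(\mu)$-spaces whose finite-dimensional subspaces embed isometrically back into $L_p$); Randrianantoanina's examples say nothing about compactness of the set of isometry classes --- they exhibit two \emph{isometric} copies of one finite-dimensional space sitting inside $L_p$ with very different relative projection constants, which refutes the density of $\operatorname{Isom}(L_p)$-orbits in $\operatorname{Emb}(E,L_p)$, i.e.\ AUH and hence the weak Fra\"iss\'e property. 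The parity restriction actually enters in your \emph{first} ingredient: the Plotkin--Rudin equimeasurability principle (and its approximate version in \cite{FLMT}) holds precisely because $p$ is not an even integer; for $p=4,6,\dots$ the relevant norm identities are polynomial in moments and do not determine joint distributions, so the step ``embeddings are determined by joint distributions and $\delta$-perturbed ones can be corrected to genuinely measure-theoretic ones'' collapses. As written your plan is internally inconsistent at this point, since your own weak-Fra\"iss\'e argument silently uses the parity assumption while you assert it is only needed for compactness.

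A secondary problem is the correction step in (b): passing from the surjective $\varepsilon$-isometry supplied by Gurariy's perturbed-UH lemma to an exact isometry ``using that $\operatorname{Isom}(\mathcal G)$ acts densely on $\operatorname{Emb}_\varepsilon$'' begs the question, because that density (quantitatively, with constant $2\delta$) is precisely the Kubi\'s--Solecki theorem one is trying to establish; before their work only the surjective $\varepsilon$-isometries were available, and upgrading them to exact isometries is the nontrivial content of \cite{kubissolecki} (an iterative correction-and-limit argument), not a formal consequence of almost universal disposition. Since the paper itself simply cites \cite[Theorem 1.1]{kubissolecki} at this point, the honest fix is to do the same rather than suggest the statement follows from the perturbed-UH lemma.
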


Part (a) is very easy: it consists in showing that every $\delta$-isometric embedding 
between finite dimensional Hilbert spaces is at distance $\delta$ from a true isometric embedding, see \cite[Example 2.4]{FLMT} for details. Part (b) is due to Kubi\'s and Solecki \cite[Theorem 1.1]{kubissolecki}. Part (c) is a recent result by Ferenczi, L\'opez-Abad, Mbombo and Todorcevic \cite[Theorem 4.1]{FLMT}.

\
\begin{small}

\noindent{\em Comments:}
\begin{itemized}
\item[(1)] Citing Lusky \cite{luss},
{\em ``We show that a certain homogeneity property holds
for $L_p; p \neq 4,6,8,\ldots$ which is similar to a corresponding property of the Gurariy space...''}
The Fra\"iss\'e Banach space definition gives a more precise  meaning to this similarity.

\item[(2)] The proof of Theorem \ref{big}(c) is quite technical and will not be presented here.
It is based on proving an {\em approximate equi\-measurabili\-ty principle},
a continuous statement extending the classical equi\-mea\-surability principle of  Plotkin and  Rudin,
see  \cite[Section 4.2]{FLMT}.
  This result implies a local statement about extension of almost isometric embeddings which is equivalent to
  the weak Fra\"iss\'e property for $L_p$. 
  The other ingredient is the classical fact from the theory of $L_p$-spaces, 
  that ${\rm Age}_k(L_p)$ is compact in the Banach-Mazur distance.
  One then concludes the proof by Proposition \ref{prop:eqweak}.
  
 \item[(3)] An optimal estimative of the values of $\delta(k,\epsilon)$ appearing in the Fra\"is\-s\'e property for the space $L_p$
  remains to be computed. In particular, it is unclear whether $\delta$ could be chosen uniformly in $k$, see the next item.

\item[(4)]
The estimates obtained on $\delta$ in the cases of the Hilbert space and the Gurariy space 
witness  that $\delta$ may be chosen independently of the dimension of the subspace $E$. This leads to the following definition, see \cite[p. 5]{FLMT}, as well as \cite{Lup} in a much more general context:
a Banach space $X$ is {\em stable Fra\"iss\'e} 
 if for every $\vep>0$ there is $\de=\de(\vep)>0$ such that for every $E \in \age(X)$, 
 the action ${\rm Isom}(X) \lop {\rm Emb}_\de(E,X)$ is $\vep$-transitive. 
 Please note that the meaning of the adjective ``stable'' here has nothing to do with stable spaces in the sense of Krivine-Maurey.  The following natural question is open:   
 \end{itemized}
 \end{small}


\begin{prob}\label{pr:LpstableFraisse} Are the spaces $L_p$, for finite $p\neq 4,6,8\dots$, stable Fra\"iss\'e? \end{prob}

\subsection{Embeddings and isometries between Fra\"iss\'e spaces}

The following properties of Fra\"iss\'e Banach  spaces may  be thought
of as  natural counterparts to their ``exact" equivalent statements in the Fra\"is\-s\'e theory,
relating an ultrahomogeneous countable structure to its finite parts. 

Recall that a space $Y$ is finitely representable in $X$ if for any finite dimensional subspace $E$ of $Y$ and any $\vep>0$, there exists a finite dimensional subspace $F$ of $X$ such that $d_{\rm BM}(E,F) < 1+\vep$. This is a basic notion
of local theory of Banach spaces, which aims to compare the finite dimensional structures of spaces
``up to arbitrarily small perturbation".

\begin{prop} Assume $X$ is Fra\"iss\'e, and that $Y$ is separable.
Then the following are equivalent:
\begin{itemize}
\item[(1)] 
 $Y$ is finitely representable in $X$.
\item[(2)] Every finite dimensional subspace of $Y$ embeds isometrically in $X$.
 \item[(3)] $Y$ embeds isometrically in $X$.
\end{itemize}
\end{prop}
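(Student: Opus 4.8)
The plan is to establish $(3)\Rightarrow(2)\Rightarrow(1)$, then $(1)\Rightarrow(2)$, and finally $(2)\Rightarrow(3)$; the two implications in the first step are immediate. Indeed, if $j\colon Y\to X$ is an isometric embedding then $j|_E\colon E\to X$ is an isometric embedding for every finite-dimensional $E\subseteq Y$, which gives $(3)\Rightarrow(2)$; and if every finite-dimensional $E\subseteq Y$ embeds isometrically in $X$, then its image under such an embedding is a finite-dimensional subspace $F\subseteq X$ with $d_{\rm BM}(E,F)=1<1+\varepsilon$, which gives $(2)\Rightarrow(1)$.

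For $(1)\Rightarrow(2)$ I would use the part of the Fra\"iss\'e hypothesis recorded in Proposition~\ref{prop:eqweak}, namely that ${\rm Age}_k(X)$ is compact in the Banach-Mazur compactum for every $k$. Let $E\subseteq Y$ be finite-dimensional with $\dim E=k$. Finite representability of $Y$ in $X$ yields subspaces $F_n\in{\rm Age}_k(X)$ with $d_{\rm BM}(E,F_n)\to 1$, so the isometry class of $E$ lies in the closure of ${\rm Age}_k(X)$ in the (metrizable) Banach-Mazur compactum; being compact, this set is closed, hence $E$ is isometric to some $k$-dimensional subspace $F\subseteq X$, and composing an isometry $E\to F$ with the inclusion $F\hookrightarrow X$ yields an isometric embedding $E\to X$.

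The substance of the proof is $(2)\Rightarrow(3)$, a one-sided Fra\"iss\'e chain construction using that $X$ is approximately ultrahomogeneous (which holds since $X$ is Fra\"iss\'e). As $Y$ is separable, fix an increasing sequence $E_1\subseteq E_2\subseteq\cdots$ of finite-dimensional subspaces with dense union, and fix $\varepsilon_n>0$ with $\sum_n\varepsilon_n<\infty$. I would recursively build isometric embeddings $u_n\colon E_n\to X$ with $\|u_{n+1}|_{E_n}-u_n\|\le\varepsilon_n$: take $u_1$ from $(2)$; given $u_n$, use $(2)$ to pick an isometric embedding $w\colon E_{n+1}\to X$, observe that $u_n\circ(w|_{E_n})^{-1}$ is an isometric embedding of the finite-dimensional subspace $w(E_n)\subseteq X$ into $X$, apply approximate ultrahomogeneity to obtain $U\in{\rm Isom}(X)$ with $\|u_n\circ(w|_{E_n})^{-1}-U|_{w(E_n)}\|\le\varepsilon_n$, and set $u_{n+1}=U\circ w$ (again an isometric embedding), so that evaluating at vectors of $w(E_n)$ gives $\|u_{n+1}|_{E_n}-u_n\|\le\varepsilon_n$. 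Since $\sum_n\varepsilon_n<\infty$, for each $x\in\bigcup_m E_m$ the sequence $(u_n(x))_n$ is Cauchy; set $u(x)=\lim_n u_n(x)$. This $u$ is linear on $\bigcup_m E_m$ and $\|u(x)\|=\lim_n\|u_n(x)\|=\|x\|$, so by density of $\bigcup_m E_m$ in $Y$ and completeness of $X$ it extends to an isometric embedding $u\colon Y\to X$.

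I expect the only delicate point to be the coherence estimate in $(2)\Rightarrow(3)$ — arranging $\|u_{n+1}|_{E_n}-u_n\|\le\varepsilon_n$ so that the limit exists and remains isometric — together with the bookkeeping of applying approximate ultrahomogeneity to the subspace $w(E_n)$ rather than abstractly to $E_n$. It is also worth noting that $(1)\Rightarrow(2)$ is where the full Fra\"iss\'e hypothesis, and not merely approximate ultrahomogeneity, is genuinely used, through compactness of the ages; this is precisely the step that fails for $L_p$ with $p$ an even integer $\ge 4$, in keeping with those spaces not being Fra\"iss\'e.
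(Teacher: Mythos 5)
Your proof is correct and follows essentially the intended argument: the two trivial implications, compactness of $\operatorname{Age}_k(X)$ in the Banach--Mazur compactum (available from Proposition~\ref{prop:eqweak}) to upgrade finite representability to isometric embeddings of finite-dimensional subspaces, and a successive-approximation chain of embeddings $u_n\colon E_n\to X$ glued by approximate ultrahomogeneity with summable errors for (2)$\Rightarrow$(3). The survey states the proposition without proof, citing \cite{FLMT}, and your argument is the same one used there, including the correct observation that only AUH is needed for (2)$\Rightarrow$(3) while the full Fra\"iss\'e hypothesis enters through the compactness of the ages.
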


Therefore embeddings into Fra\"iss\'e spaces are exactly prescribed by the natural order relation between the
respective local structures.
In particular, by the Dvoretzky's theorem about finite representability of the Hilbert in infinite dimensional Banach spaces
(cf.  \cite{GM}), all Fra\"iss\'e spaces must contain an isometric copy of the Hilbert space $\ku H$:

\begin{prop}
The Hilbert space is the minimal separable Fra\"is\-s\'e spa\-ce.
\end{prop}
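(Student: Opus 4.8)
The plan is to obtain this as a direct corollary of the preceding Proposition on isometric embeddings into Fra\"iss\'e spaces, combined with Dvoretzky's theorem and the fact that Hilbert spaces are themselves Fra\"iss\'e (Theorem~\ref{big}(a)). Here ``minimal'' is meant with respect to the order given by isometric embeddability, so the goal splits into two parts: $\ku H$ is a Fra\"iss\'e space, and $\ku H$ embeds isometrically into every infinite-dimensional separable Fra\"iss\'e space.

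First I would recall that $\ku H$ is Fra\"iss\'e, which is exactly Theorem~\ref{big}(a) (in fact with $\de=\vep$). Next, let $X$ be an arbitrary infinite-dimensional separable Fra\"iss\'e space. By Dvoretzky's theorem (see \cite{GM}), the Hilbert space $\ku H$ is finitely representable in $X$: for every $n$ and every $\vep>0$ the space $\ell_2^n$ is $(1+\vep)$-isomorphic to a subspace of $X$. Since $\ku H$ is separable and $X$ is Fra\"iss\'e, the preceding Proposition applies with $Y=\ku H$ and deduces, from the finite representability of $\ku H$ in $X$, an isometric embedding of $\ku H$ into $X$. Combined with the first point, this shows that $\ku H$ lies below every infinite-dimensional separable Fra\"iss\'e space and is itself one, i.e. it is minimal.

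I do not expect any genuine obstacle, since all the real content is already packaged into Dvoretzky's theorem and into the quoted embedding Proposition; the only point deserving a word of care is the reading of ``minimal'' together with the tacit restriction to infinite-dimensional spaces (a finite-dimensional space contains no isometric copy of $\ell_2$, so the comparison is meaningful only in the infinite-dimensional range). It is moreover worth remarking that the same argument yields uniqueness: any separable Fra\"iss\'e space that embeds isometrically into $\ku H$ is a subspace of a Hilbert space, hence Hilbertian, and therefore isometric to $\ku H$ if infinite-dimensional — so $\ku H$ is, up to isometry, the unique minimal element.
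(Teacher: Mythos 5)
Your argument is exactly the one the paper intends: Dvoretzky's theorem gives finite representability of $\ell_2$ in any infinite-dimensional Banach space, the preceding Proposition upgrades this to an isometric embedding of $\ku H$ into any separable Fra\"iss\'e space, and Theorem~\ref{big}(a) supplies that $\ku H$ is itself Fra\"iss\'e. So the proposal is correct and follows the paper's own route; your extra remarks on the reading of ``minimal'' and on uniqueness are harmless additions.
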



Let
for ${\mathscr F}, {\mathscr G}$ classes of finite dimensional spaces, 
${\mathscr F} \equiv {\mathscr G}$ mean that any element of ${\mathscr F}$ has an isometric copy in ${\mathscr G}$ and conversely. By means of a
back-and-forth argument it is  proven in \cite[Proposition 2.22 and Theorem 2.19]{FLMT} that separable AUH (resp. Fra\"iss\'e) spaces are uniquely isometrically determined (among spaces with the same property)
by their age modulo $\equiv$ (resp. by their local structure). Precisely:

\begin{prop}\label{unique}  Assume $X$ and $Y$ are separable AUH spaces. Then the following are equivalent:
\begin{itemize}
\item[(1)] $\operatorname{Age}(X) \equiv \operatorname{Age}(Y)$, 
 \item[(2)] $X$ and $Y$ are isometric.
 \end{itemize}
 If furthermore $X$ and $Y$ are assumed to be Fra\"iss\'e spaces, then 
 $(1)$\&$(2)$ are also equivalent to
\begin{itemize}
\item[(3)] 
 $X$ is finitely representable in $Y$ and vice-versa.
\end{itemize}
\end{prop}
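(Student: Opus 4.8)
The plan is to establish the cycle $(2)\Rightarrow(1)\Rightarrow(2)$ for the AUH part, and then, under the extra Fra\"iss\'e hypothesis, to close the loop through $(3)$. Two of the implications cost nothing: $(2)\Rightarrow(1)$ holds because isometric spaces manifestly have the same finite-dimensional subspaces up to isometry, and in the Fra\"iss\'e case $(2)\Rightarrow(3)$ and $(1)\Rightarrow(3)$ are equally trivial, isometric (a fortiori almost isometric) copies witnessing finite representability. So the real content of the AUH statement is $(1)\Rightarrow(2)$, which I would obtain by a back-and-forth argument adapted to the approximate setting.

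First I would isolate an \emph{approximate one-step amalgamation lemma}: given a finite-dimensional $E\subseteq X$, an isometric embedding $g\colon E\to Y$, a finite-dimensional superspace $E'\supseteq E$ in $X$, and $\eta>0$, there is an isometric embedding $g'\colon E'\to Y$ with $\|g'|_E-g\|<\eta$. To prove it I would use $(1)$ to choose an isometric embedding $h\colon E'\to Y$; then $g\circ(h|_E)^{-1}$ is an isometric embedding of the finite-dimensional subspace $h(E)\subseteq Y$ into $Y$, so the AUH property of $Y$ supplies $V\in\operatorname{Isom}(Y)$ with $\|V|_{h(E)}-g\circ(h|_E)^{-1}\|<\eta$, whence $g':=V\circ h$ satisfies $\|g'|_E-g\|\leq\|V|_{h(E)}-g\circ(h|_E)^{-1}\|<\eta$. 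I would also record the mirror lemma, obtained by interchanging $X$ and $Y$ together with their AUH properties, which extends isometric embeddings of finite-dimensional subspaces of $Y$ into $X$.

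Then I would run the back-and-forth: fix dense sequences $(x_n)\subseteq X$ and $(y_n)\subseteq Y$ and a summable sequence $(\eta_n)$ of positive reals, and alternately apply the two lemmas to produce finite-dimensional subspaces $E_n\subseteq X$ and isometric embeddings $g_n\colon E_n\to Y$ so that the odd steps drive $\operatorname{dist}(x_k,E_n)\to 0$ for each fixed $k$, the even steps drive $\operatorname{dist}(y_k,g_n(E_n))\to 0$ for each fixed $k$, and consecutive maps agree up to $\eta_n$. The desired isometry is the limit of the $g_n$ on the dense subspace $\overline{\bigcup_n E_n}=X$; it is an isometric embedding with dense range, hence onto by completeness. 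I expect this last passage to be the genuine obstacle: because AUH only yields \emph{approximate} extensions, the $E_n$ are not honestly nested and the $g_n$ do not honestly restrict to one another, so one has to carry the summable errors through the recursion and then run a perturbation argument at the limit, straightening the near-inclusions into actual ones and checking convergence. This bookkeeping is the technical heart of the matter, and is precisely what \cite[Theorem 2.19]{FLMT} takes care of.

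Finally, under the Fra\"iss\'e hypothesis it remains to prove $(3)\Rightarrow(1)$, the chain $(1)\Rightarrow(2)\Rightarrow(3)$ being already in hand. Here I would invoke Proposition~\ref{prop:eqweak}, by which $\age_k(Y)$ is compact in the Banach-Mazur pseudodistance for every $k$. Let $E\in\age(X)$, say of dimension $k$. Then $E$ is finitely representable in $X$, hence, by $(3)$, in $Y$; since the Banach-Mazur distance is finite only between spaces of equal dimension, this produces $F_n\in\age_k(Y)$ with $d_{\rm BM}(E,F_n)\to 1$, and compactness of $\age_k(Y)$ then forces the isometry class of $E$ to belong to $\age_k(Y)$, i.e.\ $E$ embeds isometrically into $Y$. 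The symmetric use of ``$Y$ finitely representable in $X$'' yields the reverse inclusion of ages, so $\age(X)\equiv\age(Y)$, which is $(1)$.
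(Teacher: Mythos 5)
Your proposal is correct and takes essentially the route the paper itself indicates: the survey gives no proof of Proposition \ref{unique} beyond attributing it, ``by means of a back-and-forth argument'', to \cite[Proposition 2.22 and Theorem 2.19]{FLMT}, and your one-step approximate extension lemma (from $\operatorname{Age}(X)\equiv\operatorname{Age}(Y)$ plus AUH) feeding a back-and-forth for (1)$\Rightarrow$(2), together with the compactness of $\age_k$ from Proposition \ref{prop:eqweak} for (3)$\Rightarrow$(1), is exactly the intended mechanism. The bookkeeping you defer to the citation is routine rather than a real obstruction --- in the back step one can extend $h^{-1}$ (the inverse of the approximate extension of $g_n^{-1}$) by the forward lemma, which keeps the $E_n$ honestly nested and the maps agreeing up to $2\eta_n$ --- so nothing essential is missing relative to the paper.
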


A consequence of Proposition \ref{unique} is that any separable Fra\"iss\'e space which does not have non-trivial cotype must be isometric to the Gurariy. Indeed, $\ell_\infty$ is finitely representable in such a space, and therefore condition (3) of Proposition \ref{unique} may be applied.

\subsection{Internal characterizations: amalgamation}
 In \cite{FLMT} are also obtained
 internal characterizations of those classes of finite dimensional spa\-ces  cor\-res\-ponding to 
 the age of some Fra\"iss\'e space (``amalgamation properties").

\begin{defin} A class ${\mathscr F}$ of finite dimensional spaces has the Fra\"iss\'e amalgamation property
if whenever  $E, F, G \in {\mathscr F}$  with $\dim E=k$, and $\ga\in {\Emb}_\de(E,F)$, $\eta\in {\Emb}_\de(E,G)$, 
there are $K \in {\mathscr F}$ and isometric embeddings $i: F \to K$ and $j:G \to K$ such that 
$\|i\circ \ga -j \circ \eta\|\le \vep$. \end{defin} 

It is not hard to check that the age of a Fra\"iss\'e Banach space must satisfy the Fra\"iss\'e amalgamation property.
Conversely and more importantly, the amalgamation property
is equivalent to the existence of an associated Fra\"iss\'e space $X$, 
which, by Proposition \ref{unique}, in the separable case, is uni\-que\-ly determined.

\begin{defin}
For a class ${\mathscr F}$ with the Fra\"iss\'e amalgamation property, there exists an isometrically unique  separable Fra\"iss\'e space $X$
such  that  ${\rm Age}(X) \equiv {\mathcal F}$. In this case it is said that $X$ is the {\em Fra\"iss\'e limit of
${\mathscr F}$}.
\end{defin}

\begin{quest} Are there other examples of amalgamation classes, apart from the classes of finite dimensional subspaces of
 $L_p$ for $p \neq 4,6,8\dots$, or the class of all finite dimensional normed spaces? 
\end{quest}

It may be that a hypothetical new separable Fra\"iss\'e space will appear not as a ``preexisting space" such as the $L_p$'s
but rather as a ``new space" defined as the limit of a new amalgamation class.

\subsection{Fra\"iss\'e is an ultraproperty}

In the same spirit as the relation between AT and transitivity in Section \ref{sec:intro}, there exist characterizations of the Fra\"iss\'e property through ultrapowers.
This point of view allows for formulations of the Fra\"iss\'e property without use of epsilontics. 

Given an ultrafilter $\mathcal U$,  denote by
$(\iso(X))_\ultrafilter$ the subgroup of isometries  of $X_\ultrafilter$ of the form $(T_i)_\ultrafilter$, where each $T_i \in {\iso}(X)$. Note that $(T_i)_\ultrafilter$ is (correctly) defined on $X_\mathcal U$ by $(T_i)_\ultrafilter[(x_i)]= [(Tx_i)]$.

We state here $k$-dimensional versions of some general properties proved  in \cite{FLMT}. 

\begin{prop}\label{prop111}
Let $\ultrafilter$ be a free ultrafilter on $\mathbb N$. 
For a Banach space $X$ and $k\geq 1$ the following are equivalent:

\begin{enumerate}
\item $X$ is weak $k$-Fra\"iss\'e (resp. $X$ is $k$-Fra\"iss\'e).
\item  For  every $E\in \age_k(X)$ 
(resp. for every $E\in \age_k(X_\ultrafilter)$), the action  $({\iso}(X))_\ultrafilter\acts {\rm Emb}(E,X_\ultrafilter)$ is almost transitive. 

\item  For  every $E\in \age_k(X)$ (resp. for every $E \in \age_k(X_\ultrafilter)$),
the action $({\iso}(X))_\ultrafilter\acts {\rm Emb}(E,X_\ultrafilter)$ is transitive. 
\end{enumerate} 
\end{prop}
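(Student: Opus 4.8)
The plan is to prove the equivalence of (1), (2), (3) by a cycle, treating the ``weak $k$-Fra\"iss\'e'' and ``$k$-Fra\"iss\'e'' cases in parallel and pointing out where the uniformity of $\delta$ on $\age_k(X)$ (versus on a single $E$) enters. First I would observe that (3)$\implies$(2) is trivial. The substantive implications are (1)$\implies$(3) and (2)$\implies$(1); the passage from the base space to the ultrapower is the standard one, using that $\ultrafilter$ is free (hence countably incomplete) on $\N$.

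For (1)$\implies$(3): suppose $X$ is weak $k$-Fra\"iss\'e and fix $E\in\age_k(X)$ together with $T,U\in\Emb(E,X_\ultrafilter)$. Write $T=[(T_i)],\ U=[(U_i)]$ with representatives $T_i,U_i\in\LL(E,X)$; since $T,U$ are isometric embeddings we may choose the representatives so that $T_i,U_i\in\Emb_{\eps_i}(E,X)$ with $\eps_i\to0$ along $\ultrafilter$. Given $n\in\N$, apply the weak $k$-Fra\"iss\'e property of $X$ to $E$ with target precision $1/n$: there is $\delta_E(1/n)>0$ such that $\Isom(X)\lop\Emb_{\delta_E(1/n)}(E,X)$ is $(1/n)$-transitive. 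On the set $A_n=\{i:\eps_i<\delta_E(1/n)\}\in\ultrafilter$ we may pick $g_i\in\Isom(X)$ with $\|g_i\circ T_i-U_i\|\le 1/n$; off $A_n$ set $g_i=\mbf{I}_X$. Now I would diagonalize along $\ultrafilter$: using countable incompleteness, choose a decreasing sequence $A_n\in\ultrafilter$ with $\bigcap_n A_n=\emptyset$ and, for each $i$, let $g_i$ be the choice associated with the largest $n$ with $i\in A_n$ (and $g_i=\mbf{I}_X$ if there is none). Then $g:=[(g_i)]\in(\Isom(X))_\ultrafilter$ and $\|g\circ T-U\|=\lim_\ultrafilter\|g_i\circ T_i-U_i\|=0$, so $g\circ T=U$; this is exactly transitivity of the action on $\Emb(E,X_\ultrafilter)$. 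In the $k$-Fra\"iss\'e case one runs the identical argument but now $E$ is allowed to range over $\age_k(X_\ultrafilter)$, which forces one to note first that every $k$-dimensional subspace of $X_\ultrafilter$ is, up to arbitrarily small Banach--Mazur perturbation, (a copy of) a $k$-dimensional subspace of $X$ — this is where $\age_k(X)$ compact in the Banach--Mazur distance is used, cf. Lemma~\ref{lem:eqweak} — and then the single uniform function $\delta_k(\cdot)$ replaces $\delta_E(\cdot)$ throughout, so no diagonalization over varying $E$ is needed beyond the perturbation step.

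For (2)$\implies$(1): I would argue the contrapositive. If $X$ is \emph{not} weak $k$-Fra\"iss\'e, there is $E\in\age_k(X)$ and $\eps_0>0$ such that for every $\delta>0$ the action $\Isom(X)\lop\Emb_\delta(E,X)$ fails to be $\eps_0$-transitive; pick $\delta=1/n$ and get $T_n,U_n\in\Emb_{1/n}(E,X)$ with $\dist_{\Isom(X)}(U_n, \Isom(X)\circ T_n)>\eps_0$. Passing to the ultrapower, $T:=[(T_n)]$ and $U:=[(U_n)]$ lie in $\Emb(E,X_\ultrafilter)$, and for any $g=[(g_n)]\in(\Isom(X))_\ultrafilter$ one has $\|g\circ T-U\|=\lim_\ultrafilter\|g_n\circ T_n-U_n\|\ge\eps_0$, so the action on $\Emb(E,X_\ultrafilter)$ is not even $\eps_0$-dense at $T$; in particular it is neither almost transitive nor transitive, contradicting (2). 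The $k$-Fra\"iss\'e version is the same with $E$ ranging over $\age_k(X_\ultrafilter)$ and a preliminary reduction of such an $E$ to a nearby member of $\age_k(X)$ as above, again invoking compactness of $\age_k(X)$.

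The step I expect to be the main obstacle is the careful handling of representatives in the ultrapower: one must know that an isometric embedding $E\to X_\ultrafilter$ (or an isometry of $X_\ultrafilter$ lying in $(\Isom(X))_\ultrafilter$) can be represented by a family of $\eps_i$-isometric maps with $\eps_i\to0$, and — in the $k$-Fra\"iss\'e case — that an arbitrary $E\in\age_k(X_\ultrafilter)$ can be replaced, at the cost of an arbitrarily small Banach--Mazur perturbation absorbed into the $\delta$'s, by an element of $\age_k(X)$. Both facts are routine consequences of the definition of the ultrapower norm and of the finite dimensionality of $E$ together with compactness of $\age_k(X)$, but they are precisely the places where the hypotheses ``$\dim E=k$'' and ``$\age_k(X)$ compact'' do real work, and getting the quantifiers in the right order (choose $\eps_0$ first, then $\delta=1/n$, then extract $T_n,U_n$, then ultraproduct) is the part that must be written with care.
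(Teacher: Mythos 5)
Your overall architecture -- the cycle (1)$\implies$(3)$\implies$(2)$\implies$(1), the lifting of an isometric embedding $E\to X_\ultrafilter$ to a family of $\eps_i$-isometries with $\eps_i\to 0$ along $\ultrafilter$, and the diagonalization along a decreasing sequence of sets of $\ultrafilter$ with empty intersection -- is exactly the standard ultraproduct argument the survey has in mind (it states the proposition without proof, pointing to \cite{FLMT} and the argument of \cite[Section 4.3.3]{ACCGM-LN}), and your treatment of the weak $k$-Fra\"iss\'e half of all three implications is complete and correct.

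The gap is in the ``resp.''\ ($k$-Fra\"iss\'e) half of (2)$\implies$(1). Arguing the contrapositive, the negation of the $k$-Fra\"iss\'e property does not hand you a single $E$: it gives $\eps_0>0$ and, for each $n$, a possibly \emph{different} $E_n\in\age_k(X)$ together with $T_n,U_n\in \mathrm{Emb}_{1/n}(E_n,X)$ witnessing the failure of $\eps_0$-transitivity (the varying domain is precisely what distinguishes $k$-Fra\"iss\'e from weak $k$-Fra\"iss\'e). The missing step is to assemble these into one subspace of the ultrapower: set $E:=[(E_n)]_\ultrafilter\subset X_\ultrafilter$, which is $k$-dimensional because the Banach--Mazur compactum of \emph{all} $k$-dimensional spaces is compact (John's theorem gives the uniform bound), define $T[(x_n)]:=[(T_nx_n)]$ and similarly $U$, check $T,U\in\mathrm{Emb}(E,X_\ultrafilter)$, and then, for any $g=[(g_n)]\in(\mathrm{Isom}(X))_\ultrafilter$, choose norming vectors $y_n\in S_{E_n}$ to get $\|g\circ T-U\|\geq\eps_0$, contradicting (2). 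Your sketch instead speaks of ``a preliminary reduction of such an $E$ to a nearby member of $\age_k(X)$, again invoking compactness of $\age_k(X)$'': that reduction goes in the wrong direction for this implication (no $E\subset X_\ultrafilter$ is given in advance), and compactness of $\age_k(X)$ is not available here anyway, since by Lemma~\ref{lem:eqweak} it is a consequence of the very property you are negating; only compactness of the full Banach--Mazur compactum may be used. (By contrast, in (1)$\implies$(3) resp.\ your appeal to compactness is legitimate, because $k$-Fra\"iss\'e does yield it; alternatively one can avoid it by transporting $U_i\circ T_i^{-1}$ to the varying images $T_i(E)\subset X$, where the uniform $\delta_k$ is exactly what is needed.)
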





Note the difference between finite dimensional subspaces of $X$ and finite dimensional
subspaces of $X_\ultrafilter$.
By classical results of local theory and ultraproducts, the elements of $\age(X_\ultrafilter)$ are exactly those belonging to the
 closure of the set $\age(X)$ with respect to the Banach-Mazur distance. As an illustration, every finite dimensional
subspace of $L_p$ is a limit (in the Banach-Mazur distance) of a sequence of finite dimensional subspaces of $\ell_p$; $\age(L_p)$ is closed but $\age(\ell_p)$ is not.

Several equivalent characterizations of the Fra\"iss\'e property appear in \cite{FLMT} and follow essentially from Proposition \ref{prop111}. Informally and under some restrictions, we see that UH, 
AUH and the Fra\"iss\'e property induced by isometries on the space, become indistinguishable in its ultrapowers.

\begin{prop}\label{prop113}
The following are equivalent for a Banach space $X$:
\begin{enumerate}
 \item $X$ is Fra\"iss\'e.
 \item  The action  $({\iso}(X))_\ultrafilter \acts {\rm Emb}(E,X_\ultrafilter)$ is almost transitive  for  every $E\in \age(X_\ultrafilter)$. 

\item The action  $({\iso}(X))_\ultrafilter \acts {\rm Emb}(E,X_\ultrafilter)$ is transitive  for  every $E\in \age(X_\ultrafilter)$ .

\item   The action  $({\iso}(X))_\ultrafilter\acts {\rm Emb}(Z,X_\ultrafilter)$ is transitive for  every separable $Z\subset X_\ultrafilter$.
\item  $X_\ultrafilter$ is Fra\"iss\'e    and $({\iso}(X))_\ultrafilter$ is SOT-dense in ${\iso}(X_\ultrafilter)$   
\end{enumerate} 
\end{prop}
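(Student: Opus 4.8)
The plan is to prove the cycle of implications $(1)\Leftrightarrow(2)\Leftrightarrow(3)$, then $(3)\Leftrightarrow(4)$, then $(3)\Leftrightarrow(5)$, drawing on Proposition~\ref{prop111} and Proposition~\ref{prop:eqweak} and on two standard facts from the local theory of ultrapowers: that $\age_k(X_\ultrafilter)$ is the Banach--Mazur closure of $\age_k(X)$, and that a $\delta$-isometric embedding of a fixed finite-dimensional space $E$ into $X_\ultrafilter$ is represented by a family of $\delta_i$-isometric embeddings $E\to X$ with $\delta_i\to\delta$ along $\ultrafilter$, so that isometries of $X$ glued along $\ultrafilter$ act coordinatewise on such representatives.

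The equivalence $(1)\Leftrightarrow(2)\Leftrightarrow(3)$ should be immediate from Proposition~\ref{prop111}: a space is Fra\"iss\'e precisely when it is $k$-Fra\"iss\'e for every $k$, and the $k$-dimensional statement is exactly that $(\iso(X))_\ultrafilter\acts\Emb(E,X_\ultrafilter)$ be almost transitive (equivalently transitive) for all $E\in\age_k(X_\ultrafilter)$; conjoining over $k$ and using $\age(X_\ultrafilter)=\bigcup_k\age_k(X_\ultrafilter)$ yields $(2)$ and $(3)$. Likewise $(4)\Rightarrow(3)$ is trivial since finite-dimensional subspaces are separable.

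For $(3)\Rightarrow(4)$ I would reduce to showing that every isometric embedding $U$ of a separable subspace $Z\subset X_\ultrafilter$ into $X_\ultrafilter$ is the restriction of some $g\in(\iso(X))_\ultrafilter$ (the general transitivity statement then follows by composing with $U\circ T^{-1}$). Write $Z=\overline{\bigcup_n E_n}$ with the $E_n$ a chain of finite-dimensional subspaces carrying Auerbach bases. By $(3)$ choose, for each $n$, some $g_n=(S^{(n)}_i)_\ultrafilter\in(\iso(X))_\ultrafilter$ with $g_n|_{E_n}=U|_{E_n}$. Using countable incompleteness of $\ultrafilter$, fix a decreasing chain $\mathbb{N}=I_0\supset I_1\supset\cdots$ in $\ultrafilter$ with empty intersection, and shrink it to a decreasing chain $I'_n\in\ultrafilter$ so that on $I'_n$ the representatives of $g_n$ and of $U$ agree, coordinate by coordinate on the Auerbach bases of $E_1,\dots,E_n$, up to $4^{-n}$; each shrinking set belongs to $\ultrafilter$ because the relevant limit along $\ultrafilter$ is $0$. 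Setting $S_i=S^{(n)}_i$ for $i\in I'_n\setminus I'_{n+1}$ and $g=(S_i)_\ultrafilter$, a telescoping estimate controlled by the Auerbach normalisation should give $\|g(x)-U(x)\|\le m\,4^{-m}\|x\|$ for $x\in E_m$, hence $g|_Z=U$. Arranging the shrinking of the $I_n$ so that the errors across the infinitely many fibres cancel is the main technical obstacle of the whole argument.

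It remains to handle $(5)$. For $(3)\Rightarrow(5)$: SOT-density of $(\iso(X))_\ultrafilter$ in $\iso(X_\ultrafilter)$ is clear, since for $h\in\iso(X_\ultrafilter)$ and a finite set spanning $E$, applying $(3)$ to the embeddings $\iota_E,h|_E\in\Emb(E,X_\ultrafilter)$ produces $g\in(\iso(X))_\ultrafilter$ agreeing with $h$ on $E$; and $X_\ultrafilter$ is Fra\"iss\'e because $(1)\Leftrightarrow(3)$ together with Proposition~\ref{prop:eqweak} gives $\age_k(X)$ compact, so $\age_k(X_\ultrafilter)=\age_k(X)$ is compact, while the uniform threshold $\delta_k(\varepsilon)$ witnessing the $k$-Fra\"iss\'e property of $X$ transfers to $X_\ultrafilter$ by representing a $\delta$-isometric embedding into $X_\ultrafilter$ fibrewise, correcting each fibre by an isometry of $X$ supplied by the $k$-Fra\"iss\'e property of $X$, and gluing; Proposition~\ref{prop:eqweak} for $X_\ultrafilter$ then delivers $(5)$. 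Conversely, for $(5)\Rightarrow(1)$ one uses that $X_\ultrafilter$ is $k$-Fra\"iss\'e with some threshold $\delta=\delta_k(\varepsilon/2)$: given $E\in\age_k(X)\subset\age_k(X_\ultrafilter)$ and $\gamma,\eta\in\Emb_\delta(E,X)$, regard $X\subset X_\ultrafilter$ diagonally, obtain $h\in\iso(X_\ultrafilter)$ with $\|h\gamma-\eta\|\le\varepsilon/2$, approximate $h$ on the finite-dimensional space $\gamma(E)$ by $g=(S_i)_\ultrafilter\in(\iso(X))_\ultrafilter$ via SOT-density, and read off from the ultraproduct norm that $\|S_i\gamma-\eta\|\le\varepsilon$ for $\ultrafilter$-many $i$; since $\delta$ depends only on $k$ and $\varepsilon$, this shows $X$ is $k$-Fra\"iss\'e for every $k$. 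Together with the first two paragraphs this closes all the equivalences; besides the fibre-gluing bookkeeping in $(3)\Rightarrow(4)$, the only other point requiring care is the two-way transfer of the uniform Fra\"iss\'e threshold between $X$ and $X_\ultrafilter$, for which the SOT-density hypothesis in $(5)$ is genuinely needed.
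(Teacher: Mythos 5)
Your argument is correct (modulo routine epsilontics that you compress, such as the Auerbach-basis/finite-net step needed to pass from vectorwise ultralimit estimates to operator-norm bounds in the (5)$\Rightarrow$(1) and (3)$\Rightarrow$(4) steps), and it follows essentially the route the paper itself indicates: (1)$\Leftrightarrow$(2)$\Leftrightarrow$(3) is read off from Proposition~\ref{prop111}, while the extension to separable subspaces and the two-way transfer of the Fra\"iss\'e threshold between $X$ and $X_\ultrafilter$ are done by the countably-incomplete-ultrafilter splicing argument of Avil\'es--Cabello S\'anchez--Castillo--Gonz\'alez--Moreno that the survey cites as the source of the proof. So your write-up is in substance the same proof, spelled out in detail.
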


\begin{cor}\label{ultra} 
The non-separable $L_p$-space 
$(L_p)_\ultrafilter$ is ultrahomogeneous for each $p\in[1,\infty)$ different from $4,6,8\dots$
\end{cor}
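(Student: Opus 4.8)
The plan is to read off Corollary \ref{ultra} directly from Theorem \ref{big}(c) together with the ultrapower characterization of the Fra\"iss\'e property in Proposition \ref{prop113}; the only thing to add is the (standard) identification of transitivity of the canonical action on embedding spaces with ultrahomogeneity. So fix a free ultrafilter $\ultrafilter$ on $\mathbb N$ and a finite exponent $p \in [1,\infty)$ with $p \notin \{4,6,8,\dots\}$. First, by Theorem \ref{big}(c), $L_p$ is Fra\"iss\'e. Next, by the implication (1) $\Rightarrow$ (3) of Proposition \ref{prop113}, the action $(\Isom(L_p))_\ultrafilter \acts \operatorname{Emb}(E,(L_p)_\ultrafilter)$ is transitive for every $E \in \age((L_p)_\ultrafilter)$. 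Finally, $(\Isom(L_p))_\ultrafilter$ is a subgroup of $\Isom((L_p)_\ultrafilter)$ — each $(T_i)_\ultrafilter$ with $T_i \in \Isom(L_p)$ is a surjective linear isometry of $(L_p)_\ultrafilter$ with inverse $(T_i^{-1})_\ultrafilter$ — so a fortiori $\Isom((L_p)_\ultrafilter)$ acts transitively on $\operatorname{Emb}(E,(L_p)_\ultrafilter)$ for every such $E$.

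It remains to translate this into the definition of UH. As noted right after the definition of ultrahomogeneity in Section \ref{subsec:ultra}, a Banach space $X$ is UH exactly when $\Isom(X) \acts \operatorname{Emb}(E,X)$ is transitive for every finite-dimensional $E \subset X$: taking one of the two isometric embeddings to be the inclusion $E \hookrightarrow X$, transitivity says precisely that every isometric embedding $E \to X$ is the restriction of a surjective isometry of $X$. Since the finite-dimensional subspaces of $(L_p)_\ultrafilter$ are exactly the members of $\age((L_p)_\ultrafilter)$, applying this to $X = (L_p)_\ultrafilter$ gives that $(L_p)_\ultrafilter$ is ultrahomogeneous. Non-separability is automatic: for a free (hence countably incomplete) ultrafilter on $\mathbb N$, the ultrapower of an infinite-dimensional Banach space has density character at least the continuum (cf. the discussion of ultraproducts in Section \ref{ult}).

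I do not expect a real obstacle in this argument: all the weight rests on Theorem \ref{big}(c) — ultimately the approximate equimeasurability principle of \cite{FLMT} together with the Banach--Mazur compactness of $\age_k(L_p)$ — and on Proposition \ref{prop113}, both of which may be cited. The single point that deserves care, and which explains why the Fra\"iss\'e property (phrased via the $\delta$-isometry spaces $\operatorname{Emb}_\delta$) rather than mere approximate ultrahomogeneity is needed here, is that an isometric embedding $u\colon E \to (L_p)_\ultrafilter$ of a finite-dimensional space does not in general lift to a family of isometric embeddings $E \to L_p$, but only to a family of $\varepsilon_i$-isometric embeddings with $\varepsilon_i \to 0$ along $\ultrafilter$; this subtlety, flagged in Section \ref{subsec:approx}, is absorbed into the proof of Proposition \ref{prop113}.
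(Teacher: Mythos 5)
Your proposal is correct and follows exactly the route the paper intends: combine Theorem \ref{big}(c) with the implication (1) $\Rightarrow$ (3) of Proposition \ref{prop113}, note that $(\Isom(L_p))_\ultrafilter$ sits inside $\Isom((L_p)_\ultrafilter)$, and translate transitivity of the actions on $\operatorname{Emb}(E,(L_p)_\ultrafilter)$ into the definition of ultrahomogeneity. The extra remarks (the lifting subtlety for embeddings into ultrapowers, and non-separability via density character) are consistent with the paper's discussion and add nothing that conflicts with it.
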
 

\begin{small}

\noindent{\em Comments:}
\begin{itemized}
\item[(1)]
 The proof of Proposition \ref{prop113}
 in \cite{FLMT}  essentially follows the argument given  by Avil\'es, Cabello S\'anchez, Castillo, Gonz\'alez and Moreno in \cite[Section 4.3.3]{ACCGM-LN}. A natural version of Corollary \ref{ultra} for the Gurariy space was also obtained by these authors \cite[Proposition 4.16]{ACCGM-LN}.
 The spaces in Corollary~\ref{ultra} seem to
be the only known super-reflexive ultrahomogeneous examples (if $p>1$).
The existence of {\em separable} ultrahomogeneous spaces other than the Hilbert still remains unknown, {\em cf}. Problem~\ref{prob:multiMazur}.

\item[(2)]
In Corollary~\ref{manytransitive}(a) we observed that when $p\ne 2$, $(L_p)_\ultrafilter$ admits infinitely many  non-isometric transitive renormings. Ho\-wever,
while $(L_p)_\ultrafilter$ is ultrahomogeneous in its natural norm by Corollary~\ref{ultra}, it is not with respect to the norms transferred from   $(L_p(\ell_2^n))_\ultrafilter$ for $n\geq 2$. Indeed these spaces admit both
a $1$-com\-ple\-men\-ted isometric copy of $\ell_2^2$ and another which is not $1$-com\-ple\-men\-ted
(the copy induced by an isometric embedding of $\ell_2^2$ into $L_p$, whose best constant of complementation
is
computed in \cite{Gor69, GLR73}: relapsing into bad habits let us add that it is exactly $\sqrt{{\Gamma(1/2)\Gamma(p/2+1)}/{\Gamma(p/2+1/2)}}$).
 It is not known whether or not there exist  spaces with two or more UH renormings.
\end{itemized}
\end{small}

  \subsection{Local versions of the Fra\"iss\'e property}

If one wants to deduce  some properties of the isometry group
$\Isom(L_p)$ from combinatorial properties
of embeddings between subspaces of $L_p$,
general subspaces of $L_p$ do not seem easy to handle.   Auspiciously, and not unexpectedly, a lot can   be said on the general structure of the space  $L_p$ just from its finite-dimensional $\ell_p$-subspaces. In this direction
we recall  a result of  Schechtman \cite{schechtman} (and Dor \cite{dor} for $p=1$) - as observed by Alspach \cite{alspach}. 
\begin{theorem}[Dor - Schechtman]\label{DS}
Let $1 \leq p<\infty$ be fixed. For every $\eps>0$ there exists $\delta>0$, depending only on $\eps$ and $p$, so that for every $n$ and every  $\delta$-isometry $u:\ell_p^n\to L_p$, there is an isometric embedding $\tilde{u}: \ell_p^n\to L_p$ such that $\|u-\tilde{u}\|<\eps$.  
\end{theorem}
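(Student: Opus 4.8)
The plan is as follows. First dispose of $p=2$ by linear algebra: for a $\delta$-isometry $u\colon\ell_2^n\to L_2$ the operator $u^*u$ is positive and invertible with spectrum in $[(1+\delta)^{-2},(1+\delta)^2]$, so $\tilde u:=u(u^*u)^{-1/2}$ is an isometric embedding with $\|u-\tilde u\|\leq(1+\delta)-1$. From now on fix $p\neq 2$ and write $f_i:=u(e_i)$, so that $(1+\delta)^{-1}\|a\|_{\ell_p^n}\leq\|\sum_ia_if_i\|_p\leq(1+\delta)\|a\|_{\ell_p^n}$ for all scalar vectors $a=(a_i)$. I would look for the approximating isometric embedding in \emph{disjointly supported} form. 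This loses nothing, because for $p\neq 2$ every isometric copy of $\ell_p^n$ inside $L_p$ is spanned by disjointly supported functions: when $p$ is not an even integer this follows from the Plotkin--Rudin equimeasurability theorem (such a copy has the same joint distribution on $\mathbb R^n$ as the standard disjoint copy, which is carried by the union of the coordinate axes), while when $p$ is an even integer the identity $\int_0^1|\sum_ia_if_i|^p=\sum_i|a_i|^p$ is polynomial in $a$, and matching coefficients yields the vanishing of mixed moments that are integrals of squares, forcing $f_if_j=0$ a.e.\ for $i\neq j$. Conversely any normalised disjointly supported sequence spans an isometric $\ell_p^n$, so it suffices to produce, for small $\delta=\delta(\eps,p)$, normalised disjointly supported $g_1,\dots,g_n$ with $\|u-\tilde u\|<\eps$, where $\tilde u(e_i):=g_i$.

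The construction I would try is the pointwise ``winner takes all'' splitting: put $A_i:=\{t:|f_i(t)|>|f_j(t)|\ \forall\, j\neq i\}$ (breaking ties arbitrarily), $g_i:=f_i\mathbf 1_{A_i}$, and renormalise to $g_i/\|g_i\|_p$ — an adjustment of size $o(1)$ once one checks $\|g_i\|_p\to 1$ uniformly in $i$. Since $u(e_i)-g_i/\|g_i\|_p=f_i\mathbf 1_{A_i^c}+(1-\|g_i\|_p^{-1})g_i$, and the second summands are disjointly supported with uniformly small coefficients and hence contribute at most $o(1)\|a\|_{\ell_p^n}$, everything reduces to bounding the operator norm of the \emph{residual map} $R\colon\ell_p^n\to L_p$, $R(e_i):=f_i\mathbf 1_{A_i^c}$. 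Writing $j(t)$ for the winning index at $t$,
\[
\Big\|\sum_ia_if_i\mathbf 1_{A_i^c}\Big\|_p^p=\sum_j\int_{A_j}\Big|\sum_{i\neq j}a_if_i(t)\Big|^p\,dt,
\]
and the task is to bound this by $\eps^p\|a\|_{\ell_p^n}^p$ with $\eps\to 0$ as $\delta\to 0$, \emph{independently of $n$}.

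That last uniformity is the step I expect to be the main obstacle. Testing the $\delta$-isometry on sign vectors $a\in\{-1,+1\}^n$ and averaging shows the ``disjointness defect'' $D:=\int_0^1\big(\sum_i|f_i|^p-\max_i|f_i|^p\big)=\sum_i\|f_i\mathbf 1_{A_i^c}\|_p^p$ satisfies $D/n=O(\delta)$; but this per-coordinate estimate is not enough by itself, since a crude application of H\"older's inequality to the displayed sum loses a factor growing with $n$. To overcome this I would exploit the structure of the residual, not just its total mass, in one of two ways: either iterate the splitting, peeling off the largest surviving coordinate at each level, observe that the level-$k$ residual map has norm controlled by the level-$k$ residual mass $\int_0^1\big(\sum_i|f_i|^p-\sum_{\ell\leq k}m_\ell(t)^p\big)$ (with $m_1(t)\geq m_2(t)\geq\cdots$ the decreasing rearrangement of $\{|f_i(t)|\}$) and sum the contributions; or, closer to Schechtman's approach, first normalise $\operatorname{span}(f_i)$ by a change of density so that it has an essentially flat basis, and then invoke a quantitative, uniform-in-$n$ form of the equimeasurability principle to push the ``almost $\ell_p^n$-representing'' pushforward measure of the $f_i$ onto the exactly representing one. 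The case $p=1$ treated by Dor should be more transparent, as there the $\ell_1$-geometry reduces the residual estimate to the elementary pointwise inequality $\sum_i|c_i|-\mathbb E_\epsilon|\sum_i\epsilon_ic_i|\gtrsim\sum_{i\geq 2}m_i$; for general $p$ the balance between $D$ and $\|R\|$ is precisely the delicate point. Once $\|R\|<\eps$ is in hand, $\tilde u$ is the required isometric embedding.
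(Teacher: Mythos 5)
First, a point of reference: the survey does not prove Theorem~\ref{DS} at all; it is quoted from Dor \cite{dor}, Schechtman \cite{schechtman} and Alspach \cite{alspach}, so your sketch has to be measured against those sources rather than against an argument in the paper. Your opening reduction is sound: the $p=2$ polar-decomposition remark is fine, and for $p\neq 2$ it is indeed true that the unit-vector images of any isometric copy of $\ell_p^n$ in $L_p$ are disjointly supported, so it suffices to approximate by a normalized disjoint system. (Two small corrections there: the Plotkin--Rudin theorem does not give what you claim, since equimeasurability requires testing against $\|1+\sum a_if_i\|_p$; without the constant function the joint distribution is \emph{not} determined -- only disjointness is forced, and that follows for all $p\neq2$ from the equality case of the pointwise inequality $|a+b|^p+|a-b|^p\gtrless 2(|a|^p+|b|^p)$, no equimeasurability needed.)

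The genuine gap is exactly the step you yourself flag as ``the main obstacle'': the uniform-in-$n$ bound $\|R\|<\eps$ on the residual operator, equivalently the passage from the average (per-coordinate) estimate $D/n=O(\delta)$ to an operator-norm estimate. This is not a technical loose end; it is the entire content of Dor's and Schechtman's theorems, and nothing in the proposal delivers it. Note that it cannot come for free from per-vector closeness: for $p\neq1$ two systems, each exactly $1$-equivalent to the $\ell_p^n$ basis, can be within $\eta$ of each other vector by vector while the induced operators differ in norm by a constant (for $p=2$, rotate by $\pi$ on a set of $\eta^2 n$ Fourier directions), so even the subordinate claim that $\|g_i\|_p\to1$ uniformly would not finish the proof, and that claim is itself unproved (for $p=1$ it is essentially Dor's theorem). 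Of the two strategies you gesture at, the iterated ``peeling'' scheme is left without any argument that the level-$k$ residual \emph{operator norms} are controlled by the level-$k$ masses and sum to something small independently of $n$ -- which is the same difficulty reappearing at every level -- while the second strategy invokes ``a quantitative, uniform-in-$n$ form of the equimeasurability principle'', i.e.\ presupposes a statement at least as deep as the theorem being proved (a quantitative equimeasurability principle of this kind is precisely the hard technical core of \cite{FLMT}, and Schechtman's own proof does not proceed by quoting such a principle). So the proposal correctly sets up the problem and identifies where the difficulty lies, but it does not prove the theorem: it stops exactly where the cited proofs begin.
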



In other words, a form of the Fra\"iss\'e property in $L_p$ is satisfied  when ``restricted" to  subspaces of $L_p$ isometric to an $\ell_p^n$
(including $p=4,6,8\dots$). As commented earlier there is no hope to extend this to general finite dimensional subspaces when $p=4,6,8\ldots$

With this example in mind, it is possible and useful  to develop a Fra\"iss\'e  theory with respect to restricted classes
of finite dimensional subspaces, which are not the Age of any $X$, because they are not hereditary.
In this sense this may be called a ``local version" of the Fra\"iss\'e theory for Banach spaces. 
Informally, given a class $\mathscr F$ of finite dimensional Banach spaces, the
$\mathscr F$-Fra\"iss\'e spaces are those for which the natural actions on $\de$-embeddings are $\vep$-transitive,
provided that the embeddings have as domain an element of $\mathscr F$. 
For $L_p$ the authors of \cite{FLMT} 
use  Theorem \ref{DS} to give meaning to the affirmation:

\begin{thm} For any $p\in[1,\infty)$, even or not, $L_p$ is the  Fra\"iss\'e limit of the class 
  $(\ell_p^n)_n$.  
\end{thm}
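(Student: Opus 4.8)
The phrase ``$L_p$ is the Fra\"iss\'e limit of $\mathscr F:=(\ell_p^n)_n$'' should be read in the local sense sketched just above: (i) $L_p$ is $\mathscr F$-Fra\"iss\'e, i.e.\ for every $\varepsilon>0$ there is $\delta>0$, \emph{independent of $n$}, such that $\operatorname{Isom}(L_p)\curvearrowright\operatorname{Emb}_\delta(\ell_p^n,L_p)$ is $\varepsilon$-transitive; (ii) $L_p$ is separable and contains an isometric copy of every $\ell_p^n$; and (iii) $L_p$ is $\mathscr F$-saturated, i.e.\ every isometric embedding $\ell_p^n\to L_p$ extends isometrically to every $\ell_p^m\supseteq\ell_p^n$. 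Properties (ii) and (iii) are immediate once one knows that an isometric embedding $\ell_p^n\to L_p$ has image spanned by $n$ disjointly supported norm-one functions (see below): in $L_p[0,1]$ each of these supports, and the leftover set, may be subdivided at will, which yields both the copies of the $\ell_p^m$ and the required extensions; the same observation gives the approximate amalgamation property of $\mathscr F$ (refine partitions of the atoms), so that one may legitimately speak of \emph{the} limit. Granting (i)--(iii), a back-and-forth argument verbatim as in \cite{FLMT} (compare Proposition~\ref{unique}), but run over the restricted class $\mathscr F$ instead of $\operatorname{Age}(X)$, shows $L_p$ is the unique separable space with these properties. So everything reduces to (i).

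For (i), fix $\varepsilon>0$ and let $\delta=\delta(\varepsilon/3,p)>0$ be the constant given by the Dor--Schechtman theorem (Theorem~\ref{DS}); the decisive feature is that it does not depend on $n$. Given $n$ and $T,U\in\operatorname{Emb}_\delta(\ell_p^n,L_p)$, Theorem~\ref{DS} supplies isometric embeddings $\widetilde T,\widetilde U\in\operatorname{Emb}(\ell_p^n,L_p)$ with $\|T-\widetilde T\|<\varepsilon/3$ and $\|U-\widetilde U\|<\varepsilon/3$; so if $g\in\operatorname{Isom}(L_p)$ satisfies $\|g\circ\widetilde T-\widetilde U\|<\varepsilon/3$ then $\|g\circ T-U\|<\varepsilon$, and it remains to prove that $\operatorname{Isom}(L_p)$ acts almost transitively on $\operatorname{Emb}(\ell_p^n,L_p)$. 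For $p$ not an even integer this is already contained in the AUH of $L_p$, hence in Theorem~\ref{big}(c). For $p=2m$ one argues directly that such embeddings are disjointly supported: writing $\widetilde T e_i=h_i$ and expanding the identity $\|\sum_i a_ih_i\|_p^p=\sum_i|a_i|^p$ as a polynomial in the real variables $a_i$, the vanishing of the coefficient of $a_i^{p-2}a_j^2$ ($i\ne j$) forces $\int|h_i|^{p-2}|h_j|^2\,d\lambda=0$, whence $h_ih_j=0$ a.e.\ since the integrand is nonnegative. (For $p$ not even the same disjointness follows from the Plotkin--Rudin equimeasurability theorem, which is precisely how this case sits inside Theorem~\ref{big}(c).) Finally, two disjointly supported configurations of $n$ norm-one functions can be carried one arbitrarily close to the other by a weighted-composition isometry (Banach--Lamperti), choosing the underlying measure-class automorphism of $[0,1]$ to match their supports approximately and adjusting values by a multiplication operator; this is routine and holds for all $p\in[1,\infty)$. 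This completes (i).

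\smallskip\noindent\textbf{Main obstacle.} The genuinely non-soft ingredient is the even case of (i). There the approximate equimeasurability principle behind Theorem~\ref{big}(c) breaks down --- $L_p$ is then not even AUH, by Randrianantoanina's example --- so one cannot perturb an arbitrary $\delta$-isometric embedding of a general finite-dimensional subspace to an exact one. The way out, and the reason the statement is uniform over all of $[1,\infty)$, is to restrict the domain class to the spaces $\ell_p^n$, where the needed perturbation statement is exactly the Dor--Schechtman theorem, valid for every $p$; unlike the rest of the argument its proof is delicate (it is this theorem that I would expect to be citing rather than reproving). Once Theorem~\ref{DS} and the disjoint-support description of isometric $\ell_p^n$-subspaces are available, (i)--(iii), the amalgamation property, and the back-and-forth identification all follow by the elementary manipulations indicated above.
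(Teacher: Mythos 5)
Your route is essentially the one the paper (following \cite{FLMT}) intends: the whole content of the statement is the restricted Fra\"iss\'e property for the class $(\ell_p^n)_n$, and this is obtained exactly as you do it, by combining the Dor--Schechtman perturbation theorem (whose $\delta$ depends only on $\varepsilon$ and $p$, not on $n$) with the classical description of exact isometric copies of $\ell_p^n$ in $L_p$ as disjointly supported systems, on which the Banach--Lamperti isometries act almost transitively. So the core of your argument, the reduction $\mathrm{Emb}_\delta\rightsquigarrow\mathrm{Emb}$ followed by approximate alignment of disjoint configurations, matches the paper's proof, and you correctly identify Dor--Schechtman as the only non-soft ingredient.

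Two of your auxiliary assertions are, however, false as literally stated and need the approximate formulation that the Fra\"iss\'e framework is designed for. First, your exact saturation property (iii) fails for $p\neq 2$: if $h_1\equiv 1$ (or more generally if the supports of $h_1,\dots,h_n$ cover $[0,1]$ up to a null set), there is \emph{no} isometric extension of the embedding $\ell_p^n\to L_p$ to $\ell_p^{n+1}$, since any such extension would force a norm-one function disjoint from all the $h_i$. The correct statement is approximate (extend after an arbitrarily small perturbation shrinking the supports, or extend to an $\varepsilon$-isometric embedding), and the same caveat applies to your ``subdivide the atoms'' justification of amalgamation; this is precisely why the limit is defined up to $\varepsilon$'s rather than exactly. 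Second, the claim that isometric embeddings of $\ell_p^n$ are disjointly supported for all even $p=2m$ fails at $m=1$: in $L_2$ any orthonormal system works. This is harmless, since for $p=2$ the needed transitivity on $\operatorname{Emb}(\ell_2^n,L_2)$ is immediate from the Hilbert space structure (indeed $L_2$ is Fra\"iss\'e outright), but the case split should read ``$p$ even, $p\geq 4$'' for the polynomial-expansion argument. With these two statements replaced by their approximate/trivial-case versions, your proof is sound and coincides with the paper's.
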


\subsection{Fra\"iss\'e and Extreme Amenability}

Recall from Section~\ref{sec:not+con} 
that a topological group $G$ is called {extremely amenable (EA)} when every continuous action 
$G\curvearrowright K$ on a compact $K$ has a fixed point. 
The Fra\"iss\'e theory is related to this notion through  the celebrated 
\  {\em KPT correspondence}\ (Kechris/Pestov/Todorcevic    2005 \cite{KPT}), a combinatorial characterization 
of the extreme amenability of an automorphism group in terms of a Ramsey property of   Age: as a beautiful example,  Pestov's result that the group of order preserving automorphisms of the rationals
 is extremely amenable \cite{Pe2} may be seen as  combination of
``$(\Q,<)$ is the Fra\"iss\'e limit of finite ordered sets" and of the classical finite Ramsey theorem on $\N$. The authors of \cite{FLMT} use
a form of the KPT correspondence for metric structures which
 applies without difficulty to the isometry group of a Fra\"iss\'e, or even \auh,\ Banach space $X$.

 \begin{definition}
A collection  $\mathscr F$ of finite dimensional normed spaces has the {Approximate Ramsey Property (ARP)} when
for every $F,G \in \mathscr F$ and  $r \in \N, {\vep}>0$   there exists $H\in \mathscr F$ such that    every coloring $c$ 
of $\mathrm{Emb}(F,H)$ into $r$ colors admits an embedding $\ro \in \mathrm{Emb}(G,H)$ which is  $\vep$-monochro\-ma\-tic for $c$: there exists a color $i$ so that for all $u\in {\Emb}(F,G) $ there is $v\in \mathrm{Emb}(F,H)$ such that $c(v)=i$ and $\|v-\ro u\|\leq \vep$. 
\end{definition}

\begin{thm} [KPT correspondence for  Banach spaces]\label{kptc}
For  an   \auh\ Banach space $X$ the following are equivalent:
\begin{enumerate}
\item $\mr{Isom}(X)$ is extremely amenable for SOT.
\item  $\age(X)$ has the approximate Ramsey property. 
\end{enumerate}
\end{thm}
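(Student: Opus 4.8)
The plan is to run the Kechris--Pestov--Todorcevic correspondence \cite{KPT} in the approximate, metric setting, following \cite{FLMT}; I only indicate the two implications and where the work lies. Write $\mathcal G=\mr{Isom}(X)$ with the SOT. On the dynamical side I would use Pestov's oscillation characterization of extreme amenability in the form: \emph{$\mathcal G$ is extremely amenable if and only if for every bounded left-uniformly continuous $f\colon\mathcal G\to\R^N$, every finite $S\subseteq\mathcal G$ and every $\vep>0$ there is $g\in\mathcal G$ with $\max_{s\in S}\|f(gs)-f(g)\|<\vep$} (stated here for left-uniformly continuous functions and right translations, which is equivalent to the usual version since $\mathcal G\cong\mathcal G^{\mathrm{op}}$ as topological groups via inversion). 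The bridge to the combinatorics is the orbit map $\pi_E\colon\mathcal G\to\Emb(E,X)$, $\pi_E(g)=g|_E$, for $E\in\age(X)$: it is left-uniformly continuous, and by approximate ultrahomogeneity (AUH) it has dense range in the complete metric space $\Emb(E,X)$. Hence, for a prescribed precision, any bounded left-uniformly continuous $f$ on $\mathcal G$ can be written as $\bar f\circ\pi_E$ up to that precision, for a suitable $E\in\age(X)$ and a bounded Lipschitz $\bar f\colon\Emb(E,X)\to\R$.

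For $(2)\Rightarrow(1)$ I would verify the oscillation criterion. Given $f$, a finite $S\subseteq\mathcal G$ with $\mathrm{id}\in S$, and $\vep>0$, approximate $f$ by $\bar f\circ\pi_E$ and enlarge $E$ to $\mathbf G:=E+\sum_{s\in S}s(E)\in\age(X)$; the key point is the identity $(gs)|_E=g|_{\mathbf G}\circ u_s$ with $u_s:=s|_E\in\Emb(E,\mathbf G)$. I would then apply the ARP to the pair $(E,\mathbf G)$ --- with enough colours to resolve the range of $\bar f$ to within $\vep$ and with a small tolerance --- colouring $v\in\Emb(E,H)$ by the cell of $\bar f(\iota_H\circ v)$ in a fixed partition of the range into $r$ small intervals, where $\iota_H\colon H\to X$ is a fixed isometric embedding. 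This produces $\rho\in\Emb(\mathbf G,H)$ that is approximately monochromatic along every $u_s$; then AUH applied to $\iota_H\circ\rho$ gives $g\in\mathcal G$ with $g|_{\mathbf G}$ close to $\iota_H\circ\rho$, and chaining the estimates yields $\max_{s\in S}|f(gs)-f(g)|<\vep$.

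For $(1)\Rightarrow(2)$ I would work in two steps. \emph{Step A} (an ``infinite'' Ramsey statement): given $F\subseteq G$ in $\age(X)$, $r$, $\vep$, and any colouring $c\colon\Emb(F,X)\to\{1,\dots,r\}$, soften $c$ to the Lipschitz vector function $\phi=(\phi_i)_{i\le r}$, $\phi_i(T)=\max\{0,1-\tfrac{2}{\vep}\dist(T,c^{-1}(i))\}$ (so $\max_i\phi_i\equiv 1$), apply the criterion to $\phi\circ\pi_F$ with $S$ a finite family of isometries realizing (by AUH) an $\vep$-net of the compact space $\Emb(F,G)$, and read off from the resulting $g$ that $\rho:=g|_G\in\Emb(G,X)$ is $\vep$-monochromatic for $c$ with colour $c(\rho|_F)$. \emph{Step B} (compactness, to descend to a \emph{finite-dimensional} $H$): if for every finite-dimensional $H\in\age(X)$ there were a ``bad'' colouring $c_H$ of $\Emb(F,H)$ admitting no $\vep$-monochromatic $\rho\in\Emb(G,H)$, take a cluster point $c^\ast$ of the $c_H$'s (extended arbitrarily to $\Emb(F,X)$) in the compact space $\{1,\dots,r\}^{\Emb(F,X)}$, indexed by finite-dimensional subspaces of $X$ under inclusion; Step A applied to $c^\ast$ with tolerance $\vep/2$ gives a monochromatic $\rho$ together with finitely many witnesses, all of which lie in one finite-dimensional subspace $H'$ of $X$, and since $c^\ast$ agrees with $c_{H'}$ at these witnesses for $H'$ far enough in the net, $\rho\in\Emb(G,H')$ is $\vep$-monochromatic for $c_{H'}$ --- a contradiction. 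Hence some finite-dimensional $H$ works, i.e.\ $\age(X)$ has the ARP.

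The main obstacle, in my view, is not a single idea but controlling the inexactness throughout --- and two bookkeeping points deserve care. First, the handedness: the orbit maps $\pi_E$ are only \emph{left}-uniformly continuous, so one must use the left-uniform/right-translation form of the criterion and verify that a left-uniformly continuous $f$ on $\mathcal G$ really is, up to $\vep$, a function of $g|_E$ for a suitable $E$ (a mollification of $f$ against the seminorm attached to $E$, together with density of $\pi_E(\mathcal G)$, which is exactly AUH). Second, in Step B one must ensure that ``approximately monochromatic for the cluster colouring $c^\ast$'' genuinely transfers back to ``approximately monochromatic for an honest finite-dimensional $c_{H'}$'': this forces one to localize $\rho$, a finite net of $\Emb(F,G)$, and the finitely many corresponding witnesses inside a single $H'$, spending a definite fraction of $\vep$ on each approximation. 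The remaining ingredients --- the softening of colourings, the compactness of $\Emb(F,G)$ and of $\{1,\dots,r\}^{\Emb(F,X)}$, the completeness of $\Emb(E,X)$ --- are routine once the accounting is arranged.
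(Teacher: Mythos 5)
Your argument is correct and it is essentially the proof this survey implicitly relies on: the paper states Theorem \ref{kptc} without proof, deferring to \cite{FLMT}, where the result is obtained by exactly your route — Pestov's finite oscillation stability criterion for extreme amenability, the left-uniformly continuous orbit maps $g\mapsto g|_E$ onto $\Emb(E,X)$ with dense range by AUH, and a compactness/net argument to pass between colourings of $\Emb(F,X)$ and of $\Emb(F,H)$ for finite-dimensional $H$. The technical points you flag (handedness of the uniformity, factoring bounded LUC functions through some $\Emb(E,X)$, and localizing $\rho$ and the finitely many witnesses in one finite-dimensional subspace when descending from the cluster colouring) are precisely the ones handled in the metric KPT correspondence of \cite{KPT} as adapted in \cite{FLMT}, so no genuinely different approach is being taken.
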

The KPT correspondence turns out to extend to the setting of $\ell_p^n$-subspa\-ces of $L_p$.
This means that one can expect to prove the extreme amenability of ${\rm Isom}(L_p)$ through {internal} properties, 
i.e. through an approximate Ramsey property of the class of isometric   embeddings between $\ell_p^n$'s. This expectation
was fulfilled for $p=\infty$ in \cite{BaLALuMbo2}, and then for $1 \leq p <\infty, p \neq 2$, in   \cite{FLMT}:

\begin{thm}[Ramsey theorem for  embeddings be\-tween $\ell_p^n$'s]\label{ram}
\

 Given $1\leq p\le\infty, p \neq 2$, integers $d$, $m$, $r$, and $\vep>0$ there exists
 ${ n}=n_p({ d,m},r,\vep)$ such that whenever  $c$ is a coloring of ${\rm Emb}(\ell_p^d,\ell_p^n)$ into $r$ co\-lors,
 there exists some  isometric embedding $\gamma:\ell_p^m\to \ell_p^n$ 
which is $\vep$-monochromatic.
\end{thm}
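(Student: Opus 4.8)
The plan is to deduce Theorem~\ref{ram} (the approximate Ramsey property for the classes $(\ell_p^n)_n$) from the extreme amenability of $\mathrm{Isom}(L_p)$ in the SOT, combined with the local version of the KPT correspondence discussed just above. First I would recall from Theorem~\ref{big}(c) and the results of Section~4.8 that, for any finite $p\neq 2$, the space $L_p$ is the Fra\"iss\'e limit of the class $\mathscr{F}_p=(\ell_p^n)_n$ in the appropriate ``local'' sense, the relevant amalgamation being an immediate consequence of the Dor--Schechtman stability theorem (Theorem~\ref{DS}): two $\delta$-isometric embeddings of $\ell_p^d$ into $\ell_p^n$-spaces can be amalgamated inside a sufficiently large $\ell_p^N$ because, up to a controlled perturbation, one may replace them by genuine isometric embeddings and then take the canonical $\ell_p$-sum. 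Thus $\mathscr{F}_p$ has the (local) Fra\"iss\'e amalgamation property, so the KPT machinery applies to it.

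Next I would invoke the known fact that $\mathrm{Isom}(L_p)$ is extremely amenable in the SOT for every $p\in[1,\infty)$; this is due to Giordano--Pestov (via the amenability of the group of measure-preserving transformations) together with the Banach--Lamperti description of isometries of $L_p$. Feeding this into the local KPT correspondence (the $\mathscr{F}_p$-version of Theorem~\ref{kptc}, proved in \cite{FLMT}) yields that the class $\mathscr{F}_p=(\ell_p^n)_n$ has the Approximate Ramsey Property: for every $d,m,r$ and $\vep>0$ there is $n$ so that any $r$-coloring of $\mathrm{Emb}(\ell_p^d,\ell_p^n)$ admits an isometric embedding $\gamma:\ell_p^m\to\ell_p^n$ that is $\vep$-monochromatic in the sense of the ARP definition. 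Since this is precisely the statement of Theorem~\ref{ram} (for finite $p$), the finite-$p$ case is done. For $p=\infty$ the same scheme works, the extreme amenability of $\mathrm{Isom}(C[0,1])$-type groups and the corresponding Ramsey statement for $\ell_\infty^n=\ell_\infty^n$-embeddings being established in \cite{BaLALuMbo2}; alternatively one quotes that result directly.

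An alternative, more self-contained route would be to prove Theorem~\ref{ram} directly by a compactness/product Ramsey argument: cover the (non-compact) space $\mathrm{Emb}(\ell_p^d,\ell_p^n)$ by finitely many $\vep$-balls, so that an $r$-coloring induces, after refining, a coloring of a finite ``net'' of embeddings; then one must find a large isometric copy of $\ell_p^m$ inside $\ell_p^n$ on which the net-coloring is constant. This would go through the Dual Ramsey theorem of Graham--Rothschild together with a discretization of embeddings between $\ell_p^n$-balls, in the style of the proof for $p=\infty$ in \cite{BaLALuMbo2}. Either way, the genuinely hard part is the passage from finite-dimensional combinatorics to the metric/continuous setting: controlling the perturbations so that a coloring of an $\vep$-net of $\mathrm{Emb}(\ell_p^d,\ell_p^n)$ reflects faithfully onto actual embeddings, and ensuring the $\vep$-monochromatic embedding $\gamma$ can be taken to be an exact isometric embedding of $\ell_p^m$ and not merely an almost-isometry. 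This is where the Dor--Schechtman stability input (Theorem~\ref{DS}) is essential, and it is also the step whose quantitative dependence $n=n_p(d,m,r,\vep)$ is delicate; I would isolate it as a lemma and prove the theorem by induction on $m$ (or on $r$), amalgamating at each stage via finite $\ell_p$-sums.
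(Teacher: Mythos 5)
Your main route (Giordano--Pestov extreme amenability of $\Isom(L_p)$ plus a KPT correspondence $\Rightarrow$ approximate Ramsey property of $(\ell_p^n)_n$) is not what the paper does, and as written it has concrete gaps. First, Theorem~\ref{kptc} is stated for AUH spaces and for the \emph{full} age. For $p$ even, $L_p$ is not AUH, so Theorem~\ref{kptc} simply does not apply; for $p$ non-even it would give the ARP of $\age(L_p)$, in which the ``large'' space $H$ witnessing the Ramsey statement is an arbitrary finite-dimensional subspace of $L_p$. Such an $H$ need not embed isometrically into any $\ell_p^n$, so Theorem~\ref{ram} (a coloring of $\Emb(\ell_p^d,\ell_p^n)$ and an \emph{exact} isometric $\gamma:\ell_p^m\to\ell_p^n$) does not follow by restriction; the transfer is exactly the perturbation work you defer, and note that Theorem~\ref{DS} corrects almost isometric embeddings \emph{into $L_p$}, not into a prescribed $\ell_p^N$. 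The ``local KPT correspondence'' for the non-hereditary class $(\ell_p^n)_n$ is invoked as an equivalence, but the survey only uses (and only asserts) the direction ARP $\Rightarrow$ extreme amenability; the converse direction in the local setting, together with the compactness step down to finite $n$, is precisely what you would have to prove. Your amalgamation sketch is also incorrect: given $\gamma\in\Emb_\delta(E,F)$ and $\eta\in\Emb_\delta(E,G)$, taking the $\ell_p$-sum $F\oplus_p G$ after perturbation does not identify the two copies of $E$, so it is not an amalgamation; the actual argument amalgamates inside $L_p$ (via its homogeneity relative to the $\ell_p^n$'s) and then returns to a finite $\ell_p^N$. Finally, even if all of this were repaired, the deduction from concentration-of-measure results \cite{GiPe,GrMi} inverts the logical role of Theorem~\ref{ram}: in \cite{FLMT} (and in this survey) the Ramsey theorem is the \emph{internal, combinatorial} input used to reprove Giordano--Pestov via the KPT correspondence, and the reverse derivation is only employed for $p=2$, where $\ell_2^n$'s exhaust the age and Gromov--Milman applies.

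For the record, the paper itself offers no proof of Theorem~\ref{ram}: it cites \cite{FLMT} for $1\le p<\infty$, $p\ne 2$, and \cite{BaLALuMbo2} for $p=\infty$, stating explicitly that the proofs are complex and beyond the scope of the survey. Your ``alternative, more self-contained route'' (discretization of $\Emb(\ell_p^d,\ell_p^n)$, the Graham--Rothschild Dual Ramsey theorem, and Dor--Schechtman stability to recover exact isometric embeddings) is indeed the shape of the actual argument, exploiting that for $p\ne2$ isometric embeddings between $\ell_p^n$'s have a rigid, partition-like form; but as written it is only a plan, and the combinatorial core --- the step both you and the survey leave out --- is the theorem.
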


The proofs of these two results are quite complex and beyond the scope of this survey.
The proof obtained in \cite{FLMT} for $p \neq 2$, as well as the estimates on $n_p(d,m,r,\vep)$ that would follow from it (but are not computed by the authors), do not extend to the case of the Hilbert, due to the different nature  of isometric embeddings between finite dimensional subspaces in this case. Theorem \ref{ram} is still valid for $p=2$, but as a consequence of
Theorem \ref{kptc} and of Gromov-Milman's result \cite{GM} of extreme amenability of the unitary group, see the following comments.

The Fra\"iss\'e spaces encountered in \cite{BaLALuMbo1,BaLALuMbo2, FLMT} were known to have extremely amenable isometry groups when equipped with the strong operator topology:

\begin{ex} The isometry group of $L_p$, for any $1 \leq p<\infty, p \neq 2$, and the isometry group of ${\mathcal G}$ are extremely amenable in the SOT. 
 \end{ex}

The extreme amenability of ${\rm Isom}(L_p)$ for $p\in [1,\infty),  p \neq 2$ was proved in 2006 by Giordano and Pestov \cite{GiPe},
and the methods of \cite{FLMT} allow to recover this result    through Theorem \ref{ram}  and the 
KPT correspondence for Banach spaces. In any case this statement refers to one group only because Choksi and  Kakutani proved long time ago that the groups $\Isom(L_p)$ are all topologically isomorphic in the SOT for $p\in[1,\infty)$, see \cite[Theorem~8]{kakutani}.

The extreme amenability of  ${\rm Isom}(\mathcal G)$ is a  recent result by Barto\-so\-v\'a, L\'o\-pez-Abad, Lupini, and Mbombo \cite{BaLALuMbo1, BaLALuMbo2}, and may be seen as a corollary of the combination of the KPT correspondence and Theorem \ref{ram} for $p=\infty$.

When $p=2$ the isometry group of $L_p$ is the unitary group whose extreme amenability was established in 1983 by Gromov and Milman \cite{GrMi}.  

The KPT correspondence for Banach spaces also implies new results for some non-separable versions of those spaces. As a consequence of Gromov-Milman's result and of Theorem \ref{kptc}, the unitary group of any infinite dimensional Hilbert space is extremely amenable for the SOT, regardless of its density character. From Theorem \ref{kptc}, the result that $L_p, 1 \leq p <\infty$ is Fra\"iss\'e, and Proposition \ref{prop113}, we also have:

\begin{ex} 
For $1 \leq p <\infty, p \neq 2$ the isometry group of any ultrapower of $L_p$ with respect to a free ultrafilter on the integers is extremely amenable in the SOT.
\end{ex}

\begin{small}
\noindent{\em Comments:}
\begin{itemized}
  \item[(1)]
  The Gromov-Milman \cite{GrMi} result of extreme amenability of ${\ku U}({\ku H})$ is ba\-sed on the concentration of measure phenomenon.
 The result of Giordano-Pestov \cite{GiPe} for $L_p$ also uses concentration of measure and a general description of ${\rm Isom}(L_p)$ as a 
 topological group. In \cite{FLMT} and for $p$ not even this may be recovered by the above considerations through the fact that $L_p$ is
 \auh\ and through the Ramsey property; for $p$ even,  the local version of the Ramsey property,  
  Theorem~\ref{ram}, needs to be used. 
 The extreme amenability of the isometry group of the Gurariy relies on its
 \auh  \ property and
 the Ramsey property for embeddings between finite dimensional spaces (or equivalently, between $\ell_\infty^n$-spaces).

\item[(2)] There are some precursors of the Ramsey result Theorem \ref{ram}. In \cite{ORS},
  Odell-Rosenthal-Schlumprecht (1993) proved that that for every $1\le p\le \infty$, 
 every $m, r \in \N$ and every $\vep>0$ there is $n \in \N$ such that for every coloring $c$ 
 of $S_{\ell_p^n}$ into $r$ there is $Y\subset \ell_p^n$ isometric to $\ell_p^m$ so that $S_Y$ is $\eps$-monochromatic. 
 Note that Odell-Rosenthal-Schlumprecht is the case $d=1$ in  Theorem \ref{ram}.
Matou\v{s}ek-R\"odl (1995) \cite{MR} gave a combinatorial proof of the \cite{ORS} result for $1\le p<\infty$.

\item[(3)]
 The authors of \cite{FLMT} also develop a Fra\"iss\'e theory by restricting the type of embeddings, 
 for example by analyzing  lattices,
 where now isometries and embeddings (resp. $\delta$-embeddings) must respect (resp. maybe up to $\eps$) the lattice structure. 
In this manner {Fra\"iss\'e Banach lattices}, 
i.e. certain unique universal objects for classes of finite dimensional lattices with an approximate lattice ultrahomogeneity property, are defined \cite[Definition 6.1]{FLMT}.

For example, $L_p$ is a Fra\"iss\'e Banach lattice for $p\in[1,\infty)$
which is the ``lattice Fra\"iss\'e" limit of its finitely generated   sublattices the $\ell_p^n$'s.
For $p=\infty$ they define a new object which they call the ``Gurariy $M$-space'', proving that
there exists a renorming of $C[0,1]$ as an $M$-space which is the lattice Fra\"iss\'e limit of
the class of $\ell_\infty^n$'s finite lattices.
\item[(4)]
The ``Gurariy $M$-space" cited in the previous item is inspired from a couple of constructions from \cite{CS7}; namely an
$M$-space, which is transitive and easier to define, albeit non-separable (the ultraproduct of the lattices $L_p$ for $p$ tending to $\infty$) and a family of separable AT $M$-spaces some of which (all?) might be isometric to the ``Gurariy $M$-space'' ... or not. 
Avil\'es and Tradacete \cite{AvTr} and M.A. Tursi \cite{Tursi} recently and independently
studied amalgamation properties for Banach lattices: Avil\'es and Tradacete constructed a (necessarily non-separable) Banach lattice of universal disposition for separable lattices.
Tursi's paper contains, among other things, the construction of a separable approximately ultrahomogeneous Banach lattice. 
In the even more recent \cite{KL}, Kawach and L\'opez-Abad study amalgamation and Fra\"iss\'e properties for Fr\'echet spaces.
\end{itemized}
\end{small}

 \section{Questions and problems}
In this final section we gather and discuss a number of problems that arise naturally  
 from the contents of the survey. 
 We have classified them according to the topics covered in the preceding sections, although the borders are quite permeable. 
 
 \subsection*{Local questions, ultraproducts, finite dimensional objects}
It is clear (use 
the $\sqrt{\dim}$ estimate for the ellipsoid of minimal volume or
an ultraproduct argument) that for each finite $n$ there is a function $f_n:[0,2]\to[1,\infty)$ with $f_n(\delta)\to 1$ as $\delta\to 0$ so that if $E$ is $n$-dimensional and $\delta$-transitive, then $d_{\text{BM}}(E,\ell_2^n)\leq f_n(\delta)$. See the comments closing Section~\ref{sec:classical-isom}.

\begin{problemo}
Can the hypothesis on the dimension  be removed? That is, is it true that for every $\eps>0$ there exist $\delta>0$ so that every finite dimensional $\delta$-transitive (or $\delta$-asymptotically transitive) space is $(1+\eps)$-close to the Hilbert space of the corresponding dimension?
\end{problemo} 

An obvious ultraproduct argument in combination with Theorem~\ref{hyperplane} shows that the answer is affirmative if we moreover require the existence of a $(1+\delta)$-complemented hyperplane.

Banach spaces that arise as ultraproducts of families of finite dimensional ones are called {\em hyperfinite} in nonstandard ambients, see for instance \cite{h-m}. A couple of closely related question are:

\begin{problemo}
Is every hyperfinite transitive (or even ultrahomogeneous) space  (isometric or isomorphic to) a Hilbert space?
\end{problemo}

\begin{problemo}(Henson and Moore \cite[Problem 5]{h-m}, Plichko)
\hspace{2pt}Do hyperfinite spaces of universal disposition exist?
\end{problemo}

\begin{problemo}\label{pr:Fdimiso} (F. Cabello S\'anchez)\hspace{2pt}
 Let $X$ be a space with an (almost) transitive norm and which admits a non trivial finite-dimensional isometry. Must $X$ be hilbertian?
 \end{problemo}
 
 The answer is affirmative if the hypothesized finite-dimensional isometry is a rank-one perturbation of the identity (see \cite[Section 3]{becerra}).
Also, by \cite[Corollary 4.14]{FR}, if $X$ is separable reflexive and satisfies the hypotheses in Problem~\ref{pr:Fdimiso}, then $X$ must have a Schauder basis. 
 With an eye in Theorem~\ref{hyperplane} we can ask:

\begin{problemo} (F. Cabello S\'anchez)\hspace{2pt}
 Let $X$ be a space with an (almost) transitive norm and which admits a 1-complemented subspace of finite codimension greater than 1. Must $X$ be hilbertian?
 \end{problemo}
 
 Let $\GL_f(X)$ denote the group of automorphisms of $X$ that have the form ${\bf I}_X+F$ where $F$ is a finite-rank operator.
 In   \cite[Problem 8.11]{FR}  asked to find a separable space $X$ and a bounded subgroup of $\GL_f(X)$ which is infinite and discrete in the SOT. This was solved in \cite{AFGR} with an example on $c_0$. The question remains in reflexive spaces:
 
 \begin{problemo} Find a separable reflexive space $X$ and a bounded subgroup of $\GL_f(X)$ which is infinite and discrete for the SOT.
 \end{problemo}
 
 In the same vein we ask:
 
 \begin{problemo} If $X$ is separable reflexive and a bounded
 subgroup $G$  of $\GL_f(X)$ is discrete for the SOT, does it imply that all orbits of the action of $G$ on the sphere are discrete? Or at least, not dense in $S_X$?
 \end{problemo}

 \subsection*{Maximality of the norm, renormings}
Not surprisingly, the hottest issue in this line is about norms on Hilbert spaces: 

\begin{problemo}(Section~\ref{sec:smAT}, Problem~\ref{3problems})\
\begin{itemize}
\item[(a)]  Does the Hilbert space have a  unique, up to conjugacy,  maximal bounded subgroup of automorphisms?
\item[(b)]  Does there exist a separable Banach space $X$  with a unique, up to conjugacy,  maximal  bounded subgroup of $\GL(X)$? 

\item[(c)]  If yes, does $X$ have to be 
 isomorphic to a Hilbert space?
 \end{itemize}
\end{problemo}
 
Note that, while the isometric part of Mazur problem asks whether every (almost) transitive renorming of a Hilbert space is Euclidean, Part (a) is asking if this is true even for maximal renormings. Concerning the possible impact that the existence of AT norms can have regarding the isomorphic structure of the underlying space:

 \begin{problemo}(\cite[Problem 8.14]{FR})\hspace{2pt}
 Let $X$ be a separable, reflexive, Banach space with an AT norm. Does it follow that $X$ has a Schauder basis?
 \end{problemo}
 
This was originally asked for CT norms. However, as we already mentioned, CT and AT are equivalent notions for reflexive spaces and imply uniform convexity and uniform smoothness of the norm;  cf. \cite[Corollary 6.9]{becerra}. Removing the hypothesis of reflexivity the answer is no in view of Lusky's \cite{lusky-rot}.
 By \cite[Corollary 4.14]{FR}, the answer is affirmative when there exists a power bounded operator in $\GL_f(X)$.

\begin{problemo}
 Assume that $X$ is a (complex) HI space. Show that $X$ does no admit an almost transitive norm, or even,  that the isometry group acts almost trivially on $X$.
\end{problemo}

According to \cite[Corollary 6.7]{FR} the answer to this problem is affirmative  when $X$ is a separable reflexive HI space without a Schauder basis.

All the examples appearing in
Theorem~\ref{list} are, in some sense, ``far from being Hilbert''. One may wonder if there exist counterexamples within the most popular classes of spaces that are ``close to being Hilbert'':

\begin{problemo}
Find a  weak Hilbert space, an asymptotically hilbertian space, or even a near Hilbert space that does not admit an AT renorming.
 \end{problemo}

Please note that {\em asymptotically hilbertian} is not the same as  asymptotic (or Asymptotic) $\ell_p$ space for $p=2$; see Theorem~\ref{list}. The definition of weak-Hilbert and asymptotically hilbertian spaces, as well as various characterizations, can be seen in Pisier~\cite{pisier-weak}; a near Hilbert space is just a Banach space having type $2-\eps$ and cotype $2+\eps $ for every $\eps>0$.
These include all ``twisted Hilbert spaces'', in particular the Kalton-Peck spaces \cite{kp}. Going in the opposite direction (see the comments closing Section~\ref{subDGZ}):

\begin{problemo}
Does there exist any symmetric space not isomorphic to $\ell_2$ which admits an almost transitive renorming? \end{problemo}

\begin{problemo}
 Are the Hilbert spaces the only (almost) transitive spaces with property $(M)$? 
 \end{problemo}

\begin{problemo}
Does there exist a separable Banach space so that every bounded subgroup of $\GL(X)$ is contained in some maximal bounded subgroup of $\GL(X)$?
Is this true for $L_p$ or Kadec' complementably universal space?
\end{problemo}

In view of Theorem~\ref{main2} this problem could have different answers for $L_p$ and $\mathcal K$ since the latter contains a complemented copy of each separable HI space with the BAP.

 \begin{problemo}
 Does $T^{(2)}$, the 2-con\-ve\-xified Tsirelson space,  
or do more general weak Hilbert spaces, other than the Hilbert, have a  maximal  bounded subgroup of $\GL(X)$?
\end{problemo}

\begin{problemo}\lb{pr:allsubspbis}
  (Dilworth and Randrianantoanina~\cite[Problem 1.1]{DR})\hspace{2pt} Suppose that every subspace of a Banach space $X$ admits  an equivalent almost transitive renorming. Is $X$ isomorphic to  a Hilbert space?
\end{problemo}

Going back to the genuine Mazur affairs we find the following question, especially the case $p=1$, most itching:
 
\begin{problemo}\label{pr:LpT}
Does $L_p$ admit a {\em transitive} renorming for some $p\neq 2$? 
 \end{problemo}  







\subsection*{Problems relative to Fra\"iss\'e or homogeneous spaces}
Here, the fundamental question seems to be Problem~\ref{prob:multiMazur}, namely

\begin{problemo}(Multidimensional Mazur problem) \hspace{2pt} 
Is every separable 
ultrahomogeneous Banach space isometric (or isomorphic) to the Hilbert space?
\end{problemo}

Even in this setting the gap between an eventual affirmative answer and the existing knowledge is sideral.

\begin{problemo}\label{pr:GLptheonlyAuH}(\cite[Problem 2.9]{FLMT})\hspace{2pt} Are the Gurariy space and the spaces $L_p$ for $ p \neq 4,6,8,\ldots$ the only separable Fra\"iss\'e spaces? or even AUH spaces?
\end{problemo}

Variants of this problem were suggested to us by G. Godefroy, based on the
 well-known fact that the norm on $L_p$ is a $C^\infty$-smooth norm exactly when $p$ is even
 (see \cite[Chapter~V]{DGZ} for much more information on this, in particular for a proof that the canonical norm of $L_p$ is optimal regarding smoothness for $1<p<\infty$). For example:
 
 \begin{problemo} Show that the Hilbert space is the only separable Fra\"iss\'e (or even AUH) space with a $C^\infty$-smooth norm.
 \end{problemo}
 
 \begin{problemo} Show that a $C^\infty$-smooth norm which is Fra\"iss\'e (or even AUH) is necessarily ultrahomogeneous.
 \end{problemo}


Note that any Fra\"iss\'e renorming of the Gurariy space must be isometric to the Gurariy itself. Indeed, cotype considerations imply that $\ell_\infty$ is finitely representable in such space, and then  we may apply the observation after Proposition \ref{unique}.  The question seems to remain open for $L_p$:

\begin{problemo} Let $1\le p <\infty$.  Is any Fra\"iss\'e norm on $L_p$ conjugate to the usual norm? 
\end{problemo}

The multidimensional version of Problem~\ref{pr:LpT} is:

\begin{problemo}\label{pr:LpUH} Show that $L_p$ does not admit an ultrahomogeneous renorming.
\end{problemo}

\begin{problemo} (\cite[Problem 2.6]{FLMT})\hspace{2pt} Are the Gurariy space and the Hilbert space the only separable stable Fra\"iss\'e Banach spaces?
\end{problemo}

In particular,

\begin{problemo} (Problem~\ref{pr:LpstableFraisse}) \hspace{2pt}
Are the spaces $L_p$, $p \neq 2,4,6,\ldots$ stable Fra\"iss\'e?
\end{problemo}

In relation to   \cite[Proposition 2.14]{FLMT} we may ask:

\begin{problemo} Is every (separable) AUH space necessarily Fra\"iss\'e? Is every ultrahomogeneous space  Fra\"iss\'e?
Is every space having an ultrahomogeneous (``countable'') ultrapower  Fra\"iss\'e?
\end{problemo}


\subsection*{The Banach-Gromov ``conjecture''} 
We close with a few remarks on another problem in Banach's book (remarques au Chapitre XII, p. 215) concerning isometric characterizations of Hilbert spaces. 

\begin{problemo}\label{BG}(Banach)\hspace{2pt} Let $X$ be a Banach space such that, for some $2\le n<\infty$, all $n$-dimen\-sional subspaces of $X$ are isometric. Must $X$ be a Hilbert space?
\end{problemo} 

The hypothesis on $X$ in this problem is somehow ``dual'' 
to that appearing in Mazur's: note that all hyperplanes in a reflexive  transitive space are mutually isometric.

 An affirmative answer for $n=2$ (real case)  was quickly provided by Auerbach, Mazur, and Ulam in \cite{AMU} and then for infinite-dimensional $X$ and any $n$ by Dvoretzky \cite{dvo} (the complex version of Dvoretzky's theorem was established later by Milman \cite{milman}) making it clear that Banach's question reduced to considering hyperplanes in finite-dimensional spaces.
In 1967, Gromov \cite{gromov} solved the problem in the affirmative for even $n$ and all $X$ (real or complex),
for odd $n$ and real $X$ with $\dim(X) \geq  n + 2$, and for odd $n$ and complex $X$ with $\dim(X) \ge 2n$, which also settled the problem in  any infinite dimensional $X$.
Thus, the first integer for which Banach's problem remains open is $3$.
 We refer ther reader to Soltan \cite[Section 6]{soltan} for more information on this topic and to \cite{Bor} for a  recent result. 
\bigskip

\noindent{\em Acknowledgements:} 
The authors started discussing questions related to this survey  in 2015, at the conference ``Banach spaces and their applications
in analysis'' at the Centre International de Rencontres Math\'ematiques (CIRM) in Luminy, France, and would like to thank the organizers for 
their hospitality and support. We thank G. Godefroy for helpful comments on a previous version of this survey. 







\end{document}